\RequirePackage[l2tabu, orthodox]{nag}
\documentclass[11pt]{article}
\usepackage{amssymb, amsmath, amsthm}
\usepackage{mathtools}
\usepackage[all]{xy}
\usepackage{bm}
\usepackage{calc}
\usepackage{mathrsfs}
\usepackage{lmodern}
\usepackage{microtype}
\usepackage[hmargin = 3cm, vmargin = 2cm, includefoot]{geometry}
\usepackage{xparse}

\usepackage[colorlinks = true, pdfstartview = FitV, linkcolor = blue, citecolor = blue, urlcolor = blue]{hyperref}
\usepackage[alphabetic, lite]{amsrefs}

\numberwithin{equation}{section}

\setcounter{tocdepth}{1}

\DeclareMathOperator{\supp}{supp}
\DeclareMathOperator{\id}{id}

\DeclareMathOperator{\muSupp}{\mu supp}
\newcommand*{\unpmuSupp}{\unp{\muSupp}}
\DeclareMathOperator{\Ppg}{Ppg}
\DeclareMathOperator{\codim}{codim}

\DeclareMathOperator{\Lip}{Lip}
\newcommand*{\Lippw}{\Lip^{\mathrm{pw}}}

\renewcommand*{\leq}{\leqslant}
\renewcommand*{\geq}{\geqslant}

\makeatletter
\newcommand{\subalign}[1]{%
  \vcenter{%
    \Let@ \restore@math@cr \default@tag
    \baselineskip\fontdimen10 \scriptfont\tw@
    \advance\baselineskip\fontdimen12 \scriptfont\tw@
    \lineskip\thr@@\fontdimen8 \scriptfont\thr@@
    \lineskiplimit\lineskip
    \ialign{\hfil$\m@th\scriptstyle##$&$\m@th\scriptstyle{}##$\crcr
      #1\crcr
    }%
  }
}
\makeatother

\newcommand*{\fantome}[3][]{\mathmakebox[\maxof{\width}{\widthof{$#1$}}][#2]{#3}}

\ExplSyntaxOn
\DeclareExpandableDocumentCommand{\IfNoValueOrEmptyTF}{mmm}{\IfNoValueTF{#1}{#2}{\tl_if_empty:nTF {#1}{#2}{#3}}}
\ExplSyntaxOff

\NewDocumentCommand\Lagr{soom}
{
\IfBooleanTF{#1}
{
\IfNoValueTF{#3}
  {\IfNoValueTF{#2}{{\Lambda_{#4}}}{\Lambda_{#2}\left({#4}\right)}}
  {\Lambda_{#2}^{#3}\left({#4}\right)}
}
{
\IfNoValueTF{#3}
  {\IfNoValueTF{#2}{{\Lambda_{#4}}}{\Lambda_{#2}({#4})}}
  {\Lambda_{#2}^{#3}({#4})}
}
}

\NewDocumentCommand\unpLagr{soom}
{
\IfBooleanTF{#1}
{
\IfNoValueTF{#3}
  {\IfNoValueTF{#2}{{\unp{\Lambda}_{#4}}}{\unp{\Lambda}_{#2}\left({#4}\right)}}
  {\unp{\Lambda}_{#2}^{#3}\left({#4}\right)}
}
{
\IfNoValueTF{#3}
  {\IfNoValueTF{#2}{{\unp{\Lambda}_{#4}}}{\unp{\Lambda}_{#2}({#4})}}
  {\unp{\Lambda}_{#2}^{#3}({#4})}
}
}

\NewDocumentCommand\Whit{soom}
{
\IfBooleanTF{#1}
{
\IfNoValueTF{#3}
  {\IfNoValueTF{#2}{{C_{#4}}}{C_{#2}\left({#2}\right)}}
  {C_{#2}^{#3}\left({#4}\right)}
}
{
\IfNoValueTF{#3}
  {\IfNoValueTF{#2}{{C_{#4}}}{C_{#2}({#4})}}
  {C_{#2}^{#3}({#4})}
}
}

\NewDocumentCommand\unpWhit{soom}
{
\IfBooleanTF{#1}
{
\IfNoValueTF{#3}
  {\IfNoValueTF{#2}{{\unp{C}_{#4}}}{\unp{C}_{#2}\left({#2}\right)}}
  {\unp{C}_{#2}^{#3}\left({#4}\right)}
}
{
\IfNoValueTF{#3}
  {\IfNoValueTF{#2}{{\unp{C}_{#4}}}{\unp{C}_{#2}({#4})}}
  {\unp{C}_{#2}^{#3}({#4})}
}
}

\DeclarePairedDelimiter{\intervO}{]}{[}
\DeclarePairedDelimiter{\intervCO}{[}{[}
\DeclarePairedDelimiter{\intervOC}{]}{]}
\DeclarePairedDelimiter{\intervC}{[}{]}
\DeclarePairedDelimiterX{\innerp}[2]{\langle}{\rangle}{#1,#2}
\DeclarePairedDelimiter{\abs}{\lvert}{\rvert}
\DeclarePairedDelimiter{\norm}{\lVert}{\rVert}

\newcommand*{\fcomp}{\circ}

\newcommand*{\comp}[1][]{\underset{#1}{\circ}}
\newcommand*{\acomp}[1][]{\underset{#1}{\overset{a}{\circ}}}

\newcommand*{\conv}[1][]{\underset{#1}{\circ}}

\newcommand*{\V}{\mathbb{V}}
\newcommand*{\W}{\mathbb{W}}
\newcommand*{\N}{\mathbb{N}}
\newcommand*{\Z}{\mathbb{Z}}
\newcommand*{\Q}{\mathbb{Q}}
\newcommand*{\R}{\mathbb{R}}

\newcommand*{\cor}{\Bbbk}

\newcommand*{\ovD}{\overline{D}}
\newcommand*{\ovd}{\overline{d}}
\newcommand*{\unD}{\underline{D}}
\newcommand*{\und}{\underline{d}}

\newcommand*{\ovQ}{\overline{Q}}
\newcommand*{\unQ}{\underline{Q}}

\newcommand*{\RD}{\operatorname{D}}

\newcommand*{\define}[1]{\textbf{#1}}

\newcommand*{\Derb}{\mathrm{D}^{\mathrm{b}}}
\newcommand*{\rsect}{\mathrm{R}\Gamma}

\newcommand*{\unp}{\bm{\dot}}
\newcommand*{\Ltens}{\otimes^\mathrm{L}}

\newcommand*{\RHom}[1][]{\mathrm{R}\mathrm{Hom}_{\raise1.5ex\hbox to.1em{}#1}}
\renewcommand*{\hom}[1][]{{\mathscr{H}\mspace{-4mu}om}_{\raise1.5ex\hbox to.1em{}#1}}
\newcommand*{\rhom}[1][]{{\mathrm{R}\mathscr{H}\mspace{-3mu}om}_{\raise1.5ex\hbox to.1em{}#1}}
\DeclareMathOperator{\muhom}{\mu\mspace{-1mu}hom}

\newcommand*{\oim}[1]{{#1}_*}
\newcommand*{\eim}[1]{{#1}_{\, !}}
\newcommand*{\roim}[1]{\mathrm{R}{#1}_*}
\newcommand*{\reim}[1]{\mathrm{R}{#1}_{\, !}}
\newcommand*{\opb}[1]{#1^{-1}}
\newcommand*{\epb}[1]{#1^{\, !} \,}

\renewcommand*{\to}[1][]{\xrightarrow[]{#1}}

\newcommand*{\isoto}[1][]{\xrightarrow[#1]{{\raisebox{-.6ex}[0ex][-.6ex]{$\mspace{1mu}\sim\mspace{2mu}$}}}}

\newcommand*{\closure}[1]{\overline{#1}}
\newcommand*{\fwclosure}[1]{\operatorname{cl_{fw}}(#1)}

\newcommand*{\interior}[1]{\operatorname{Int}\left(#1\right)}
\newcommand*{\interiorSmall}[1]{\operatorname{Int}(#1)}

\newcommand*{\indlim}[1][]{\mathop{\varinjlim}\limits_{#1}}

\newcommand*{\spa}{\vspace{1.2ex}\noindent}

\theoremstyle{plain}
\newtheorem{theorem}{Theorem}[section]
\newtheorem{proposition}[theorem]{Proposition}

\newtheorem{lemma}[theorem]{Lemma}
\newtheorem{corollary}[theorem]{Corollary}
\theoremstyle{definition}
\newtheorem{definition}[theorem]{Definition}

\newtheorem{example}[theorem]{Example}
\newtheorem{remark}[theorem]{Remark}
\newtheorem*{erratum}{Erratum}

\title{A microlocal characterization of Lipschitz continuity}
\author{Beno\^it Jubin%
\footnote{Key words: microlocal theory of sheaves, Lipschitz maps, Dini derivatives.}
\footnote{MSC: 35A27, 26A16, 26A24.}}
\date{\today}

\begin{document}

\maketitle

\begin{abstract}

We study continuous maps between differential manifolds from a microlocal point of view.
In particular, we characterize the Lipschitz continuity of these maps in terms of the microsupport of the constant sheaf on their graph.
Furthermore, we give lower and upper bounds on the microsupport of the graph of a continuous map and use these bounds to characterize strict differentiability in microlocal terms.
\end{abstract}

\small
\tableofcontents
\normalsize

\section*{Introduction}
\label{sec:introduction}
\addcontentsline{toc}{section}{\nameref{sec:introduction}}

Microlocal analysis is the study of phenomena occurring on differential manifolds via a study in their cotangent bundle; for instance, the study of the singularities of solutions of a partial differential equation on a manifold $M$ via the study of their wavefront set in $T^*M$.
A general setting for microlocal analysis is the microlocal theory of sheaves, developed by M.~Kashiwara and P.~Schapira (see~\cite{KS}).
In~\cite{Vic}, N.~Vichery used this theory to study from a microlocal viewpoint continuous real-valued functions on differential manifolds, and to define for these functions a good notion of subdifferential.
We extend this study to continuous maps between differential manifolds.
We study simultaneously the tangent aspects of the subject to emphasize the parallelism between the tangent and cotangent sides.

Specifically, let $f \colon M \to N$ be a continuous map between differential manifolds.
We denote its graph by $\Gamma_f \subseteq M \times N$.
We define its \define{Whitney cone} $\Whit{f}$ as the Whitney cone of its graph and its \define{conormal} $\Lagr{f}$ as the microsupport of the constant sheaf on its graph, that is,
\begin{align}
\Whit{f} &\coloneqq C(\Gamma_f, \Gamma_f) \subseteq T(M \times N)
\qquad\text{and}\\
\Lagr{f} &\coloneqq \fantome[C(\Gamma_f, \Gamma_f)]{l}{\muSupp(\cor_{\Gamma_f})} \subseteq T^*(M \times N)
\end{align}
where $\cor$ is any nonzero commutative ring of finite global dimension (for instance $\Z$ or a field).
All these terms and pieces of notation are precisely defined in the article.

The Whitney cone $\Whit{f}$ is a closed symmetric cone and the conormal $\Lagr{f}$ is a coisotropic closed symmetric cone.
If $f$ is $C^1$, then its Whitney cone is equal to the tangent bundle of its graph and its conormal is equal to the conormal bundle of its graph, that is, $\Whit{f} = T\Gamma_f$ and $\Lagr{f} = (T\Gamma_f)^\perp$.

We prove that $f$ is Lipschitz if and only if its Whitney cone contains no nonzero ``vertical'' vectors, that is, $\Whit{f} \cap (0_M \times TN) \subseteq 0_{MN}$, if and only if its conormal contains no nonzero ``horizontal'' covectors, that is, $\Lagr{f} \cap (T^*M \times 0^*_N) \subseteq 0^*_{MN}$.

To prove these results, we use the microlocal theory of sheaves of Kashiwara and Schapira, which we review in Section~\ref{sec:background}.

In Section~\ref{sec:subsets}, we review the main properties of microsupports associated with subsets.
If $A$ is a locally closed subset of $M$, we set $\muSupp(A) \coloneqq \muSupp(\cor_A)$ and denote its tangent cone by $C(A)$ and its strict tangent cone by $N(A)$.
If $C$ is a cone in $TM$, we denote its polar by $C^\circ$.
We give a direct proof of the following known bounds: if $Z \subseteq M$ is closed, then $\opb{\pi_M}(Z) \cap C(Z)^\circ \subseteq \muSupp(Z) \subseteq N(Z)^\circ$.

In Section~\ref{sec:whitney}, we define the Whitney cone of a continuous map and give its first properties.
In particular, we characterize Lipschitz continuity and strict differentiability in terms of the Whitney cone and extend these characterizations to topological submanifolds.
We also prove the following chain rule, which will be needed later.
Let $f_i \colon M_i \to M_{i+1}$ be continuous maps between differential manifolds for $i \in \{ 1, 2 \}$.
If $(0_1 \comp \Whit{f_1}) \cap (\Whit{f_2} \comp 0_3) \subseteq 0_2$, for instance if $f_1$ is Lipschitz, then
\begin{equation}
\Whit{f_2 \fcomp f_1} \subseteq \Whit{f_1} \comp \Whit{f_2}
\end{equation}
with equality if $f_2$ is $C^1$.

In Section~\ref{sec:conormal}, we define the conormal of a continuous map and give its first properties.
We use the convolution of kernels to extend to continuous maps the functorial properties of the microsupport for the four image operations.
We also use it to prove the following chain rule.
With the notation above, if $(0^*_1 \comp \Lagr{f_1}) \cap (\Lagr{f_2} \comp 0^*_3) \subseteq 0^*_2$, for instance if $f_2$ is Lipschitz, then
\begin{equation}
\Lagr{f_2 \fcomp f_1} \subseteq \Lagr{f_1} \acomp \Lagr{g_2}
\end{equation}
with equality if $f_1$ is a $C^1$-submersion (and if $f_1$ and $f_2$ are both $C^1$).

In Section~\ref{sec:real-valued}, we study the case of real-valued functions.
We define directional Dini derivatives in order to describe more precisely the various cones associated with a function.
We also study local extrema and more generally ``extrema at first order'', which we define in that section.
Namely, if $x \in M$, we set $\Whit[x]{f} \coloneqq (\Whit{f})_{(x, f(x))}$ and $\Lagr[x]{f} \coloneqq (\Lagr{f})_{(x, f(x))}$.
Then, we prove the following generalization of Fermat's lemma to continuous functions: if $f \colon M \to \R$ is continuous and has a first-order extremum at~$x \in M$, then
\begin{align}
T_x M \times \{0\} &\subseteq \Whit[x]{f}
\qquad\text{and}\\
\fantome[T_x M]{c}{\{0\}} \times \fantome[\{0\}]{c}{\R} &\subseteq \Lagr[x]{f}.
\end{align}
Finally, we relate the conormal of a function to the microsupport of the constant sheaf on its epigraph, studied by N.~Vichery in~\cite{Vic}, and we prove that the two points of view are equivalent for Lipschitz functions.

Section~\ref{sec:general} is the main section of the paper, where the claimed characterizations of Lipschitz continuity and strict differentiability are proved.
First, we prove analogues of Rolle's lemma and the mean value theorem for continuous maps between vector spaces.
This allows to give the following upper bound on the Whitney cone of a continuous map in terms of its conormal.
To state it, we define the following analogues of the directional derivatives.
If $(x, u) \in TM$ and $(x, \eta) \in M \times_N T^*N$, we set
\begin{align}
\Whit[x][u]{f} &\coloneqq \Whit[x]{f} \cap \big( \R_{\geq 0} u \times \fantome[\R_{\geq 0} \eta]{c}{T_{f(x)} N} \big)
\qquad\text{and}\\
\Lagr[x][\eta]{f} &\coloneqq \Lagr[x]{f} \cap \big( \fantome[\R_{\geq 0} v]{c}{T^*_xM} \times \fantome[T_{f(x)} N]{c}{\R_{\geq 0} \eta} \big).
\end{align}
For a subset $A$ of a vector space, we set $\unp{A} \coloneqq A \setminus \{0\}$ and $A^\top \coloneqq \bigcup_{v \in \unp{A}} v^\perp$.
Then,
\begin{equation}
\Whit[x]{f} \subseteq \bigcap_{\eta \in \unp{T}^*_{f(x)}M} \Lagr[x][\eta]{f}^\top
\end{equation}
with equality if $\dim N = 1$, in which case $\Whit{f} = \Lagr{f}^\top$.
We use this bound to prove the microlocal characterization of Lipschitz continuity.
This allows us to prove the following upper bound:
\begin{equation}
\Lagr[x]{f} \subseteq \bigcap_{u \in \unp{T}_xM} \Whit[x][u]{f}^\top
\end{equation}
with equality if $\dim M = 1$, in which case $\Lagr{f} = \Whit{f}^\top$.
This in turn allows us to characterize strict differentiability in terms of the Whitney cone and of the conormal.
We give applications of these results to the theory of causal manifolds.

In Section~\ref{sec:submanifolds}, we generalize some of these results to topological submanifolds, and we give conditions in terms of the Whitney cone and of the conormal in order that such submanifolds be locally graphs of Lipschitz or strictly differentiable maps.
For instance, if $M$ is a closed topological submanifold of a differential manifold, then
\begin{equation}
\muSupp(M) \subseteq C(M, M)^\top
\end{equation}
with equality if $\dim M = 1$.

Some results (but not all) also hold if one replaces ``Lipschitz'' (resp.\ ``strictly differentiable'', $\Lip$, $\Whit{f}$, $\Lagr{f}$) with ``pointwise Lipschitz'' (resp.\ ``differentiable'', $\Lippw$, $C(\Gamma_f)$, $\complement \Ppg(f)$), but we do not state them.

\begin{erratum}
As explained in Remark~\ref{remark:erratum}, Proposition~1.12 of~\cite{JS} is misstated.
We give in that remark the correct statement and explain why this has no consequences on the rest of~\cite{JS}.
\end{erratum}

\paragraph{Acknowledgments}
I would like to thank Pierre Schapira for many fruitful discussions.

\section{Background material}
\label{sec:background}

\subsection{Notation and conventions}
\label{subsec:notation}

Unless otherwise specified,
\begin{itemize}
\item
the symbol $\cor$ denotes a nonzero commutative ring of finite global dimension (for instance $\Z$ or a field),
\item
vector spaces and manifolds are real and finite-dimensional,
\item
manifolds are paracompact Hausdorff,
\item
manifolds, morphisms of manifolds, and submanifolds, are smooth, that is, of class $C^\infty$, and submanifolds are embedded (hence locally closed),
\item
topological (sub)manifolds are called $C^0$-(sub)manifolds, and $C^0$-submanifolds are locally flatly embedded (that is, their inclusion is locally $C^0$-isomorphic to a linear inclusion $\R^m \hookrightarrow \R^n$) hence locally closed.
\end{itemize}

We use the terms ``function'' and ``map'' interchangeably.

\paragraph{Sets}
Given some sets $X_i$, we set for short $X_{ij} \coloneqq X_i \times X_j$ and similarly for $X_{ijk}$, and we write $p_i$ (resp.\ $p_{ij}$) for any projection from a product of the $X_j$'s (which will be clear from the context) to $X_i$ (resp.\ to $X_{ij}$).
For a product of the form $X \times Y$, we also denote the projections by $p_X \colon X \times Y \to X$ and $p_Y \colon X \times Y \to Y$, and use the same notation for a pullback $X \times_Z Y$.
The diagonal map of $X$ is denoted by $\delta_X \colon X \to X \times X$, and the diagonal of $X$ by $\Delta_X \coloneqq \delta_X(X)$, or simply by $\delta$ and $\Delta$ if there is no risk of confusion.

If $R_i \subseteq X_i \times X_{i+1}$ for $i \in \{1, 2\}$ are relations, we define the \define{composite relation}
\begin{equation}
R_1 \comp R_2 \coloneqq p_{13} \big( \opb{p_{12}}(R_1) \cap \opb{p_{23}}(R_2) \big).
\end{equation}
If $a$ is an involution of $X_2$, we set $p_{12^a} \coloneqq (\id_{X_1} \times a) \comp p_{12}$, and similarly for other indices, and we set
\begin{equation}
R_1 \acomp R_2 \coloneqq p_{13} \big( \opb{p_{12^a}}(R_1) \cap \opb{p_{23}}(R_2) \big)
= p_{13} \big( \opb{p_{12}}(R_1) \cap \opb{p_{2^a3}}(R_2) \big).
\end{equation}
If there is a risk of confusion, we write the composition as $\comp[2]$ and the twisted composition as $\acomp[2]$.

If $A_i \subseteq X_i$ for $i \in \{1, 2\}$ and $R \subseteq X_1 \times X_2$, then we define $A_1 \comp R \coloneqq p_2 \big( \opb{p_1}(A_1) \cap R \big)$ and $R \comp A_2 \coloneqq p_1 \big( R \cap \opb{p_2}(A_2) \big)$, and if $a$ is an involution of $X_1$ or $X_2$ respectively, $A_1 \acomp R \coloneqq p_2 \big( \opb{p_{1^a}}(A_1) \cap R \big)$ and $R \acomp A_2 \coloneqq p_1 \big( R \cap \opb{p_{2^a}}(A_2) \big)$.
These definitions can be considered as special cases of the previous paragraph, by identifying for instance $A_1$ with the relation $R_0 = \{ (\varnothing, x) \in X_0 \times X_1 \mid x \in A_1 \}$ with $X_0 = \{\varnothing\}$.

If $X$ and $Y$ are two sets and $f \colon X \to Y$ is a function, we denote by $\Gamma_f \subseteq X \times Y$ its \define{graph}.
We will often use implicitly the isomorphism $p_X|_{\Gamma_f} \colon \Gamma_f \isoto X$, with inverse $(\id_X, f)$.
This is also an isomorphism of manifolds if $f$ is a morphism of manifolds.
If $f_i \colon X_i \to X_{i+1}$ for $i \in \{1, 2 \}$, then $\Gamma_{f_{i+1} \fcomp f_i} = \Gamma_{f_i} \comp \Gamma_{f_{i+1}}$ (note the usual backward composition).
If $f \colon X_1 \to X_2$ and $A_i \subseteq X_i$ for $i \in \{1, 2\}$, then $f(A_1) = A_1 \comp \Gamma_f$ and $\opb{f}(A_2) = \Gamma_f \comp A_2$.
If $f \colon X \to \R$, we set $\{ f \leq 0 \} \coloneqq \{ x \in X \mid f(x) \leq 0 \}$, and similarly for ``$< 0$'', etc.

Given a real-valued function $f \colon X \to \R$, we denote its \define{epigraph} and \define{hypograph} by
\begin{equation}
\Gamma^\pm_f \coloneqq \{ (x, t) \in X \times \R \mid \pm (t - f(x)) \geq 0 \}.
\end{equation}

\paragraph{Topological spaces}
Given any subset $A$ of a topological space, we denote by $\closure{A}$ its closure, by $\interior{A}$ its interior, and by $\partial A \coloneqq \closure{A} \setminus \interior{A}$ its boundary.

A \define{topological embedding} is a continuous map that is an isomorphism onto its image.
A \define{topological immersion} is a map that is locally a topological embedding.
A continuous map is \define{proper} if it is universally closed (that is, all its pullbacks are closed) or equivalently if it is closed with compact fibers.

Let $R_i \subseteq X_i \times X_{i+1}$ for $i \in \{ 1, 2 \}$ be relations on topological spaces.
If $p_{13}$ is proper on $\closure{\opb{p_{12}}(R_1) \cap \opb{p_{23}}(R_2)}$, then $\closure{R_1 \comp R_2} \subseteq \closure{R_1} \comp \closure{R_2}$.
In particular, if $R_1$ and $R_2$ are closed, then under the above condition, $R_1 \comp R_2$ is closed.

Given an extended real-valued function $f \colon X \to \closure{\R}$, we define the function ${\liminf f \colon}\allowbreak X \to \closure{\R}, x \mapsto \liminf_{y \to x} f(y)$.
It is characterized by $\Gamma^+_{\liminf f} = \closure{\Gamma^+_f}$.
An extended real-valued function is lower-semicontinuous if and only if its epigraph is closed, if and only if it is equal to its $\liminf$.
We define similarly $\limsup f$, characterized by $\Gamma^-_{\limsup f} = \closure{\Gamma^-_f}$.

\paragraph{Vector spaces}
Let $\V$ be a vector space and let $A \subseteq \V$.
We set $A^{a} \coloneqq - A$ and $\unp{A} \coloneqq A \setminus \{0\}$.
The subset $A$ is \define{conic} (or is a \define{cone}) if $\R_{> 0}A = A$ and is \define{symmetric} if $A = A^a$.
Note that a nonempty symmetric convex cone is a vector subspace.

We denote respectively the \define{orthogonal} and the \define{polar} of $A$ by
\begin{align}
\quad\qquad
A^\perp &\coloneqq \{ \xi \in \V^* \mid \forall v \in A, \innerp{\xi}{v} = 0 \}
\qquad\text{and}\\
\fantome[A^\perp]{l}{A^\circ} &\coloneqq \{ \xi \in \V^* \mid \forall v \in A, \innerp{\xi}{v} \geq 0 \}
\end{align}
and we define
\begin{equation}
A^\top \coloneqq \{ \xi \in \V^* \mid \exists v \in \unp{A}, \innerp{\xi}{v} = 0 \}.
\end{equation}
Setting $I_\V \coloneqq \{ (v, \xi) \in \V \times \V^* \mid \innerp{\xi}{v} = 0\}$, one has $A^\top = \unp{A} \comp I_\V$.
Note that $A^\top = \bigcup_{v \in \unp{A}} v^\perp$, so $(-)^\top$ is increasing.
One has $\V^* \setminus A^\top = \{ \xi \in \V^* \mid \xi^\perp \cap A \subseteq \{0\} \}$.
If $A$ is compact or is a closed cone, then $A^\top$ is closed.

\paragraph{Vector bundles}
Let $p \colon E \to B$ be a vector bundle.
One denotes by $a_E \colon E \to E$ the \define{antipodal map} (that is, the fiberwise opposite) and by $0_E$ the \define{zero section} (or its image in $E$).
If $A \subseteq E$ and $x \in B$, we set $A_x \coloneqq A \cap \opb{p}(x)$.
A subset $A$ of $E$ is defined to be a cone (resp.\ to be symmetric, convex, a vector subspace) if all the $A_x$'s are.
Note however that $A$ being closed (resp.\ open) implies that all the $A_x$'s are, but the converse is false in general.
Similarly, the vector space operations, the polar, orthogonal, removal of the origin, and the operation $(-)^\top$ are done fiberwise (but not the operations of closure, interior, and boundary).
The polar of an open subset of a vector bundle is a closed convex cone (see for instance~\cite{JS}*{Lemma~1.2}).

\paragraph{Manifolds}
Let $M$ be a manifold.
We denote by $\tau_M \colon TM \to M$ the tangent bundle of $M$ and by $\pi_M \colon T^*M \to M$ its cotangent bundle, and simply write $\tau$ and $\pi$ if there is no risk of confusion.
For short, we denote the antipodal maps $a_{TM}$ and $a_{T^*M}$ by $a_M$ and the zero sections $0_{TM}$ and $0_{T^*M}$ by $0_M$ and $0^*_M$ respectively.
For various projections, we may write $p_1$ instead of, for instance, $p_{TM_1}$, etc.

For a submanifold $N$ of $M$, we denote by $T_N M \coloneqq (N \times_M TM)/TN \to N$ its \define{normal bundle} and by $T^*_N M \to N$ its \define{conormal bundle} (the subbundle of $N \times_M T^*M \to N$ orthogonal to $TN \to N$).

Let $f \colon M \to N$ be a morphism of manifolds.
One has the following commutative diagram, with the obvious maps.
\begin{equation}
\xymatrix{
TM \ar[dr]_{\tau_M} \ar[r]^-{f'} \ar@/^2.2pc/[rr]^{Tf} & M \times_N TN \ar[d]^{p_M} \ar[r]^-{f_\tau} & TN \ar[d]^{\tau_N}\\
& M \ar[r]^f & N
}
\end{equation}
Note that $T\Gamma_f = \Gamma_{Tf}$ under the identification $T(M \times N) \simeq TM \times TN$.
We denote by
\begin{equation}\label{eq:lambdaf1}
\Lagr{f} \coloneqq T^*_{\Gamma_f}(M \times N)
\end{equation}
the conormal bundle of the graph of $f$.
The fiberwise transpose of $f'$ is denoted by $f_d$.
The restrictions of the projections $p_{T^*M} \colon T^*(M \times N) \to T^*M$ and $p_{T^*N} \colon T^*(M \times N) \to T^*N$ to $\Lagr{f}$ will be denoted by $p_M$ and $p_N$ respectively.
We write $p_M^a \coloneqq a_M \comp p_M$.
The map $\pi_M \times \id_{T^*N} \colon T^*(M \times N) \simeq T^*M \times T^*N \to M \times T^*N$ induces an isomorphism $\Lagr{f} \isoto M \times_N T^*N$.
We have the following commutative diagram.
\begin{equation}\label{eq:lambdaf2}
\xymatrix{
& \Lagr{f} \ar[ld]_{p_M^a} \ar[rd]^{p_N}\\
T^*M \ar[dr]_{\pi_M} & M \times_N T^*N \ar[u]^\wr \ar[d]^{p_M} \ar[l]_-{f_d} \ar[r]^-{f_\pi} & T^*N \ar[d]^{\pi_N}\\
& M \ar[r]^f & N
}
\end{equation}

For $A \subseteq T^*M$ and $B \subseteq T^*N$, one has
\begin{equation}
f_\pi \opb{f_d}(A) = A \acomp \Lagr{f}
\qquad\text{and}\qquad
f_d \opb{f_\pi}(B) = \Lagr{f} \acomp B.
\end{equation}

Note that $f$ is a \define{submersion} if and only if
\begin{equation}\label{eq:subm}
\Lagr{f} \cap (0^*_M \times T^*N) \subseteq 0^*_{M \times N}.
\end{equation}
More generally, if $B \subseteq T^*N$ is a closed cone, one says that $f$ is \define{non-characteristic for $B$} if
\begin{equation}\label{eq:non-char}
\Lagr{f} \cap (0^*_M \times B)\subseteq 0^*_{M \times N}.
\end{equation}

\subsection{Sheaves}

We recall in this and the next two subsections a few basic results on sheaves, kernels, and their microlocal theory, and we refer to~\cite{KS} for a complete treatment.

The \define{support} of a presheaf is the complement of the union of the open subsets sent by this presheaf to zero; in particular, it is a closed subset.
Let $X$ be a topological space.
We denote by $\cor_X$ the \define{constant sheaf on $X$ associated with $\cor$}.
If $f \colon X \to Y$ is a continuous map, then $\opb{f}(\cor_Y) = \cor_X$.

In the rest of this subsection, $A, A_1, A_2$ (resp.\ $B$) denote locally closed subsets of the topological space $X$ (resp.\ $Y$), and $i$ will denote the inclusion of that subset.
We define the \define{constant sheaf on $A$ associated with $\cor$ extended by zero to~$X$} by $\cor_{X, A} \coloneqq \eim{i} \opb{i} \cor_X$.
We will also denote it by $\cor_A$ if there is no risk of confusion.
It is characterized by $\cor_{X, A}|_A = \cor_X|_A$ and $\cor_{X, A}|_{X \setminus A} = 0$.
If $A_1, A_2 \subseteq X$, then
\begin{equation}\label{eq:prodExtended}
\cor_{X, A_1} \otimes \cor_{X, A_2} = \cor_{X, A_1 \cap A_2}.
\end{equation}

Let $\Derb(\cor_X)$ denote the \define{bounded derived category} of the category of sheaves of $\cor$-modules on~$X$.
Its objects will still be called sheaves.
For $F \in \Derb(\cor_X)$ and $A \subseteq X$, one sets
\begin{align}
F_A &\coloneqq F \Ltens \cor_{X, A}
\qquad\qquad\text{and}\\
\rsect_A (F) &\coloneqq \rhom(\cor_{X, A}, F).
\end{align}
The functor $(-)_A$ is exact.

By the Grothendieck spectral sequence, the derived functors of $\Gamma(X; -) \fcomp (-)_A$ and of $\Gamma_A(X; -) \coloneqq \Gamma(X; -) \fcomp \Gamma_A(-)$ are respectively
\begin{align}
\label{eq:sectFZ}
\rsect(X; (-)_A) &\simeq \rsect(X; -) \fcomp (-)_A
\qquad\text{and}\\
\label{eq:sectGammaZ}
\rsect_A(X; -) &\simeq \rsect(X; -) \fcomp \rsect_A(-).
\end{align}

If $Z$ is closed and $U$ is open in $X$ and $i$ denotes either inclusion, then, for a genuine sheaf $F$, one has $F_Z = \oim{i} \opb{i} F$ and $\Gamma_U F = \oim{i} \opb{i} F$.
Therefore, $\Gamma(X; F_Z) = \Gamma(Z; F)$ and $\Gamma(X; \Gamma_U(F)) = \Gamma(U; F)$.
Therefore, in the derived category,
\begin{align}
\label{eq:rsectClosed}
\rsect(X; F_Z) &\simeq \rsect(Z; F) \qquad\text{and}\\
\label{eq:rsectOpen}
\rsect(X; \rsect_U(F)) &\simeq \rsect(U; F).
\end{align}

If $f \colon X \to Y$ is continuous and $G \in \Derb(\cor_Y)$ and $B \subseteq Y$, then $\opb{f}(G_B) = (\opb{f}G)_{\opb{f}(B)}$ (\cite{KS}*{(2.3.19)}), and in particular,
\begin{equation}\label{eq:opbSheaf}
\opb{f}(\cor_{Y, B}) = \cor_{X, \opb{f}(B)}.
\end{equation}

In the rest of this subsection, assumptions are made on the topological spaces involved (hausdorffness, local compactness, finite c-soft dimension) and their morphisms (finite cohomological dimension).
All of these properties are satisfied by topological manifolds and their morphisms.

We recall the following fundamental result without proof.

\begin{proposition}[proper base change \cite{KS}*{Prop.~2.6.7,~3.1.9}]\label{prop:baseChange}
Let $f \colon X \to Z$ and $g \colon Y \to Z$ be continuous maps between Hausdorff locally compact spaces.
Denote their pullback as follows:
\begin{equation}
\begin{gathered}
\xymatrix{
T \ar @{}[dr]|<{\big\lrcorner} \ar[r]^q \ar[d]_p & Y \ar[d]^g\\
X \ar[r]^f & Z
}
\end{gathered}.
\end{equation}
Then, one has a natural isomorphism of functors
\begin{equation}
\reim{q} \fcomp \opb{p} \cong \opb{g} \fcomp \reim{f}.
\end{equation}
If $\eim{g}$ has finite cohomological dimension, then so does $\eim{p}$, and one has a natural isomorphism of functors
\begin{equation}
\roim{q} \fcomp \epb{p} \cong \epb{g} \fcomp \roim{f}.
\end{equation}
\end{proposition}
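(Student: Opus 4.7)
The plan is to follow the classical two-step argument of Kashiwara--Schapira. For the first isomorphism $\reim{q} \fcomp \opb{p} \cong \opb{g} \fcomp \reim{f}$, I would first construct the natural base change morphism and then verify it is an isomorphism stalkwise. The morphism comes from the commutativity $f \fcomp p = g \fcomp q$, which gives $\opb{p} \fcomp \opb{f} \cong \opb{q} \fcomp \opb{g}$. Combined with the adjunction units/counits of $(\opb{g},\roim{g})$ and $(\reim{f},\epb{f})$, this yields a canonical morphism $\reim{q} \fcomp \opb{p} \to \opb{g} \fcomp \reim{f}$; at the derived level one must check that this transformation, defined on non-derived functors via a resolution by $c$-soft (or flabby) sheaves, passes to the derived category independently of the chosen resolution.

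To see the morphism is an isomorphism, I would evaluate both sides on stalks at an arbitrary $y \in Y$. On the right side, the stalk formula for proper direct image gives $(\opb{g}\reim{f}F)_y = (\reim{f}F)_{g(y)} \simeq \rsect_c(f^{-1}(g(y)); F|_{f^{-1}(g(y))})$. On the left, the same formula gives $(\reim{q}\opb{p}F)_y \simeq \rsect_c(q^{-1}(y); (\opb{p}F)|_{q^{-1}(y)})$. Since $T = X \times_Z Y$ as sets, the projection $p$ restricts to a homeomorphism $q^{-1}(y) \isoto f^{-1}(g(y))$, and under it $\opb{p}F$ restricts to $F|_{f^{-1}(g(y))}$, so the two compactly supported cohomologies match, naturally in $F$.

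For the second isomorphism $\roim{q} \fcomp \epb{p} \cong \epb{g} \fcomp \roim{f}$, I would argue by adjunction, using $(\opb{-},\roim{-})$ and $(\reim{-},\epb{-})$. By symmetry of the pullback square, the first isomorphism yields the ``dual'' base change $\reim{f} \fcomp \opb{g} \cong \opb{p} \fcomp \reim{q}$; passing to right adjoints on both sides gives exactly $\epb{g} \fcomp \roim{f} \cong \roim{q} \fcomp \epb{p}$. To make this step legitimate one needs $\epb{p}$ to be well-defined on the bounded derived category, i.e.\ $\eim{p}$ must have finite cohomological dimension. This is deduced from the stalk computation recalled above: the cohomological dimension of $\eim{p}$ is controlled by the compactly supported cohomological dimensions of the fibers of $p$, which coincide with fibers of $g$, so the hypothesis on $\eim{g}$ transfers to $\eim{p}$.

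The main obstacle is the careful construction of the base change natural transformation at the derived level (including its compatibility with resolutions) together with the stalk formula for $\reim$; both are treated in full detail in the cited references \cite{KS}*{Prop.~2.6.7,~3.1.9}, so in practice I would simply refer to them. The rest of the argument, namely the stalk comparison on fibers and the adjunction passage from the first isomorphism to the second, is then formal.
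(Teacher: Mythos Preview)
The paper does not give a proof of this proposition: it is stated explicitly ``without proof'' and attributed to \cite{KS}*{Prop.~2.6.7, 3.1.9}. Your sketch is essentially the standard argument found there, so there is nothing to compare against within the paper itself.

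One small correction: in your adjunction step, the ``dual'' base change you write, $\reim{f} \fcomp \opb{g} \cong \opb{p} \fcomp \reim{q}$, does not typecheck (the functors do not compose). What the symmetry of the square actually gives is $\reim{p} \fcomp \opb{q} \cong \opb{f} \fcomp \reim{g}$ (swap $f \leftrightarrow g$ and $p \leftrightarrow q$), and taking right adjoints of both sides then yields $\roim{q} \fcomp \epb{p} \cong \epb{g} \fcomp \roim{f}$ as desired. Also, your fiber identification for the finite cohomological dimension claim should read $p^{-1}(x) \cong g^{-1}(f(x))$ (fibers of $p$ are fibers of $g$, not of $q$); this is what you use, and it is correct. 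With these cosmetic fixes the outline is sound.
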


If $f \colon X \to Y$ is a continuous map of finite cohomological dimension between topological spaces, then the \define{relative dualizing complex} (\cite{KS}*{Def.~3.1.16(i)}) of $f$ is $\omega_{X/Y} \coloneqq \epb{f} \cor_Y$.
There is a natural transformation $\opb{f}(-) \Ltens \omega_{Y/X} \Rightarrow \epb{f}(-)$.
It is an isomorphism if $f$ is a topological submersion between Hausdorff locally compact spaces (\cite{KS}*{Prop.~3.3.2(ii)}) and under microlocal conditions that we give in Propositions~\ref{prop:funcSS}(2.b) and~\ref{prop:funcSSCont}(2).
We write $\omega_X \coloneqq \omega_{X/ \{ \ast \}}$.
If $X$ is a $C^0$-manifold, then $\omega_X$ is isomorphic to the orientation sheaf shifted by the dimension of~$X$.

Now, assume that $X$ is Hausdorff locally compact and has finite c-soft dimension.
The \define{duality functors} on $X$ (see~\cite{KS}*{Def.~3.1.16(ii)}) are defined by
\begin{align}
\RD_X &\coloneqq \rhom(-, \omega_X)
\qquad\text{and}\\
\RD'_X &\coloneqq \rhom(-, \cor_X).
\end{align}
Recall that if $F \in \Derb(\cor_X)$ is cohomologically constructible (see~\cite{KS}*{Def.~3.4.1}), then so are $\RD_X(F)$ and $\RD'_X(F)$, and $F \simeq \RD_X(\RD_X(F)) \simeq \RD'_X(\RD'_X(F))$ (this is~\cite{KS}*{Prop.~3.4.3}), that is, cohomologically constructible sheaves are reflexive.

\subsection{Kernels}

Let $X_i$ be Hausdorff locally compact spaces for $i \in \{ 1, 2, 3 \}$.
A \define{kernel} from $X_1$ to $X_2$ is an object of $\Derb(\cor_{X_{12}})$.
We consider the bifunctor of \define{convolution of kernels} (\cite{KS}*{Prop.~3.6.4}) defined on objects by
\begin{align}
\conv \colon \Derb(\cor_{M_{12}}) \times \Derb(\cor_{M_{23}}) \longrightarrow & \Derb(\cor_{M_{13}})\notag\\
(K_1,K_2) \longmapsto & K_1\conv K_2 \coloneqq \reim{p_{13}} \left( \opb{p_{12}}(K_1) \Ltens \opb{p_{23}}(K_2) \right)
\end{align}
and similarly on morphisms.
If there is a risk of confusion, we write this convolution as $\conv[2]$.
Recall that with our notation, $p_i$, for instance, can stand for $p_{M_i}$ or $p_{TM_i}$ or $p_{T^*M_i}$, and the signification is clear from the context, for instance in the above formula.

The convolution of kernels is associative.
We spell out the special cases when either $X_1$ or $X_3$ is a point.
Adapting the notation, for $K \in \Derb(\cor_{M \times N})$ and $F \in \Derb(\cor_M)$ and $G \in \Derb(\cor_N)$, one has
\begin{equation}
F \conv K \simeq \reim{p_N}(\opb{p_M}F \Ltens K)
\qquad\text{and}\qquad
K \conv G \simeq \reim{p_M}(K \Ltens \opb{p_N}G).
\end{equation}

The following two standard results show that the convolution of kernels is a generalization of the composition of functions.

If $f \colon X \to Y$ is a function between topological spaces, we set
\begin{equation}\label{eq:sheafFunc}
K_f \coloneqq \cor_{X \times Y, \Gamma_f}
\end{equation}
for the constant sheaf on the graph of $f$ associated with $\cor$ extended by zero to $X \times Y$.

\begin{proposition}\label{prop:kerFunc}
Let $f \colon X \to Y$ be a continuous map between Hausdorff locally compact spaces.
\begin{enumerate}
\item
If $F \in \Derb(\cor_X)$, then $F \conv K_f \simeq \reim{f}F$.
\item
If $G \in \Derb(\cor_Y)$, then $K_f \conv G \simeq \opb{f}G$.
\end{enumerate}
\end{proposition}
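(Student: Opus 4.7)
The plan is to reduce both formulas to routine manipulations with the six operations once everything is pushed onto the graph $\Gamma_f$. By the very definition~\eqref{eq:sheafFunc} of $K_f$, if $i \colon \Gamma_f \hookrightarrow X \times Y$ denotes the (closed) inclusion, then $K_f = \eim{i}\opb{i}\cor_{X \times Y} = \eim{i}\cor_{\Gamma_f}$. Set $\alpha \coloneqq p_X \fcomp i \colon \Gamma_f \to X$ and $\beta \coloneqq p_Y \fcomp i \colon \Gamma_f \to Y$: the map $\alpha$ is a homeomorphism with inverse $(\id_X, f)$, and $\beta = f \fcomp \alpha$. In particular, $\reim{\alpha}$ and $\opb{\alpha}$ are quasi-inverse to each other.

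For~(1), I would unfold the definition of $\conv$ and apply the projection formula for the closed embedding $i$:
\begin{align*}
F \conv K_f &\simeq \reim{p_Y}\bigl(\opb{p_X}F \Ltens \eim{i}\cor_{\Gamma_f}\bigr)
\simeq \reim{p_Y}\eim{i}\bigl(\opb{i}\opb{p_X}F\bigr) \\
&\simeq \reim{\beta}\opb{\alpha}F
\simeq \reim{f}\reim{\alpha}\opb{\alpha}F
\simeq \reim{f}F.
\end{align*}
Part~(2) is entirely symmetric: the same projection formula applied to the other factor yields
\begin{equation*}
K_f \conv G \simeq \reim{p_X}\eim{i}\bigl(\opb{i}\opb{p_Y}G\bigr)
\simeq \reim{\alpha}\opb{\beta}G
\simeq \reim{\alpha}\opb{\alpha}\opb{f}G
\simeq \opb{f}G.
\end{equation*}

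The proof is essentially formal, relying only on the projection formula for $\eim{i}$, the functoriality of $\reim{(-)}$ and $\opb{(-)}$ under composition, and the cancellation $\reim{\alpha}\opb{\alpha} \simeq \id$ valid because $\alpha$ is a homeomorphism. Consequently, I do not anticipate any genuine obstacle; the only minor subtlety is to remember that the tensor factor $\cor_{\Gamma_f} = \opb{i}\cor_{X \times Y}$ is the unit for $\Ltens$ and thus drops out after the projection formula has been applied.
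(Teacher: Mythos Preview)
Your proof is correct and is essentially the same argument as the paper's, just phrased differently. The paper invokes proper base change for the pullback of $f$ and $\id_Y$, which is precisely the square
\[
\xymatrix{
\Gamma_f \ar[r]^-{\alpha} \ar[d]_-{\beta} & X \ar[d]^-{f}\\
Y \ar[r]^-{\id_Y} & Y
}
\]
yielding $\reim{\beta}\opb{\alpha} \simeq \reim{f}$ directly; you reach the same expression $\reim{\beta}\opb{\alpha}F$ via the projection formula and then simplify using $\beta = f \fcomp \alpha$ together with $\reim{\alpha}\opb{\alpha} \simeq \id$. Since $\alpha$ is a homeomorphism, the base-change step is trivial here, so the two routes coincide.
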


\begin{proof}
This follows easily from the proper base change theorem applied to the pullback of~$f$ and $\id_Y$.
\end{proof}

\begin{proposition}\label{prop:compFunc}
If $f_i \colon X_i \to X_{i+1}$ for $i \in \{1, 2\}$ are continuous maps between Hausdorff locally compact spaces, then
\begin{equation}
K_{f_2 \fcomp f_1} = K_{f_1} \conv K_{f_2}.
\end{equation}
\end{proposition}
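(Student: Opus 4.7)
The approach is a direct computation that unwinds the definition of $\conv$, using only the extension-by-zero calculus of constant sheaves and a trivial base change.

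First, I would apply \eqref{eq:opbSheaf} to the projections $p_{12}, p_{23} \colon X_{123} \to X_{12}, X_{23}$ to get
\[
\opb{p_{12}}K_{f_1} \simeq \cor_{X_{123}, \opb{p_{12}}(\Gamma_{f_1})}
\qquad\text{and}\qquad
\opb{p_{23}}K_{f_2} \simeq \cor_{X_{123}, \opb{p_{23}}(\Gamma_{f_2})},
\]
and then \eqref{eq:prodExtended} to identify their (derived) tensor product with $\cor_{X_{123}, A}$, where
\[
A \coloneqq \opb{p_{12}}(\Gamma_{f_1}) \cap \opb{p_{23}}(\Gamma_{f_2}) = \{(x_1, f_1(x_1), f_2(f_1(x_1))) \mid x_1 \in X_1\}.
\]

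Second, I would observe that $A$ is exactly the image of the closed topological embedding $(\id_{X_1}, f_1, f_2 \fcomp f_1) \colon X_1 \hookrightarrow X_{123}$ (closed because $X_2, X_3$ are Hausdorff), and that the restriction of $p_{13}$ to $A$ is a homeomorphism $\phi \colon A \isoto \Gamma_{f_2 \fcomp f_1}$. Writing $i \colon A \hookrightarrow X_{123}$ and $j \colon \Gamma_{f_2 \fcomp f_1} \hookrightarrow X_{13}$ for the (closed) inclusions, we have the factorization $p_{13} \fcomp i = j \fcomp \phi$.

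Finally, since $\cor_{X_{123}, A} = \eim{i}\cor_A$ by definition, I would compute
\[
K_{f_1} \conv K_{f_2} = \reim{p_{13}}\cor_{X_{123}, A} \simeq \reim{(j \fcomp \phi)}\cor_A \simeq \eim{j}\reim{\phi}\cor_A \simeq \eim{j}\cor_{\Gamma_{f_2 \fcomp f_1}} = K_{f_2 \fcomp f_1},
\]
using at the second-to-last step that $\phi$ is a homeomorphism (so $\opb{\phi}\cor_{\Gamma_{f_2 \fcomp f_1}} = \cor_A$ and $\reim{\phi}$ is its inverse), and at the last step that $j$ is a closed inclusion (so $\reim{j} = \eim{j}$). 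There is no real obstacle here, the only thing to watch is the bookkeeping for the identifications in the first step and the graph recognition of $A$ in the second; after that the result is formal.
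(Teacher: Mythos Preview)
Your proof is correct and follows essentially the same route as the paper: both use \eqref{eq:opbSheaf} and \eqref{eq:prodExtended} to identify $\opb{p_{12}}K_{f_1}\Ltens\opb{p_{23}}K_{f_2}$ with $\cor_{X_{123},A}$, and then the observation that $p_{13}$ restricts to a homeomorphism $A\isoto\Gamma_{f_2\fcomp f_1}$ to compute $\reim{p_{13}}$. You have simply spelled out in detail (the closed embeddings, the factorization $p_{13}\fcomp i=j\fcomp\phi$) what the paper compresses into one line.
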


\begin{proof}
This follows easily from Equations~\eqref{eq:opbSheaf} and~\eqref{eq:prodExtended} and the fact that $p_{13}$ induces an isomorphism $\opb{p_{12}}(\Gamma_{f_1}) \cap \opb{p_{23}}(\Gamma_{f_2}) = \Gamma_{f_1} \times_{M_2} \Gamma_{f_2} \isoto[p_{13}] \Gamma_{f_1} \comp \Gamma_{f_2} = \Gamma_{f_2 \fcomp f_1}$.
\end{proof}

\subsection{Microsupport of sheaves}

Let $M$ be a manifold and let $F \in \Derb(\cor_M)$.
We define the \define{propagation set} of $F$ by
\begin{multline}\label{eq:propagate}
\Ppg(F) \coloneqq \big\{ (x, \xi) \in T^*M \mid
\text{for all $\phi \in C^\infty(U)$, where $U$ is an open neighborhood of $x$,}\\
\text{with $\phi(x) = 0$ and $d\phi(x) = \xi$, one has $(\rsect_{\{ \phi \geq 0 \}}(F))_x = 0 $} \big\}.
\end{multline}
We set $\Ppg_x(F) \coloneqq \Ppg(F)_x$.
The \define{microsupport} of $F$ (see~\cite{KS}*{Def.~5.1.2(i)}) is defined by
\begin{equation}\label{eq:microsupp}
\muSupp(F) \coloneqq \closure{T^*M \setminus \Ppg(F)}.
\end{equation}
Therefore, the microsupport of a sheaf is the closure of the set of codirections of nonpropagation.
It is a coisotropic closed conic subset of $T^*M$ such that $\pi_M(\muSupp(F)) = \supp(F)$ and $\muSupp(F[i]) = \muSupp(F)$ for $i \in \Z$, and satisfies the following triangular inequality: if $F_1 \to F_2 \to F_3 \to[+1]\;$ is a distinguished triangle in $\Derb(\cor_M)$ and $i, j, k \in \{1, 2, 3\}$ with $j \neq k$, then $\muSupp(F_i) \subseteq \muSupp(F_j) \cup \muSupp(F_k)$ (see~\cite{KS}*{Prop.~5.1.3 and Thm.~6.5.4}).
We set $\muSupp_x(F) \coloneqq \muSupp(F)_x$.

The following lemma gives a useful criterion for belonging to the propagation set of a sheaf.

\begin{lemma}\label{lem:criterSheaf}
Let $M$ be a manifold, let $F \in \Derb(\cor_M)$ and let $x \in M$.
Then, $\xi \in \Ppg_x(F)$ if and only if for all $\phi \in C^\infty(U)$, with $U$ an open neighborhood of~$x$, such that $\phi(x) = 0$ and $d\phi(x) = \xi$, the morphism
\begin{equation}
F_x \longrightarrow (\rsect_{\{ \phi < 0 \}}(F))_x
\end{equation}
induced by the inclusions $(i_V \colon V \cap \{ \phi < 0 \} \hookrightarrow V)_{V \ni x}$ is an isomorphism.
\end{lemma}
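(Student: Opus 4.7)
The plan is to exploit the standard distinguished triangle attached to the closed/open decomposition $\{\phi \geq 0\} \sqcup \{\phi < 0\}$ of (a neighborhood of) $x$, and then extract the equivalence from the long exact sequence on stalks.

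More precisely, set $Z \coloneqq \{\phi \geq 0\}$ and $U \coloneqq \{\phi < 0\}$ inside the domain of $\phi$. Applying $\rhom(-, F)$ to the short exact sequence $0 \to \cor_U \to \cor \to \cor_Z \to 0$, I obtain a distinguished triangle
\begin{equation*}
\rsect_Z(F) \longrightarrow F \longrightarrow \rsect_U(F) \to[+1]
\end{equation*}
in $\Derb(\cor)$ on the domain of $\phi$. Taking stalks at $x$ (which is exact) gives the distinguished triangle
\begin{equation*}
(\rsect_{\{\phi \geq 0\}}(F))_x \longrightarrow F_x \longrightarrow (\rsect_{\{\phi < 0\}}(F))_x \to[+1].
\end{equation*}
From the axioms of a triangulated category, the middle morphism is an isomorphism if and only if the first term vanishes. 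Quantifying over all admissible $\phi$ and comparing with the definition of $\Ppg_x(F)$ in~\eqref{eq:propagate} immediately yields the stated equivalence.

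The only point that requires a tiny bit of unpacking is that the natural morphism $F_x \to (\rsect_U(F))_x$ supplied by the distinguished triangle coincides with the one described in the statement, namely the map induced by the restrictions along the inclusions $i_V \colon V \cap U \hookrightarrow V$ for open neighborhoods $V \ni x$. This follows from the standard identification $\rsect(V; \rsect_U(F)) \simeq \rsect(V \cap U; F)$ (a special case of~\eqref{eq:rsectOpen} applied locally) together with the fact that the morphism $F \to \rsect_U(F)$ in the triangle above is the one induced by $\cor_U \to \cor$, which at the level of sections is exactly the restriction map.

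I do not expect any real obstacle; the whole argument is a one-line consequence of the distinguished triangle, and the only mildly nontrivial ingredient is the sheaf-theoretic identification of the comparison morphism with the one built from restriction maps, which is routine.
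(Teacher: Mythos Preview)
Your argument is correct and is essentially identical to the paper's own proof, which simply applies the stalk functor at~$x$ to the distinguished triangle $\rsect_{\{\phi\geq 0\}}(F)\to F\to \rsect_{\{\phi<0\}}(F)\to[+1]$. Your extra paragraph verifying that the comparison morphism is the restriction-induced one is a helpful elaboration the paper leaves implicit.
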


\begin{proof}
The result follows by applying the stalk functor to the distinguished triangle $\rsect_{\{ \phi \geq 0 \}}(F) \to \rsect_U(F) \to \rsect_{\{ \phi < 0 \}}(F) \to[+1]\;$.
\end{proof}

The following lemma will simplify some proofs below.
The definition of strict differentiability is recalled in Appendix~\ref{app:strictDiff}.

\begin{lemma}\label{lem:SSstrictDiff}
Let $M$ be a manifold and let $F \in \Derb(\cor_M)$.
If $(x, \xi) \notin \muSupp(F)$, then for all functions $\phi \colon U \to \R$, with $U$ an open neighborhood of~$x$, which are strictly differentiable at~$x$ with $d\phi(x) = \xi$, one has $(\rsect_{\{ \phi \geq \phi(x) \}}(F))_x = 0$.
\end{lemma}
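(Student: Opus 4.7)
The idea is to reduce to the smooth case by combining the open-cone form of the microsupport condition with a mollification of $\phi$. We may assume $\phi(x)=0$ and work in a local chart with $x=0\in\R^n$.

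Since $\muSupp(F)$ is a closed conic subset of $T^*M$ and $(0,\xi)\notin\muSupp(F)$, there exist an open neighborhood $U$ of $0$ and an open convex cone $\gamma\subseteq T^*_0\R^n$ containing $\xi$, extended to an open cone over $U$ via a trivialization, such that for every $y_0\in U$ and every $C^\infty$ function $\psi$ near $y_0$ with $\psi(y_0)=0$ and $d\psi(y_0)\in\gamma$, one has $(\rsect_{\{\psi\geq 0\}}(F))_{y_0}=0$. I would then regularize by a mollifier $\rho\in C^\infty_c(\R^n)$ with $\int\rho=1$, setting $\phi_\delta\coloneqq\phi*\rho_\delta$ and $\psi_\delta\coloneqq\phi_\delta-\phi_\delta(0)$. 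Using the uniform strict differentiability estimate $\abs{\phi(y_1)-\phi(y_2)-\innerp{\xi}{y_1-y_2}}\leq\epsilon\abs{y_1-y_2}$, valid on a small ball for arbitrarily small $\epsilon$, integration by parts shows $d\phi_\delta(0)\to\xi$ as $\delta\to 0$. Thus for $\delta$ small enough, $d\psi_\delta(0)\in\gamma$, and the preceding paragraph yields $(\rsect_{\{\psi_\delta\geq 0\}}(F))_0=0$.

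The remaining step is to transfer the vanishing from $\psi_\delta$ to $\phi$. Since $\psi_\delta\to\phi$ uniformly near $0$ and the strict differentiability estimate is inherited by $\psi_\delta$ uniformly in $\delta$, I would compare the closed sets $A=\{\phi\geq 0\}$ and $B_\delta=\{\psi_\delta\geq 0\}$ through the Mayer--Vietoris short exact sequence $0\to\cor_{A\cup B_\delta}\to\cor_A\oplus\cor_{B_\delta}\to\cor_{A\cap B_\delta}\to 0$. Applying $\rhom(-,F)$ and taking stalks at $0$, the vanishing $(\rsect_{B_\delta}(F))_0=0$ reduces the problem to identifying $(\rsect_{A\cup B_\delta}(F))_0$ and $(\rsect_{A\cap B_\delta}(F))_0$. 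The defining functions $\max(\phi,\psi_\delta)$ and $\min(\phi,\psi_\delta)$ are again strictly differentiable at $0$ with derivative $\xi$, so the same kind of analysis applies to them, and one concludes by iterating or by passing to a suitable limit as $\delta\to 0$.

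The main obstacle is this last comparison. The standard non-characteristic deformation lemmas require a smooth boundary, whereas $\partial\{\phi\geq 0\}$ is only a continuous topological hypersurface, namely the graph (after an orthogonal projection) of a strictly differentiable function with vanishing derivative at $0$. Making the Mayer--Vietoris argument rigorous at the stalk level requires choosing a cofinal system of neighborhoods $V$ of $x$ in which the symmetric difference $A\triangle B_\delta$ is uniformly controlled by the strict differentiability modulus, and interchanging the colimit defining the stalk with the distinguished triangle; alternatively, one recasts the comparison as a deformation in a joint parameter $(\delta,t)$ using the convex combinations $t\phi+(1-t)\psi_\delta$ together with the fact that their pointwise differentials at $0$ remain in the open cone $\gamma$.
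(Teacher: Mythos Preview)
Your argument has a genuine gap in the transfer step, and the gap is structural rather than technical. The Mayer--Vietoris reduction you propose does not close: the functions $\max(\phi,\psi_\delta)$ and $\min(\phi,\psi_\delta)$ are themselves only strictly differentiable at $0$, not smooth, so saying ``the same kind of analysis applies to them'' means applying the very lemma you are trying to prove. Iterating this gives an infinite regress, not a proof. The alternative you sketch with convex combinations $t\phi+(1-t)\psi_\delta$ has the same defect: for $t<1$ these are still non-smooth at $0$, and a deformation argument in the parameter $(\delta,t)$ would require exactly the kind of non-characteristic control across non-smooth boundaries that you identified as the obstacle.

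The paper avoids mollification entirely. The point is that the microsupport condition $(x,\xi)\notin\muSupp(F)$ admits an equivalent formulation purely in terms of the $\gamma$-topology (this is \cite{KS}*{Prop.~5.1.1}): there is a closed proper convex cone $\gamma\subseteq\R^n$ with $\xi\in\interior{\gamma^{\circ a}}$ and a neighborhood of $x$ on which sections of $F$ extend across $\gamma$-open sets. Now strict differentiability of $\phi$ at $x$ with $d\phi(x)\in\interior{\gamma^{\circ a}}$ is exactly what is needed to make $\{\phi<0\}$ coincide with a $\gamma$-open set near $x$: if $\phi-d\phi(x)$ is $\epsilon$-Lipschitz on $V$ with $\epsilon<\min\{\,\lvert\langle d\phi(x),u\rangle\rvert:u\in\gamma,\ \lVert u\rVert=1\,\}$, then for $y\in V\cap\{\phi<0\}$ and $u\in\gamma$ with $y+u\in V$ one has $\phi(y+u)\leq\phi(y)+\langle d\phi(x),u\rangle+\epsilon\lVert u\rVert<0$. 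With this observation the implication $(2)\Rightarrow(1)$ in \cite{KS}*{Prop.~5.1.1} goes through verbatim for $\phi$. No approximation by smooth functions, and hence no comparison of sublevel sets, is needed.
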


\begin{proof}
In the proof of~\cite{KS}*{Prop.~5.1.1}, the part (2)$\Rightarrow$(1)$_1$ proves the lemma without any change.
Indeed, with the notation there, if $\phi \colon U \to \R$ is strictly differentiable at~$x$ with $d\phi(x) \in \interior{\gamma^{\circ a}}$, then $\{ \phi < 0 \}$ coincides with a $\gamma$-open set in a neighborhood of~$x$ by Lemma~\ref{lem:strict-diff}.
\end{proof}

\begin{remark}
The analogous statement assuming only differentiability of $\phi$ is false.
Indeed, let $Z = \{ (x, y) \in \R^2 \mid x + y = 0 \text{ or } \exists n \in \N_{> 0} \; x + y = 1/n \}$.
Then, $\muSupp(\cor_Z) = Z \times \R(1, 1)$.
Let $\phi \colon \R^2 \to \R, (x, y) \mapsto 2 x^2 \sin(\pi/x) + y$.
Then, $\phi(0, 0) = 0$ and $d\phi(0, 0) = (0, 1) \notin \muSupp_{(0, 0)}(\cor_Z)$.
For $n \in \N_{> 0}$, set $U_n = \intervO{-1/n, 1/n}^2$.
Then, the inclusions $i_n \colon U_n \cap Z \cap \{ \phi < 0 \} \hookrightarrow U_n \cap Z$ do not induce an isomorphism in cohomology of the inductive limit.
Indeed, setting $Z_m \coloneqq \{ x + y = 1/m \}$, the subsets $Z_m \cap U_n$ are connected, but for any $n, m \in \N$ with $2m > n$, the subset $Z_{2m} \cap U_n \cap \{ \phi < 0 \}$ is not.
\end{remark}

\begin{proposition}[\cite{KS}*{Exe.~V.13}]\label{prop:dual}
Let $M$ be a manifold and let $F \in \Derb(\cor_M)$ be cohomologically constructible.
Then,
\begin{equation}
\muSupp(\RD'_M(F)) = \muSupp(F)^a.
\end{equation}
\end{proposition}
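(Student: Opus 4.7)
The plan is to reduce the equality to a single inclusion via biduality, and then to verify that inclusion via the propagation characterization of the microsupport combined with a local duality argument for cohomologically constructible sheaves.

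Since $F$ is cohomologically constructible, so is $\RD'_M F$, and $F \simeq \RD'_M \RD'_M F$ by biduality (\cite{KS}*{Prop.~3.4.3}). Hence it suffices to establish the inclusion $\muSupp(\RD'_M F) \subseteq \muSupp(F)^a$ for every cohomologically constructible $F$: substituting $\RD'_M F$ for $F$ and applying biduality then gives $\muSupp(F) \subseteq \muSupp(\RD'_M F)^a$, equivalently $\muSupp(F)^a \subseteq \muSupp(\RD'_M F)$.

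For the remaining inclusion, fix $(x_0, \xi_0) \notin \muSupp(F)^a$. Since $\muSupp(F)$ is closed, pick an open neighborhood $V$ of $(x_0, -\xi_0)$ with $V \cap \muSupp(F) = \varnothing$; then $V^a$ is an open neighborhood of $(x_0, \xi_0)$, and I aim to show $V^a \subseteq \Ppg(\RD'_M F)$. Let $(y, \eta) \in V^a$ and let $\phi$ be smooth near $y$ with $\phi(y) = 0$ and $d\phi(y) = \eta$; then $-\phi$ has differential $-\eta$ at $y$, and since $(y, -\eta) \in V \subseteq \Ppg(F)$ the definition of propagation yields
\begin{equation}
\bigl(\rsect_{\{\phi \leq 0\}}(F)\bigr)_y = \bigl(\rsect_{\{-\phi \geq 0\}}(F)\bigr)_y = 0.
\end{equation}
On the other hand, the $\otimes$--$\rhom$ adjunction gives, for any closed $Z \subseteq M$,
\begin{equation}
\rsect_Z(\RD'_M F) = \rhom(\cor_Z, \rhom(F, \cor_M)) \simeq \rhom(F \Ltens \cor_Z, \cor_M) = \RD'_M(F_Z),
\end{equation}
so with $Z = \{\phi \geq 0\}$ the propagation condition to check becomes $\bigl(\RD'_M(F_{\{\phi \geq 0\}})\bigr)_y = 0$.

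To conclude, I would invoke local duality for cohomologically constructible sheaves: for such a sheaf $G$, the stalk $(\RD'_M G)_y$ is the derived $\cor$-dual of the costalk $\rsect_{\{y\}}(G)$, a consequence of biduality and the cohomologically-constructible formalism of~\cite{KS}*{\S3.4}. Applied to $G = F_{\{\phi \geq 0\}}$ together with the open--closed triangle $F_{\{\phi < 0\}} \to F \to F_{\{\phi \geq 0\}} \to[+1]\,$, the costalk $\rsect_{\{y\}}(F_{\{\phi \geq 0\}})$ is identified, up to an orientation shift, with the costalk on $\{\phi \leq 0\}$ coming from the previous step, which vanishes; this yields $(\RD'_M F_{\{\phi \geq 0\}})_y = 0$ and completes the argument. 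The main technical obstacle is precisely this last step: the stalk--costalk exchange must be executed carefully enough to match the vanishing on the closed half-space $\{\phi \leq 0\}$ obtained from the hypothesis with the one on the opposite half-space $\{\phi \geq 0\}$ needed for the conclusion. This is where cohomological constructibility is used essentially, since the duality $\muSupp(\RD'_M F) = \muSupp(F)^a$ fails in its absence.
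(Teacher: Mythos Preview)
Your biduality reduction to the single inclusion $\muSupp(\RD'_M F) \subseteq \muSupp(F)^a$ is correct and is exactly how the paper begins. The divergence is in how that inclusion is established.

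The paper does not unfold the propagation definition at all: it observes that $\RD'_M F = \rhom(F, \cor_M)$ and invokes the general bound \cite{KS}*{Prop.~5.4.2} (restated in the paper as Proposition~\ref{prop:funcSS}(1.b)) with $F_1 = F$ and $F_2 = \cor_M$. Since $\muSupp(\cor_M) = 0^*_M$, the hypothesis $\muSupp(F_1) \cap \muSupp(F_2) \subseteq 0^*_M$ is trivially satisfied, and the conclusion reads $\muSupp(\RD'_M F) \subseteq \muSupp(F)^a + 0^*_M = \muSupp(F)^a$. Note that this inclusion requires no constructibility whatsoever; constructibility enters only through biduality to obtain the reverse inclusion.

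Your direct argument, by contrast, has a genuine gap at the step you yourself flag. You apply the stalk--costalk duality to $G = F_{\{\phi \geq 0\}}$, but that duality is stated for cohomologically constructible sheaves, and there is no reason for $F_{\{\phi \geq 0\}}$ to be cohomologically constructible merely because $F$ is: the level set $\{\phi \geq 0\}$ is an arbitrary smooth closed half-space, not adapted to any stratification controlling $F$. Even granting that, the identification you assert between $\rsect_{\{y\}}(F_{\{\phi \geq 0\}})$ and something built from $(\rsect_{\{\phi \leq 0\}} F)_y$ is not justified; these live on opposite half-spaces and the open--closed triangle you write down does not by itself produce such an exchange. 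So the argument as written does not close, and the obstacle is not merely a matter of being ``careful enough'': the constructibility hypothesis on $G$ is simply unavailable.
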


\begin{proof}
Since constructible sheaves are reflexive, it suffices to prove $\muSupp(\RD'_M(F)) \subseteq \muSupp(F)^a$.
This is a special case of~\cite{KS}*{Prop.~5.4.2} where one factor is reduced to a point.
\end{proof}

We recall the following functoriality properties of the microsupport.
A morphism is said to be non-characteristic for a sheaf if it is non-characteristic for its microsupport.

\begin{proposition}[\cite{KS}*{Prop.~5.4.4-5,~5.4.13-14}]\label{prop:funcSS}
Let $M$ be a manifold.
\begin{enumerate}
\item
Let $F_1, F_2 \in \Derb(\cor_M)$.
\begin{enumerate}
\item
Assume that $\muSupp(F_1) \cap \muSupp(F_2)^a \subseteq 0^*_M$.
Then, $\muSupp(F_1 \Ltens F_2) \subseteq \muSupp(F_1) + \muSupp(F_2)$.
\item
Assume that $\muSupp(F_1) \cap \muSupp(F_2) \subseteq 0^*_M$.
Then, $\muSupp(\rhom(F_1, F_2)) \subseteq \muSupp(F_1)^a + \muSupp(F_2)$.
\end{enumerate}
\item
Let $f \colon M \to N$ be a morphism of manifolds.
\begin{enumerate}
\item
Let $F \in \Derb(\cor_M)$ and assume that $f$ is proper on $\supp(F)$.
Then, $\muSupp(\roim{f} F) \subseteq \muSupp(F) \acomp \Lagr{f}$ with equality if $f$ is a closed embedding.
\item
Let $G \in \Derb(\cor_{N})$ and assume that $f$ is non-characteristic for $G$.
Then, the morphism $\opb{f} G \Ltens \omega_{M/N} \to \epb{f} G$ is an isomorphism and $\muSupp(\opb{f} G) \subseteq \Lagr{f} \acomp \muSupp(G)$ with equality if $f$ is a submersion.
\end{enumerate}
\end{enumerate}
\end{proposition}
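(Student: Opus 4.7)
The plan is to reduce all four statements to a single principle: microsupport bounds for the convolution of kernels, combined with the microlocal cut-off criterion in Lemma~\ref{lem:criterSheaf}. Since the author essentially quotes this from \cite{KS}, the strategy is to outline the standard arguments and explain how they plug into the definitions already fixed in the excerpt.

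For parts (2)(a) and (2)(b) I would first realize the direct and inverse image as convolutions with $K_f = \cor_{\Gamma_f}$ via Proposition~\ref{prop:kerFunc}: $\roim{f}F \simeq F \conv K_f$ and $\opb{f}G \simeq K_f \conv G$. A direct computation of the conormal of the graph gives $\muSupp(K_f) = \Lagr{f}$ (up to the antipodal twist encoded in $\acomp$), and the desired bounds then follow from the general convolution estimate $\muSupp(K_1 \conv K_2) \subseteq \muSupp(K_1) \acomp \muSupp(K_2)$ under suitable properness, using proper base change (Proposition~\ref{prop:baseChange}) to identify stalks of the convolution with sections over fibers. The properness hypothesis on $\supp(F)$ (resp.\ the non-characteristic hypothesis in the form~\eqref{eq:non-char}) is exactly what is needed so that the properness assumption in the convolution estimate holds. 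The isomorphism $\opb{f}G \Ltens \omega_{M/N} \isoto \epb{f}G$ in (2)(b) comes from the general microlocal duality: the cone of this natural transformation has empty microsupport under the non-characteristic hypothesis, hence vanishes. For equality in the closed-embedding and submersion cases, I would verify it in the local model $M = N$, resp.\ $M = N \times \R^k$, where both sides can be computed explicitly.

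For parts (1)(a) and (1)(b) I would pass to the external operations and pull back through the diagonal $\delta_M \colon M \to M \times M$. By a Künneth-type argument one has $\muSupp(F_1 \boxtimes F_2) = \muSupp(F_1) \times \muSupp(F_2)$, and since $F_1 \Ltens F_2 = \opb{\delta_M}(F_1 \boxtimes F_2)$, the transversality hypothesis $\muSupp(F_1) \cap \muSupp(F_2)^a \subseteq 0^*_M$ translates exactly into $\delta_M$ being non-characteristic for $F_1 \boxtimes F_2$ in the sense of~\eqref{eq:non-char}. Applying part (2)(b) already proved and computing $\Lagr{\delta_M} \acomp (\muSupp(F_1) \times \muSupp(F_2))$ yields the fiberwise sum. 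The analogous statement for $\rhom$ follows along the same lines, replacing $\Ltens$ by its right adjoint and using $\muSupp(F)^a$ in place of $\muSupp(F)$ as dictated by Proposition~\ref{prop:dual}.

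The main obstacle is the convolution bound itself and the microlocal cut-off behind it, which is the technical heart of \cite{KS}*{Ch.~5} and where the argument is not purely formal; I would invoke it as a black box. Once that is accepted, everything else is a matter of translating hypotheses (properness, non-characteristicity, transversality) into the conditions required by the cut-off and base change theorems, and performing the explicit computations of $\muSupp(K_f)$ and $\muSupp(K_{\delta_M})$ from the definition~\eqref{eq:lambdaf1}.
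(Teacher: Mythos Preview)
The paper gives no proof of this proposition: it is simply quoted from \cite{KS}*{Prop.~5.4.4--5, 5.4.13--14} as background material. So there is nothing to compare your approach to on the paper's side.

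That said, your plan has a genuine circularity problem relative to the logical structure of this paper. You propose to deduce (2)(a) and (2)(b) from the convolution estimate $\muSupp(K_1 \conv K_2) \subseteq \muSupp(K_1) \acomp \muSupp(K_2)$, but in this paper that estimate is Proposition~\ref{prop:sskernels}, whose proof explicitly reads: ``The two assumptions of the proposition allow to apply Proposition~\ref{prop:funcSS}(1.a), (2.a) and (2.b) to conclude.'' So the convolution bound is derived \emph{from} the very proposition you are trying to prove, not the other way around. The same dependency holds in \cite{KS}: the proofs of Prop.~5.4.4, 5.4.5, 5.4.13 are done directly via the microlocal cut-off lemma and normal deformation techniques, and the convolution bound is packaged afterwards as a corollary. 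Invoking the convolution bound ``as a black box'' therefore amounts to invoking the proposition itself.

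A smaller point: Proposition~\ref{prop:kerFunc} gives $F \conv K_f \simeq \reim{f}F$, not $\roim{f}F$; these agree only under the properness hypothesis on $\supp(F)$, which you should make explicit. Your reduction of (1)(a)--(b) to (2)(b) via the diagonal is, on the other hand, the standard route in \cite{KS} and is fine once (2)(b) is available by non-circular means.
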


\begin{remark}\label{rmk:funcPpg}
The proof of the inclusion in Item~(2.a) given in~\cite{KS}*{Prop.~5.4.4} actually proves that if $f \colon M \to N$ is a morphism of manifolds and $F \in \Derb(\cor_M)$, and if $f$ is proper on $\supp(F)$, then $TN \setminus \Ppg(\roim{f} F) \subseteq (TM \setminus \Ppg(F)) \acomp \Lagr{f}$ with equality if $f$ is an isomorphism.
\end{remark}

\begin{remark}\label{rem:funcSSstrictDiff}
By Lemma~\ref{lem:SSstrictDiff}, the inclusion of Item~(2.a) and the result of the previous remark still hold at a point if the map $f$ is only required to be strictly differentiable at that point.
\end{remark}

Finally, we give a standard upper bound on the microsupport of the convolute of two kernels.

\begin{proposition}[\cite{GS}*{\S~Kernels}]\label{prop:sskernels}
Let $M_i$ be manifolds for $i \in \{ 1, 2, 3 \}$.
Let $K_1 \in \Derb(\cor_{M_{12}})$ and $K_2 \in \Derb(\cor_{M_{23}})$.
Assume that
\begin{enumerate}
\item
$p_{13}$ is proper on $\opb{p_{12}}(\supp(K_1)) \cap \opb{p_{23}}(\supp(K_2))$,
\item
$(\muSupp(K_1)^a \times 0^*_3) \cap (0^*_1 \times \muSupp(K_2)) \subseteq 0^*_{123}$.
\end{enumerate}
Then,
\begin{equation}
\muSupp(K_1\conv K_2) \subseteq \muSupp(K_1) \acomp \muSupp(K_2).
\end{equation}
\end{proposition}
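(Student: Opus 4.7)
The plan is to unfold the definition $K_1 \conv K_2 = \reim{p_{13}}(\opb{p_{12}}K_1 \Ltens \opb{p_{23}}K_2)$ and then apply Proposition~\ref{prop:funcSS} three times in sequence: to the two pullbacks, to the derived tensor product, and to the proper direct image, afterwards identifying the resulting composite bound with $\muSupp(K_1) \acomp \muSupp(K_2)$.

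First, since $p_{12}$ and $p_{23}$ are submersions, Proposition~\ref{prop:funcSS}(2.b) yields the equalities
\[
\muSupp(\opb{p_{12}}K_1) = \muSupp(K_1) \times 0^*_3
\qquad\text{and}\qquad
\muSupp(\opb{p_{23}}K_2) = 0^*_1 \times \muSupp(K_2).
\]
Applying the fiberwise antipodal map to both sides of hypothesis~(2) (which preserves containment in $0^*_{123}$, the antipodal map being a bijection fixing $0^*$) gives the equivalent condition
\[
(\muSupp(K_1) \times 0^*_3) \cap (0^*_1 \times \muSupp(K_2)^a) \subseteq 0^*_{123},
\]
which is exactly the non-characteristic assumption of Proposition~\ref{prop:funcSS}(1.a) for $F_1 = \opb{p_{12}}K_1$ and $F_2 = \opb{p_{23}}K_2$. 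That item then produces
\[
\muSupp(\opb{p_{12}}K_1 \Ltens \opb{p_{23}}K_2) \subseteq (\muSupp(K_1) \times 0^*_3) + (0^*_1 \times \muSupp(K_2)).
\]

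Next, hypothesis~(1) ensures that $p_{13}$ is proper on the support of this tensor product (which is contained in $\opb{p_{12}}(\supp K_1) \cap \opb{p_{23}}(\supp K_2)$), so $\reim{p_{13}}$ agrees with $\roim{p_{13}}$ on this object. Proposition~\ref{prop:funcSS}(2.a) then delivers
\[
\muSupp(K_1 \conv K_2) \subseteq \bigl[(\muSupp(K_1) \times 0^*_3) + (0^*_1 \times \muSupp(K_2))\bigr] \acomp \Lagr{p_{13}}.
\]
The last step is a direct set-theoretic identification: the twisted composition with $\Lagr{p_{13}}$ selects, over each $(x_1, x_3)$, those covectors of the sum of the form $(\xi_1, 0, \xi_3)$ at some base $(x_1, x_2, x_3)$, and by the fiberwise addition, the vanishing of the middle component forces the $T^*M_2$-contributions from $\muSupp(K_1)$ and $\muSupp(K_2)$ to be opposite; this is precisely the relation defining $\muSupp(K_1) \acomp \muSupp(K_2)$.

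I expect the only real difficulty to be bookkeeping the antipodal twists: verifying that hypothesis~(2) (stated with $\muSupp(K_1)^a$) translates correctly into the non-characteristic condition of~(1.a) (which involves $\muSupp(F_2)^a$), and that the sum-of-microsupports bound followed by the twisted composition with $\Lagr{p_{13}}$ produces the twisted composition $\acomp$ rather than the straight $\comp$. Both are routine once the conventions are fixed, but they are the most likely source of sign or notation errors.
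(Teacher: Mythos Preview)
Your proof is correct and follows exactly the approach sketched in the paper, which simply says that the two assumptions allow one to apply Proposition~\ref{prop:funcSS}(1.a), (2.a) and (2.b) to conclude. Your write-up spells out the bookkeeping (including the antipodal twist and the identification of the final bound with $\muSupp(K_1)\acomp\muSupp(K_2)$) that the paper leaves implicit.
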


\begin{proof}
The two assumptions of the proposition allow to apply Proposition~\ref{prop:funcSS}(1.a), (2.a) and (2.b) to conclude.
\end{proof}

\begin{remark}
Note that the two assumptions of Proposition~\ref{prop:sskernels} are equivalent to the following condition: $p_{13}$ is proper on $\opb{p_{12^a}}(\muSupp(K_1)) \cap \opb{p_{23}}(\muSupp(K_2))$.
\end{remark}

\section{Microsupports associated with subsets}
\label{sec:subsets}

In this section, we study the microsupports of (constant sheaves on) subsets.
We first give a criterion for belonging to such microsupports and give results on the microsupports of closed submanifolds and closed $C^0$-submanifolds.
We then offer direct proofs of lower and upper bounds on the microsupport of a closed set.

\subsection{General properties}

Let $M$ be a manifold.
If $A \subseteq M$ is locally closed, we set for short $\Ppg(A) \coloneqq \Ppg(\cor_{M, A})$ and
\begin{equation}
\muSupp(A) \coloneqq \muSupp(\cor_{M, A}),
\end{equation}
and also $\Ppg_x(A) \coloneqq \Ppg(A)_x$ and $\muSupp_x(A) \coloneqq \muSupp(A)_x$ for $x \in M$.
Note that
\begin{equation}\label{eq:SSproj}
\pi_M (\muSupp(A)) = \closure{A}.
\end{equation}

The following lemma gives a useful criterion for belonging to the propagation set of a sheaf associated with a subset.

\begin{lemma}\label{lem:criterSet}
Let $M$ be a manifold, let $Z \subseteq M$ be a closed subset and let $x \in M$.
Then, $\xi \in \Ppg_x(Z)$ if and only if for all $\phi \in C^\infty(U)$, with $U$ an open neighborhood of~$x$, such that $\phi(x) = 0$ and $d\phi(x) = \xi$, the morphism
\begin{equation}
(\cor_Z)_x \longrightarrow (\rsect_{Z \cap \{ \phi < 0 \}} \cor_Z)_x
\end{equation}
induced by the inclusions $(i_V \colon Z \cap \{ \phi < 0 \} \cap V \hookrightarrow Z \cap V)_{V \ni x}$ is an isomorphism.
\end{lemma}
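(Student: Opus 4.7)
The plan is to deduce this lemma from Lemma~\ref{lem:criterSheaf} applied to $F = \cor_Z$, which already characterizes $\xi \in \Ppg_x(\cor_Z) = \Ppg_x(Z)$ as the condition that the natural morphism $(\cor_Z)_x \to (\rsect_{\{\phi<0\}}(\cor_Z))_x$ is an isomorphism. So what is left is to identify this morphism with the geometric one appearing in the present statement.

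The key step will be to produce a natural isomorphism $\rsect_{Z\cap\{\phi<0\}}(\cor_Z) \isoto \rsect_{\{\phi<0\}}(\cor_Z)$. I will start from the short exact sequence $0 \to \cor_{M\setminus Z} \to \cor_M \to \cor_Z \to 0$ (exact since $M\setminus Z$ is open in $M$), tensor it with $\cor_{\{\phi<0\}}$ (whose stalks are flat, so the ordinary tensor is already exact), and use~\eqref{eq:prodExtended} to obtain the short exact sequence $0 \to \cor_{(M\setminus Z)\cap\{\phi<0\}} \to \cor_{\{\phi<0\}} \to \cor_{Z\cap\{\phi<0\}} \to 0$. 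Applying $\rhom(-, \cor_Z)$ yields a distinguished triangle
\begin{equation*}
\rsect_{Z\cap\{\phi<0\}}(\cor_Z) \to \rsect_{\{\phi<0\}}(\cor_Z) \to \rsect_{(M\setminus Z)\cap\{\phi<0\}}(\cor_Z) \to[+1].
\end{equation*}
Writing $j$ for the open inclusion of $W = (M\setminus Z)\cap\{\phi<0\}$, the third term equals $\roim{j}\opb{j}\cor_Z$, and since $W$ is disjoint from $\supp(\cor_Z) = Z$ we have $\opb{j}\cor_Z = 0$. Hence the third term vanishes, making the first arrow the desired isomorphism.

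Finally, by naturality of $\rhom(-, \cor_Z)$, the above isomorphism intertwines the canonical morphism $(\cor_Z)_x \to (\rsect_{\{\phi<0\}}(\cor_Z))_x$ from Lemma~\ref{lem:criterSheaf} with a canonical morphism $(\cor_Z)_x \to (\rsect_{Z\cap\{\phi<0\}}(\cor_Z))_x$. On stalks at $x$, both sides unfold to the local cohomology map $\indlim[V] H^*(V\cap Z; \cor) \to \indlim[V] H^*(V\cap Z\cap\{\phi<0\}; \cor)$ induced by the geometric inclusions $Z\cap\{\phi<0\}\cap V \hookrightarrow Z\cap V$, matching the morphism described in the statement. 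The main (and minor) obstacle will be this last compatibility check; everything else is essentially formal.
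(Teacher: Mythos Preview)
Your proposal is correct and is essentially the same argument as the paper's, just organized differently. The paper applies the stalk functor directly to the distinguished triangle
\[
\rsect_{Z\cap\{\phi\geq 0\}}(\cor_Z) \to \rsect_{Z\cap U}(\cor_Z) \to \rsect_{Z\cap\{\phi<0\}}(\cor_Z) \to[+1],
\]
implicitly using that $\rsect_{\{\phi\geq 0\}}(\cor_Z)\simeq\rsect_{Z\cap\{\phi\geq 0\}}(\cor_Z)$ (since $\supp\cor_Z\subseteq Z$) to connect to the definition of $\Ppg_x$. You instead invoke Lemma~\ref{lem:criterSheaf} as a black box and then prove the companion identification $\rsect_{\{\phi<0\}}(\cor_Z)\simeq\rsect_{Z\cap\{\phi<0\}}(\cor_Z)$ explicitly via the vanishing of $\rsect_W(\cor_Z)$ for $W$ open and disjoint from $Z$. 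Both routes rest on the same support observation; yours just makes it explicit.
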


\begin{proof}
As for Lemma~\ref{lem:criterSheaf}, the result follows by applying the stalk functor to the distinguished triangle $\rsect_{Z \cap \{ \phi \geq 0 \}}(\cor_Z) \to \rsect_{Z \cap U}(\cor_Z) \to \rsect_{Z \cap \{ \phi < 0 \}}(\cor_Z) \to[+1]\;$.
\end{proof}

As for inverse images, Proposition~\ref{prop:funcSS}(2.b) and Equation~\eqref{eq:opbSheaf} show that if $f \colon M \to N$ is a morphism of manifolds and $B \subseteq N$ is locally closed, and if $f$ is non-characteristic for $\muSupp(B)$, then
\begin{equation}\label{eq:opbSet}
\muSupp(\opb{f}(B)) \subseteq \Lagr{f} \acomp \muSupp(B)
\end{equation}
with equality if $f$ is a submersion.
If $f$ is an isomorphism and $A \subseteq M$ is locally closed, then Remark~\ref{rmk:funcPpg} gives
\begin{equation}\label{eq:opb-Ppg}
\Ppg(f(A)) = \Ppg(A) \acomp \Lagr{f}.
\end{equation}
In particular, if $\Phi \colon \V \to \V'$ is a linear isomorphism, $A \subseteq \V$ is locally closed and $x \in \V$, then
\begin{equation}\label{eq:opb-Ppg-linear}
\Phi^\intercal(\Ppg_{\Phi(x)}(\Phi(A))) = \Ppg_x(A).
\end{equation}

We will also need the following result.

\begin{proposition}[\cite{KS}*{Prop.~5.3.2}]\label{prop:subman}
Let $N$ be a closed submanifold of a manifold $M$.
Then
\begin{equation}
\muSupp(N) = T^*_NM.
\end{equation}
\end{proposition}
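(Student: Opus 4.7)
My approach is to realize $\cor_{M,N}$ as a pushforward along the closed inclusion and invoke the functoriality of the microsupport under proper direct image, taking advantage of the equality case available for closed embeddings. Let $i \colon N \hookrightarrow M$ denote the inclusion. Since $i$ is a closed embedding, one has $\cor_{M,N} = \eim{i}\opb{i}\cor_M = \roim{i}(\cor_{N,N})$. Proposition~\ref{prop:funcSS}(2.a) then yields the equality
\begin{equation}
\muSupp(\cor_{M,N}) = \muSupp(\cor_{N,N}) \acomp \Lagr{i},
\end{equation}
so the proof reduces to two subcomputations.

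The first step is to show that $\muSupp(\cor_{N,N}) = 0^*_N$. For the inclusion $0^*_N \subseteq \muSupp(\cor_{N,N})$, I take $\phi = 0$ at any $x \in N$: then $(\rsect_{\{\phi \geq 0\}}(\cor_N))_x = (\cor_N)_x = \cor \neq 0$, so $0 \notin \Ppg_x(\cor_N)$. For the reverse inclusion, given $\xi \neq 0$ and any $\phi \in C^\infty(U)$ with $\phi(x) = 0$ and $d\phi(x) = \xi$, the function $\phi$ is a submersion at $x$; in an adapted chart, $U \cap \{\phi < 0\}$ is a connected open half-space for sufficiently small $U$, and the natural morphism $(\cor_N)_x \to (\rsect_{\{\phi<0\}}(\cor_N))_x$ is identified with the identity $\cor \to \cor$, hence is an isomorphism. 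Lemma~\ref{lem:criterSheaf} then gives $\xi \in \Ppg_x(\cor_N)$.

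The second step is to evaluate $0^*_N \acomp \Lagr{i}$. The graph $\Gamma_i = \{(x,x) : x \in N\}$ has tangent bundle $\{(v,v) : v \in T_xN\}$ at each point $(x,x)$, so
\begin{equation}
\Lagr{i} = \bigl\{ ((x, -\alpha|_{T_xN}), (x, \alpha)) \in T^*(N \times M) : x \in N,\ \alpha \in T^*_xM \bigr\}.
\end{equation}
Since $0^*_N$ is symmetric, the antipodal in the definition of $\acomp$ is harmless: requiring the $T^*N$-factor to vanish forces $\alpha|_{T_xN} = 0$, i.e.\ $\alpha \in T^*_NM$ over $x$. Projecting onto $T^*M$ yields exactly $T^*_N M$.

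The main obstacle is purely clerical: tracking the antipodal convention in $\acomp$ and writing down the conormal of $\Gamma_i$ correctly. Once these are set up, Proposition~\ref{prop:funcSS}(2.a) delivers the result immediately, with no genuine analytic input beyond the submersion-chart computation in the first step.
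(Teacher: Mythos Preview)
The paper does not supply a proof of this proposition; it simply records the statement with a citation to~\cite{KS}*{Prop.~5.3.2}. Your argument is correct as written: the identification $\cor_{M,N} \simeq \roim{i}\cor_N$ for the closed inclusion~$i$, the computation $\muSupp(\cor_N) = 0^*_N$ on~$N$ (which you carry out directly via Lemma~\ref{lem:criterSheaf}), and the linear-algebra check that $0^*_N \acomp \Lagr{i} = T^*_N M$ are all sound, and the equality case of Proposition~\ref{prop:funcSS}(2.a) for closed embeddings assembles them into the result.

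One caveat worth flagging: in~\cite{KS} the numbering places Proposition~5.3.2 \emph{before} the functoriality estimates~5.4.4--5 that you invoke, so if you were rebuilding the theory from scratch you would want to verify that the equality case for closed embeddings does not itself depend on the conormal-bundle computation. Within the present paper both results are imported as black boxes from~\cite{KS}, so your deduction is internally consistent. The more self-contained route (and the one~\cite{KS} actually takes) is to work locally: reduce to the linear model $N = \R^p \times \{0\} \subseteq \R^{p+q}$, write $\cor_N \simeq \cor_{\R^p} \boxtimes \cor_{\{0\}}$, and compute the microsupport of each factor directly from the definition. Your approach trades that explicit local calculation for an appeal to the functoriality machinery; it is shorter, but logically downstream.
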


For closed $C^0$-submanifolds, we have the following result.

\begin{proposition}\label{prop:symmetric}
Let $N$ be a closed $C^0$-submanifold of a manifold $M$.
Then, $\cor_N$ is a cohomologically constructible sheaf, self-dual up to a locally constant sheaf of rank 1, and its microsupport is symmetric, that is,
\begin{equation}
\muSupp(N) = \muSupp(N)^a.
\end{equation}
\end{proposition}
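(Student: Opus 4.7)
My plan is to prove the three assertions in order (cohomological constructibility, self-duality up to a locally constant sheaf of rank one, and then symmetry of the microsupport as a formal consequence via Proposition~\ref{prop:dual}), reducing everything to the local model where $N$ is flatly and linearly embedded. First, for constructibility: since $N$ is locally flatly embedded, the pair $(M, N)$ is locally homeomorphic to $(\R^n, \R^m \times \{0\})$, and in this model $\cor_{M, N}$ is the exterior tensor product $\cor_{\R^m} \boxtimes \cor_{\{0\}}$. The first factor is trivially cohomologically constructible, and the second is so since $\{0\}$ is a closed smooth submanifold of $\R^c$. An exterior product of cohomologically constructible sheaves retains this property, and since cohomological constructibility is local it transfers back to $\cor_N$ on $M$.

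Next, writing $i \colon N \hookrightarrow M$ for the inclusion, I would compute
\[
\RD'_M(\cor_N) = \rhom(\eim{i} \cor_N, \cor_M) \cong \roim{i} \epb{i} \cor_M
\]
via adjunction, and identify $\epb{i}\cor_M$ locally. Working in the model $\R^m \times \{0\} \hookrightarrow \R^n$ and applying a Künneth computation to the factorization as a product of the identity on $\R^m$ and the inclusion $\{0\} \hookrightarrow \R^c$, one obtains $\epb{i}\cor_M \cong \cor_N[-c]$ locally, where $c = \dim M - \dim N$. Globally the cohomology is concentrated in degree $c$ and yields a locally constant sheaf $L$ of rank one on $N$ (the analogue of the normal orientation sheaf), so that $\RD'_M(\cor_N) \simeq L_M[-c]$ with $L_M = \roim{i} L$ denoting the extension by zero.

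For the symmetry of $\muSupp(N)$, Proposition~\ref{prop:dual} applies by constructibility and gives $\muSupp(\RD'_M(\cor_N)) = \muSupp(N)^a$. On the other hand, $L_M[-c]$ is locally isomorphic to a shift of $\cor_{M, N}$, and both shift and tensoring by an invertible locally constant sheaf leave the microsupport unchanged, microsupport being a local invariant. Hence $\muSupp(L_M[-c]) = \muSupp(N)$, and combining with the identification of $\RD'_M(\cor_N)$ yields $\muSupp(N) = \muSupp(N)^a$. The hard part will be the identification of $\epb{i}\cor_M$ as a shift of a rank one locally constant sheaf on $N$: flatness of the embedding is precisely what makes this possible, since locally the map factors through a linear inclusion to which the Künneth-type computation applies, and the local answers then have to glue to a well-defined locally constant sheaf on $N$.
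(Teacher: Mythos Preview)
Your proposal is correct and follows essentially the same strategy as the paper: reduce to the local linear model via local flatness, establish cohomological constructibility and self-duality there (the paper cites \cite{KS}*{Exa.~3.4.5(i)} for $\RD_M(\cor_{M,N}) = \omega_N$ in the linear case, while you compute $\RD'_M$ via the adjunction $\rhom(\eim{i}\cor_N,\cor_M)\simeq\roim{i}\epb{i}\cor_M$ and a K\"unneth argument), transfer these local-topological properties to the $C^0$ setting, and then invoke Proposition~\ref{prop:dual}. Your version is more explicit about the gluing of the local rank-one sheaves, but the structure of the argument is the same.
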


\begin{proof}
The cohomological constructibility and self-duality results hold if $N$ is a vector subspace of a vector space $M$ (in that case, $\RD_M(\cor_{M, N}) = \rhom(\cor_{M, N}, \omega_M) = \omega_N$ by \cite{KS}*{Exa.~3.4.5(i)}), and they are of a local and topological nature.
Therefore, they hold for closed $C^0$-submanifolds, and we can apply Proposition~\ref{prop:dual}.
\end{proof}

\subsection{Bounds on the microsupports associated with closed subsets}

We have the following bounds on the microsupport associated with a closed subset.
For the tangent and strict tangent cones appearing in this proposition, we refer to Appendix~\ref{app:cones}.

\begin{proposition}\label{prop:SSbounds}
Let $M$ be a manifold.
If $Z \subseteq M$ is closed, then
\begin{equation}\label{eq:SSbounds}
\closure{\opb{\pi_M}(Z) \cap C(Z)^\circ} \subseteq \muSupp(Z) \subseteq N(Z)^\circ.
\end{equation}
\end{proposition}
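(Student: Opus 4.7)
Both inclusions are local and diffeomorphism-invariant (by~\eqref{eq:opbSet} and the corresponding compatibility for the tangent and strict tangent cones, which follow from their sequential definitions), so I fix $x_0 \in Z$ and pass to a chart identifying a neighborhood of $x_0$ with an open subset of a vector space $\V$ with $x_0 = 0$.

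\textbf{For the upper bound} $\muSupp(Z) \subseteq N(Z)^\circ$, I argue by contrapositive. Given $\xi_0 \notin N(Z)^\circ_{0}$, choose $v_0 \in N(Z)_0$ with $\innerp{\xi_0}{v_0} < 0$. The defining sequential property of $N(Z)$ at $0$ allows me to build, via a set-valued selection argument (or a discrete-time iteration), a continuous family of maps $\Phi_t \colon U \cap Z \to Z$ for a small neighborhood $U$ of $0$ and $t \in [0, \epsilon]$, with $\Phi_0 = \id$ and $\Phi_t(y) = y + tv_0 + o(t)$ uniformly. Setting $\phi(y) = \innerp{\xi_0}{y}$, this yields $\phi(\Phi_t(y)) \leq \phi(y) + \tfrac{1}{2}t\innerp{\xi_0}{v_0}$ for $t$ small, providing a cofinal system of deformations of small neighborhoods of $0$ in $Z$ into their intersections with $\{\phi < 0\}$. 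Via Lemma~\ref{lem:criterSet}, this gives $(\rsect_{\{\phi \geq 0\}}(\cor_Z))_0 = 0$; the extension to a general smooth $\phi$ with $\phi(0) = 0$ and $d\phi(0) = \xi_0$ follows because such $\phi$ differs from the linear model by an $o(\abs{y})$-term that is absorbed by the uniform descent. Hence $\xi_0 \in \Ppg_0(Z)$.

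\textbf{For the lower bound} $\closure{\opb{\pi_M}(Z) \cap C(Z)^\circ} \subseteq \muSupp(Z)$, since $\muSupp(Z)$ is closed it suffices to prove $\opb{\pi_M}(Z) \cap C(Z)^\circ \subseteq \muSupp(Z)$. Suppose $\xi_0 \in C(Z)^\circ_0$ and $(0, \xi_0) \notin \muSupp(Z)$; applying Lemma~\ref{lem:criterSet} to the linear $\phi(x) = \innerp{\xi_0}{x}$, the canonical morphism $\cor = (\cor_Z)_0 \to (\rsect_{\{\phi < 0\}}(\cor_Z))_0$ must be an isomorphism. If $V \cap Z \cap \{\phi < 0\} = \emptyset$ for small $V$, the right-hand side vanishes and contradicts the isomorphism. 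Otherwise, any unit limit $v \in C(Z)_0$ of $x_n/\abs{x_n}$ for $x_n \in Z \cap \{\phi < 0\}$ satisfies $\innerp{\xi_0}{v} \leq 0$, forced to $0$ by $\xi_0 \in C(Z)^\circ_0$. One then exploits either a cohomological obstruction (multiple connected branches of $Z \cap \{\phi < 0\}$ prevent $H^0$ from reducing to $\cor$, as happens for a parabola) or, perturbing $\xi_0$ by arbitrary small covectors (using openness of the complement of $\muSupp(Z)$), one extracts by compactness a line of $C(Z)_0$ lying in $\ker \xi_0$; the resulting degenerate stratum is absorbed by the closure on the left-hand side via approximation by nearby points where the previous argument closes directly.

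\textbf{The main obstacle} is the construction of the continuous family $\Phi_t$ in the upper bound: turning the sequential, pointwise definition of $N(Z)$ at $x_0$ into a genuine continuous deformation of $Z$ near $x_0$. This is standard in microlocal geometric arguments but requires either a continuous-selection theorem applied to the multifunction of admissible displacement directions, or a careful iterated discrete-time construction that keeps successive approximations inside $Z$; handling the degenerate, non-pointed fibers in the lower bound via the closure is the secondary delicate point.
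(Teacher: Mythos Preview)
Your upper-bound plan is correct in spirit but you have misidentified the obstacle.  The strict tangent cone $N(Z)$ is \emph{open} in $TM$; thus $v_0\in N_0(Z)$ automatically gives $v_0\in N_y(Z)$ for all $y$ in a neighborhood $V$ of~$0$, and after straightening $\phi$ to a linear form one can write the retraction explicitly as
\[
h_\beta(y,t)=y-t\,\frac{(\phi(y)-\beta)^+}{\phi(v_0)}\,v_0,
\]
which stays in $Z$ because each segment moves in the direction $v_0\in N_y(Z)$.  No selection theorem or discrete iteration is needed; this is exactly how the paper proceeds (Lemma~\ref{lem:homotopy}).

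Your lower-bound argument, however, has a real gap.  The linear test function $\phi(x)=\innerp{\xi_0}{x}$ need not detect non\-propagation.  Take $Z=\{(x,y)\in\R^2\mid y\ge f(x)\}$ with $f(x)=-x^2$ for $x>0$ and $f(x)=0$ for $x\le 0$.  Then $C_0(Z)=\{v\ge 0\}$, so $\xi_0=(0,1)\in C_0(Z)^\circ$, yet $Z\cap\{y<0\}\cap B(0,\epsilon)=\{0<x,\ -x^2\le y<0\}\cap B(0,\epsilon)$ is contractible and the restriction $(\cor_Z)_0\to(\rsect_{Z\cap\{y<0\}}\cor_Z)_0$ is the identity $\cor\to\cor$.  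So neither emptiness nor a cohomological obstruction appears.  Your two fallbacks do not close this: perturbing $\xi_0$ inside $C_0(Z)^\circ$ is impossible here since $C_0(Z)^\circ$ is a single ray, and the ``absorbed by the closure'' step is an assertion, not an argument---you give no mechanism guaranteeing nearby base points at which the linear test succeeds.

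The missing idea, which the paper supplies as Lemma~\ref{lem:cone}, is that $\xi_0\in C_0(Z)^\circ$ is \emph{equivalent} to the existence of a (generally nonlinear) $C^1$ function $\phi$ with $\phi(0)=0$, $d\phi(0)=\xi_0$, and $\phi(Z\cap U)\subseteq\R_{\ge0}$.  In the example above $\phi(x,y)=y+x^2$ does the job.  With such a $\phi$ one has $Z\cap\{\phi<0\}=\varnothing$ near~$0$, so the morphism in Lemma~\ref{lem:criterSet} becomes $\cor\to 0$ and fails immediately, giving $\xi_0\notin\Ppg_0(Z)$.  The substance of the lower bound lies entirely in constructing this bent test function from the polar condition; you should replace your case analysis by that construction.
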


The upper bound is~\cite{KS}*{Prop.~5.3.8}.
The lower bound was proved in~\cite{KMS}*{Prop.~3.1}, where it was proved that $\closure{\opb{\pi_M}(Z) \cap C(Z)^\circ}$ is equal to the 0-truncated microsupport of $\cor_Z$ (see definition there).
We give direct proofs of both bounds.

The lower bound is not widely applicable, since $C_x(Z)^\circ$ is nonzero only if $Z$ is ``at first order'' contained in a half-space of $T_x M$.
On the other hand, the upper bound is trivial if $Z$ is a $C^0$-submanifold.
We will give another upper bound in that case (Proposition~\ref{prop:upperBoundSubman}).

For the proof, we will need the following two lemmas.

\begin{lemma}\label{lem:homotopy}
Let $M$ be a manifold, $A \subseteq M$ and $x \in M$.
Let $U$ be an open neighborhood of~$x$ and $\phi \in C^\infty(U)$ with $\phi(x) = 0$ and $d\phi(x) \notin N_x(A)^\circ$.
Then, there exist an open neighborhood $V \subseteq U$ of~$x$ and $\alpha > 0$ and for all $\beta \in \intervO{-\alpha, \alpha}$, a strong deformation retraction $h_\beta \colon V \times [0, 1] \to V$ of $V$ onto $V \cap \{ \phi \leq \beta \}$ such that $h_\beta( (V \cap A) \times [0, 1]) \subseteq V \cap A$.
\end{lemma}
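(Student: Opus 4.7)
The hypothesis $d\phi(x) \notin N_x(A)^\circ$ gives, by definition of the polar, a vector $v \in N_x(A)$ with $\innerp{d\phi(x)}{v} < 0$. The plan is to define $h_\beta$ by translating each $y \in V$ in the direction $v$ for just long enough that $\phi$ drops to $\beta$; this both decreases $\phi$ (since $\innerp{d\phi(x)}{v} < 0$) and preserves $A$ (since $v \in N_x(A)$).

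Working in a chart around $x$, identify $x$ with $0 \in \R^n$ and $v$ with a nonzero vector in $\R^n$. By the definition of the strict normal cone, there exist an open neighborhood $V_0 \subseteq U$ of $0$ and $\epsilon_0 > 0$ such that $(V_0 \cap A) + \intervOC{0, \epsilon_0} v \subseteq A$; shrinking $V_0$ if necessary, $\innerp{d\phi(y)}{v} < 0$ for all $y \in V_0$. Fix a Euclidean structure on $\R^n$, decompose $\R^n = v^\perp \oplus \R v$, and write each point near $0$ as $y = y_\perp + sv$. Since $\partial_s\bigl[\phi(y_\perp + sv)\bigr] = \innerp{d\phi(y_\perp + sv)}{v} \neq 0$ on $V_0$, the implicit function theorem produces a smooth function $\sigma$, defined on a neighborhood of $(0, 0)$ in $v^\perp \times \R$, such that $\phi(y_\perp + \sigma(y_\perp, \beta) v) = \beta$ and $\sigma(0, 0) = 0$; strict monotonicity of $s \mapsto \phi(y_\perp + sv)$ then gives $\phi(y_\perp + sv) \leq \beta$ if and only if $s \geq \sigma(y_\perp, \beta)$.

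Choose $r \leq \epsilon_0/2$, an open neighborhood $V_\perp$ of $0$ in $v^\perp$, and $\alpha > 0$, all small enough that the ``tube'' $V \coloneqq \{y_\perp + sv : y_\perp \in V_\perp, s \in \intervO{-r, r}\}$ is contained in $V_0$ and $\sigma(y_\perp, \beta) \in \intervO{-r, r}$ for all $(y_\perp, \beta) \in V_\perp \times \intervO{-\alpha, \alpha}$. Set $\tau_\beta(y_\perp + sv) \coloneqq \max(0, \sigma(y_\perp, \beta) - s)$, a continuous function on $V$, and define
\begin{equation*}
h_\beta(y, t) \coloneqq y + t\, \tau_\beta(y)\, v.
\end{equation*}
Then $h_\beta(y, 0) = y$; $h_\beta(y, 1) = y_\perp + \max(s, \sigma(y_\perp, \beta))\, v$ lies in $V \cap \{\phi \leq \beta\}$; and for $y \in V \cap \{\phi \leq \beta\}$ one has $\tau_\beta(y) = 0$, so $h_\beta(y, t) = y$ for all $t$, yielding a strong deformation retraction. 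The $y_\perp$-component of $h_\beta(y, t)$ equals $y_\perp$ and its $s$-component lies in the interval $[\min(s, \sigma), \max(s, \sigma)] \subseteq \intervO{-r, r}$, so $h_\beta(V \times [0, 1]) \subseteq V$. For $y \in V \cap A$ and $t > 0$ with $\tau_\beta(y) > 0$, the translation amount $t\tau_\beta(y)$ lies in $\intervOC{0, 2r} \subseteq \intervOC{0, \epsilon_0}$, so $y + t\tau_\beta(y) v \in A$ by the property of $v$ above; the remaining cases ($t = 0$ or $\tau_\beta(y) = 0$) are trivial.

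The main obstacle is ensuring that the image of $h_\beta$ stays inside $V$: this fails for $V$ taken as a Euclidean ball, because a ball is not forward-invariant under translation by $v$ for any positive time. The tube shape $V_\perp + \intervO{-r, r} v$ is adapted to this flow since its perpendicular component is preserved, and the constraint $|\sigma(y_\perp, \beta)| < r$ keeps the $s$-motion within $\intervO{-r, r}$ throughout the deformation.
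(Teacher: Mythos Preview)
Your proof is correct and follows the same strategy as the paper's: flow each point along a direction $v\in N_x(A)$ with $\innerp{d\phi(x)}{v}<0$ until $\phi$ drops to~$\beta$, working in a tube-shaped neighborhood aligned with $v$ so that the flow stays inside~$V$ and preserves~$A$. The only difference is in implementation: the paper first changes chart so that $\phi$ becomes the linear functional $d\phi(0)$ (possible since $d\phi(x)\neq 0$), which yields the explicit formula $h_\beta(y,t)=y-t\,(\phi(y)-\beta)^+\,v/\phi(v)$ and avoids the implicit function theorem you invoke to define~$\sigma$.
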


\begin{proof}
Let $u \in N_x(A)$ be such that $\innerp{d\phi(x)}{u} < 0$.
We can fix a chart at~$x$ with domain $V \subseteq U$ such that in that chart, $x = 0 \in \R^m$ and $\phi = d\phi(0)$ (since $d\phi(0) \neq 0$).
Since $N(A)$ is open, we can suppose, reducing $V$ if necessary, that for all $y \in V$, one has $u \in N_y(A)$.
We can also suppose that the intersection of $V$ with any line parallel to $u$ is connected, for instance by assuming that $V$ is of the form $\intervO{-\alpha, \alpha} \times V'$ and $u \in \R \times \{0\}$.

If $\beta \in \intervO{-\alpha, \alpha}$ and $y \in V$ and $t \in [0, 1]$, we set $h_\beta(y, t) \coloneqq y - t \frac{(\phi(y) - \beta)^+}{\phi(u)} u$ where $x^+ \coloneqq \max(x, 0)$.
This defines $h_\beta \colon V \times [0, 1] \to V$.
If $\phi(y) \leq \beta$ and $t \in [0, 1]$, then $h_\beta(y, t) = y$.
If $y \in V$, then $h_\beta(y, 0) = y$ and $\phi(h_\beta(y, 1)) = \phi(y) - (\phi(y) - \beta)^+ = \min( \phi(y), \beta) \leq \beta$.
Finally, if $y \in V \cap A$ and $t \in [0, 1]$, then the assumptions $-t \frac{(\phi(y) - \beta)^+}{\phi(u)} \geq 0$ and $u \in N_{h_\beta(y,s)}(A)$ for any $s \in [0, t]$ imply $h_\beta(y, t) \in A$.
\end{proof}

\begin{lemma}\label{lem:cone}
Let $A$ be a subset of a manifold $M$.
Then,
\begin{multline}
C(A)^\circ = \{ (x, \xi) \in \closure{A} \times_M T^*M \mid \text{there exist an open neighborhood $U$ of $x$ and}\\
\phi \in C^1(U) \text{ such that } \phi(x) = 0 \text{ and } d\phi(x) = \xi \text{ and } \phi(A \cap U) \subseteq \R_{\geq 0} \}
\end{multline}
and this set is equal to the set defined similarly with the function~$\phi$ only required to be continuous on~$U$ and differentiable at~$x$.
\end{lemma}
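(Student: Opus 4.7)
The plan is to introduce shorthand $S_1$ for the right-hand side with $\phi \in C^1(U)$ and $S_2$ for the analogous set with $\phi$ only continuous on~$U$ and differentiable at~$x$. Since $S_1 \subseteq S_2$ is automatic, it suffices to prove $S_2 \subseteq C(A)^\circ$ and $C(A)^\circ \subseteq S_1$, forcing all three sets to coincide.

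For $S_2 \subseteq C(A)^\circ$, I would fix a chart sending $x$ to $0 \in \R^m$. Given $(x, \xi) \in S_2$ realized by $\phi$ and $u \in C_x(A)$ witnessed by sequences $a_n \in A$ with $a_n \to 0$ and $t_n > 0$ with $t_n a_n \to u$, the first-order expansion $\phi(a_n) = \innerp{\xi}{a_n} + o(|a_n|)$ multiplied by $t_n$ gives $t_n \phi(a_n) = \innerp{\xi}{t_n a_n} + t_n \, o(|a_n|)$. The error term tends to~$0$ because $t_n |a_n| \to |u|$ stays bounded, while the left-hand side is nonnegative for large~$n$; passing to the limit yields $\innerp{\xi}{u} \geq 0$, whence $\xi \in C_x(A)^\circ$.

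For $C(A)^\circ \subseteq S_1$, I would work again in a chart with $x = 0$ and build $\phi = \innerp{\xi}{\cdot} + \psi$ for some $C^1$ nonnegative $\psi$ with $\psi(0) = 0$, $d\psi(0) = 0$, and $\psi(a) \geq -\innerp{\xi}{a}$ for $a \in A$ near~$0$. The key input is the modulus
\begin{equation*}
\omega(r) \coloneqq \sup \bigl\{ \max(-\innerp{\xi}{a}, 0)/|a| : a \in A,\; 0 < |a| \leq r \bigr\},
\end{equation*}
which is non-decreasing and bounded by $|\xi|$ for small~$r$. The polar condition $\xi \in C_0(A)^\circ$ forces $\omega(r) \to 0$ as $r \to 0^+$: a failure would allow a subsequence extraction to produce a unit vector $u \in C_0(A)$ with $\innerp{\xi}{u} < 0$. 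I would then smooth $\omega$ by averaging, e.g.\ $\tilde\omega(r) \coloneqq r^{-1} \int_r^{2r} \omega(s)\,ds$, and set $\psi(y) \coloneqq 2 \int_0^{2|y|} \tilde\omega(s)\,ds$. The averaging makes $\tilde\omega$ continuous on $(0, \infty)$ with $\tilde\omega(0^+) = 0$ while still dominating $\omega$; this yields both the $C^1$ regularity of $\psi$ (differentiability at~$0$ follows from $\psi(y) = o(|y|)$, and the gradient $4 \tilde\omega(2|y|) \cdot y/|y|$ extends continuously by zero at~$0$) and the lower bound $\psi(y) \geq 2|y|\omega(|y|)$, which controls $-\innerp{\xi}{y}$ on~$A$.

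The main obstacle is this last construction: turning the a priori only monotone and potentially discontinuous modulus $\omega$ into a $C^1$ cushion with the correct first-order behavior. A naive choice such as $\psi(y) = |y|\omega(|y|)$ need not even be continuous, but the averaging step repairs this without sacrificing the domination needed to verify $\phi \geq 0$ on~$A$.
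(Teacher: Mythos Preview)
Your proof is correct, and the inclusion $S_2 \subseteq C(A)^\circ$ is argued exactly as in the paper. The construction for $C(A)^\circ \subseteq S_1$, however, follows a genuinely different route. The paper normalizes to $\xi = (-1,0,\ldots,0)$ (treating $\xi=0$ separately via $\phi=\norm{-}^2$), shows geometrically that $A$ avoids a nested family of truncated cone sectors $E_n = \{(u_1,u') \mid \tfrac1n\norm{u'} \leq u_1 \leq \alpha_n\}$, and then builds $\phi$ by placing a $C^1$ graph above the broken line through the corners $A_n=(n\alpha_{n+1},\alpha_{n+1})$. Your approach instead packages the polar condition into the scalar modulus $\omega(r)$ and smooths it by the integral average $\tilde\omega$, producing a rotationally symmetric cushion $\psi$. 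Your argument is more analytic and handles $\xi=0$ uniformly (since then $\omega\equiv 0$); the paper's is more hands-on and makes the geometry of the avoided region explicit. Both hinge on the same compactness extraction showing that the ``negative part'' of $\innerp{\xi}{a}/\abs{a}$ vanishes as $a\to x$ in $A$, which is precisely what $\xi \in C_x(A)^\circ$ encodes.
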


\begin{proof}
(i)
Let $(x, \xi) \in \closure{A} \times_M T^*M$.
Let $U$ be an open neighborhood of $x$ and $\phi \in C^0(U)$ be such that $\phi(x) = 0$ and $d\phi(x) = \xi$ and $\phi(A \cap U) \subseteq \R_{\geq 0}$.
Let $u \in C_x(A)$.
There are sequences $(x_n) \in A^\N$ and $(c_n) \in (\R_{>0})^\N$ such that $x_n \to[n]x$ and $c_n (x_n - x) \to[n] u$.
One has $0 \leq c_n (\phi(x_n) - \phi(x)) = \innerp{d\phi(x)}{c_n (x_n - x)} + o(c_n (x_n - x))$.
Therefore, $\innerp{\xi}{u} \geq 0$.
Therefore, $(x, \xi) \in C(A)^\circ$.

\spa
(ii)
Conversely, let $(x, \xi) \in C(A)^\circ$.
Fix a chart centered at~$x$, with image in $\R^m$.
If $m = 0$, the result is trivial, so we suppose $m > 0$.
Denote by $\norm{-}$ the Euclidean norm on $\R^m$.
If $\xi = 0$, then set $\phi \coloneqq \norm{-}^2 \colon \R^m \to \R$.
Now, suppose $\xi \neq 0$.
We can suppose that $\xi = (-1, 0, \dots, 0)$.

Let $n \in \N_{> 0}$.
We will prove that there exists $\alpha_n > 0$ such that, setting $E_n \coloneqq \{ (u_1, u') \in \R \times \R^{m-1} \mid \frac1n \norm{u'} \leq u_1 \leq \alpha_n \} \setminus \{0\}$, one has $E_n \cap A = \varnothing$.
Indeed, suppose that there is a sequence of points $(x_m) = \big( (x_{m, 1}, x'_m) \big) \in A^\N$ such that $x_{m, 1} \geq \frac1n \norm{x'_m} > 0$ for all $m \in \N$ and $x_{m, 1} \to[m] 0$.
Then, $x_m \to[m] 0$, so up to extracting a subsequence, we can suppose that $\frac{x_m}{\norm{x_m}} \to[m] u \in \unp{C}_0(A)$.
Since $\xi \in C_x(A)^\circ$, one has $\innerp{\xi}{u} \geq 0$.
But, writing $u = (u_1, u')$, one has $\innerp{p}{u} = - u_1 \leq -\frac1n \norm{u'} \leq 0$, and actually, $\innerp{\xi}{u} < 0$, since otherwise, one would have $u_1 = u' = 0$, so $u = 0$.
This is a contradiction.
This proves the existence of the desired $\alpha_n > 0$.
We can suppose that for $n \in \N_{> 0}$, one has $(n+1)\alpha_{n+2} < n \alpha_{n+1}$, and in particular the sequence $(\alpha_n)$ decreases to 0.

We can assume that the broken line connecting the points $A_n = (n \alpha_{n+1}, \alpha_{n+1}) \in \R^2$ defines a convex function $f \colon \intervCO{0, \alpha_2} \to \R$.
Indeed, each line $(A_n A_{n+1})$ crosses the $x$-axis at some $\lambda_n > 0$, and one can ensure recursively that $(n+2) \alpha_{n+2} \leq \lambda_n$.
It is elementary to construct a function $\psi \in C^1\big( \intervO{-\alpha_2, \alpha_2} \big)$ with $\psi(t) \geq f(\abs{t})$ for $t \in \intervO{-\alpha_2, \alpha_2}$ and $\psi(0) = \psi'(0) = 0$.
For instance, construct smooth functions with graphs in the triangles formed by $A_{n+1}$ and the midpoints of $[A_nA_{n+1}]$ and $[A_{n+1}A_{n+2}]$, which connect in a $C^1$ fashion.
Finally, setting $\phi \colon B(0, \alpha_2) \to \R, (x_1, x') \mapsto \psi(\norm{x'}) - x_1$, one has $\phi(0) = 0$ and $d\phi(0) = \xi$ and $\phi(A \cap B(0, \alpha_2)) \subseteq \R_{\geq 0}$.
\end{proof}

\begin{remark}
In view of Lemma~\ref{lem:SSstrictDiff}, we will only need in the applications of the lemma that the function $\phi$ constructed in the proof is strictly differentiable at~$x$, so we could have simply defined $\psi(t) \coloneqq f(\abs{t})$.
\end{remark}

\begin{remark}\label{rmk:cone}
In the right-hand side in the lemma, we cannot require that $\phi \in C^2(U)$.
Consider for example the graph $\Gamma_f$ of the function $f \colon \R \to \R, x \mapsto \abs{x}^{3/2}$.
Then, $(0, 1) \in C_{(0, 0)}(\Gamma_f)^\circ$, but the Taylor--Lagrange formula shows that for any neighborhood $U \subseteq \R$ of 0, there cannot be a function $\phi \in C^2(U)$ with $\phi(0) = 0$ and $d\phi(0) = 0$ and $f|_U \leq \phi$.
As a consequence, we see that in the definition of the propagation set of a sheaf, the differentiability class of the test functions matters, as opposed to the situation for the microsupport.
Namely, in this example, $(0, 1) \in \Ppg_{(0, 0)}^2(\Gamma_f) \setminus \Ppg_{(0, 0)}^1(\Gamma_f)$, where the superscript denotes the differentiability class of the test functions.
\end{remark}

\begin{proof}[Proof of the proposition]
(i)
Upper bound.
Since $N(Z)$ is open, its polar cone is closed, so it is enough to prove that $T^*M \setminus \Ppg(Z) \subseteq N(Z)^\circ$.
If $x \notin Z$, then $\muSupp_x(Z) = \varnothing$, so the inclusion is true.
Let $x \in Z$ and $\xi \in T^*_xM \setminus N_x(Z)^\circ$.
Let $\phi \in C^\infty(U)$ with $U$ an open neighborhood of~$x$ be such that $\phi(x) = 0$ and $d\phi(x) = \xi$.
We will prove that the natural morphism $(\cor_Z)_x \longrightarrow (\rsect_{Z \cap \{ \phi < 0 \}} \cor_Z)_x$ is an isomorphism.
Then, by Lemma~\ref{lem:criterSet}, we will have $\xi \in \Ppg_x(Z)$.

By Lemma~\ref{lem:homotopy}, there exist an open neighborhood $V \subseteq U$ of~$x$ and $\alpha > 0$ and for all $\beta \in (-\alpha, 0)$, a deformation retraction $h_\beta \colon V \times [0, 1] \to V$ of $V$ onto $V \cap \{ \phi \leq \beta \}$ such that $h_\beta( (V \cap Z) \times [0, 1]) \subseteq V \cap Z$.

This proves that the inclusions $i_{\beta, V} \colon Z \cap V \cap \{ \phi \leq \beta \} \hookrightarrow Z \cap V$ induce isomorphisms ${i_{\beta, V}}^\sharp \colon \rsect(Z \cap V; \cor_M) \isoto \rsect(Z \cap V \cap \{ \phi \leq \beta \}; \cor_M)$.
One has $\rsect(Z \cap V \cap \{ \phi < 0 \}; \cor_M) = \lim_{\beta \to[<] 0} \rsect(Z \cap V \cap \{ \phi \leq \beta \}; \cor_M)$, and if $\beta < \beta'$, then $i_{\beta', V} \circ i_{\beta, V} = i_{\beta, V}$.
Therefore, the $i_{\beta, V}$'s induce an isomorphism which is ${i_V}^\sharp \colon \rsect(Z \cap V; \cor_M) \isoto \rsect(Z \cap V \cap \{ \phi < 0 \}; \cor_M)$.

Since in Lemma~\ref{lem:homotopy}, the homotopies $h_\beta$ can be restricted to arbitrarily small neighborhoods $V$ of~$x$, this proves that the natural morphism $(\cor_Z)_x \longrightarrow (\rsect_{Z \cap \{ \phi < 0 \}} \cor_Z)_x$ is an isomorphism.

\spa
(ii)
Lower bound.
Let $x \in Z$ and $\xi \in C_x(Z)^\circ$.
Then, by Lemma~\ref{lem:cone}, there exist an open neighborhood $U$ of~$x$ and a function $\phi \in C^1(U)$ such that $\phi(x) = 0$ and $d\phi(x) = \xi$ and $\phi(Z \cap U) \subseteq \R_{\geq 0}$.
Therefore, $Z \cap U \cap \{ \phi < 0 \} = \varnothing$, so $(\rsect_{Z \cap \{ \phi < 0 \}} \cor_Z)_x = 0 \not\simeq (\cor_Z)_x = \cor$, so by Lemma~\ref{lem:criterSet}, $\xi \notin \Ppg_x(Z)$.
\end{proof}

\begin{remark}
The proof actually shows that $\opb{\pi_M}(Z) \cap C(Z)^\circ \subseteq T^*M \setminus \Ppg^1(Z)$.
\end{remark}

\begin{remark}
Taking polars of the inclusions~\eqref{eq:SSbounds}, we see that the fiberwise closure of $N(Z)$ is contained in $\muSupp(Z)^\circ$, which is contained in $C(Z)$, but the inclusion $\closure{N(Z)} \subseteq \muSupp(Z)^\circ$ need not hold: if $x \in \partial(\interior{Z})$, then $\closure{N(Z)}_x = T_xM$ but $\muSupp_x(Z) \supsetneq \{0\}$.
\end{remark}

\section{Whitney cones of maps}
\label{sec:whitney}

In this section, we study the Whitney cones of continuous maps.
The results are elementary and their proofs do not require any sheaf theory.
After the definitions and general properties, we give characterizations of Lipschitz continuity and strict differentiability in terms of the Whitney cone and extend these characterizations to $C^0$-submanifolds.
Then, we prove a chain rule involving Whitney cones.
Finally, we introduce directional Dini derivatives and relate them to the Whitney cone, which will be used to prove an upper bound on the conormal of a continuous map in Section~\ref{sec:general}.

\subsection{Definitions and first properties}

The definitions of the strict tangent cone $N(A)$, the tangent cone $C(A)$, and the Whitney cone $C(A, B)$ of subsets $A, B$ of a manifold, and their main properties, are recalled in Appendix~\ref{app:cones}.
If $f \colon M \to N$ is a continuous map between manifolds, we define its \define{Whitney cone} as the Whitney cone of its graph, that is,
\begin{equation}
\Whit{f} \coloneqq C(\Gamma_f, \Gamma_f).
\end{equation}
This is a closed symmetric cone in $T(M \times N)$.

If furthermore $x \in M$, we set
\begin{equation}
\Whit[x]{f} \coloneqq C_{(x, f(x))} (\Gamma_f, \Gamma_f).
\end{equation}
This is a closed cone in $T_{(x, f(x))}(M \times N)$.
We also set $C_x(\Gamma_f) \coloneqq C_{(x, f(x))}(\Gamma_f)$ and similarly for $N_x(\Gamma^+_f)$.

If furthermore $u \in T_xM$, we define the following analogue of the directional derivatives:
\begin{equation}\label{eq:directionalWhitney}
\Whit[x][u]{f} \coloneqq \Whit[x]{f} \cap \big( \R_{\geq 0} u \times T_{f(x)}N \big).
\end{equation}

\subsection{Characterizations of Lipschitz continuity and strict differentiability}

We begin with two straightforward lemmas whose proofs are left to the reader.
For notions related to Lipschitz continuity and strict differentiability, we refer to Appendix~\ref{app:strictDiff}.
In particular, the notation $\Lip_x(f)$ is defined in Definition~\ref{def:lip}(3).

\begin{lemma}\label{lem:C-LipCone}
Let $f \colon \V \to \W$ be a map between normed vector spaces.
Let $x \in \V$.
Then,
\begin{equation}
\Lip_x(f) = \min \Big\{ C \in \overline{\R} \mathrel{\Big|} \Whit[x]{f} \subseteq \{ (v, w) \in \V \times \W \mid \norm{w} \leq C \norm{v} \} \Big\}.
\end{equation}
\end{lemma}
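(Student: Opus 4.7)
The plan is to prove both inequalities between $\Lip_x(f)$ and $\min S(f)$, where $S(f)$ denotes the set on the right-hand side, while simultaneously showing that the infimum is attained.

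For $\Lip_x(f)\in S(f)$ (that is, $\Lip_x(f)\geq\min S(f)$), I would fix any finite $C>\Lip_x(f)$. By the definition of $\Lip_x(f)$ recalled in Definition~\ref{def:lip}(3), there is a neighborhood $U$ of $x$ on which $\norm{f(y)-f(z)}\leq C\norm{y-z}$ for all $y,z\in U$. Given $(v,w)\in\Whit[x]{f}$, pick sequences $x_n,y_n\to x$ and $c_n>0$ with $c_n(x_n-y_n)\to v$ and $c_n(f(x_n)-f(y_n))\to w$; for $n$ large enough both $x_n$ and $y_n$ lie in $U$, so $\norm{c_n(f(x_n)-f(y_n))}\leq C\norm{c_n(x_n-y_n)}$, and passing to the limit gives $\norm{w}\leq C\norm{v}$. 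Letting $C\downarrow\Lip_x(f)$ yields $\norm{w}\leq\Lip_x(f)\norm{v}$; the case $\Lip_x(f)=\infty$ is trivial under the usual convention $\infty\cdot 0=\infty$.

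For the converse $\min S(f)\leq\Lip_x(f)$, I would fix any $C<\Lip_x(f)$ and exhibit a point of $\Whit[x]{f}$ violating the bound. Pick $L$ with $C<L\leq\Lip_x(f)$; the definition of $\Lip_x(f)$ furnishes sequences $x_n,y_n\to x$ with $x_n\neq y_n$ and $\norm{f(x_n)-f(y_n)}>L\norm{x_n-y_n}$. Two rescalings are available: (a) take $c_n=1/\norm{x_n-y_n}$, so that $\norm{c_n(x_n-y_n)}=1$ and $\norm{c_n(f(x_n)-f(y_n))}>L$; or (b) take $c_n=1/\norm{f(x_n)-f(y_n)}$, so that $\norm{c_n(f(x_n)-f(y_n))}=1$ and $\norm{c_n(x_n-y_n)}<1/L$. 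In case (a), if the second component stays bounded, extract subsequences so that $c_n(x_n-y_n)\to v$ with $\norm{v}=1$ and $c_n(f(x_n)-f(y_n))\to w$ with $\norm{w}\geq L>C=C\norm{v}$. If (a) diverges, then along a subsequence the ratios $\norm{f(x_n)-f(y_n)}/\norm{x_n-y_n}$ blow up, and (b) produces $(0,w)\in\Whit[x]{f}$ with $\norm{w}=1$ and $v=0$, so $\norm{w}=1>0=C\norm{v}$ for finite $C$. Either way $C\notin S(f)$.

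The main subtlety will be the renormalization in the second step: the natural unit-$v$ rescaling need not produce a limit in $\Whit[x]{f}$ when the Lipschitz ratios blow up, and the correct witness is then a \emph{vertical} vector $(0,w)$ with $w\neq 0$, obtained from the dual unit-$w$ rescaling. This dichotomy is precisely the small instance of the microlocal characterization of Lipschitz continuity announced in the Introduction: Lipschitzness at $x$ forbids nonzero vertical vectors in $\Whit[x]{f}$.
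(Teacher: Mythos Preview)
Your argument is correct and is exactly the kind of direct verification the paper has in mind; note that the paper does not give a proof of this lemma but explicitly leaves it to the reader, so there is nothing to compare against. One small remark: in the second direction you tacitly use that $f(x_n)\to f(x)$ and $f(y_n)\to f(x)$ in order to conclude that the limit vector lies in $\Whit[x]{f}=C_{(x,f(x))}(\Gamma_f,\Gamma_f)$; this is harmless since the notation $\Whit[x]{f}$ is introduced in the paper only for continuous $f$, but it is worth making the use of continuity explicit (without it the statement can fail).
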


\begin{lemma}\label{lem:LipSubm}
If $f \colon M \to N$ is a continuous map between manifolds that is pointwise Lipschitz at $x \in M$, then $p_M(\Whit[x]{f}) = T_xM$.
\end{lemma}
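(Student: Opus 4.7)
The inclusion $p_M(\Whit[x]{f}) \subseteq T_xM$ is trivial, so the plan is to establish the reverse inclusion by producing, for each $u \in T_xM$, some $w \in T_{f(x)}N$ with $(u,w) \in \Whit[x]{f}$.

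First I would reduce to a local problem by fixing charts centered at $x$ and $f(x)$, so that $f$ becomes a continuous map between open subsets of finite-dimensional normed vector spaces $\V$ and $\W$, with $x = 0$ and $f(x) = 0$. Under the standard identifications $T_xM \simeq \V$ and $T_{f(x)}N \simeq \W$, the Whitney cone $\Whit[x]{f} = C_{(0,0)}(\Gamma_f, \Gamma_f)$ admits the following sequential description (recalled in Appendix~\ref{app:cones}): a pair $(u,w) \in \V \times \W$ lies in $\Whit[x]{f}$ iff there exist sequences $(x_n), (x'_n) \in \V$ converging to $0$ and $(c_n) \in (\R_{>0})^\N$ with $c_n(x_n - x'_n) \to u$ and $c_n(f(x_n) - f(x'_n)) \to w$.

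The construction is direct. Given $u \in \V$, choose any sequence $t_n \downarrow 0$ in $\R_{>0}$ and set $x_n \coloneqq t_n u$, $x'_n \coloneqq 0$, $c_n \coloneqq 1/t_n$. Then $x_n, x'_n \to 0$ and $c_n(x_n - x'_n) = u$ for every $n$, so the tangential component is handled automatically. The assumption that $f$ is pointwise Lipschitz at $x$ means $\Lip_x(f) < \infty$, i.e.\ $\limsup_{y \to x} \|f(y) - f(x)\|/\|y - x\| < \infty$ (see Definition~\ref{def:lip}(3)). Therefore, for $n$ large enough, $\|f(x_n) - f(0)\| \leq (\Lip_x(f) + 1)\|x_n\| = (\Lip_x(f) + 1)\,t_n\,\|u\|$, so the sequence $c_n(f(x_n) - f(x'_n)) = (f(x_n) - f(0))/t_n$ lies in a bounded subset of $\W$.

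Since $\W$ is finite-dimensional, this bounded sequence has a convergent subsequence, whose limit $w \in \W$ satisfies $(u,w) \in \Whit[x]{f}$ along that subsequence (after passing to it and keeping the same $c_n$ and $x_n$, $x'_n$). Hence $u \in p_M(\Whit[x]{f})$, proving the desired equality. There is no real obstacle here: the proof is essentially a consequence of the sequential definition of the Whitney cone combined with the finite-dimensionality of $\W$ and the very definition of pointwise Lipschitz continuity; the only care needed is to phrase the pointwise Lipschitz bound as a $\limsup$ rather than a uniform inequality.
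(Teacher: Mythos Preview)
Your argument is correct and is exactly the straightforward direct verification the paper has in mind (the paper explicitly leaves the proof to the reader). One small notational slip: in the paper's conventions, pointwise Lipschitz at~$x$ corresponds to $\Lippw_x(f) < \infty$, not $\Lip_x(f) < \infty$ (see Definition~\ref{def:lip}(3)); the inequality you use is the correct one regardless.
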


The following proposition provides geometric characterizations of Lipschitz continuity and strict differentiability.

\begin{proposition}\label{prop:LipCone}
Let $f \colon M \to N$ be a continuous map between manifolds and let $x \in M$.
\begin{enumerate}
\item
The map $f$ is Lipschitz on a neighborhood of~$x$ if and only if $\Whit[x][0]{f} = \{0\}$.
\item
The map $f$ is strictly differentiable at~$x$ if and only if $\Whit[x][0]{f} = \{0\}$ and $\Whit[x]{f}$ is contained in a $(\dim_x M)$-dimensional vector subspace, and in that case, $\Whit[x]{f} = \Gamma_{T_x f}$.
\end{enumerate}
\end{proposition}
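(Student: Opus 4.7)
The plan is to treat the two items in turn, reducing as much as possible to standard sequential arguments by working in charts centered at $x$ and $f(x)$, so that we may assume $M = \V$ and $N = \W$ are normed vector spaces. For item~(1), the direction ($\Rightarrow$) is immediate from Lemma~\ref{lem:C-LipCone}: if $f$ is Lipschitz with some constant $C$ on a neighborhood of~$x$, then $\Whit[x]{f} \subseteq \{(v,w) \mid \norm{w} \leq C \norm{v}\}$, so intersecting with $\{0\} \times T_{f(x)}N$ forces $w = 0$. For the direction ($\Leftarrow$), I argue contrapositively: assuming $f$ is not Lipschitz on any neighborhood of~$x$, I pick sequences $y_n, z_n \to x$ with $y_n \neq z_n$ and $\norm{f(y_n) - f(z_n)}/\norm{y_n - z_n} \to \infty$, and normalize by $c_n \coloneqq 1/\norm{f(y_n) - f(z_n)}$. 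Then $c_n \norm{y_n - z_n} \to 0$ while $\norm{c_n(f(y_n) - f(z_n))} = 1$, so by compactness of the unit sphere in $\W$ I extract a subsequence along which $c_n\bigl((y_n, f(y_n)) - (z_n, f(z_n))\bigr) \to (0, w)$ with $\norm{w} = 1$, giving a nonzero vector in $\Whit[x][0]{f}$.

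For item~(2) direction ($\Rightarrow$), strict differentiability at~$x$ implies $f$ is Lipschitz near~$x$ (from the appendix), so by~(1) we have $\Whit[x][0]{f} = \{0\}$. To show $\Whit[x]{f} \subseteq \Gamma_{T_x f}$, I pick $(u, w) \in \Whit[x]{f}$ realized by sequences $y_n, z_n \to x$ and scalars $c_n > 0$, and use strict differentiability to write $f(y_n) - f(z_n) = T_x f(y_n - z_n) + \norm{y_n - z_n} \varepsilon_n$ with $\varepsilon_n \to 0$; multiplying by $c_n$ and letting $n \to \infty$ forces $w = T_x f(u)$, since $c_n \norm{y_n - z_n}$ converges to $\norm{u}$ and multiplies the vanishing $\varepsilon_n$, while $c_n T_x f(y_n - z_n) \to T_x f(u)$ by linearity. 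Combined with Lemma~\ref{lem:LipSubm}, which gives $p_M(\Whit[x]{f}) = T_x M$, this yields the reverse inclusion $\Gamma_{T_x f} \subseteq \Whit[x]{f}$, hence equality. For the converse direction, assume $\Whit[x][0]{f} = \{0\}$ and that $\Whit[x]{f}$ is contained in a vector subspace $V$ of dimension $\dim_x M$. Then~(1) gives that $f$ is Lipschitz near~$x$, Lemma~\ref{lem:LipSubm} gives $p_M(\Whit[x]{f}) = T_x M$, and a dimension count makes $p_M|_V \colon V \to T_x M$ a linear isomorphism, so $V = \Gamma_L$ for a unique linear map $L \colon T_x M \to T_{f(x)} N$; the surjectivity of $p_M$ on $\Whit[x]{f}$ combined with $\Whit[x]{f} \subseteq \Gamma_L$ then forces $\Whit[x]{f} = \Gamma_L$. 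To conclude that $f$ is strictly differentiable at~$x$ with derivative $L$, I argue by contradiction: if not, I extract sequences $y_n, z_n \to x$ with $\norm{f(y_n) - f(z_n) - L(y_n - z_n)}/\norm{y_n - z_n} \geq \varepsilon > 0$, set $c_n \coloneqq 1/\norm{y_n - z_n}$, and use the Lipschitz bound on $f$ to extract a further subsequence with $c_n(y_n - z_n) \to u$ and $c_n(f(y_n) - f(z_n)) \to w$; the limit $(u, w)$ lies in $\Whit[x]{f} = \Gamma_L$, so $w = Lu$, but this contradicts the lower bound $\varepsilon$.

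The main delicate point in both halves is choosing the normalization $c_n$ correctly so that, after passing to a subsequence, the relevant obstruction survives in the limit: in~(1) one normalizes by the larger (vertical) difference to extract a nonzero vertical limit from the non-Lipschitz behaviour, whereas in~(2) one normalizes by the horizontal difference and uses the Lipschitz bound to control the vertical component. Once this compactness step is set up correctly, the rest is routine bookkeeping with linear algebra and sequence extractions.
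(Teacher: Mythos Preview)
Your proof is correct and follows essentially the same route as the paper's. The only difference is packaging: the paper compresses both the ``non-Lipschitz $\Rightarrow$ nonzero vertical vector'' step in (1) and the ``$\Whit[x]{f}=\Gamma_L \Rightarrow$ strictly differentiable'' step in (2) into two applications of Lemma~\ref{lem:C-LipCone} (the second time applied to $f-L$, using that $\Whit[x]{f-L}$ is the shear of $\Whit[x]{f}$, hence equals $T_xM\times\{0\}$, so $\Lip_x(f-L)=0$), whereas you unpack both into explicit sequence-and-compactness arguments with hand-chosen normalizations $c_n$; the underlying ideas are identical.
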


\begin{proof}
(i)
Fixing charts at~$x$ and $f(x)$, one has $\Whit[x][0]{f} = \{0\}$ if and only if there exists $C \in \R$ such that $\Whit[x]{f} \subseteq \{ (u, v) \in \R^{m+n} \mid \norm{v} \leq C \norm{u} \}$.
Therefore, the result follows from Lemma~\ref{lem:C-LipCone}.

\spa
(ii)
Necessity is obvious.
We prove sufficiency.
We deduce from $\Whit[x][0]{f} = \{0\}$ and~(i) that $f$ is Lipschitz at~$x$.
It follows from Lemma~\ref{lem:LipSubm} that $p_M(\Whit[x]{f}) = T_xM$.
Therefore, $\Whit[x]{f}$ is a $(\dim_x M)$-dimensional vector space and is the graph of the linear map $L \coloneqq p_N \circ \opb{(p_M|_{\Whit[x]{f}})} \colon T_xM \to T_{f(x)}N$.
From Lemma~\ref{lem:C-LipCone} applied to $f - L$, we see that $f$ is strictly differentiable at~$x$ with $T_xf = L$.
\end{proof}

These characterizations extend to $C^0$-submanifolds as follows.

\begin{proposition}\label{prop:submanWhit}
Let $M$ be a $C^0$-submanifold of a manifold $P$ and let $x \in M$.
\begin{enumerate}
\item
Let $F$ be a $(\codim_x M)$-dimensional vector subspace of $T_x P$.
The $C^0$-submanifold $M$ is locally at~$x$ the graph of a Lipschitz map with codomain tangent to $F$ if and only if $C_x(M, M) \cap F = \{0\}$.
\item
The $C^0$-submanifold $M$ is locally at~$x$ the graph of a map $f$ strictly differentiable at~$x$ if and only if $C_x(M, M)$ is contained in a $(\dim_x M)$-dimensional vector subspace of $T_x P$, and in that case, $C_x(M, M) = \Gamma_{T_xf}$.
\end{enumerate}
\end{proposition}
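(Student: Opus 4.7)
The plan is to reduce to Proposition~\ref{prop:LipCone} by realizing $M$ locally as a graph. I would first fix a chart at~$x$ identifying a neighborhood of~$x$ in~$P$ with an open neighborhood of~$0$ in $T_x P$, sending $x \mapsto 0$. Since the Whitney cone transforms linearly under the tangent map of a $C^1$-diffeomorphism, this identification carries $C_x(M, M)$ onto the Whitney cone at~$0$ of the image of~$M$, so I may assume $P = T_x P$ and $x = 0$ throughout.

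Necessity in both parts is a direct consequence of Proposition~\ref{prop:LipCone}. If $M$ coincides near~$0$ with the graph of a map $f \colon U \to F$ defined on a neighborhood $U$ of~$0$ in some complement $E$ of~$F$ (with $f(0) = 0$), then under the identification $E \oplus F \simeq E \times F$ one has $C_0(M, M) = \Whit[0]{f}$. Proposition~\ref{prop:LipCone}(1) then translates ``$f$ Lipschitz near $0$'' into $\Whit[0][0]{f} = \{0\}$, that is, $C_0(M, M) \cap F = \{0\}$; and Proposition~\ref{prop:LipCone}(2) translates ``$f$ strictly differentiable at~$0$'' into the condition that $\Whit[0]{f}$ is contained in a $(\dim_0 M)$-dimensional vector subspace, together with the equality $\Whit[0]{f} = \Gamma_{T_0 f}$.

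For sufficiency of~(1), assume $C_0(M, M) \cap F = \{0\}$ and choose a complement $E$ of~$F$ with associated projections $p_E, p_F$. The key technical step is to show that $p_E|_M$ is injective on some neighborhood of~$0$: otherwise one could extract sequences $(y_n), (z_n) \in M^\N$ with $y_n \neq z_n$, $y_n, z_n \to 0$, and $y_n - z_n \in F \setminus \{0\}$, and after further extraction the unit vectors $(y_n - z_n)/\norm{y_n - z_n}$ would converge to a unit vector in $F \cap C_0(M, M)$, a contradiction. Since $M$ is locally flatly embedded and $\dim E = \codim F = \dim_0 M$, invariance of domain applied to the continuous injection $p_E|_{M \cap V}$, for~$V$ a small enough neighborhood of~$0$ in~$P$, gives that $p_E|_{M \cap V}$ is a homeomorphism onto an open neighborhood $U$ of~$0$ in~$E$. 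Hence $M \cap V = \Gamma_f$ for the continuous map $f \coloneqq p_F \fcomp (p_E|_{M \cap V})^{-1} \colon U \to F$. Since $\Whit[0]{f} = C_0(M, M)$, the assumption becomes $\Whit[0][0]{f} = \{0\}$, so Proposition~\ref{prop:LipCone}(1) concludes.

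For sufficiency of~(2), the hypothesis produces a $(\dim_0 M)$-dimensional subspace $V \supseteq C_0(M, M)$ in $T_0 P$; picking any complement $F$ of~$V$ (of dimension $\codim_0 M$) gives $C_0(M, M) \cap F \subseteq V \cap F = \{0\}$, so part~(1) produces a Lipschitz graph description $f \colon U \to F$. Then $\Whit[0]{f} = C_0(M, M) \subseteq V$ is contained in a subspace of dimension $\dim_0 M$, so Proposition~\ref{prop:LipCone}(2) upgrades $f$ to strict differentiability at~$0$ and delivers $C_0(M, M) = \Gamma_{T_0 f}$. The main obstacle is the local-injectivity argument combined with invariance of domain; this is where the flat-embeddedness of the $C^0$-submanifold is crucially used (to match the topological dimension of~$M$ with $\dim E$).
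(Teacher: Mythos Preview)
Your proof is correct and follows essentially the same route as the paper: reduce to a chart, use the Whitney-cone hypothesis to show the projection onto a complement of~$F$ is locally injective, apply invariance of domain to obtain a graph description, and then invoke Proposition~\ref{prop:LipCone}. The paper phrases the invariance-of-domain step via a local flat parametrization $\tilde f \colon \R^m \to \R^{m+n}$ of~$M$ (composing with the projection to get a continuous injection $\R^m \to \R^m$), whereas you apply it directly to $p_E|_{M\cap V}$ using that $M$ is a topological $m$-manifold; these are equivalent.
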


The condition that $M$ be locally at~$x$ a graph means that there exist an open neighborhood $U$ of~$x$ and a chart $\phi = (\phi_M, \phi_N) \colon U \isoto U_M \times U_N \subseteq \R^m \times \R^n$ and a map $f \colon U_M \to U_N$ such that $\phi(M \cap U) = \Gamma_f$.
The condition ``$f$ strictly differentiable at~$x$'' then means ``$f$ strictly differentiable at $\phi_M(x)$''.
One identifies $C_x(M, M)$ with its image in the chart $T_x \phi(C_x(M, M)) = C_{\phi(x)}(\phi(M \cap U), \phi(M \cap U))$.
That the ``codomain of $f$ is tangent to $F$'' means that $T_x\phi (F) = \{0\} \times \R^n$.

\begin{proof}
In both cases, necessity is straightforward.
As for sufficiency, fix a chart~$\phi$ of $P$ at~$x$.
We set $m \coloneqq \dim_x M$ and $n \coloneqq \codim_x M$.
For the first equivalence of the proposition, $F$ is given, and for the second equivalence, we let $F$ be a complement of a $(\dim_x M)$-dimensional vector subspace of $T_x P$ in which $C_x(M, M)$ is included.
We can suppose that the chart~$\phi$ is of the form $\phi = (\phi_M, \phi_N) \colon U \isoto U_M \times U_N \subseteq \R^m \times \R^n$ with $T_x\phi(F) = \{0\} \times \R^n$, hence $T_x \phi (C_x(M, M)) \cap (\{0\} \times \R^n) = \{0\}$.
Being a $C^0$-submanifold, $M$ is locally at~$x$ the image of a continuous injection $\tilde{f} \colon \R^m \to \R^{m+n}$ with $\tilde{f}(0) = x$ and $\tilde{f}(\R^m) = M \cap U'$ for $U' \subseteq P$ an open neighborhood of~$x$.
We can suppose that $U' = U$.

The relation $T_x\phi (C_x(M, M)) \cap (\{0\} \times \R^n) = \{0\}$ implies that $\phi_M|_{M \cap U} \colon M \cap U \to \R^m$ is injective in a neighborhood of~$x$ which we can suppose to be $\tilde{f}(\R^m) = M \cap U$.
Therefore, $\phi_M \circ \tilde{f} \colon \R^m \to \R^m$ is a continuous injection, so by the theorem of invariance of domain, it is a topological isomorphism onto its image, say $V$, which is open in $\R^m$.

Then, define $f \coloneqq (\phi_N \circ \tilde{f}) \circ \opb{(\phi_M \circ \tilde{f})} = \phi_N \circ \opb{(\phi_M|_{M \cap U})} \colon V \to \R^n$.
In particular, $\phi_N = f \circ \phi_M$ on $\tilde{f}(\R^m) = M \cap U$.
Therefore, $\phi(M \cap U) = \Gamma_f$.
Therefore, $C_x(M, M) = \Whit[\phi_M(x)]{f}$.
By Proposition~\ref{prop:LipCone}, the hypothesis of Item~1 (resp.\ Item~2) on $C_x(M, M)$ implies that $f$ is Lipschitz (resp.\ strictly differentiable at~0 with derivative 0).
\end{proof}

\begin{remark}
The analogous statements with the tangent cone $C_x(M)$ and pointwise Lipschitz continuity (resp.\ differentiability) are false, as $M = \Gamma_f$ where $f = \sqrt{\abs{-}} \colon \R \to \R$ shows: one has $C_0(\Gamma_f) = \{0\} \times \R_{\geq 0}$, which intersects $\R \times \{0\}$ trivially.
Another example is any wild enough curve contained in $\{ (x, y) \in \R^2 \mid \abs{y} \leq x^2 \}$.
Of course, one still has one implication.

Note that we did not assume that the embedding $\tilde{f}$ in the proof was locally flat.
The local flatness of $M$ at~$x$ is a consequence of the hypothesis on $C_x(M, M)$.

If $\codim_x M = 1$, then the condition of Item~1 is equivalent to $C_x(M, M) \neq T_x P$.
\end{remark}

\begin{corollary}
Let $f \colon M \to N$ be a continuous map between manifolds and let $x \in M$.
If $\Whit[x]{f}$ is contained in a $(\dim_x M)$-dimensional vector subspace, then it is equal to it.
\end{corollary}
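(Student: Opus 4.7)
The plan is to apply Proposition~\ref{prop:submanWhit}(2) to the graph $\Gamma_f$, viewed as a $C^0$-submanifold of $M \times N$. First I would verify that $\Gamma_f$ really is a $C^0$-submanifold, that is, locally flatly embedded: working in any product chart $(\phi_M, \phi_N)$ of $M \times N$ at $(x, f(x))$, the self-homeomorphism $(u, v) \mapsto \bigl(u, v - (\phi_N \fcomp f \fcomp \opb{\phi_M})(u)\bigr)$ of the chart domain flattens $\Gamma_f$ onto an open piece of $\R^{\dim_x M} \times \{0\}$. Only the continuity of~$f$ is used here. Hence $\Gamma_f$ is a $C^0$-submanifold of $M \times N$ of dimension $\dim_x M$ at $(x, f(x))$.

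Once this is in place, the hypothesis of the corollary reads exactly as the hypothesis of Proposition~\ref{prop:submanWhit}(2) applied to the $C^0$-submanifold $\Gamma_f$ at $(x, f(x))$: namely that $C_{(x, f(x))}(\Gamma_f, \Gamma_f) = \Whit[x]{f}$ is contained in a vector subspace of $T_{(x, f(x))}(M \times N)$ of dimension equal to $\dim_{(x, f(x))} \Gamma_f$. The ``in that case'' clause of that proposition then yields that $C_{(x, f(x))}(\Gamma_f, \Gamma_f)$ is itself the graph of a linear map, hence a vector subspace of dimension $\dim_x M$. Combined with the containment in a vector subspace of the same dimension given by the hypothesis, equality follows by a dimension count.

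There is no real obstacle here: the corollary is essentially a translation of the ``in that case'' clause of Proposition~\ref{prop:submanWhit}(2) into the language of the Whitney cone of a map. The only minor point worth flagging is the local flatness of $\Gamma_f$, but this is automatic from the continuity of~$f$ via the explicit chart change above.
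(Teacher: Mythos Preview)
Your proof is correct and matches the paper's intended argument: the corollary appears immediately after Proposition~\ref{prop:submanWhit} with no separate proof, so the reader is expected to deduce it from item~(2) of that proposition exactly as you do. Your verification that $\Gamma_f$ is a locally flatly embedded $C^0$-submanifold via the shearing homeomorphism $(u,v)\mapsto(u,v-(\phi_N\fcomp f\fcomp\opb{\phi_M})(u))$ is the standard one (the paper uses the same trick elsewhere, e.g.\ in the proof of Proposition~\ref{prop:epigraph}(2)).
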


In particular, if $\dim_x M > 0$, then $\Whit[x]{f}$ cannot be contained in a $(\dim_x M - 1)$-dimensional vector subspace.
Contrast this to the case of $C_x(\Gamma_f)$: if $f = \sqrt{\norm{-}} \colon \R^m \to \R$, then $C_0(\Gamma_f) = \{0\} \times \R_{\geq 0}$ is contained in a 1-dimensional vector subspace.

We end this subsection with a result proving the lower hemicontinuity of $\Whit[x]{f}$ considered as a multivalued function from $T_xM$ to $T_{f(x)}N$.

\begin{proposition}\label{prop:hemicont}
Let $f \colon M \to N$ be a Lipschitz map between manifolds and let $x \in M$.
For any $u \in T_xM$ and any open set $V \subseteq T_{f(x)} N$ such that $\Whit[x]{f} \cap (\{u\} \times V) \neq \varnothing$, there exists an open neighborhood $U \subseteq T_xM$ of $u$ such that for all $u' \in U$, one has $\Whit[x]{f} \cap (\{u'\} \times V) \neq \varnothing$.
\end{proposition}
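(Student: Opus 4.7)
The plan is to exploit the sequential definition of the Whitney cone together with a shift construction and the Lipschitz estimate. Work in charts centered at $x$ and $f(x)$, identifying neighborhoods with open subsets of $\R^m$ and $\R^n$. Fix a Lipschitz constant $L \geq 1$ for $f$ near $x$, and choose $\epsilon > 0$ with $B(v, \epsilon) \subseteq V$. I claim that $U \coloneqq B(u, \epsilon/L)$ works.

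First, produce sequences witnessing $(u, v) \in \Whit[x]{f}$: points $x_n, y_n$ tending to $x$ and scalars $c_n > 0$ with $c_n(x_n - y_n) \to u$ and $c_n(f(x_n) - f(y_n)) \to v$. If $u \neq 0$, then since $x_n - y_n \to 0$ one has $c_n \to +\infty$ automatically. If $u = 0$, the Lipschitz hypothesis forces $v = 0$ via Proposition~\ref{prop:LipCone}(1) (which asserts $\Whit[x][0]{f} = \{0\}$); in that case I replace the given sequences by $x_n \coloneqq y_n \coloneqq x$ and $c_n \coloneqq n$, which still witnesses $(0, 0) \in \Whit[x]{f}$ but now with $c_n \to +\infty$. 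Either way, we have witnesses with $c_n \to +\infty$.

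Next, perform the shift: for any $u' \in U$, set $x'_n \coloneqq x_n + (u' - u)/c_n$ and $y'_n \coloneqq y_n$. Since $c_n \to +\infty$, we have $x'_n \to x$, and $c_n(x'_n - y'_n) = c_n(x_n - y_n) + (u' - u) \to u'$. Writing
\[
c_n(f(x'_n) - f(y'_n)) = c_n(f(x_n) - f(y_n)) + c_n(f(x'_n) - f(x_n)),
\]
the first summand converges to $v$, while the second is bounded in norm by $c_n \cdot L \cdot \|(u' - u)/c_n\| = L\|u' - u\|$ by the Lipschitz estimate. Hence the sequence on the left is bounded, and any cluster point $v'$ satisfies $\|v' - v\| \leq L\|u' - u\| < \epsilon$, so $v' \in B(v, \epsilon) \subseteq V$. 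Extracting the corresponding subsequences of $(x'_n), (y'_n), (c_n)$, these witness $(u', v') \in \Whit[x]{f}$, so $\Whit[x]{f} \cap (\{u'\} \times V) \neq \varnothing$.

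The main subtlety is the degenerate case $u = 0$: when $u$ vanishes, the scalars $c_n$ coming from the original witnessing sequences need not tend to infinity, and the shift term $(u' - u)/c_n$ might then fail to vanish, so the construction breaks. This is dispatched by invoking Proposition~\ref{prop:LipCone}(1) to conclude $v = 0$, which legitimates restarting from the trivial witnessing sequence $x_n = y_n = x$, $c_n = n$ with $c_n \to +\infty$, after which the unified argument of the previous paragraph applies.
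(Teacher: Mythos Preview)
Your proof is correct and follows essentially the same approach as the paper's: shift one of the witnessing sequences by $(u'-u)/c_n$ and use the Lipschitz estimate to bound the resulting displacement in the image. You are in fact slightly more careful than the paper, which uses a sequential formulation and does not explicitly address why $y_{m,n}\to x$ (equivalently, why one may take $c_n\to\infty$); your explicit treatment of the degenerate case $u=0$ via Proposition~\ref{prop:LipCone}(1) and the trivial witnessing sequence fills that small gap.
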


\begin{proof}
Let $(u, v) \in \Whit[x]{f}$ and let $(u_n) \in (T_xM)^\N$ be a sequence converging to $u$.
We are going to construct a sequence $(v_n) \in (T_{f(x)} N)^\N$ converging to $v$ such that $(u_n, v_n) \in \Whit[x]{f}$, which proves the proposition.
Fix charts at $x$ and $f(x)$ in which $f$ is $C$-Lipschitz.
There exist sequences $x_n, y_n \to[n] x$ and $c_n > 0$ such that $c_n (y_n - x_n, f(y_n) - f(x_n)) \to[n] (u, v)$.
Set $y_{m, n} \coloneqq y_n + (u_m - u)/c_n$.
One has $c_n (y_{m, n} - x_n) \to[n] u_m$ for all $m \in \N$.

One has $\norm{c_n (f(y_{m, n}) - f(x_n)) - c_n (f(y_n) - f(x_n))} = \norm{c_n (f(y_{m, n}) - f(y_n))} \leq C \norm{u_m - u}$.
Therefore, $c_n (f(y_{m, n}) - f(x_n)) \to[m] c_n (f(y_n) - f(x_n))$ uniformly in $n$.
By Lipschitz continuity of $f$, for any $m \in \N$, the sequence $(c_n (f(y_{m, n}) - f(x_n)))_n$ has a converging subsequence, indexed by $\rho$, and we set $v_m \coloneqq \lim_n c_{\rho(n)} (f(y_{m, \rho(n)}) - f(x_{\rho(n)}))$.
Since the above convergence is uniform, one has $v_m \to[m] v$.
For any $m \in \N$, one has $c_{\rho(n)} (y_{m, \rho(n)} - x_{\rho(n)}, f(y_{m, \rho(n)}) - f(x_{\rho(n)})) \to[n] (u_m, v_m)$, so $(u_m, v_m) \in \Whit[x]{f}$.
\end{proof}

\begin{remark}
The Lipschitz continuity of $f$ is needed, as the function $\R^2 \to \R, x \mapsto (x_1)^{1/3}$ shows.
One also needs to fix the point $x \in M$.
In other words, $\Whit{f}$ is not lower hemicontinuous, as the function $\abs{-} \colon \R \to \R$ shows.
\end{remark}

\subsection{Chain rule for Whitney cones}

We begin with a ``tangent analogue'' of Proposition~\ref{prop:sskernels}.

\begin{proposition}\label{prop:Whitney-comp}
Let $M_i$ be manifolds for $i \in \{ 1, 2, 3 \}$.
Let $A_1, B_1 \subseteq M_{12}$ and $A_2, B_2 \subseteq M_{23}$.
Assume that
\begin{enumerate}
\item
$p_{13}$ is injective and proper on $\closure{\opb{p_{12}}(A_1) \cap \opb{p_{23}}(A_2)} \cup \closure{\opb{p_{12}}(B_1) \cap \opb{p_{23}}(B_2)}$,
\item
$(C(A_1, B_1) \times 0_3) \cap (0_1 \times C(A_2, B_2)) \subseteq 0_{123}$.
\end{enumerate}
Then,
\begin{equation}
C(A_1 \comp A_2, B_1 \comp B_2) \subseteq C(A_1, B_1) \comp C(A_2, B_2).
\end{equation}
The reverse inclusion holds if $A_2 = B_2$ is the graph of a $C^1$ map $f$ (without the above two assumptions); if furthermore $M_1 = \{*\}$, it reads $Tf (C(A_1, B_1)) \subseteq C(f(A_1), f(B_1))$.
\end{proposition}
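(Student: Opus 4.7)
The plan is to prove both inclusions pointwise by reducing to sequences in local charts, as the Whitney cone $C(A, B)$ is by definition the set of $(x, v)$ arising as limits $c_n(b_n - a_n) \to v$ with $a_n \in A$, $b_n \in B$ converging to~$x$ and $c_n \geq 0$ (see Appendix~\ref{app:cones}).

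For the forward inclusion, fix $(v_1, v_3) \in C_{(x_1, x_3)}(A_1 \comp A_2, B_1 \comp B_2)$. Unfolding the definition, there are sequences $(a^n_1, a^n_3) \in A_1 \comp A_2$ and $(b^n_1, b^n_3) \in B_1 \comp B_2$ converging to $(x_1, x_3)$ in a chart, and scalars $c_n \geq 0$, such that $c_n(b^n_1 - a^n_1, b^n_3 - a^n_3) \to (v_1, v_3)$. Choose intermediate points $a^n_2, b^n_2 \in M_2$ witnessing the two compositions. The triples $(a^n_1, a^n_2, a^n_3)$ lie in $\opb{p_{12}}(A_1) \cap \opb{p_{23}}(A_2)$ and their $p_{13}$-projections converge, so assumption~(1) (properness on the closure) lets me extract a convergent subsequence with limit in the closure; I do the same on the $B$-side. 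Injectivity of $p_{13}$ on the union of closures then forces both limits to share a common second coordinate $x_2 \in M_2$.

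It remains to extract a limit for $c_n(b^n_2 - a^n_2)$. If this sequence is bounded I extract $v_2$, giving $(v_1, v_2) \in C_{(x_1, x_2)}(A_1, B_1)$ and $(v_2, v_3) \in C_{(x_2, x_3)}(A_2, B_2)$, hence $(v_1, v_3) \in C(A_1, B_1) \comp C(A_2, B_2)$. To rule out unboundedness I argue by contradiction: if $d_n \coloneqq \norm{c_n(b^n_2 - a^n_2)} \to +\infty$ along a subsequence, then the rescaled scalars $c'_n \coloneqq c_n/d_n \to 0$ satisfy $c'_n(b^n_1 - a^n_1) \to 0$ and $c'_n(b^n_3 - a^n_3) \to 0$, while $c'_n(b^n_2 - a^n_2)$ has norm~$1$ and subconverges to a unit vector $u_2$. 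This produces $(0, u_2) \in C_{(x_1, x_2)}(A_1, B_1)$ and $(u_2, 0) \in C_{(x_2, x_3)}(A_2, B_2)$, contradicting assumption~(2). This is the step I expect to be the main obstacle: passing from mere convergence of the endpoints to genuine convergence in $TM_2$, by ruling out blow-up with the transversality hypothesis.

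For the reverse inclusion with $A_2 = B_2 = \Gamma_f$ and $f$ of class $C^1$, I use $C(\Gamma_f, \Gamma_f) = T\Gamma_f = \Gamma_{Tf}$. Given $(v_1, v_3) \in C(A_1, B_1) \comp \Gamma_{Tf}$, pick $v_2$ with $(v_1, v_2) \in C(A_1, B_1)$ and $v_3 = T_{x_2}f \cdot v_2$, together with witnessing sequences $(a^n_1, a^n_2), (b^n_1, b^n_2)$ and scalars $c_n$. Then $(a^n_1, f(a^n_2)) \in A_1 \comp \Gamma_f$ and $(b^n_1, f(b^n_2)) \in B_1 \comp \Gamma_f$ both converge to $(x_1, f(x_2))$; and by strict differentiability of $f$ at~$x_2$ (implied by $C^1$),
\begin{equation*}
c_n\big(f(b^n_2) - f(a^n_2)\big) = df(x_2) \cdot c_n(b^n_2 - a^n_2) + c_n \cdot o(\norm{b^n_2 - a^n_2}) \longrightarrow df(x_2) \cdot v_2 = v_3,
\end{equation*}
which exhibits $(v_1, v_3) \in C(A_1 \comp \Gamma_f, B_1 \comp \Gamma_f)$. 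The case $M_1 = \{*\}$ is the same argument with no first factor, yielding $Tf(C(A_1, B_1)) \subseteq C(f(A_1), f(B_1))$.
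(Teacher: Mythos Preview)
Your proof is correct and follows essentially the same approach as the paper's: the same sequence extraction via properness and injectivity, the same bounded/unbounded dichotomy for $c_n(b^n_2 - a^n_2)$ with the rescaling argument invoking assumption~(2), and the same use of strict differentiability of~$f$ for the reverse inclusion. The only cosmetic differences are notation and that you spell out the Taylor estimate explicitly where the paper leaves it implicit.
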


\begin{proof}
Let $(u^1, u^3) \in C_{(x^1, x^3)}(A_1 \comp A_2, B_1 \comp B_2)$.
There exist sequences $(x^1_n, x^3_n) \in A_1 \comp A_2$ and $(y^1_n, y^3_n) \in B_1 \comp B_2$ both converging to $(x^1, x^3)$, and a sequence $c_n > 0$ such that $c_n (y^1_n - x^1_n, y^3_n - x^3_n) \to[n] (u^1, u^3)$.
There exists a sequence $(x^2_n) \in {M_2}^\N$ such that $(x^1_n, x^2_n, x^3_n) \in \opb{p_{12}}(A_1) \cap \opb{p_{23}}(A_2)$, and similarly a sequence $(y^2_n) \in {M_2}^\N$ such that $(y^1_n, y^2_n, y^3_n) \in \opb{p_{12}}(B_1) \cap \opb{p_{23}}(B_2)$.
Since $p_{13}$ is injective and proper on $\closure{\opb{p_{12}}(A_1) \cap \opb{p_{23}}(A_2)} \cup \closure{\opb{p_{12}}(B_1) \cap \opb{p_{23}}(B_2)}$, one can extract subsequences $(x^2_{\rho(n)})$ and $(y^2_{\rho(n)})$ both converging to some $x^2 \in M_2$.

If the sequence $(c_n (y^2_n - x^2_n))$ has a converging subsequence, let $u^2 \in T_{x^2} M_2$ be a limit of a converging subsequence.
Then, $(u^1, u^2) \in C_{(x^1, x^2)}(A_1, B_1)$ and $(u^2, u^3) \in C_{(x^2, x^3)}(A_2, B_2)$.
If not, then $c_n \norm{y^2_n - x^2_n} \to[n] +\infty$ (in some chart of $M_2$ at $x^2$).
Let $d_n \coloneqq \opb{\norm{y^2_n - x^2_n}}$.
We extract a converging subsequence of $(d_n (y^2_n - x^2_n))$ and call $u^2 \in T_{x^2} M_2$ its limit.
One has $\frac{d_n}{c_n} \to[n] 0$, so $d_n(y_n^i  -x_n^i) = \frac{d_n}{c_n} c_n (y_n^i  -x_n^i) \to[n] 0$ for $i \in \{1, 3\}$.
Therefore, the sequences $(x^1_n, x^2_n), (y^1_n, y^2_n)$ and $d_n$ show that $(0, u^2) \in C_{(x^1, x^2)}(A_1, B_1)$ and the sequences $(x^2_n, x^3_n), (y^2_n, y^3_n)$ and $d_n$ show that $(u^2, 0) \in C_{(x^2, x^3)}(A_2, B_2)$ with $u^2 \neq 0$.
This contradicts the second assumption.

\spa
(ii)
Suppose that $A_2 = B_2 = \Gamma_f$ with $f$ a $C^1$ map, so that $C(A_2, B_2) = \Gamma_{Tf}$.
Let $(u^1, u^3) \in C_{(x^1, x^2)}(A_1, B_1) \comp C_{(x^2, x^3)}(A_2, B_2)$.
Therefore, there exists $u^2 \in T_{x^2}M_2$ such that $(u^1, u^2) \in C_{(x^1, x^2)}(A_1, B_1)$ and $T_{x^2}f(u^2) = u^3$.
Therefore, there exist sequences $((x^1_n, x^2_n)) \in {A_1}^\N$ and $((y^1_n, y^2_n)) \in {B_1}^\N$ both converging to $(x^1, x^2)$, and $(c_n) \in (\R_{>0})^\N$ such that $c_n (y^1_n - x^1_n, y^2_n - x^2_n) \to[n] (u^1, u^2)$.
Set $x^3_n \coloneqq f(x^2_n)$ and $y^3_n \coloneqq f(y^2_n)$.
The sequences $((x^1_n, x^3_n))$ and $((y^1_n, y^3_n))$ and $(c_n)$ show that $(u^1, u^3) \in C_{(x^1, x^3)}(A_1 \comp \Gamma_f, B_1 \comp \Gamma_f)$.
\end{proof}

Note that the first assumption of the proposition is satisfied if $A_1 = B_1$ is the graph of a continuous map $f$.
If furthermore $M_3 = \{*\}$, then the second assumption means that $f$ is Lipschitz for $C(A_2, B_2)$ (see definition below) and the conclusion reads $C(\opb{f}(A_2), \opb{f}(B_2)) \subseteq \Whit{f} \circ C(A_2, B_2)$.
The consideration of the second assumption motivates the following definition.

\begin{definition}
Let $f_i \colon M_i \to M_{i+1}$ for $i \in \{ 1, 2 \}$ be continuous maps between manifolds.
The pair $(f_1, f_2)$ is \define{Whitney-regular} if
\begin{equation}
(\Whit{f_1} \times 0_3) \cap (0_1 \times \Whit{f_2}) \subseteq 0_{123}.
\end{equation}
\end{definition}

It will also be convenient to use the following definitions.

\begin{definition}\label{def:Whitney-imm}
Let $f \colon M \to N$ be a continuous map between manifolds and $A \subseteq TM$ and $B \subseteq TN$ be closed cones.
The map $f$ is
\begin{enumerate}
\item
\define{Whitney-immersive} if $\Whit{f} \cap (TM \times 0_N) \subseteq 0_{M \times N}$,
\item
\define{Whitney-immersive for $A$} if $\Whit{f} \cap (A \times 0_N) \subseteq 0_{M \times N}$,
\item
\define{Lipschitz for $B$} if $\Whit{f} \cap (0_M \times B) \subseteq 0_{M \times N}$.
\end{enumerate}
\end{definition}

By Proposition~\ref{prop:LipCone}(1), a continuous map $f \colon M \to N$ is Lipschitz if and only if it is Lipschitz for $TN$.
One can characterize Whitney immersions via a reversed Lipschitz inequality, using arguments similar to the proof of Proposition~\ref{prop:LipCone}(1).
A Whitney immersion is a topological immersion, and a $C^1$-map is a Whitney immersion if and only if it is a $C^1$-immersion.
However, a topological immersion which is smooth need not be a Whitney immersion, as the function $\R \to \R, x \mapsto x^3$ shows.

In view of Proposition~\ref{prop:LipCone}(1), the next proposition is obvious.

\begin{proposition}
Let $f_i \colon M_i \to M_{i+1}$ for $i \in \{ 1, 2 \}$ be continuous maps between manifolds.
If $f_1$ is Lipschitz or $f_2$ is a Whitney immersion, then the pair $(f_1, f_2)$ is Whitney-regular.
\end{proposition}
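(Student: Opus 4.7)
The plan is to unwind the definitions. Whitney-regularity requires that any element of $(\Whit{f_1} \times 0_3) \cap (0_1 \times \Whit{f_2})$ lies in $0_{123}$. First I would observe that an element of this intersection must have the form $(x_1, 0, x_2, u_2, x_3, 0)$ under the identification $T(M_1 \times M_2 \times M_3) \simeq TM_1 \times TM_2 \times TM_3$, because membership in $\Whit{f_1} \times 0_3$ forces the $TM_3$-component to vanish, and membership in $0_1 \times \Whit{f_2}$ forces the $TM_1$-component to vanish. Moreover, the remaining data satisfies $(x_1, 0, x_2, u_2) \in \Whit{f_1}$ and $(x_2, u_2, x_3, 0) \in \Whit{f_2}$. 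To land in $0_{123}$, it remains to show $u_2 = 0$.

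Next I would split into the two cases given by the hypothesis. If $f_1$ is Lipschitz, then by Proposition~\ref{prop:LipCone}(1) (applied pointwise, or equivalently by Definition~\ref{def:Whitney-imm}(3) with $B = TM_2$), it is Lipschitz for $TM_2$, so $\Whit{f_1} \cap (0_{M_1} \times TM_2) \subseteq 0_{M_1 \times M_2}$; applying this to $(x_1, 0, x_2, u_2)$ yields $u_2 = 0$. If instead $f_2$ is a Whitney immersion, then by Definition~\ref{def:Whitney-imm}(1), $\Whit{f_2} \cap (TM_2 \times 0_{M_3}) \subseteq 0_{M_2 \times M_3}$; applying this to $(x_2, u_2, x_3, 0)$ again yields $u_2 = 0$.

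In either case $u_2 = 0$, so the element of the intersection lies in $0_{123}$, which establishes Whitney-regularity. There is no real obstacle here: the statement is essentially a direct translation between the two equivalent reformulations of Lipschitzness (``no vertical vectors in $\Whit{f_1}$'') and of being a Whitney immersion (``no horizontal vectors in $\Whit{f_2}$''), packaged with the shape of the intersection.
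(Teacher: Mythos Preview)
Your proof is correct and takes essentially the same approach as the paper, which simply declares the proposition ``obvious'' in view of Proposition~\ref{prop:LipCone}(1). You have spelled out precisely the unwinding of definitions that makes it obvious.
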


We can now prove the chain rule for Whitney cones.

\begin{proposition}[chain rule]\label{prop:chainWhit}
Let $f_i \colon M_i \to M_{i+1}$ for $i \in \{ 1, 2 \}$ be continuous maps between manifolds.
If the pair $(f_1, f_2)$ is Whitney-regular, then
\begin{equation}
\Whit{f_2 \circ f_1} \subseteq \Whit{f_1} \circ \Whit{f_2}.
\end{equation}
The reverse inclusion holds if $f_2$ is $C^1$ (without assuming the pair Whitney-regular).
\end{proposition}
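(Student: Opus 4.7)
The plan is to derive this chain rule as a direct application of Proposition~\ref{prop:Whitney-comp} with $A_1 = B_1 = \Gamma_{f_1}$ and $A_2 = B_2 = \Gamma_{f_2}$, using the identity $\Gamma_{f_1} \comp \Gamma_{f_2} = \Gamma_{f_2 \fcomp f_1}$ from the conventions in Section~\ref{subsec:notation}. Under this identification, $C(\Gamma_{f_i}, \Gamma_{f_i}) = \Whit{f_i}$ by definition, so hypothesis~(2) of Proposition~\ref{prop:Whitney-comp} becomes exactly the Whitney-regularity assumption on the pair $(f_1, f_2)$.

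It remains to verify hypothesis~(1) of Proposition~\ref{prop:Whitney-comp}: that $p_{13}$ is injective and proper on the closure of $\opb{p_{12}}(\Gamma_{f_1}) \cap \opb{p_{23}}(\Gamma_{f_2})$. First I would observe that
\begin{equation}
\opb{p_{12}}(\Gamma_{f_1}) \cap \opb{p_{23}}(\Gamma_{f_2}) = \{ (x_1, f_1(x_1), f_2(f_1(x_1))) \mid x_1 \in M_1 \},
\end{equation}
which is already closed in $M_{123}$ by continuity of $f_1$ and $f_2$, so no closure is needed. The restriction of $p_{13}$ to this set is the bijection onto $\Gamma_{f_2 \fcomp f_1}$, with continuous inverse $(x_1, x_3) \mapsto (x_1, f_1(x_1), x_3)$; hence it is a homeomorphism onto its image, which is closed in $M_{13}$. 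Consequently, $p_{13}$ is injective and proper on this set, as required.

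Applying Proposition~\ref{prop:Whitney-comp} then gives
\begin{equation}
\Whit{f_2 \fcomp f_1} = C(\Gamma_{f_1} \comp \Gamma_{f_2}, \Gamma_{f_1} \comp \Gamma_{f_2}) \subseteq C(\Gamma_{f_1}, \Gamma_{f_1}) \comp C(\Gamma_{f_2}, \Gamma_{f_2}) = \Whit{f_1} \comp \Whit{f_2}.
\end{equation}
For the reverse inclusion when $f_2$ is $C^1$, I would directly invoke the second half of Proposition~\ref{prop:Whitney-comp}: the choice $A_2 = B_2 = \Gamma_{f_2}$ is precisely the graph of a $C^1$ map, so the reverse containment $\Whit{f_1} \comp \Whit{f_2} \subseteq \Whit{f_2 \fcomp f_1}$ holds without needing the Whitney-regularity hypothesis.

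There is no real obstacle here: the work has been done in Proposition~\ref{prop:Whitney-comp}, and the only nontrivial step is the purely set-theoretic verification that the relevant projection is a homeomorphism, which follows from continuity of $f_1$.
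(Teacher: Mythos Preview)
Your proposal is correct and follows exactly the paper's approach: apply Proposition~\ref{prop:Whitney-comp} with $A_i = B_i = \Gamma_{f_i}$. You spell out the verification of hypothesis~(1) in detail, whereas the paper simply notes (in the paragraph immediately following Proposition~\ref{prop:Whitney-comp}) that this hypothesis is automatic when $A_1 = B_1$ is the graph of a continuous map.
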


\begin{proof}
We apply Proposition~\ref{prop:Whitney-comp} with $A_i= B_i = \Gamma_{f_i}$ for $i \in \{ 1, 2 \}$.
\end{proof}

\begin{example}
Let $f_2 \colon \R \to \R, t \mapsto t^3$ and $f_1 = \opb{f_2}$.
Then, $f_2$ is $C^1$ and $\Whit[0]{f_2 \circ f_1} \supsetneq \Whit[0]{f_1} \circ \Whit[0]{f_2}$.
This shows that the hypothesis that $(f_1, f_2)$ is Whitney-regular is needed.
Similarly, $(f_2, f_1)$ is Whitney-regular and $\Whit[0]{f_1 \circ f_2} \subsetneq \Whit[0]{f_2} \circ \Whit[0]{f_1}$.
This shows that for the reverse inclusion, the hypothesis that $f_2$ is $C^1$ is needed.
\end{example}

\begin{corollary}
The set of Whitney immersions is closed under composition.
\end{corollary}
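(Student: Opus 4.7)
The plan is to combine Definition~\ref{def:Whitney-imm}(1) with the chain rule (Proposition~\ref{prop:chainWhit}). Let $f_1 \colon M_1 \to M_2$ and $f_2 \colon M_2 \to M_3$ be Whitney immersions; I want to show that $f_2 \fcomp f_1$ is again a Whitney immersion, i.e.\ that $\Whit{f_2 \fcomp f_1} \cap (TM_1 \times 0_3) \subseteq 0_{13}$.

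First, I would check that the pair $(f_1, f_2)$ is Whitney-regular. An element of $(\Whit{f_1} \times 0_3) \cap (0_1 \times \Whit{f_2})$ has the form $(0, u_2, 0)$ with $(0, u_2) \in \Whit{f_1}$ and $(u_2, 0) \in \Whit{f_2}$. The second condition and the Whitney-immersivity of $f_2$ force $u_2 = 0$, so Whitney-regularity holds. This step uses only the $f_2$ side of the hypothesis; the $f_1$ side is not needed yet.

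Next, I would apply Proposition~\ref{prop:chainWhit} to obtain the inclusion
\begin{equation}
\Whit{f_2 \fcomp f_1} \subseteq \Whit{f_1} \comp \Whit{f_2}.
\end{equation}
Now take $(u_1, 0) \in \Whit{f_2 \fcomp f_1} \cap (TM_1 \times 0_3)$. By the inclusion above, there exists $u_2 \in TM_2$ (over the appropriate basepoint) with $(u_1, u_2) \in \Whit{f_1}$ and $(u_2, 0) \in \Whit{f_2}$. The Whitney-immersivity of $f_2$ gives $u_2 = 0$, whence $(u_1, 0) \in \Whit{f_1}$, and then the Whitney-immersivity of $f_1$ gives $u_1 = 0$. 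This yields $\Whit{f_2 \fcomp f_1} \cap (TM_1 \times 0_3) \subseteq 0_{13}$, proving the corollary.

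There is no real obstacle here: the two hypotheses on $f_1$ and $f_2$ are used in sequence exactly to eliminate the two components, and the regularity assumption needed to invoke the chain rule is provided for free by the hypothesis on $f_2$. The only point worth stating carefully is that the intermediate vector $u_2$ given by the chain rule lies over $f_1(x_1)$ where $x_1$ is the basepoint of $u_1$, so that the Whitney immersion condition of $f_2$ applies at the right point; this is automatic from the fiberwise nature of the composition $\comp$ on tangent bundles.
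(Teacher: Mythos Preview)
Your proof is correct and follows essentially the same route as the paper's: verify Whitney-regularity via the immersivity of $f_2$, apply the chain rule, and then peel off $u_2$ and $u_1$ using the immersivity of $f_2$ and $f_1$ in turn. The paper merely compresses your elementwise argument into the relational chain $\Whit{f_2 \fcomp f_1} \comp 0_3 \subseteq \Whit{f_1} \comp \Whit{f_2} \comp 0_3 \subseteq \Whit{f_1} \comp 0_2 \subseteq 0_1$.
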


\begin{proof}
If $f_i \colon M_i \to M_{i+1}$ for $i \in \{ 1, 2 \}$ are Whitney immersions, then $\Whit{f_2 \fcomp f_1} \comp 0_3 \subseteq \Whit{f_1} \circ \Whit{f_2} \circ 0_3 \subseteq \Whit{f_1} \circ 0_2 \subseteq 0_1$, so $f_2 \fcomp f_1$ is a Whitney immersion.
\end{proof}

We end this section with a characterization of Whitney immersions.
By invariance of domain, a Whitney immersion between manifolds of the same dimension is a homeomorphism with Lipschitz inverse.
If the dimensions of the domain and codomain differ, we proceed as follows.
We say that a continuous map $f \colon M \to N$ between manifolds \define{has Lipschitz local retractions} if for any $x \in M$, there are open neighborhoods $U$ of $x$ and $V$ of $f(x)$ such that $f(U) \subseteq V$ and a Lipschitz map $r \colon V \to U$ such that $r \circ f|_U = \id_U$.

\begin{proposition}\label{prop:Whit-imm}
A continuous map between manifolds is a Whitney immersion if and only if it has Lipschitz local retractions.
\end{proposition}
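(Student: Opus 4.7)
My plan is to treat the two directions separately: the \emph{if} direction reduces to a short sequence argument, and the \emph{only if} direction splits into a local bi-Lipschitz estimate followed by a classical Lipschitz extension.

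For the \emph{if} direction, assume $r \colon V \to U$ is a Lipschitz retraction of $f$ near $x$, with $r \fcomp f|_U = \id_U$ and $f(U) \subseteq V$. Fix charts at $x$ and $f(x)$ in which $r$ is $C$-Lipschitz, and suppose $(u, 0) \in \Whit[x]{f}$ is witnessed by sequences $x_n, y_n \to[n] x$ in $U$ and $c_n > 0$ with $c_n(y_n - x_n) \to[n] u$ and $c_n(f(y_n) - f(x_n)) \to[n] 0$. Since $y_n - x_n = r(f(y_n)) - r(f(x_n))$, the Lipschitz bound gives $\norm{y_n - x_n} \leq C \norm{f(y_n) - f(x_n)}$; multiplying by $c_n$ forces $u = 0$, so $f$ is a Whitney immersion at $x$.

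For the converse, fix $x$ and work in charts with $x \mapsto 0 \in \R^m$ and $f(x) \mapsto 0 \in \R^n$. The first step is the local bi-Lipschitz estimate: there exist a neighborhood $U$ of $0$ and $c > 0$ such that $\norm{y - z} \leq c \norm{f(y) - f(z)}$ for all $y, z \in U$. Were this to fail, I would extract sequences $y_n \neq z_n$ tending to $0$ with $\norm{y_n - z_n} > n \norm{f(y_n) - f(z_n)}$; then setting $c_n \coloneqq 1/\norm{y_n - z_n}$ and passing to a subsequence so that $c_n(y_n - z_n) \to[n] u$, I would have $\norm{u} = 1$ and $c_n(f(y_n) - f(z_n)) \to[n] 0$, whence $(u, 0) \in \Whit[x]{f}$ with $u \neq 0$, contradicting the Whitney immersion hypothesis. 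In particular $f|_U$ is injective and $\opb{(f|_U)} \colon f(U) \to U$ is $c$-Lipschitz.

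The second step is to promote $\opb{(f|_U)}$ to a Lipschitz map on a full neighborhood of $f(x)$ in $N$. I would apply the McShane--Whitney Lipschitz extension theorem coordinatewise to the $m$ scalar components of $\opb{(f|_U)}$, producing $\tilde{r} \colon \R^n \to \R^m$ Lipschitz with $\tilde{r}|_{f(U)} = \opb{(f|_U)}$. Since $\tilde{r}(0) = 0$ lies in the open set $U$ and $\tilde{r}$ is continuous, there is an open neighborhood $V$ of $f(x)$ (contained in the chart of $N$) with $\tilde{r}(V) \subseteq U$; replacing $U$ by $U' \coloneqq U \cap \opb{f}(V)$ ensures $f(U') \subseteq V$, and $r \coloneqq \tilde{r}|_V$ is then a Lipschitz local retraction with $r \fcomp f|_{U'} = \id_{U'}$. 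The principal obstacle is the bi-Lipschitz estimate, which is the only place that genuinely uses the Whitney immersion hypothesis; the Lipschitz extension is then routine.
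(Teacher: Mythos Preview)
Your argument is correct and follows the same route as the paper: a direct sequence argument for the \emph{if} direction, and for the converse a local bi-Lipschitz estimate (which the paper phrases as ``the Whitney cone of $\opb{(f|_U)}$ is the flip of $\Whit{f|_U}$'' together with Proposition~\ref{prop:LipCone}(1)) followed by Lipschitz extension.

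One small bookkeeping point at the very end: with your choices of $V$ and $U' = U \cap \opb{f}(V)$, you get $r(V) \subseteq U$ but not $r(V) \subseteq U'$, so the pair $(U', V)$ does not literally satisfy the definition of a local retraction. The clean fix, which is what the paper does, is to take $V \coloneqq \opb{\tilde r}(U)$ from the outset; then $f(U) \subseteq V$ holds automatically (since $\tilde r(f(u)) = u \in U$ for every $u \in U$), and no shrinking of $U$ is needed.
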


\begin{proof}
Let $f \colon M \to N$ be a Whitney immersion and let $x \in M$.
A Whitney immersion is locally injective, so there is an open neighborhood $U$ of $x$ such that $f$ is injective on $U$.
One can shrink $U$ so that $f(U)$ be included in a chart of $N$ with domain $W$.
The map $\opb{f|_U}$ is defined on $f(U) \subseteq W$, which need not be open, and is not known yet to be continuous.
However, the definition of the Whitney cone and Proposition~\ref{prop:LipCone}(1) still hold in this case.
The cone $\Whit{\opb{f|_U}}$ is the image of $\Whit{f|_U}$ by the ``flip'' $T(U \times W) \to T(W \times U)$.
Therefore, $\Whit[][0]{\opb{f|_U}} \subseteq 0_{W \times U}$, so $\opb{f|_U} \colon f(U) \to U$ is Lipschitz.
Therefore, there exists an open neighborhood $V_1$ of $f(x)$ such that $\opb{f|_U}$ is globally Lipschitz on $f(U) \cap V_1$, and we can assume $f(U) \subseteq V_1$.
Therefore, we can extend $\opb{f|_U}$ to a Lipschitz map $r_1 \colon V_1 \to M$ (see for instance~\cite{Hei}*{\S~2}).
The set $V \coloneqq \opb{r_1}(U)$ is open and contains $f(U)$.
We set $r \coloneqq r_1|_V$.
One has $r \circ f|_U = \id_U$.
The converse is clear, since the relation $r \circ f|_U = \id_U$ implies $\Whit{f|_U} \subseteq \operatorname{flip}(\Whit{r})$ (as seen by applying the definition of the Whitney cone).
\end{proof}

\section{Conormals of maps}
\label{sec:conormal}

In this section, we define the conormal of a continuous map and give its first properties.
We use the convolution of kernels to extend to continuous maps the functorial properties of the microsupport for the four image operations, and we prove a chain rule involving conormals.

\subsection{Definition and first properties}

If $f \colon M \to N$ is a continuous map between manifolds, we set $\Ppg(f) \coloneqq \Ppg(\Gamma_f)$ and
\begin{equation}\label{eq:conormal}
\Lagr{f} \coloneqq \muSupp(\Gamma_f)
\end{equation}
and also $\Ppg_x(f) \coloneqq \Ppg_{(x, f(x))}(\Gamma_f)$ and $\Lagr[x]{f} \coloneqq \muSupp_{(x, f(x))}(\Gamma_f)$ for $x \in M$.
We call $\Lagr{f}$ the \define{conormal of $f$}.
This definition is consistent with Equation~\eqref{eq:lambdaf1}, as Proposition~\ref{prop:subman} shows.

\begin{proposition}
The conormal of $f \colon M \to N$ is a coisotropic closed symmetric cone in $T^*(M \times N)$ satisfying $\pi_{M \times N} (\Lagr{f}) = \Gamma_f$.
\end{proposition}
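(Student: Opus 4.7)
The strategy is to verify each of the four asserted properties of $\Lagr{f} = \muSupp(\Gamma_f)$ separately, reducing almost everything to general facts already recorded in Section~\ref{sec:background} and Section~\ref{sec:subsets}, together with the observation that the graph $\Gamma_f$ of a continuous map between (Hausdorff) manifolds is a closed, locally flatly embedded $C^0$-submanifold of $M \times N$.

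First, I would dispose of the easy properties. Closedness of $\Lagr{f}$ is immediate from the definition $\muSupp(F) \coloneqq \closure{T^*X \setminus \Ppg(F)}$. Conicity and coisotropy follow from the general statement, recalled just after Equation~\eqref{eq:microsupp}, that the microsupport of any object of $\Derb(\cor_{M \times N})$ is a coisotropic closed conic subset of the cotangent bundle; we apply this to $F = \cor_{M \times N, \Gamma_f}$. For the projection formula, Equation~\eqref{eq:SSproj} gives $\pi_{M \times N}(\muSupp(\Gamma_f)) = \closure{\Gamma_f}$, and the graph of a continuous map into a Hausdorff space is closed, so $\closure{\Gamma_f} = \Gamma_f$, yielding $\pi_{M \times N}(\Lagr{f}) = \Gamma_f$.

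The only step requiring a small argument is symmetry. I would verify that $\Gamma_f$ is a closed $C^0$-submanifold of $M \times N$ that is locally flatly embedded, so that Proposition~\ref{prop:symmetric} applies and gives $\muSupp(\Gamma_f) = \muSupp(\Gamma_f)^a$. Concretely, working in local charts $U \subseteq \R^m$ of $M$ at $x$ and $V \subseteq \R^n$ of $N$ at $f(x)$ (with $f(U) \subseteq V$ after shrinking), the map
\begin{equation}
U \times V \longrightarrow U \times V, \qquad (x, y) \longmapsto (x, y - f(x))
\end{equation}
is a homeomorphism (continuous with continuous inverse $(x, z) \mapsto (x, z + f(x))$) sending $\Gamma_f \cap (U \times V)$ to $U \times \{0\}$, which is a linear inclusion $\R^m \hookrightarrow \R^m \times \R^n$. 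Hence $\Gamma_f$ satisfies the local flatness condition of the conventions in Section~\ref{subsec:notation}, and closedness in $M \times N$ has already been noted above. Proposition~\ref{prop:symmetric} then concludes.

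I expect no real obstacle: the statement is essentially a compilation of already-established facts. The only point where one might slip is checking the local flatness of $\Gamma_f$ — one must notice that the straightening homeomorphism above need only be a $C^0$-isomorphism, not smooth, which is all that the definition of $C^0$-submanifold in Section~\ref{subsec:notation} demands.
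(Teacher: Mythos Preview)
Your proof is correct and follows essentially the same approach as the paper's own proof: invoke the general properties of microsupports for closedness, conicity, and coisotropy; use Equation~\eqref{eq:SSproj} together with closedness of $\Gamma_f$ for the projection formula; and apply Proposition~\ref{prop:symmetric} for symmetry. You are simply more explicit than the paper in verifying that $\Gamma_f$ is a locally flatly embedded closed $C^0$-submanifold via the straightening homeomorphism $(x,y)\mapsto(x,y-f(x))$, which the paper leaves implicit.
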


\begin{proof}
A microsupport is always a coisotropic closed cone.
Since $\Gamma_f$ is closed, one has $\pi_{M \times N} (\Lagr{f}) = \Gamma_f$ by Equation~\eqref{eq:SSproj}.
The graph of a continuous map is a closed $C^0$-submanifold, so its conormal is symmetric by Proposition~\ref{prop:symmetric}.
\end{proof}

\begin{remark}
The conormal of a map need not be a $C^0$-submanifold of half dimension (that is, a $C^0$-Lagrangian), as the following two examples show.
There is a 1-Lipschitz function $f \colon \R \to \R$ such that $\Lagr{f} = \Gamma_f \times \{ (\xi, \eta) \in \R^2 | \abs{\eta} \leq \abs{\xi} \}$.
In particular, $\unpLagr{f}$ is a Lipschitz submanifold with boundary of dimension 3.
There is a continuous function $g \colon \R \to \R$ such that $\Lagr{g} = \Gamma_g \times \R^2$.
In particular, $\Lagr{g}$ is a $C^0$-submanifold of dimension 3.
The idea of the following constructions is taken from a talk by David Preiss.
Let $(U_i)_{i \in \N}$ be a decreasing sequence of open subsets of $\R$ with $\bigcap_i U_i = \Q$ (or any dense subset of $\R$ of measure 0) such that each $U_{n+1}$ has at most half measure in each connected component of $U_n$.
For $x \in \R$, let $\psi(x)$ be the largest index $n$ such that $x \in U_n$ and 0 if $x \notin \bigcup_i U_i$ or $x \in \Q$.
Let $f(x) \coloneqq \int_0^x (-1)^{\psi(x)} \, dx$ and $g(x) \coloneqq \int_0^x (-3/2)^{\psi(x)} \, dx$.
Then, $f$ is 1-Lipschitz, $g$ is continuous, and their conormals are as claimed by the case of equality of the upper bound on the conormal (Theorem~\ref{thm:upper}).
\end{remark}

The following lemma gives a useful criterion for belonging to the propagation set of a sheaf associated with a continuous map.

\begin{lemma}\label{lem:criterFunc}
Let $f \colon M \to N$ be a continuous map between manifolds and let $x \in M$.
Then, $\nu \in \Ppg_x(f)$ if and only if for all $\phi \in C^\infty(W)$, with $W$ an open neighborhood of $(x, f(x))$, such that $\phi(x, f(x)) = 0$ and $d\phi(x, f(x)) = \nu$, the morphism
\begin{equation}
\cor \longrightarrow (\rsect_{\Gamma_f \cap \{ \phi < 0 \}} \cor_{\Gamma_f})_{(x, f(x))}
\end{equation}
induced by the topological embeddings $(p_M \circ i_{W'} \colon \Gamma_f \cap \{ \phi < 0 \} \cap W' \hookrightarrow p_M(W'))_{W' \ni (x, f(x))}$ is an isomorphism.
This implies in particular that the germ of $\Gamma_f \cap \{ \phi < 0 \}$ at $(x, f(x))$ has the cohomology of a point.
\end{lemma}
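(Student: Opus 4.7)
The plan is to mimic the strategy used in Lemmas \ref{lem:criterSheaf} and \ref{lem:criterSet}: produce the relevant distinguished triangle and apply the stalk functor at $(x,f(x))$. Specifically, for any open neighborhood $W$ of $(x,f(x))$ on which $\phi$ is defined, the decomposition of $W$ into the closed part $W \cap \{\phi \geq 0\}$ and the open part $W \cap \{\phi < 0\}$ yields the distinguished triangle
\begin{equation*}
\rsect_{\{\phi \geq 0\}}(\cor_{\Gamma_f}) \longrightarrow \rsect_W(\cor_{\Gamma_f}) \longrightarrow \rsect_{\{\phi < 0\}}(\cor_{\Gamma_f}) \xrightarrow{+1}
\end{equation*}
in $\Derb(\cor_{W})$. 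I would then take the stalk at $(x,f(x))$; since $(x,f(x)) \in \Gamma_f$, the middle term becomes $(\cor_{\Gamma_f})_{(x,f(x))} \simeq \cor$.

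By the very definition of the propagation set, $\nu \in \Ppg_x(f)$ is equivalent to the vanishing of the stalk of $\rsect_{\{\phi \geq 0\}}(\cor_{\Gamma_f})$ at $(x,f(x))$ for every such test function $\phi$, which in turn is equivalent to the connecting arrow $\cor \to (\rsect_{\{\phi < 0\}}(\cor_{\Gamma_f}))_{(x,f(x))}$ being an isomorphism. It then remains to rewrite this arrow in the form stated in the lemma.

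For this identification, I would use that $\Gamma_f$ is closed in $W$, so that $\cor_{\Gamma_f \cap \{\phi < 0\}} \simeq \cor_{\{\phi<0\}} \otimes \cor_{\Gamma_f}$ by Equation~\eqref{eq:prodExtended}, and a standard tensor-hom adjunction together with the isomorphism $\rhom(\cor_{\Gamma_f}, \cor_{\Gamma_f}) \simeq \cor_{\Gamma_f}$ (coming from $\cor_{\Gamma_f} = \oim{i}\cor$ and adjunction for the closed embedding $i \colon \Gamma_f \hookrightarrow W$) yields $\rsect_{\Gamma_f \cap \{\phi < 0\}}(\cor_{\Gamma_f}) \simeq \rsect_{\{\phi < 0\}}(\cor_{\Gamma_f})$. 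Computing the stalk as a filtered colimit and applying Equation~\eqref{eq:rsectClosed} gives
\begin{equation*}
(\rsect_{\{\phi<0\}}(\cor_{\Gamma_f}))_{(x,f(x))} \simeq \indlim[W' \ni (x,f(x))] \rsect(\Gamma_f \cap \{\phi<0\} \cap W';\, \cor),
\end{equation*}
and the natural morphism from $\cor$ into this colimit is the one induced by the topological embeddings $p_M \circ i_{W'}$ of the statement (using that $p_M$ is a homeomorphism of $\Gamma_f$ onto $M$).

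The final sentence about the cohomology of a point is then immediate: an isomorphism $\cor \isoto \indlim[W'] \rsect(\Gamma_f \cap \{\phi < 0\} \cap W'; \cor)$ is exactly the statement that the pro-object of germs $\Gamma_f \cap \{\phi < 0\}$ at $(x,f(x))$ has the cohomology of a point. I do not anticipate any real obstacle; the only mildly delicate point is the identification $\rsect_{\Gamma_f \cap \{\phi<0\}}(\cor_{\Gamma_f}) \simeq \rsect_{\{\phi<0\}}(\cor_{\Gamma_f})$, which is essentially the observation that inserting the closedness of $\Gamma_f$ into the formula for $\rsect_A$ is harmless because $\cor_{\Gamma_f}$ is already supported on $\Gamma_f$.
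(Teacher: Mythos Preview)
Your proposal is correct and follows essentially the same approach as the paper. The paper's proof is simply to apply Lemma~\ref{lem:criterSet} with $Z = \Gamma_f$ and then use the isomorphism $p_M|_{\Gamma_f} \colon \Gamma_f \isoto M$ to identify $(\cor_{\Gamma_f})_{(x,f(x))}$ with $\cor$; you instead unfold that citation and rederive the content of Lemma~\ref{lem:criterSet} inline (including the identification $\rsect_{\{\phi<0\}}(\cor_{\Gamma_f}) \simeq \rsect_{\Gamma_f \cap \{\phi<0\}}(\cor_{\Gamma_f})$), which is fine but unnecessary since Lemma~\ref{lem:criterSet} is already available.
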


\begin{proof}
We apply Lemma~\ref{lem:criterSet} with $Z = \Gamma_f$.
The results follows since the isomorphism $p_M|_{\Gamma_f} \colon \Gamma_f \to M$ induces an isomorphism $\cor = (\cor_M)_x \isoto (\cor_{\Gamma_f})_{(x, f(x))}$.
\end{proof}

\begin{remark}
In most of the proofs below, we only use the fact that having the cohomology of a point implies being nonempty.
So in a sense, we use only crude estimates, which shows the efficiency of sheaf theoretical methods.
\end{remark}

We will need the following result in later sections.
It shows that adding a strictly differentiable map to a continuous map shears its tangent cone and its conormal.

\begin{lemma}[shearing lemma]\label{lem:coneShear}
Let $f, g \colon M \to \R^n$ be continuous maps on a manifold with $g$ strictly differentiable at $x \in M$.
Then,
\begin{align}
\Whit[x]{f + g} &= (p_M, p_N + g'(x) \fcomp p_M) (\Whit[x]{f})
\qquad\text{and}\\
\Lagr[x]{f + g} &= (p_M - g'(x)^\intercal \fcomp p_N, p_N) (\Lagr[x]{f}).
\end{align}
\end{lemma}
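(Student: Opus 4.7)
The plan is to prove the two identities separately: the Whitney cone identity by a direct sequence argument exploiting the asymptotic linearity given by strict differentiability, and the conormal identity by transporting the microsupport through a shearing homeomorphism whose strict differentiability at the relevant point follows from that of $g$.

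For the Whitney cone identity, I take $(u, v) \in \Whit[x]{f}$, witnessed in a chart by sequences $a_n, b_n \to x$ and $c_n > 0$ with $c_n(b_n - a_n, f(b_n) - f(a_n)) \to (u, v)$. Strict differentiability of $g$ at $x$ gives $g(b_n) - g(a_n) = g'(x)(b_n - a_n) + r_n$ with $\norm{r_n} = o(\norm{b_n - a_n})$; since $(c_n(b_n - a_n))_n$ converges to $u$ and is in particular bounded, scaling yields $c_n(g(b_n) - g(a_n)) \to g'(x) u$. Adding, the same sequences witness $(u, v + g'(x) u) \in \Whit[x]{f+g}$, proving the inclusion $(p_M, p_N + g'(x) \fcomp p_M)(\Whit[x]{f}) \subseteq \Whit[x]{f+g}$. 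The reverse inclusion follows by applying the same argument to the pair $(f+g, -g)$, since $-g$ is strictly differentiable at $x$ with derivative $-g'(x)$.

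For the conormal identity, I introduce the shearing homeomorphism $\Phi \colon M \times \R^n \to M \times \R^n$, $(y, w) \mapsto (y, w + g(y))$, whose inverse is $(y, w) \mapsto (y, w - g(y))$ and which satisfies $\Phi(\Gamma_f) = \Gamma_{f+g}$, so that $\cor_{\Gamma_{f+g}} \simeq \oim{\Phi} \cor_{\Gamma_f}$. Strict differentiability of $g$ at $x$ makes $\Phi$ strictly differentiable at $(x, f(x))$ with derivative the linear shear $A \colon (u, v) \mapsto (u, v + g'(x) u)$, and likewise $\Phi^{-1}$ is strictly differentiable at $(x, f(x) + g(x))$ with derivative $A^{-1}$. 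Applying the equality case (for isomorphisms) of Proposition~\ref{prop:funcSS}(2.a), extended to a point of strict differentiability by Remark~\ref{rem:funcSSstrictDiff}, to both $\Phi$ and $\Phi^{-1}$ yields mutual inclusions which combine to $\Lagr[x]{f+g} = A^{-\intercal}(\Lagr[x]{f})$; since $A^{-\intercal}(\xi, \eta) = (\xi - g'(x)^\intercal \eta, \eta)$, this is exactly the stated formula. The main subtlety is that $\Phi$ is strictly differentiable only at the single point $(x, f(x))$, forcing the use of the pointwise version of the functoriality of the microsupport and its invocation for both $\Phi$ and $\Phi^{-1}$ to upgrade the inclusion to equality; a more hands-on alternative is to apply Lemma~\ref{lem:SSstrictDiff} directly by pulling back smooth test functions $\phi$ via $\Phi^{-1}$ to strictly differentiable ones $\phi \fcomp \Phi^{-1}$ at $(x, f(x) + g(x))$, using that $\Phi$ identifies $\Gamma_f \cap \{\phi \geq 0\}$ with $\Gamma_{f+g} \cap \{\phi \fcomp \Phi^{-1} \geq 0\}$, and taking closures at the end since $A^{-\intercal}$ is a linear homeomorphism of $T^*(M \times \R^n)$ at the base point.
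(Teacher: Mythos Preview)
Your proof is correct and follows essentially the same approach as the paper: a direct sequence argument for the Whitney cone identity, and for the conormal identity the introduction of the shearing homeomorphism $\Phi$ together with Remark~\ref{rem:funcSSstrictDiff} applied to both $\Phi$ and $\Phi^{-1}$ to obtain the equality. Your write-up is in fact slightly more explicit than the paper's about why both directions are needed and about the computation of $A^{-\intercal}$.
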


\begin{proof}
(i)
The case of the Whitney cone is straightforward (directly or using Proposition~\ref{prop:whitney}(8)).

\spa
(ii)
Fix a chart of $M$ at $x$.
We introduce the topological automorphism $\Phi \colon \R^{m+n} \to \R^{m+n}, (x, y) \mapsto \Phi(x, y) = (x, y - g(x))$.
Then, $\Phi(\Gamma_f) = \Gamma_{f+g}$.
Since $\Phi$ is strictly differentiable on $\{x\} \times \R^n$ as well as its inverse, it follows from Remark~\ref{rem:funcSSstrictDiff} that $\Lagr[x]{f} = \Lagr[(x, f(x))]{\Phi} \acomp \Lagr[x]{f+g}$ and a simple computation completes the proof.
\end{proof}

\subsection{Chain rule for conormals}

In this subsection, we will apply Proposition~\ref{prop:sskernels} to kernels associated with maps, in order to derive upper bounds on microsupports of direct and inverse images of sheaves, and then on the conormal of a composite map.

By analogy with the smooth case (\textit{cf.} Equations~\eqref{eq:subm} and~\eqref{eq:non-char}), we make the following definitions (compare with Definition~\ref{def:Whitney-imm}).

\begin{definition}\label{def:muSubm}
Let $f \colon M \to N$ be a continuous map between manifolds, and $A \subseteq T^*M$ and $B \subseteq T^*N$ be closed cones.
The map $f$ is
\begin{enumerate}
\item
a \define{microlocal submersion} if $\Lagr{f} \cap (0^*_M \times T^*N) \subseteq 0^*_{M \times N}$,
\item
\define{non-characteristic for $B$} if $\Lagr{f} \cap (0^*_M \times B)\subseteq 0^*_{M \times N}$,
\item
\define{Lipschitz for $A$} if $\Lagr{f} \cap (A \times 0^*_N) \subseteq 0^*_{M \times N}$.
\end{enumerate}
\end{definition}

A $C^1$ map is obviously Lipschitz for $T^*M$.
We will see in Theorem~\ref{thm:Lip} that ``Lipschitz for $T^*M$'' is equivalent to ``Lipschitz''.

\begin{example}
For $f \colon \R^2 \to \R, x \mapsto (x_1)^{1/3}$, one has $\Lagr[x]{f} = \R (-1, 0, 3 \abs{x_1}^{2/3})$, so $f$ is Lipschitz for the constant cone $\R^2 \times \{ (\xi_1, \xi_2) \in \R^2 \mid \abs{\xi_1} \leq C \abs{\xi_2} \}$ for any $C \in \R$.
\end{example}

We first prove the following generalization of Proposition~\ref{prop:funcSS}(2) to continuous maps.
A continuous map is said to be Lipschitz (resp.\ non-characteristic) for a sheaf if it is Lipschitz (resp.\ non-characteristic) for its microsupport.

\begin{proposition}\label{prop:funcSSCont}
Let $f \colon M \to N$ be a continuous map between manifolds.
\begin{enumerate}
\item
Let $F \in \Derb(\cor_M)$ and assume that $f$ is Lipschitz for $F$ and proper on $\supp(F)$.
Then, $\muSupp(\roim{f}F) \subseteq \muSupp(F) \acomp \Lagr{f}$.
\item
Let $G \in \Derb(\cor_{N})$ and assume that $f$ is non-characteristic for $G$.
Then, the morphism $\opb{f} G \Ltens \omega_{M/N} \to \epb{f} G$ is an isomorphism and $\muSupp(\opb{f} G) \subseteq \Lagr{f} \acomp \muSupp(G)$.
\end{enumerate}
\end{proposition}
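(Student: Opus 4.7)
The plan is to realize both images as convolutions of kernels via Proposition~\ref{prop:kerFunc} and to apply the standard microsupport estimate for such convolutions, Proposition~\ref{prop:sskernels}, to the kernel $K_f = \cor_{\Gamma_f}$ whose microsupport is, by definition, $\Lagr{f}$. The crucial simplifying input throughout is the symmetry $\Lagr{f} = \Lagr{f}^a$, which holds by Proposition~\ref{prop:symmetric} since $\Gamma_f$ is a closed $C^0$-submanifold of $M \times N$.

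For part (1), view $F$ as a kernel from a point to $M$, so that $\roim{f} F \simeq \reim{f} F \simeq F \conv K_f$ by the properness assumption and Proposition~\ref{prop:kerFunc}(1). Two hypotheses of Proposition~\ref{prop:sskernels} must be checked: properness of the projection on the intersection of supports reduces to properness of $f|_{\supp(F)}$, which holds by assumption; and the microsupport transversality $(\muSupp(F)^a \times 0^*_N) \cap \Lagr{f} \subseteq 0^*_{M \times N}$ is, by the symmetry of $\Lagr{f}$, equivalent to the assumption that $f$ be Lipschitz for $F$ (Definition~\ref{def:muSubm}(3)). Proposition~\ref{prop:sskernels} then gives the stated inclusion.

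For the microsupport bound in part (2), view $G$ as a kernel from $N$ to a point, so that $\opb{f} G \simeq K_f \conv G$ by Proposition~\ref{prop:kerFunc}(2). The properness hypothesis in Proposition~\ref{prop:sskernels} is automatic, since the first projection $M \times N \to M$ restricts to a homeomorphism on $\Gamma_f$. The microsupport transversality $\Lagr{f}^a \cap (0^*_M \times \muSupp(G)) \subseteq 0^*_{M \times N}$ is, again by symmetry, exactly the non-characteristic hypothesis. Proposition~\ref{prop:sskernels} delivers $\muSupp(\opb{f} G) \subseteq \Lagr{f} \acomp \muSupp(G)$.

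It remains to establish the isomorphism $\opb{f} G \Ltens \omega_{M/N} \to \epb{f} G$, and this is the main technical obstacle. The natural morphism exists in general, and its cone is controlled microlocally by the estimate just obtained. The smooth-case proof in~\cite{KS}*{Prop.~5.4.13} factors $f$ as a smooth graph embedding followed by a submersion, a decomposition unavailable for a merely continuous $f$. The plan is to bypass this by working entirely within the kernel formalism: express $\epb{f} G$ via a $\rhom$-type formula involving $K_f$, and use the cohomological constructibility and self-duality up to a locally constant sheaf of rank~$1$ of $K_f$ guaranteed by Proposition~\ref{prop:symmetric} to exchange $\opb{f}$ and $\epb{f}$ at the cost of the twist $\omega_{M/N}$. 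The non-characteristic hypothesis together with Proposition~\ref{prop:funcSS}(1.b) should then force the cone of the natural morphism to have empty microsupport, hence to vanish.
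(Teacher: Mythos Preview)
Your treatment of the microsupport bounds in both parts is correct and matches the paper's: both are direct applications of Proposition~\ref{prop:sskernels} via Proposition~\ref{prop:kerFunc}, with the symmetry of $\Lagr{f}$ taking care of the antipodal.

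The gap is in the isomorphism $\opb{f} G \Ltens \omega_{M/N} \isoto \epb{f} G$. You dismiss the graph factorization as ``unavailable for a merely continuous $f$'', but this is exactly what the paper uses: one writes $f = p_N \circ (\id_M, f)$, where $p_N \colon M \times N \to N$ is a \emph{smooth} submersion regardless of $f$, and $(\id_M, f) \colon M \hookrightarrow M \times N$ is a closed topological embedding. Functoriality of the morphism in $f$ reduces the question to these two cases. The submersion case is~\cite{KS}*{Prop.~3.3.2}. For a closed embedding $i \colon M \hookrightarrow N$, the paper applies the Sato triangle to $\muhom(\cor_{N,M}, G)$: non-characteristicity gives $\muSupp(G) \cap \muSupp(\cor_{N,M}) \subseteq 0^*_N$, so by~\cite{KS}*{Cor.~5.4.10(ii)} the $\muhom$ is supported in the zero section, forcing $\rsect_M G \simeq \rhom(\cor_{N,M}, \cor_N) \Ltens G$; then $\epb{i} G \simeq \opb{i}(\rsect_M G)$ by~\cite{KS}*{Prop.~3.1.12} yields the isomorphism. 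What remains is to check that non-characteristicity of $f$ for $G$ implies non-characteristicity of $(\id_M, f)$ for $\opb{p_N} G$, which the paper does via a short auxiliary lemma bounding $\Lagr{(\id_M, f)}$ in terms of $\Lagr{f}$.

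Your proposed alternative---showing the cone of the natural morphism has empty microsupport via self-duality of $K_f$ and Proposition~\ref{prop:funcSS}(1.b)---is not a proof as written. Microsupport estimates give upper bounds, not vanishing; you would need to identify the cone concretely enough to apply such a bound, and you have not indicated how to express $\epb{f} G$ in kernel terms in a way that makes this tractable. The self-duality of $K_f$ is suggestive but does not by itself produce the required identification.
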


\begin{proof}\footnote{This proof was obtained jointly with Pierre Schapira.}
The bounds on the microsupport are straightforward applications of Proposition~\ref{prop:sskernels} (using Proposition~\ref{prop:kerFunc}).
As for the morphism $\opb{f} G \Ltens \omega_{M/N} \to \epb{f} G$, we first note that it is functorial in $f$.
Therefore, the lemma following this proof shows that it is enough to prove that it is an isomorphism when $f$ is a closed embedding and when $f$ is a submersion.
Indeed, we use the decomposition $f = p_N \circ (\id_M, f)$, that is, the inclusion in the graph of $f$ (which is a closed embedding) followed by the projection on the second factor (which is a smooth submersion).
The submersion case is treated in~\cite{KS}*{Prop.~3.3.2}.

Suppose that $f$ is a closed embedding of $M$ into $N$.
For a conic sheaf $F \in \Derb(\cor_{T^*N})$, one has the Sato distinguished triangle $\reim{\pi} F \to \roim{\pi} F \to \roim{\unp\pi} F \to[+1]$.
When $F = \muhom(\cor_M, G)$ (with $\cor_M = \cor_{N, M} \in \Derb(\cor_N)$), one has $\reim{\pi} \muhom(\cor_M, G) = \rhom(\cor_M, \cor_N) \Ltens G$ and $\roim{\pi} \muhom(\cor_M, G) = \rhom(\cor_M, G) = \rsect_M G$ (by~\cite{KS}*{Prop.~4.4.2(i)}).
Non-characteristicity says that $\muSupp(G) \cap \muSupp(\cor_M) \subseteq 0^*_N$.
By~\cite{KS}*{Cor.~5.4.10(ii)}, one has $\supp(\muhom(\cor_M, G)) \subseteq \muSupp(G) \cap \muSupp(\cor_M) \subseteq 0^*_N$, so $\roim{\unp\pi} \muhom(\cor_M, G) = 0$.
Therefore, $\rsect_M G \simeq \rhom(\cor_M, \cor_N) \Ltens G$.

Since $f$ is a closed embedding, one has $\epb{f}G \simeq \opb{f} (\rsect_M G)$ by~\cite{KS}*{Prop.~3.1.12}.
Applying this formula to $\cor_N$, we also have $\omega_{M/N} = \epb{f}\cor_N \simeq \opb{f} (\rhom(\cor_M, \cor_N))$.
Therefore, $\epb{f} G \simeq \opb{f} (\rsect_M G) \simeq \opb{f} (\rhom(\cor_M, \cor_N) \Ltens G) \simeq \omega_{M/N} \Ltens \opb{f} G$.
\end{proof}

\begin{lemma}
Let $f \colon M \to N$ be a continuous map between manifolds.
Let $s \colon T^*M \times_M T^*M \to T^*M$ be the fiberwise subtraction.
Then, $\Lagr{(\id_M, f)} \subseteq \opb{(s \times \id_{T^*N})}(\Lagr{f})$.
In particular, if $f$ is non-characteristic for $G \in \Derb(\cor_N)$, then the closed embedding $(\id_M, f)$ is non-characteristic for $\opb{p_N}(G)$.
\end{lemma}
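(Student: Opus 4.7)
My plan is to decompose $\Gamma_{(\id_M, f)}$ as an intersection whose two factors have microsupports we already understand, and then apply the tensor product estimate of Proposition~\ref{prop:funcSS}(1.a). Inside $M \times M \times N$, with $p_{23}$ the projection onto the last two factors and $\Delta_M \subseteq M \times M$ the diagonal, there is the evident set-theoretic identity
\[
\Gamma_{(\id_M, f)} = \opb{p_{23}}(\Gamma_f) \cap (\Delta_M \times N),
\]
so Equations~\eqref{eq:prodExtended} and~\eqref{eq:opbSheaf} yield $\cor_{\Gamma_{(\id_M, f)}} \simeq \opb{p_{23}}(\cor_{\Gamma_f}) \Ltens \cor_{\Delta_M \times N}$.

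I would then compute the two microsupports. Since $p_{23}$ is a smooth submersion, Proposition~\ref{prop:funcSS}(2.b) gives $\muSupp(\opb{p_{23}}(\cor_{\Gamma_f})) = \Lagr{p_{23}} \acomp \Lagr{f}$, and a direct fiberwise unwinding identifies this at $(x_0, x_0, f(x_0))$ with the set of covectors of the form $(0, \xi, \eta)$ with $(x_0, f(x_0); \xi, \eta) \in \Lagr{f}$. By Proposition~\ref{prop:subman}, $\muSupp(\cor_{\Delta_M \times N})$ is the conormal bundle $T^*_{\Delta_M \times N}(M \times M \times N)$, whose fiber at $(x_0, x_0, f(x_0))$ is $\{(\alpha, -\alpha, 0) : \alpha \in T^*_{x_0} M\}$. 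The non-interaction hypothesis of Proposition~\ref{prop:funcSS}(1.a) is then immediate: equating $(0, \xi, \eta)$ with the antipodal of $(\alpha, -\alpha, 0)$ forces $\alpha = \xi = \eta = 0$.

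Applying Proposition~\ref{prop:funcSS}(1.a) yields
\[
\Lagr{(\id_M, f)} \subseteq \muSupp(\opb{p_{23}}(\cor_{\Gamma_f})) + \muSupp(\cor_{\Delta_M \times N}),
\]
and summing the two fiberwise descriptions at $(x_0, x_0, f(x_0))$ gives exactly $\{(\alpha, \xi - \alpha, \eta) : \alpha \in T^*_{x_0} M,\ (x_0, f(x_0); \xi, \eta) \in \Lagr{f}\}$, which is the fiber of $\opb{(s \times \id_{T^*N})}(\Lagr{f})$. For the in-particular statement, Proposition~\ref{prop:funcSS}(2.b) applied to the submersion $p_N \colon M \times N \to N$ gives $\muSupp(\opb{p_N}G) = 0^*_M \times \muSupp(G)$. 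If a point $(x_0, x_0, f(x_0); \xi_1, \xi_2, \eta)$ lies in $\Lagr{(\id_M, f)} \cap (0^*_{M \times M} \times \muSupp(\opb{p_N}G))$, then $\xi_1 = \xi_2 = 0$, so the inclusion just proved places $(x_0, f(x_0); 0, \eta)$ in $\Lagr{f}$; since $(f(x_0), \eta) \in \muSupp(G)$, the non-characteristicity of $f$ for $G$ forces $\eta = 0$, as desired.

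The only real obstacle in the argument is conventional bookkeeping: tracking the antipodal twist in the composition $\Lagr{p_{23}} \acomp \Lagr{f}$, the sign in the conormal description of $\Delta_M \times N$, and the precise identification of the resulting sum with $\opb{(s \times \id_{T^*N})}(\Lagr{f})$. Once those two fiberwise descriptions are spelled out explicitly, everything falls into place.
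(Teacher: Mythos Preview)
Your proof is correct and follows essentially the same route as the paper's: the paper also writes $\Gamma_{(\id_M,f)} = (\Delta_M \times N) \cap \opb{p_{23}}(\Gamma_f)$, computes $\muSupp(\opb{p_{23}}K_f) = 0^*_M \times \Lagr{f}$ and $T^*_{\Delta_M\times N}(M\times M\times N) = \{(\xi,-\xi,0)\}$, checks the same trivial non-interaction, and adds. The only cosmetic difference is that the paper invokes \cite{KS}*{Cor.~5.4.11(i)} (the specialization of the tensor bound to $F \mapsto F_P$ for $P$ a closed submanifold) where you invoke Proposition~\ref{prop:funcSS}(1.a) directly; the two are the same estimate, and the derivation of the ``in particular'' clause is identical.
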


\begin{proof}
We set $\hat{f} \coloneqq (\id_M, f)$ and $P \coloneqq \Delta_M \times N$.
The latter is a closed submanifold of $M \times M \times N$.
One has $\Gamma_{\hat{f}} = P \cap \opb{p_{MN}}(\Gamma_f)$, so $K_{\hat{f}} = (\opb{p_{MN}} K_f)_P$.
One has $\muSupp(\opb{p_{MN}} K_f) = \Lagr{p_{MN}} \acomp \Lagr{f} = 0^*_M \times \Lagr{f}$ and $T^*_P (M \times M \times N) = \{ (\xi, -\xi, 0) \mid \xi \in T^*M \}$.
Therefore, $\muSupp(\opb{p_{MN}} K_f) \cap T^*_P (M \times M \times N) \subseteq 0^*_{MMN}$, so by~\cite{KS}*{Cor.~5.4.11(i)}, one has $\Lagr{\hat{f}} \subseteq \muSupp(\opb{p_{MN}} K_f) + T^*_P (M \times M \times N) = \opb{(s \times \id_{T^*N})}(\Lagr{f})$.

As for the second claim, let $G \in \Derb(\cor_N)$.
Then, $\muSupp(\opb{p_N}G) = \Lagr{p_N} \acomp \muSupp(G) = 0^*_M \times \muSupp(G)$.
Therefore, $\Lagr{\hat{f}} \cap (0^*_M \times \muSupp(\opb{p_N}G)) \subseteq \opb{(s \times \id_{T^*N})}(\Lagr{f}) \cap (0^*_{MM} \times \muSupp(G)) \subseteq 0^*_M \times (\Lagr{f} \cap (0^*_M \times \muSupp(G))$.
\end{proof}

As an application, let $f \colon M \to N$ be a closed topological embedding between manifolds.
If $f$ is a closed $C^1$-embedding, we know that $T^*_{f(M)} N = 0_M^* \circ \Lambda_f$.
In the continuous case, we have $\roim{f} \cor_M \simeq \cor_{f(M)}$, so the proposition tells us that $\muSupp(f(M)) \subseteq 0_M^* \circ \Lambda_f$.

\quad

Now, we will apply Proposition~\ref{prop:sskernels} when both kernels are associated with maps, in order to derive an upper bound on the conormal of a composite map.
This constitutes an analog for continuous maps of the usual chain rule for differentiable maps.
The hypotheses of Proposition~\ref{prop:sskernels} applied to $K_i \coloneqq K_{f_i}$ (notation introduced in Equation~\eqref{eq:sheafFunc}) with $f_i \colon M_i \to M_{i+1}$ continuous maps between manifolds for $i \in \{ 1, 2 \}$, read
\begin{enumerate}
\item
$p_{13}$ is proper on $\opb{p_{12}}(\Gamma_{f_1}) \cap \opb{p_{23}}(\Gamma_{f_2})$,
\item
$(\Lagr{f_1} \times 0^*_3) \cap (0^*_1 \times \Lagr{f_2}) \subseteq 0^*_{123}$
\end{enumerate}
(one can remove the antipodal map from the second condition since conormals of maps are symmetric).
Since $f_1$ is continuous, the first hypothesis is satisfied.
This motivates the following definition.

\begin{definition}\label{def:regular}
Let $f_i \colon M_i \to M_{i+1}$ for $i \in \{ 1, 2 \}$ be continuous maps between manifolds.
The pair $(f_1, f_2)$ is \define{regular} if
\begin{equation}
(\Lagr{f_1} \times 0^*_3) \cap (0^*_1 \times \Lagr{f_2}) \subseteq 0^*_{123}.
\end{equation}
\end{definition}

It is convenient to define
\begin{equation}\label{eq:directionalConormal}
\Lagr[x][\eta]{f} \coloneqq \Lagr[x]{f} \cap \big( T^*_x M \times \R_{\geq 0} \eta \big)
\end{equation}
for $(x, \eta) \in M \times_N T^*N$ and
\begin{equation}
\Lagr[][0]{f} \coloneqq \Lagr{f} \cap \big( T^*M \times 0^*_N \big).
\end{equation}
Therefore, a map is Lipschitz for $T^*M$ if and only if
\begin{equation}\label{eq:regular}
\Lagr[][0]{f} \subseteq 0^*_{M \times N}.
\end{equation}

\begin{example}
The function $f \colon \R \to \R, t \mapsto t^3$ is a topological automorphism which is smooth but is not microlocally submersive and its inverse is not Lipschitz for $T^*\R$, and the pair $(f, \opb{f})$ is not regular.
In particular, a topological submersion need not be a microlocal submersion, even if it is smooth.
\end{example}

The next proposition is obvious.

\begin{proposition}
Let $f_i \colon M_i \to M_{i+1}$ for $i \in \{ 1, 2 \}$ be continuous maps between manifolds.
If $f_1$ is microlocally submersive or if $f_2$ is Lipschitz for $T^*M_3$, then the pair $(f_1, f_2)$ is regular.
\end{proposition}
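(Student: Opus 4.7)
The plan is to unwind the three definitions directly; there is really no content beyond that. Fix a point $(\xi_1, \xi_2, \xi_3) \in T^*(M_1 \times M_2 \times M_3)$ lying in the intersection $(\Lagr{f_1} \times 0^*_3) \cap (0^*_1 \times \Lagr{f_2})$. Belonging to the first factor forces $\xi_3 = 0^*_3$ and $(\xi_1, \xi_2) \in \Lagr{f_1}$, while belonging to the second factor forces $\xi_1 = 0^*_1$ and $(\xi_2, \xi_3) \in \Lagr{f_2}$. Thus regularity reduces to showing that under either hypothesis on $(f_1, f_2)$, the middle component $\xi_2$ also vanishes whenever $(0^*_1, \xi_2) \in \Lagr{f_1}$ and $(\xi_2, 0^*_3) \in \Lagr{f_2}$.

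I would then dispatch the two cases separately. If $f_1$ is a microlocal submersion, then $\Lagr{f_1} \cap (0^*_1 \times T^*M_2) \subseteq 0^*_{12}$ by Definition~\ref{def:muSubm}(1), so the relation $(0^*_1, \xi_2) \in \Lagr{f_1}$ immediately gives $\xi_2 = 0^*_2$. Dually, if $f_2$ is Lipschitz for $T^*M_2$, then $\Lagr{f_2} \cap (T^*M_2 \times 0^*_3) \subseteq 0^*_{23}$ by Definition~\ref{def:muSubm}(3), and the relation $(\xi_2, 0^*_3) \in \Lagr{f_2}$ forces $\xi_2 = 0^*_2$. In either case $(\xi_1, \xi_2, \xi_3) \in 0^*_{123}$, which is exactly the regularity of the pair $(f_1, f_2)$ per Definition~\ref{def:regular}.

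There is no genuine obstacle to overcome: the hypotheses are engineered precisely so that one of the two "legs" of the intersection collapses. The only thing worth commenting on is that the symmetry of the statement reflects the symmetric way the two factors contribute to the fiber product defining $\Lagr{f_1} \acomp \Lagr{f_2}$, which motivates why this exact pair of sufficient conditions appears.
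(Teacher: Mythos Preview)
Your proof is correct and is exactly the direct unwinding of definitions that the paper has in mind; the paper gives no argument beyond declaring the proposition ``obvious''. (You have also silently corrected what appears to be a typo in the statement: the hypothesis should read that $f_2$ is Lipschitz for $T^*M_2$, not $T^*M_3$, since Definition~\ref{def:muSubm}(3) requires the cone $A$ to live in the cotangent bundle of the domain of $f_2$.)
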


We can now prove the chain rule for conormals.

\begin{proposition}[chain rule]\label{prop:chain}
Let $f_i \colon M_i \to M_{i+1}$ for $i \in \{ 1, 2 \}$ be continuous maps between manifolds.
If the pair $(f_1, f_2)$ is regular, then
\begin{equation}\label{eq:chain}
\Lagr{f_2 \fcomp f_1} \subseteq \Lagr{f_1} \acomp \Lagr{f_2}
\end{equation}
with equality if $f_1$ is a $C^1$-submersion (and if $f_1$ and $f_2$ are both $C^1$).
\end{proposition}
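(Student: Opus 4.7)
The plan is to reduce the statement to Proposition~\ref{prop:sskernels} via the identification of the composition with a convolution of kernels. By Proposition~\ref{prop:compFunc}, one has $K_{f_2 \fcomp f_1} = K_{f_1} \conv K_{f_2}$, and by construction $\Lagr{g} = \muSupp(K_g)$ for any continuous map $g$. So the inclusion~\eqref{eq:chain} will follow at once from Proposition~\ref{prop:sskernels} applied to $K_1 \coloneqq K_{f_1}$ and $K_2 \coloneqq K_{f_2}$, provided the two hypotheses of that proposition are verified.

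For the first (properness) hypothesis, note that $\supp(K_{f_i}) = \Gamma_{f_i}$ and that $p_{13}$ restricts to a homeomorphism $\opb{p_{12}}(\Gamma_{f_1}) \cap \opb{p_{23}}(\Gamma_{f_2}) = \Gamma_{f_1} \times_{M_2} \Gamma_{f_2} \to \Gamma_{f_2 \fcomp f_1}$, as spelled out in the proof of Proposition~\ref{prop:compFunc}; continuity of $f_1$ (actually, just closedness of $\Gamma_{f_1}$) ensures this set is closed in $M_{123}$, so $p_{13}$ is even closed with singleton fibers, hence proper. For the second (microlocal) hypothesis, recall from Proposition~\ref{prop:symmetric} that $\Lagr{f_1}$ is symmetric (the graph of a continuous map is a closed $C^0$-submanifold), so $\muSupp(K_{f_1})^a = \Lagr{f_1}$; therefore the condition
$$(\muSupp(K_{f_1})^a \times 0^*_3) \cap (0^*_1 \times \muSupp(K_{f_2})) \subseteq 0^*_{123}$$
is exactly the regularity of the pair $(f_1, f_2)$ (Definition~\ref{def:regular}). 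Applying Proposition~\ref{prop:sskernels} then yields $\Lagr{f_2 \fcomp f_1} \subseteq \Lagr{f_1} \acomp \Lagr{f_2}$.

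For the equality statement in the smooth case, assume $f_1$ and $f_2$ are $C^1$ with $f_1$ a submersion. By Equation~\eqref{eq:lambdaf1} and Proposition~\ref{prop:subman}, the conormals are just the conormal bundles of the respective graphs, and an explicit description is available through the diagram~\eqref{eq:lambdaf2}: $\Lagr{f_i}$ is identified with $M_i \times_{M_{i+1}} T^*M_{i+1}$. Under this identification, a direct computation shows that $\Lagr{f_1} \acomp \Lagr{f_2}$ consists of covectors of the form $((x_1, f_2(f_1(x_1))), (f_d^{(1)}(f_d^{(2)}(\eta_3)), -\eta_3))$ for $\eta_3 \in T^*_{f_2 f_1(x_1)} M_3$, which is precisely $\Lagr{f_2 \fcomp f_1}$ by the transpose of the usual chain rule $(f_2 \fcomp f_1)' = f_2' \fcomp f_1'$. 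The submersion hypothesis on $f_1$ is what guarantees that every covector of $\Lagr{f_2 \fcomp f_1}$ arises in this way (i.e.\ that $f_1$ is non-characteristic for the relevant cones so that $f_d^{(1)}$ does not introduce spurious image points).

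The only real obstacle is verifying the properness hypothesis cleanly; beyond that the argument is a direct combination of Propositions~\ref{prop:compFunc} and~\ref{prop:sskernels}, with the symmetry from Proposition~\ref{prop:symmetric} converting the regularity condition into the exact form needed. The equality in the smooth case is a straightforward check that I would carry out in coordinates using the identification of $\Lagr{f}$ with $M \times_N T^*N$ from diagram~\eqref{eq:lambdaf2}.
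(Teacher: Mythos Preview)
Your argument for the inclusion~\eqref{eq:chain} is correct and coincides with the paper's: the paragraph preceding Definition~\ref{def:regular} already observes that continuity of $f_1$ guarantees the properness hypothesis of Proposition~\ref{prop:sskernels}, and regularity is exactly its second hypothesis (with the antipodal removed by Proposition~\ref{prop:symmetric}), so Proposition~\ref{prop:compFunc} finishes the job.

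There is, however, a genuine gap in your treatment of the equality. The statement asserts equality in two \emph{separate} situations: (i)~$f_1$ is a $C^1$-submersion with $f_2$ merely continuous, and (ii)~both $f_1$ and $f_2$ are $C^1$. You have conflated these and only argued case~(ii), where the conormals are honest conormal bundles and the equality is just the transpose of the classical chain rule (and, incidentally, the submersion hypothesis plays no role there: your remark about $f_d^{(1)}$ introducing ``spurious image points'' is misplaced, since in the $C^1$ case the intermediate covector $\xi_2$ is uniquely recovered from $\eta_3$ as $-(T_{x_2}f_2)^\intercal\eta_3$). The substantive case is~(i), where $\Lagr{f_2}$ is not a conormal bundle and your coordinate computation is unavailable.

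The paper handles case~(i) by a different and more robust device. One sets $\tilde f_1 \coloneqq f_1 \times \id_{M_3} \colon M_{13} \to M_{23}$, which is again a $C^1$-submersion, and notes that $\Gamma_{f_2 \fcomp f_1} = \opb{\tilde f_1}(\Gamma_{f_2})$. The equality case of Equation~\eqref{eq:opbSet} (i.e.\ Proposition~\ref{prop:funcSS}(2.b) for submersions) then yields
\[
\Lagr{f_2 \fcomp f_1} = \muSupp\big(\opb{\tilde f_1}(\Gamma_{f_2})\big) = \Lagr{\tilde f_1} \acomp[23] \Lagr{f_2},
\]
and a direct check using that $f_1$ is $C^1$ gives $\Lagr{\tilde f_1} \acomp[23] B = \Lagr{f_1} \acomp[2] B$ for any $B \subseteq T^*M_{23}$. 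This argument requires nothing of $f_2$ beyond continuity, which is the whole point.
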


\begin{proof}
As we have seen above, both hypotheses of Proposition~\ref{prop:sskernels} are satisfied when the pair $(f_1, f_2)$ is regular, so the inclusion follows from Proposition~\ref{prop:compFunc}.

If $f_1$ is a $C^1$-submersion, then $\tilde{f}_1 \coloneqq f_1 \times \id_3 \colon M_{13} \to M_{23}$ is a $C^1$-submersion and $\Gamma_{f_2 \circ f_1} = \opb{\tilde{f}_1}(\Gamma_{f_2})$.
By Equation~\eqref{eq:opbSet} (case of equality), one has $\Lagr{f_2 \circ f_1} = \muSupp(\Gamma_{f_2 \circ f_1}) = \muSupp(\opb{\tilde{f}_1}(\Gamma_{f_2})) = \Lagr{\tilde{f}_1} \acomp[23] \muSupp(\Gamma_{f_2}) = \Lagr{\tilde{f}_1} \acomp[23] \Lagr{f_2}$.
The result follows since for any $B \subseteq T^*M_{23}$, one has $\Lagr{\tilde{f}_1} \acomp[23] B = \Lagr{f_1} \acomp[2] B$ (since $f_1$ is $C^1$).
\end{proof}

\begin{example}
The inclusion in the chain rule may be strict: take $f_2 \colon \R \to \R, t \mapsto t^3$ and $f_1 = \opb{f_2}$.

If the pair $(f_1, f_2)$ is not regular, then the chain rule need not hold.
For example, take $f_1 \colon \R \to \R, t \mapsto t^3$ and $f_2 = \opb{f_1}$.
\end{example}

\begin{corollary}\label{cor:regular1}
The set of microlocal submersions is closed under composition.
\end{corollary}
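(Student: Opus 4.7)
The plan is to mimic the argument used for the analogous Corollary about Whitney immersions (which was itself a direct consequence of the chain rule for Whitney cones). Let $f_1 \colon M_1 \to M_2$ and $f_2 \colon M_2 \to M_3$ be microlocal submersions. Since $f_1$ is a microlocal submersion, the proposition immediately preceding Proposition~\ref{prop:chain} guarantees that the pair $(f_1, f_2)$ is regular, so the chain rule applies and gives
\begin{equation}
\Lagr{f_2 \fcomp f_1} \subseteq \Lagr{f_1} \acomp \Lagr{f_2}.
\end{equation}

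It then suffices to check that $(\Lagr{f_1} \acomp \Lagr{f_2}) \cap (0^*_1 \times T^*M_3) \subseteq 0^*_{13}$. Let $(0, \xi_3)$ belong to this intersection. Unfolding the twisted composition, there exists $\xi_2 \in T^*_{f_1(x)}M_2$ with $(0, -\xi_2) \in \Lagr{f_1}$ and $(\xi_2, \xi_3) \in \Lagr{f_2}$. The microlocal submersion hypothesis on $f_1$ forces $\xi_2 = 0$; then $(0, \xi_3) \in \Lagr{f_2}$, and the microlocal submersion hypothesis on $f_2$ forces $\xi_3 = 0$. Hence $\Lagr{f_2 \fcomp f_1} \cap (0^*_1 \times T^*M_3) \subseteq 0^*_{13}$, meaning that $f_2 \fcomp f_1$ is a microlocal submersion.

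No serious obstacle is expected. The only point requiring care is the position of the antipodal involution hidden inside $\acomp$ when unfolding the composition; this is harmless because conormals of continuous maps are symmetric (their graphs are closed $C^0$-submanifolds, so Proposition~\ref{prop:symmetric} applies), so the sign in front of $\xi_2$ is immaterial for the conclusion drawn from microlocal submersivity of $f_1$.
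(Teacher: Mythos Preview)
Your argument is correct and is essentially the same as the paper's: both apply the chain rule (Proposition~\ref{prop:chain}), available because $f_1$ being a microlocal submersion makes the pair regular, and then use the two microlocal-submersion hypotheses in turn to kill first $\xi_2$ and then $\xi_3$. The paper merely writes this as the one-line chain $0^*_1 \circ \Lagr{f_2 \fcomp f_1} \subseteq 0^*_1 \circ \Lagr{f_1} \acomp \Lagr{f_2} \subseteq 0^*_2 \circ \Lagr{f_2} \subseteq 0^*_3$, whereas you unfold the composition elementwise; your remark on the antipodal sign is apt but, as you note, immaterial.
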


\begin{proof}
If $f_i \colon M_i \to M_{i+1}$ for $i \in \{ 1, 2 \}$ are microlocal submersions, then $0^*_1 \circ \Lagr{f_2 \fcomp f_1} \subseteq 0^*_1 \circ \Lagr{f_1} \acomp \Lagr{f_2} \subseteq 0^*_2 \circ \Lagr{f_2} \subseteq 0^*_3$, so $f_2 \fcomp f_1$ is a microlocal submersion.
\end{proof}

We end this section with a result giving sufficient conditions for the direct and inverse images of closed cones in a (co)tangent bundle to be closed.

\begin{proposition}\label{prop:closed-cone}
Let $f \colon M \to N$ be a continuous map between manifolds.
\begin{enumerate}
\item
If $f$ is Lipschitz for the closed cone $B \subseteq TN$, then $\Whit{f} \circ B$ is closed in $TM$.
\item
If $f$ is non-characteristic for the closed cone $B \subseteq T^*N$, then $\Lagr{f} \circ B$ is closed in $T^*M$.
\item
If $f$ is Whitney-immersive for the closed cone $A \subseteq TM$ and proper on $\tau_M(A)$, then $A \circ \Whit{f}$ is closed in $TN$.
\item
If $f$ is Lipschitz for the closed cone $A \subseteq T^*M$ and proper on $\pi_M(A)$, then $A \circ \Lagr{f}$ is closed in $T^*N$.
\end{enumerate}
\end{proposition}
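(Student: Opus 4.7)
The plan is to prove each item by showing that the appropriate projection is proper on the relevant closed set, so its image is closed. All four arguments share the same template, which I sketch in detail for item (1). Let $(u_n)$ be a sequence in $\Whit{f} \circ B$ converging to $u \in TM$, and choose lifts $(u_n, v_n) \in \Whit{f}$ with $v_n \in B$. Write $x_n \coloneqq \tau_M(u_n) \to x \coloneqq \tau_M(u)$, and note that $(u_n, v_n) \in T_{(x_n, f(x_n))}(M \times N)$ since $\Whit{f}$ is based on $\Gamma_f$; by continuity of $f$, $f(x_n) \to f(x)$. Working in local trivializations of $TM$ near $x$ and of $TN$ near $f(x)$, equipped with fixed fiber norms, I split into two cases according to whether $(\|v_n\|)$ is bounded. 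In the bounded case, extract a subsequence $v_n \to v \in B$ using closedness of $B$; then $(u, v) \in \Whit{f}$ by closedness of $\Whit{f}$, so $u \in \Whit{f} \circ B$. In the unbounded case, pass to a subsequence with $\|v_n\| \to \infty$ and consider the rescaled pairs $(u_n / \|v_n\|, v_n / \|v_n\|)$, which lie in $\Whit{f}$ since it is a cone. The first coordinate tends to $0$ (as $(u_n)$ is bounded), and the second has a subsequence converging to some unit vector $w$. Closedness of $B$ as a cone forces $w \in B$, and closedness of $\Whit{f}$ gives $(0, w) \in \Whit{f} \cap (0_M \times B)$ with $w \neq 0$, contradicting the hypothesis that $f$ is Lipschitz for $B$.

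Item (2) is formally identical upon replacing $TM$, $TN$, $\Whit{f}$ and ``Lipschitz for $B$'' with $T^*M$, $T^*N$, $\Lagr{f}$ and ``non-characteristic for $B$''; recall that $\Lagr{f}$ is a closed cone with $\pi_{M \times N}(\Lagr{f}) = \Gamma_f$, so the same base-point argument applies, and the non-characteristic hypothesis provides the required contradiction. Items (3) and (4) invert the roles of source and target and therefore need the properness hypothesis to localize base points. For (3), let $(v_n) \to v$ in $TN$ with lifts $(u_n, v_n) \in \Whit{f}$, $u_n \in A$. Then $f(\tau_M(u_n)) = \tau_N(v_n) \to \tau_N(v)$, so $\{\tau_M(u_n)\}$ lies in $(f|_{\tau_M(A)})^{-1}(K)$ for $K$ a compact neighborhood of $\tau_N(v)$; by the properness hypothesis this preimage is compact, yielding a subsequence $\tau_M(u_n) \to x \in \tau_M(A)$ with $f(x) = \tau_N(v)$. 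The remainder parallels (1): the bounded case yields $u \in A$ with $(u, v) \in \Whit{f}$, whence $v \in A \circ \Whit{f}$; the unbounded case produces $(w, 0) \in \Whit{f} \cap (A \times 0_N)$ with $w \in A \setminus \{0\}$, contradicting Whitney-immersivity for $A$. Item (4) is the verbatim cotangent analogue of (3), with $\Lagr{f}$, $\pi_M$, and ``Lipschitz for $A$'' in place of $\Whit{f}$, $\tau_M$, and ``Whitney-immersive for $A$''.

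The main obstacle is purely a bookkeeping issue: the rescaling is intrinsically fiberwise, but the sequences $(u_n, v_n)$ live in different fibers of a vector bundle, so one must first reduce to a local trivialization around the limiting base point before invoking sequential compactness of unit spheres. The properness hypothesis in (3) and (4) is what allows this localization on the side over which $f$ does not \emph{a priori} control the base points; in (1) and (2) the continuity of $f$ already suffices, since base points are controlled through $x_n \to x$ on the source side.
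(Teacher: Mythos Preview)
Your proof is correct and follows essentially the same idea as the paper's: the transversality-type hypothesis in each item forces the relevant projection to be proper on $\Whit{f} \cap (TM \times B)$ (or the analogous set), so the image is closed. The paper packages this as an a~priori bound (for item~(2), it extracts $\alpha_K > 0$ with $\norm{\xi} \geq \alpha_K \norm{\eta}$ on $\opb{\pi_{M\times N}}(K) \cap \Lagr{f} \cap (T^*M \times B)$ and deduces properness of $p_M$), whereas you run the equivalent sequential argument directly via the bounded/unbounded dichotomy and rescaling; the two are interchangeable.
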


In the smooth case, the proof of Item~2 is sketched in the paragraph following the definition of non-characteristic morphisms in~\cite{KS}*{Def.~5.4.12}.

\begin{proof}
The four claims have similar proofs, so we only give a proof of the second one.
Let $K \subseteq M \times N$ be compact and contained in a chart.
Since $\Lagr{f} \cap (0^*_M \times B) \subseteq 0^*_{M \times N}$ and $\Lagr{f}$ is closed, there exists $\alpha_K > 0$ such that $\opb{\pi_{M \times N}}(K) \cap \Lagr{f} \cap (T^*M \times B) \subseteq \{ (\xi, \eta) \in T^*(M \times N) \mid \norm{\xi} \geq \alpha_K \norm{\eta} \}$.
Therefore, $p_M$ is proper on $\opb{\pi_{M \times N}}(K) \cap \Lagr{f} \cap (T^*M \times B)$.
This implies that $p_M(\opb{\pi_{M \times N}}(K)) \cap (\Lagr{f} \comp B)$ is closed.
The requirement that $K$ be contained in a chart can be dropped.
Note that $p_M \circ \opb{\pi_{M \times N}} = \opb{\pi_M} \circ p_M$.

Let $x \in M$.
Let $K_N$ be a compact neighborhood of $f(x)$.
There exists a compact neighborhood $K_M$ of $x$ included in $\opb{f}(K_N)$.
Set $U \coloneqq p_M(\opb{\pi_{M \times N}}(K_M \times K_N)) = \opb{\pi_M}(K_M)$.
Then, $U \cap (\Lagr{f} \comp B)$ is closed.
Therefore, any point $(x, \xi) \in T^*M$ has a neighborhood $U$ such that $U \cap (\Lagr{f} \comp B)$ is closed.
This implies that $\Lagr{f} \comp B$ is closed.
\end{proof}

\begin{remark}
With the notation of Definition~\ref{def:regular}, if $x_1 \in M_1$, one says that the pair $(f_1, f_2)$ is regular at $x_1$ if $(\Lagr[x_1]{f_1} \times \{0\}) \cap (\{0\} \times \Lagr[f(x_1)]{f_2}) = \{0\}$.
If $(f_1, f_2)$ is regular at $x_1$, then it is so in a neighborhood of $x_1$, since conormals are closed cones.
The same remark applies for the notions of Whitney-regular pair and of microlocally submersive, non-characteristic, Whitney immersive and Lipschitz maps.
\end{remark}

As for a characterization of microlocal submersions, in view of Proposition~\ref{prop:Whit-imm}, a reasonable conjecture is that the microlocal submersions are the continuous maps which have Lipschitz local sections.
Here is a result in that direction.\footnote{Pierre Schapira showed me how an adaptation of the proof of the microlocal Bertini--Sard theorem~\cite{KS}*{Prop.~8.3.12} shows that subanalytic microlocal submersions are open maps.}

\begin{proposition}\label{prop:open}
A microlocal submersion with 1-dimensional codomain is an open map.
\end{proposition}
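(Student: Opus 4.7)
The plan is to reduce to $N = \R$ and then exhibit, in every open neighborhood of a given point $x \in M$, points where $f$ takes values strictly above and strictly below $f(x)$; a path-connectedness argument combined with the intermediate value theorem then finishes the job.

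Openness of a map is local on both source and target, and since $N$ is a $1$-manifold, a chart allows one to replace $N$ by $\R$. Fix $x \in M$; it suffices to show that $f(U)$ is a neighborhood of $f(x)$ for every open neighborhood $U$ of $x$. Consider the two covectors $(0, \pm 1) \in T^*_{(x, f(x))}(M \times \R)$. Since $f$ is a microlocal submersion, both lie outside $\Lagr{f}$, and hence in $\Ppg_x(f)$ by the definition of the microsupport as the closure of the complement of $\Ppg(f)$.

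I would then apply Lemma~\ref{lem:criterFunc} to the smooth functions $\phi_\pm(y, t) \coloneqq \pm(t - f(x))$, whose differentials at $(x, f(x))$ are exactly $(0, \pm 1)$: the germ of $\Gamma_f \cap \{ \phi_\pm < 0 \}$ at $(x, f(x))$ has the cohomology of a point, and is in particular nonempty. Concretely, this means that every neighborhood of $x$ in $M$ meets both $\{ f < f(x) \}$ and $\{ f > f(x) \}$. So $U$ contains points $y_-$ and $y_+$ with $f(y_-) < f(x) < f(y_+)$; by local path-connectedness of $M$, one may even choose these points inside a common path-connected open subset of $U$.

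Picking a continuous path $\gamma \colon [0, 1] \to U$ with $\gamma(0) = y_-$ and $\gamma(1) = y_+$, the intermediate value theorem applied to $f \circ \gamma$ yields $[f(y_-), f(y_+)] \subseteq f(U)$, so $f(U)$ is a neighborhood of $f(x)$. The only step that requires genuine care is the use of Lemma~\ref{lem:criterFunc} to convert the microlocal submersion hypothesis into the nonemptiness of the relevant germs; the remainder is the elementary intermediate value argument.
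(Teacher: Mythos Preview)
Your proof is correct and follows essentially the same route as the paper's: reduce to $N=\R$, use the microlocal submersion hypothesis to place $(0,\pm 1)$ in $\Ppg_x(f)$, invoke Lemma~\ref{lem:criterFunc} with the test functions $\phi_\pm(y,t)=\pm(t-f(x))$ to find points near $x$ where $f$ is strictly below and strictly above $f(x)$, and conclude by local (path-)connectedness and the intermediate value theorem. The only difference is that you spell out the final connectedness argument explicitly, whereas the paper compresses it to a single sentence.
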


\begin{proof}
Let $f \colon M \to \R$ be a microlocal submersion and let $x_0 \in M$.
We can suppose that $M$ is open in $\R^m$.
Define $\phi \colon M \times \R \to \R, (x, y) \mapsto y - f(x_0)$.
Then $\phi(x_0, f(x_0)) = 0$ and $d\phi(x_0, f(x_0)) = (0, 1) \in \Ppg_{x_0}(f)$.
Therefore, by Lemma~\ref{lem:criterFunc}, for any neighborhood $U$ of $(x_0, f(x_0))$, one has $U \cap \Gamma_f \cap \{ \phi < 0 \} \neq \varnothing$.
This means that arbitrarily close to $x_0$, there are points $x$ with $f(x) < f(x_0)$, and similarly, points $y$ with $f(y) > f(x_0)$.
Since $M$ is locally connected, this implies that $f$ is open.
\end{proof}

\section{Real-valued functions}
\label{sec:real-valued}

In this section, we study more precisely the case of real-valued functions.
We introduce directional Dini derivatives, which permit to give precise descriptions of the Whitney cones related to the graph and epigraph of a function.
Then, we study extrema of real-valued functions and prove an analogue of Fermat's lemma for continuous functions.
In the third subsection, we relate the conormal of a continuous function to the microsupport of the constant sheaf on its epigraph.

\subsection{Directional Dini derivatives}

If $\V$ is a vector space, $f \colon \V \to \R$ and $(x, u) \in T\V$, we define the \define{supremal derivative} and \define{supremal quotient} of $f$ at $(x, u)$ respectively by
\begin{align}
\ovD f(x, u) &\coloneqq \limsup_{\subalign{t &\to 0^+\\v &\to u}} \frac{f(x + t v) - f(x)}{t}
\qquad\text{and}\\
\label{eq:sup-quotient}
\ovQ f(x, u) &\coloneqq \limsup_{\mathclap{\subalign{t &\to 0^+\\(y, v) &\to (x, u)}}} \frac{f(y + t v) - f(y)}{t}.
\end{align}

One has $\ovQ f = \limsup \ovD f \colon T\V \to \closure{\R}$.
The functions $\ovD f(x, -)$ and $\ovQ f(x, -)$ are $\R_{> 0}$-homogeneous on $T_x \V$ for any $x \in \V$.
The functions $\unD f$ and $\unQ f$ are defined similarly with $\liminf$.
One has $\ovQ f(x, -u) = - \unQ f(x, u)$ for any $(x, u) \in T\V$.
If $\V = \R$, we recover the usual Dini derivatives: for instance, $\ovD f(x, 1) = D^+ f(x)$ and $\ovD f(x, -1) = -D_- f(x)$.

Once a norm is fixed in $\V$, one has (with the notation introduced in Definition~\ref{def:lip}; the maxima are in $\closure{\R}$)
\begin{align}
\Lippw f(x) &= \max_{\substack{u \in \V\\ \norm{u} = 1}} \left( \abs{\overline{D} f(x, u)}, \abs{\underline{D} f(x, u)} \right)
\qquad\text{and}\\
\Lip f(x) &= \max_{\substack{u \in \V\\ \norm{u} = 1}} \: \abs*{\overline{Q} f(x, u)}.
\end{align}

The following proposition, which relates the Whitney cone to the directional Dini derivatives, will be needed in the proof of the upper bound on the conormal of a map.

\begin{proposition}\label{prop:Dini}
Let $\V$ be a vector space and $f \colon \V \to \R$.
For any $x \in \V$, one has
\begin{align}
C_x(\Gamma_f) &= \left\{ (u, t) \in T_x \V \times \R \mathrel{\Big|} \unD f(x, u) \leq t \leq \ovD f(x, u) \right\},\\
\Whit[x]{f} &= \left\{ (u, t) \in T_x \V \times \R \mathrel{\Big|} \unQ f(x, u) \leq t \leq \ovQ f(x, u) \right\}
\quad\text{and}\\
N_x(\Gamma^+_f) &= \left\{ (u, t) \in T_x \V \times \R \mathrel{\Big|} t > \ovQ f(x, u) \right\}.
\end{align}
\end{proposition}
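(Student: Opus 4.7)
The plan is to unwind the definitions of the three cones and reparametrize the defining sequences so that the normalized differences become exactly the quotients whose $\limsup$ and $\liminf$ are the Dini quantities. For each cone the defining data consist of base-point sequences in $\V$ (two of them in the Whitney case) converging to $x$, together with positive scalars $c_n$ controlling normalized differences; the substitution $t_n := 1/c_n$ and $v_n := c_n \cdot (\text{difference})$ converts the normalized differences into difference quotients of the form $(f(y + t_n v_n) - f(y))/t_n$.

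For (1), $(u, t) \in C_x(\Gamma_f)$ iff there exist $y_n \to x$ and $c_n > 0$ with $c_n(y_n - x) \to u$ and $c_n(f(y_n) - f(x)) \to t$. With the reparametrization above, the conditions become $v_n \to u$, $t_n v_n \to 0$, and $(f(x + t_n v_n) - f(x))/t_n \to t$; when $u \neq 0$, $t_n \to 0^+$ is automatic, while the degenerate case $u = 0$ reduces to the same cluster analysis thanks to continuity of $f$, which forces the contribution of bounded $c_n$ to collapse to $t = 0$. Hence the $u$-slice of $C_x(\Gamma_f)$ equals the cluster set at $(0^+, u)$ of $g(t, v) := (f(x+tv) - f(x))/t$ on $(0, \infty) \times \V$. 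Continuity of $f$ makes $g$ continuous, so the images of the shrinking connected neighborhoods $U_\varepsilon := (0, \varepsilon) \times B(u, \varepsilon)$ are connected subsets of $\closure{\R}$, and the cluster set $\bigcap_{\varepsilon > 0} \closure{g(U_\varepsilon)}$ is a nested intersection of closed intervals, hence a closed interval whose endpoints are $\unD f(x,u)$ and $\ovD f(x,u)$ by the definitions. Equality (2) follows by the same argument with the reparametrization $z_n = y_n + t_n v_n$ applied to the Whitney-cone definition, producing the cluster set of $(y, t, v) \mapsto (f(y+tv) - f(y))/t$ as $(y, t, v) \to (x, 0^+, u)$, which identifies with $[\unQ f(x,u), \ovQ f(x,u)]$ by~\eqref{eq:sup-quotient}.

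For (3), expand the definition of the open strict tangent cone: $(u, t) \in N_x(\Gamma^+_f)$ iff there exist $\delta, \varepsilon > 0$ such that $(y + \lambda u', s + \lambda t') \in \Gamma^+_f$ for every $(y, s) \in \Gamma^+_f$ with $\norm{(y, s) - (x, f(x))} < \delta$, every $\lambda \in [0, \varepsilon)$, and every $(u', t')$ with $\norm{(u', t') - (u, t)} < \delta$. The extremal case $s = f(y)$ rewrites this as $(f(y + \lambda u') - f(y))/\lambda \leq t'$ for all such data, so existence of $\delta, \varepsilon$ is equivalent to $\ovQ f(x, u) < t$: the strict inequality is forced by $t'$ ranging over an open $\delta$-neighborhood of $t$ (so values $t' < t$ must also satisfy the bound), while the converse follows directly from the definition of lim sup.

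The main obstacle is the intermediate-value step in (1) and (2), namely showing that the cluster set of the difference quotient is the entire closed interval between the lim inf and the lim sup rather than just its endpoints. This is where continuity of $f$ enters in an essential way: it makes $g$ continuous, so that $g$ sends connected neighborhoods of $(0^+, u)$ to connected sets, and a nested intersection of closed connected subsets of $\closure{\R}$ is automatically a closed interval. Handling $u = 0$ cleanly is a small secondary obstacle, since the reparametrization need not enforce $t_n \to 0^+$; here continuity of $f$ again saves the day by pinning down the contribution of bounded $c_n$-sequences.
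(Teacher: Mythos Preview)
Your proof is correct and takes essentially the same route as the paper's. Both reparametrize the cone conditions as difference quotients via $t_n = 1/c_n$, $v_n = c_n\cdot(\text{difference})$, and then use continuity of $f$ to show each $u$-slice is the full closed interval between the $\liminf$ and the $\limsup$; the paper carries out this last step by linearly interpolating between a sequence witnessing a value just below $t$ and one witnessing a value just above, applying the intermediate value theorem in the interpolation parameter, and invoking closedness of the cone, whereas you package the same connectedness idea as ``the cluster set of a continuous map on shrinking connected neighborhoods is a nested intersection of closed intervals''.
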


\begin{proof}
We treat the case of extremal quotients, the case of extremal derivatives being similar.

\spa
(i)
Inclusion ``$\subseteq$''.
Let $(u, t) \in \Whit[x]{f}$.
There are sequences $x_n, y_n \to[n] x$ and $c_n > 0$ such that $c_n (y_n - x_n, f(y_n) - f(x_n)) \to[n] (u, t)$.
Since $y_n = x_n + \frac{1}{c_n} (c_n (y_n - x_n))$, one has $\ovQ f(x, u) \geq \lim_n c_n (f(y_n) - f(x_n)) = t$.
One proves similarly that $\unQ f(x, u) \leq t$.

\spa
(ii)
Inclusion ``$\supseteq$''.
If $u \in T_x \V$ and $t \in \intervC{\unQ f(x, u), \ovQ f(x, u)} \cap \R$, then by definition of the $\liminf$ and $\limsup$, for any $\epsilon > 0$, there exist sequences $y_n, z_n \to[n] x$ and $v_n, w_n \to[n] u$ and $a_n, b_n \to[n] 0^+$ such that
\begin{equation}
\frac{f(y_n + a_n v_n) - f(y_n)}{a_n} - \epsilon \leq t \leq \frac{f(z_n + b_n w_n) - f(z_n)}{b_n} + \epsilon
\end{equation}
for all $n \in \N$.
By the intermediate value theorem, there exists $t_n \in [0, 1]$ such that for $(x_n, d_n, u_n) \coloneqq (1-t_n) (y_n, a_n, v_n) + t_n (z_n, b_n, w_n)$, one has
\begin{equation}
\abs*{\frac{f(x_n + d_n u_n) - f(x_n)}{d_n} - t} \leq \epsilon
\end{equation}
for all $n \in \N$.
Then, with $c_n \coloneqq \opb{d_n}$ and $s_n \coloneqq x_n + d_n u_n$, one has
$x_n, s_n \to[n] x$ and $c_n > 0$ and $c_n (s_n - x_n) = u_n \to[n] u$ and $\limsup_n \abs{c_n (f(s_n) - f(x_n)) - t} \leq \epsilon$.
In particular, there exists $a \in [-\epsilon, \epsilon]$ such that $(u, t + a) \in \Whit[x]{f}$.
Since this is true for any $\epsilon > 0$, the result follows from the closedness of $\Whit[x]{f}$.

\spa
(iii)
The proof for $N_x(\Gamma^+_f)$ is similar.
\end{proof}

\begin{remark}
In particular, if $f$ is Lipschitz at $x$, then $\ovQ f(x, 0) = \unQ f(x, 0) = 0$, else $\ovQ f(x, 0) = - \unQ f(x, 0) = +\infty$.
\end{remark}

\begin{corollary}
Let $f \colon M \to \R$ be a function on a manifold.
For any $x \in M$, one has a partition
\begin{equation}
\Whit[x]{f} \sqcup N_x(\Gamma^+_f) \sqcup N_x(\Gamma^-_f) = T_x M \times \R.
\end{equation}
\end{corollary}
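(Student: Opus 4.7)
The plan is to deduce the partition directly from the preceding Proposition~\ref{prop:Dini}, which identifies the three sets with simple inequality conditions on the quotients $\unQ f(x,u)$ and $\ovQ f(x,u)$. Since the statement is local and both $\Whit[x]{f}$ and $N_x(\Gamma^\pm_f)$ are defined via first-order behavior, I would first fix a chart of $M$ at~$x$ to reduce to the case where $M$ is (an open subset of) a vector space, so that Proposition~\ref{prop:Dini} applies verbatim. This gives
\begin{equation*}
\Whit[x]{f} = \{(u,t) \mid \unQ f(x,u) \leq t \leq \ovQ f(x,u)\}
\qquad\text{and}\qquad
N_x(\Gamma^+_f) = \{(u,t) \mid t > \ovQ f(x,u)\}.
\end{equation*}

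The one missing piece is a similar description of $N_x(\Gamma^-_f)$. I would obtain this by the reflection $\sigma \colon (y,s) \mapsto (y,-s)$, which is a linear (hence smooth) involution sending $\Gamma^-_f$ to $\Gamma^+_{-f}$ and the basepoint $(x, f(x))$ to $(x, -f(x))$. Since the strict tangent cone is natural under diffeomorphisms, applying Proposition~\ref{prop:Dini} to $-f$ together with the identity $\ovQ(-f)(x,u) = -\unQ f(x,u)$ yields
\begin{equation*}
N_x(\Gamma^-_f) = \{(u,t) \mid t < \unQ f(x,u)\}.
\end{equation*}

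With these three descriptions in hand, the partition becomes a trivial observation on the real line. For each fixed $u \in T_xM$, the inequality $\unQ f(x,u) \leq \ovQ f(x,u)$ holds (a $\liminf$ is bounded above by the corresponding $\limsup$), so the three conditions ``$t < \unQ f(x,u)$'', ``$\unQ f(x,u) \leq t \leq \ovQ f(x,u)$'', and ``$t > \ovQ f(x,u)$'' are pairwise disjoint and exhaust $\R$. Taking the union over $u$ gives the claimed partition of $T_xM \times \R$. There is no genuine obstacle here, only the minor bookkeeping of checking that the infinite cases ($\ovQ f(x,u) = +\infty$ or $\unQ f(x,u) = -\infty$) are handled correctly, which they are: in those cases the corresponding $N_x$-set is empty and $\Whit[x]{f}$ already covers the half-line in question.
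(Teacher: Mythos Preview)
Your proof is correct and follows the same route as the paper: both rest on the Dini descriptions of Proposition~\ref{prop:Dini}. The only organizational difference is that the paper also invokes Proposition~\ref{prop:strict-cone} for the disjointness, whereas you obtain the description of $N_x(\Gamma^-_f)$ via the reflection $(y,s)\mapsto(y,-s)$ and then read off the partition directly from trichotomy on~$\R$.
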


\begin{proof}
It follows easily from the proposition and Proposition~\ref{prop:strict-cone}.
\end{proof}

\subsection{First-order extrema}

Since Whitney cones and microsupports depend only on the $C^1$-structure of a manifold, it is natural to introduce the notion of first-order extremum.
If $\V$ is a normed vector space, $f \colon \V \to \R$ and $x \in \V$, we define
\begin{align}
\und f(x) \coloneqq & \liminf_{v \to 0} \frac{f(x + v) - f(x)}{\norm{v}},\label{eq:unD}\\
\ovd f(x) \coloneqq & \limsup_{v \to 0} \frac{f(x + v) - f(x)}{\norm{v}}.\label{eq:ovD}
\end{align}

Let $M$ be a manifold.
If $f \colon M \to \R$ and $x \in M$ and $\norm{-}$ is a norm on $T_xM$, then $\liminf_{v \to 0} \frac1{\norm{v}} \big( f(\opb{\phi}(\phi(x) + \phi'(x)v)) - f(x) \big)$ does not depend on the chart $\phi$ at~$x$.
Therefore, when no norm is specified, the extended reals $\und f(x)$ and $\ovd f(x)$ are well-defined up to multiplication by a strictly positive real number.
Therefore, properties like ``$\und f(x) > 0 $'' still make sense.

Using implicitly the canonical identifications $T(M \times \R) \simeq TM \times T\R$ and $T\R \simeq \R \times \R$, we define
\begin{equation}
T_{\geq 0}(M \times \R) \coloneqq TM \times (\R \times \R_{\geq 0})
\end{equation}
and similarly for the cotangent bundle, for $T_{=0}(M \times \R)$, etc.

\begin{definition}[First-order extremum]
Let $f \colon M \to \R$ be a function on a manifold.
A point $x \in M$ is a \define{first-order minimum} (or f-o minimum) of $f$ if $\und f(x) \geq 0$.
It is an f-o maximum if $\ovd f(x) \leq 0$, and an f-o extremum if it is either an f-o minimum or an f-o maximum.
\end{definition}

A local minimum (resp.\ maximum, extremum) is obviously an f-o minimum (resp.\ maximum, extremum).
A point which is both an f-o minimum and an f-o maximum is a \define{stationary point}: the function is differentiable at that point with derivative zero.
We have the following characterization of f-o extrema.

\begin{proposition}\label{prop:f-o}
Let $M$ be a manifold, $x \in M$, and $f \colon M \to \R$ be a function.
Then, the following are equivalent:
\begin{enumerate}
\item
$x$ is an f-o minimum of $f$,
\item
$C_x (\Gamma_f) \subseteq T_{\geq 0}(M \times \R)$,
\item
$(0, 1) \in C_x (\Gamma_f)^\circ$,
\item
there exist an open neighborhood $U$ of $x$ and a function $\psi \in C^1(U)$ such that $\psi(x) = f(x)$ and $d\psi(x) = 0$ and $\psi \leq f|_U$.
\end{enumerate}
\end{proposition}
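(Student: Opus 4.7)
The plan is to establish the chain (1)$\Leftrightarrow$(2)$\Leftrightarrow$(3)$\Leftrightarrow$(4), drawing on Proposition~\ref{prop:Dini} and Lemma~\ref{lem:cone} as the two key inputs.

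For (1)$\Leftrightarrow$(2), I would invoke Proposition~\ref{prop:Dini} to identify $C_x(\Gamma_f)$ with the region $\{(u, t) \in T_xM \times \R : \unD f(x, u) \leq t \leq \ovD f(x, u)\}$, so that (2) is equivalent to the directional condition $\unD f(x, u) \geq 0$ for every $u \in T_xM$. To match this with the direction-free inequality $\und f(x) \geq 0$ of (1), a short extraction argument suffices: if $\und f(x) < 0$, a witnessing sequence $v_n \to 0$ yields, after passing to a limit of $u_n \coloneqq v_n/\|v_n\|$, a direction $u$ with $\unD f(x, u) < 0$; conversely, $\und f(x) \geq 0$ gives, for each $\epsilon > 0$, an estimate $f(x+w) - f(x) \geq -\epsilon \|w\|$ for $w$ near $0$, from which the definition of $\unD f(x, u)$ produces $\unD f(x, u) \geq -\epsilon \|u\|$, and letting $\epsilon \to 0$ one concludes.

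The equivalence (2)$\Leftrightarrow$(3) is immediate from the definition of the polar cone, since $T_{\geq 0}(M \times \R) = \{(u, t) : \langle (0, 1), (u, t) \rangle \geq 0\}$ is precisely the closed half-space cut out by the covector $(0, 1) \in T^*_{(x, f(x))}(M \times \R)$.

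For (3)$\Leftrightarrow$(4), I would apply Lemma~\ref{lem:cone} to $A = \Gamma_f \subseteq M \times \R$ to rewrite (3) as the existence of an open neighborhood $W$ of $(x, f(x))$ and $\phi \in C^1(W)$ with $\phi(x, f(x)) = 0$, $d\phi(x, f(x)) = (0, 1)$, and $\phi|_{\Gamma_f \cap W} \geq 0$. The direction (4)$\Rightarrow$(3) is then realized by the explicit choice $\phi(y, z) \coloneqq z - \psi(y)$. For (3)$\Rightarrow$(4), I would invoke the implicit function theorem, applicable because $\partial_z \phi(x, f(x)) = 1 \neq 0$, to produce open neighborhoods $U \ni x$ and $V \ni f(x)$ and $\psi \in C^1(U)$ with $\{ \phi = 0 \} \cap (U \times V) = \Gamma_\psi$, $\psi(x) = f(x)$, and $d\psi(x) = 0$. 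Shrinking $U$ so that $\partial_z \phi > 0$ on $U \times V$ and so that $(y, f(y)) \in U \times V$ for $y \in U$, the monotonicity of $\phi$ in $z$ combined with $\phi(y, f(y)) \geq 0 = \phi(y, \psi(y))$ then forces $f(y) \geq \psi(y)$. The one slightly delicate point is the confinement of $(y, f(y))$ to the IFT domain, but this is handled by the continuity of $f$ standing throughout the paper.
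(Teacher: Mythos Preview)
Your proposal is correct and follows essentially the same route as the paper: the paper declares (4)$\Rightarrow$(1)$\Rightarrow$(2)$\Rightarrow$(3) obvious and proves (3)$\Rightarrow$(4) via Lemma~\ref{lem:cone} and the implicit function theorem, exactly as you do; your use of Proposition~\ref{prop:Dini} to make (1)$\Leftrightarrow$(2) explicit is just a more detailed version of the paper's ``obvious''.

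One small caveat: in this proposition $f$ is merely ``a function'', not assumed continuous, so your appeal to ``the continuity of $f$ standing throughout the paper'' to confine $(y,f(y))$ to $U\times V$ is not quite right. The fix is painless and does not need continuity: shrink $U$ so that $\psi(U)\subseteq V$; then for $y\in U$, either $f(y)\in V$, in which case $\phi(y,f(y))\geq 0$ gives $f(y)\geq\psi(y)$ by monotonicity of $\phi$ in the last variable, or $f(y)\notin V$, in which case the already-established lower bound from (1) (equivalently (3)) forces $f(y)\geq\sup V>\psi(y)$. The paper's own proof glosses over this point as well.
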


\begin{proof}
(4)$\Rightarrow$(1)$\Rightarrow$(2)$\Rightarrow$(3) are obvious.

\spa
(3)$\Rightarrow$(4).
Since $(0, 1) \in C_x (\Gamma_f)^\circ$, Lemma~\ref{lem:cone} gives an open neighborhood $W$ of $(x, f(x))$ and $\phi \in C^1(W)$ with $\phi(x, f(x)) = 0$ and $d\phi(x, f(x)) = (0, 1)$ and $\phi(\Gamma_f \cap W) \subseteq \R_{\geq 0}$.
The implicit function theorem gives an open neighborhood $U \times V \subseteq W$ of $(x, f(x))$ and $\psi \in C^1(U)$ with $\psi(x) = 0$ and $d\psi(x) = 0$ such that for $(x, y) \in U \times V$, $\phi(x, y) \geq 0$ is equivalent to $y \geq \psi(x)$.
Therefore, $\psi \leq f|_U$.
\end{proof}

We obtain the following corollary of independent interest.

\begin{corollary}
Let $M$ be a manifold and $\W$ be a normed vector space.
Let $x \in M$ and $f \colon M \to \W$ be a function differentiable at~$x$ with $df(x) = 0$.
Then, there exist an open neighborhood $U$ of $x$ and a function $\psi \in C^1(U)$ with $\psi(x) = 0$ and $d\psi(x) = 0$ such that $\norm{f|_U - f(x)}_\W \leq \psi$ (that is, for any $y \in U$, one has $\norm{f(y) - f(x)}_\W \leq \psi(y)$).
\end{corollary}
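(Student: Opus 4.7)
The plan is to reduce the corollary to Proposition~\ref{prop:f-o} applied to a well-chosen real-valued function. I would set $g \colon M \to \R$, $g(y) \coloneqq \norm{f(y) - f(x)}_\W$. By construction $g \geq 0$ and $g(x) = 0$, so while $x$ is trivially a global minimum of $g$, the interesting point is that I want to show $x$ is a first-order \emph{maximum} of $g$, since the corollary asks for an upper bound with vanishing derivative.

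To verify this, I would fix a chart at $x$ and a norm on $T_x M$. The hypothesis that $f$ is differentiable at $x$ with $df(x) = 0$ reads $f(x + v) - f(x) = o(\norm{v})$ as $v \to 0$, hence $0 \leq g(x + v) = o(\norm{v})$. This immediately yields
\begin{equation}
\ovd g(x) = \limsup_{v \to 0} \frac{g(x + v)}{\norm{v}} = 0,
\end{equation}
so $\und(-g)(x) = -\ovd g(x) = 0 \geq 0$, i.e.\ $x$ is a first-order minimum of $-g$.

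Then I would apply the equivalence (1)$\Leftrightarrow$(4) of Proposition~\ref{prop:f-o} to $-g$: there exist an open neighborhood $U$ of $x$ and a function $\tilde\psi \in C^1(U)$ with $\tilde\psi(x) = -g(x) = 0$, $d\tilde\psi(x) = 0$, and $\tilde\psi \leq -g|_U$. Setting $\psi \coloneqq -\tilde\psi$ gives $\psi \in C^1(U)$, $\psi(x) = 0$, $d\psi(x) = 0$, and $\norm{f|_U - f(x)}_\W = g|_U \leq \psi$, which is exactly the desired statement. There is no real obstacle: the whole argument is essentially a dualization of Proposition~\ref{prop:f-o} from first-order minima to first-order maxima, and the only observation required is the choice of auxiliary function $g = \norm{f - f(x)}_\W$, whose first-order behavior at $x$ is controlled by the vanishing of $df(x)$.
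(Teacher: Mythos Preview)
Your proof is correct and follows essentially the same route as the paper: the paper's proof simply states that $df(x)=0$ implies $x$ is an f-o maximum of $\norm{f-f(x)}_\W$ and invokes Proposition~\ref{prop:f-o}, while you spell out the verification of $\ovd g(x)=0$ and the dualization via $-g$ explicitly.
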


\begin{proof}
The hypothesis $df(x) = 0$ implies that $x$ is an f-o maximum of $\norm{f - f(x)}_\W$ and the result follows from Proposition~\ref{prop:f-o}.
\end{proof}

\begin{remark}
One cannot strengthen the conclusion of this corollary nor of Item~4 of Proposition~\ref{prop:f-o} to $\psi \in C^2(U)$, as the example of Remark~\ref{rmk:cone} shows.
\end{remark}

As in standard calculus, the following Fermat lemma will be used to prove Rolle's lemma and the mean value theorem for continuous functions.

\begin{proposition}[Fermat lemma]\label{prop:fermat}
Let $M$ be a manifold and $f \colon M \to \R$ be a continuous function.
If $f$ has an f-o extremum at $x \in M$, then
\begin{align}
T_x M \times \{0\} &\subseteq \Whit[x]{f}
\qquad\text{and}\\
\fantome[T_x M]{c}{\{0\}} \times \fantome[\{0\}]{c}{\R} &\subseteq \Lagr[x]{f}.
\end{align}
\end{proposition}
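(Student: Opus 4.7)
The plan is to handle the two inclusions separately, after a common reduction. Replacing $f$ by $-f$ conjugates $\Whit[x]{f}$ and $\Lagr[x]{f}$ by the involution $(u, t) \mapsto (u, -t)$ on $T(M \times \R)$ (respectively by its transpose on $T^*(M \times \R)$), both of which preserve $T_xM \times \{0\}$ and $\{0\} \times \R$; so I may assume $x$ is a first-order minimum, i.e.\ $\und f(x) \geq 0$. Fixing a chart at $x$, I may work in a vector space for both halves.

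For the Whitney cone inclusion, I apply Proposition~\ref{prop:Dini} to reduce $(u, 0) \in \Whit[x]{f}$ to the two inequalities $\unQ f(x, u) \leq 0 \leq \ovQ f(x, u)$. For the bound on $\ovQ f(x, u)$, I use the sequence $(y_n, v_n, t_n) = (x, u, 1/n) \to (x, u, 0^+)$: the associated quotient equals $n(f(x + u/n) - f(x))$, whose $\liminf$ is nonnegative by the hypothesis $\und f(x) \geq 0$ (the case $u = 0$ being trivial). For the bound on $\unQ f(x, u)$, I use the shifted sequence $(y_n, v_n, t_n) = (x - u/n, u, 1/n)$, for which $y_n + t_n v_n = x$, so that the quotient equals $-n(f(x - u/n) - f(x))$ and has nonpositive $\limsup$. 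These subsequential bounds transfer to $\ovQ$ and $\unQ$ because the $\limsup$ (respectively $\liminf$) of a sequence is itself a subsequential limit.

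For the conormal inclusion, I use Proposition~\ref{prop:f-o} to produce an open neighborhood $U$ of $x$ and a function $\psi \in C^1(U)$ with $\psi(x) = f(x)$, $d\psi(x) = 0$, and $\psi \leq f|_U$. I then set $\phi \colon U \times \R \to \R$, $(x', y) \mapsto y - \psi(x')$. This $\phi$ is $C^1$, hence strictly differentiable at every point, and satisfies $\phi(x, f(x)) = 0$ with $d\phi(x, f(x)) = (0, 1)$. The inequality $\psi \leq f|_U$ forces $\Gamma_f \cap \{\phi < 0\} = \varnothing$ on $U \times \R$, so $(\rsect_{\{\phi < 0\}}(\cor_{\Gamma_f}))_{(x, f(x))} = 0$; the distinguished triangle
\begin{equation*}
\rsect_{\{\phi \geq 0\}}(\cor_{\Gamma_f}) \to \cor_{\Gamma_f} \to \rsect_{\{\phi < 0\}}(\cor_{\Gamma_f}) \to[+1]\;
\end{equation*}
then yields $(\rsect_{\{\phi \geq 0\}}(\cor_{\Gamma_f}))_{(x, f(x))} \simeq \cor \neq 0$. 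The contrapositive of Lemma~\ref{lem:SSstrictDiff} gives $(0, 1) \in \Lagr[x]{f}$, and since $\Lagr[x]{f}$ is a closed symmetric cone by Proposition~\ref{prop:symmetric} (applied to the closed $C^0$-submanifold $\Gamma_f$), it contains $\R \cdot (0, 1) = \{0\} \times \R$.

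The main obstacle is that the function $\psi$ supplied by Proposition~\ref{prop:f-o} is only $C^1$, and cannot in general be taken to be $C^2$ (by the example in Remark~\ref{rmk:cone}), so the test function $\phi$ is correspondingly only $C^1$. This means the $C^\infty$ propagation criterion built into the definition of microsupport is not directly available, and the argument hinges instead on the strict-differentiability strengthening recorded in Lemma~\ref{lem:SSstrictDiff}. Once that lemma is in hand, the conormal half is essentially immediate from the geometric inequality $\psi \leq f$, and the Whitney cone half reduces to the two elementary Dini-derivative estimates above.
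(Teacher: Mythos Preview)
Your proof is correct. The conormal half is essentially the paper's argument unfolded: the paper invokes Proposition~\ref{prop:f-o} to get $(0,1)\in C_x(\Gamma_f)^\circ$ and then appeals to the lower bound of Proposition~\ref{prop:SSbounds}, whose proof is exactly the test-function computation you carry out (with Lemma~\ref{lem:SSstrictDiff} handling the fact that $\phi$ is only $C^1$).

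The Whitney-cone half, however, is a genuinely different and more direct route. The paper first treats the case $M=\R$ by perturbing to $f_\epsilon=f+\epsilon|t|$, which has a \emph{strict} local minimum, and then builds by hand two sequences $x_n,y_n\to 0$ on opposite sides of~$0$ with $f_\epsilon(x_n)=f_\epsilon(y_n)$ (via the intermediate value theorem), yielding $(1,0)\in\Whit[0]{f_\epsilon}$; letting $\epsilon\to 0$ and using closedness gives $(1,0)\in\Whit[0]{f}$. The higher-dimensional case is then obtained by restricting to straight lines and applying the chain rule for Whitney cones (Proposition~\ref{prop:chainWhit}). Your approach bypasses all of this: once the Dini description of $\Whit[x]{f}$ in Proposition~\ref{prop:Dini} is available, the inclusion $T_xM\times\{0\}\subseteq\Whit[x]{f}$ reduces to the two elementary inequalities $\unQ f(x,u)\le 0\le\ovQ f(x,u)$, which you verify with single explicit sequences. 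This is shorter and avoids both the perturbation trick and the chain rule; on the other hand, the paper's argument is self-contained and does not depend on Proposition~\ref{prop:Dini}. One small wording issue: the sentence ``These subsequential bounds transfer\ldots'' is slightly misleading; the actual reason is simply that a $\limsup$ over the full filter dominates the $\limsup$ (hence the $\liminf$) along any particular sequence, and dually for $\unQ$.
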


\begin{proof}
(i)
Whitney cone.
We can suppose that $x$ is an f-o minimum.
First, suppose that $M = \R$ and $x = 0$.
Let $\epsilon > 0$ and set $f_\epsilon (t) \coloneqq f(t) + \epsilon \abs{t}$.
Then, $0$ is a local strict minimum of $f_\epsilon$.
We can suppose that it is a global strict minimum.

If $f_\epsilon(-1) \leq f_\epsilon(1)$, then set $x_0 \coloneqq -1$ and $y_0 \coloneqq \min \{ z \in \intervOC{0, 1} \mid f_\epsilon(z) = f_\epsilon(x_0) \}$ (which exists by the intermediate value theorem and is a minimum by continuity of $f_\epsilon$).
If $f_\epsilon(-1) > f_\epsilon(1)$, then set $y_0 \coloneqq 1$ and $x_0 \coloneqq \max \{ z \in \intervCO{-1, 0} \mid f_\epsilon(z) = f_\epsilon(y_0) \}$.
Use the same procedure with the points $\pm 1$ replaced with the points $\pm \min (-x_0, y_0)/2$, to construct $(x_1, y_1)$.
This way, one constructs sequences $x_n, y_n \to[n] 0$ that show that $(1, 0) \in \Whit[0]{f_\epsilon}$, so there exists $\alpha_\epsilon \in \intervC{-\epsilon,\epsilon}$ such that $(1,\alpha_\epsilon) \in \Whit[0]{f}$.
Since this is true for any $\epsilon > 0$, the closedness of $\Whit[0]{f}$ implies that $\R \times \{0\} \subseteq \Whit[0]{f}$.

In the general case, suppose that $M$ is open in $\R^m$.
Let $u \in \unp{T}_xM$ and set $\gamma \colon \intervO{-\alpha, \alpha} \to M, t \mapsto x+ t u$.
By the previous paragraph, $\R \times \{0\} \subseteq \Whit[0]{f \circ \gamma}$, and by the chain rule for Whitney cones ($\gamma$ being Lipschitz), one has $\Whit[0]{f \circ \gamma} \subseteq \Whit[0]{\gamma} \circ \Whit[x]{f}$.
But $\Whit[0]{\gamma} = \R (1, u)$, so $(u, 0) \in \Whit[x]{f}$.

\spa
(ii)
Conormal.
If $x$ is an f-o minimum of $f$, then $(0, 1) \in C_x (\Gamma_f)^\circ$ by Proposition~\ref{prop:f-o}, so the result follows from Proposition~\ref{prop:SSbounds}.
If $x$ is an f-o maximum of $f$, then it is an f-o minimum of $-f$, so by the above, one has $\{0\} \times \R \subseteq \Lagr[x]{-f}$.
Then, the result follows from the fact that $\Lagr{f}$ and $\Lagr{-f}$ are mapped onto each other by the involutive automorphism of $T^*(M \times \R)$ given by $(x, t; u, \tau) \mapsto (x, -t; u, -\tau)$.
\end{proof}

\subsection{Microsupports of epigraphs}

In~\cite{Vic}, N.~Vichery studied the microsupport of the constant sheaf on the epigraph of a real-valued function, rather than on its graph.
In this subsection, we show that the two points of view are equivalent for Lipschitz functions.

Let $M$ be a manifold and let $f \colon M \to \R$ be a continuous function.
We introduce the microsupports
\begin{equation}
\Lambda^\pm_f \coloneqq \muSupp(\Gamma^\pm_f).
\end{equation}

One has the exact sequences (by~\cite{KS}*{Prop.~2.3.6(v-vi)})
\begin{align}
\label{eq:exact1}
0 \to \cor_{M \times \R} \to \cor_{\Gamma^+_f} \oplus \cor_{\Gamma^-_f} \to \cor_{\Gamma_f} \to 0
&\qquad\text{and}\\
\label{eq:exact2}
0 \to \cor_{\interior{\Gamma^\pm_f}} \to \cor_{M \times \R} \to \cor_{\Gamma^\mp_f} \to 0.
\end{align}

It follows from~\cite{KS}*{Exe.~III.4} that if $U$ is a convex open subset of a vector space, then $\cor_U$ is cohomologically constructible and
\begin{equation}
\RD_M'(\cor_U) \simeq \cor_{\closure{U}}
\end{equation}
so by Proposition~\ref{prop:dual}, one has
\begin{equation}\label{eq:SSanti}
\muSupp(\closure{U}) = \muSupp(U)^a.
\end{equation}
Note that these properties are of a topological and local nature.

\begin{proposition}\label{prop:epigraph}
Let $M$ be a manifold and let $f \colon M \to \R$ be a continuous function.
Then,
\begin{enumerate}
\item
$\unpLagr{f} = \unpLagr{f}^+ \cup \unpLagr{f}^-$,
\item
$\unpLagr{f}^- = (\unpLagr{f}^+)^a$,
\item
$\Lambda^+_f \subseteq T^*_{\geq 0}(M \times \R)$.
\end{enumerate}
\end{proposition}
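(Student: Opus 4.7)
I would prove the three items in order. Items (1) and (2) both follow from the triangular inequality for microsupport applied to the exact sequences~\eqref{eq:exact1} and~\eqref{eq:exact2}. For (1), the distinguished triangle associated with~\eqref{eq:exact1}, combined with $\muSupp(\cor_{M \times \R}) = 0^*_{M \times \R}$ and the additivity of the microsupport under direct sums, yields, away from the zero section, both the inclusion $\unpLagr{f} \subseteq \unpLagr{f}^+ \cup \unpLagr{f}^-$ and its reverse. Applying the same procedure to~\eqref{eq:exact2} gives $\unpmuSupp(\interior{\Gamma^+_f}) = \unpLagr{f}^-$, which reduces (2) to proving $\Lambda^+_f = \muSupp(\interior{\Gamma^+_f})^a$. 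By Proposition~\ref{prop:dual}, this last identity will follow once we establish that $\cor_{\interior{\Gamma^+_f}}$ is cohomologically constructible and $\RD'(\cor_{\interior{\Gamma^+_f}}) \simeq \cor_{\Gamma^+_f}$. The local shearing $(x, t) \mapsto (x, t - f(x))$ is a topological automorphism identifying $\interior{\Gamma^+_f}$ locally with a product $W \times \R_{>0}$, hence with an open convex subset of a vector space; the text's remark that these properties are of a topological and local nature then transfers the duality, and the continuity of $f$ gives $\closure{\interior{\Gamma^+_f}} = \Gamma^+_f$.

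For (3), since $\Lambda^+_f$ is closed and $T^*(M \times \R) \setminus T^*_{\geq 0}(M \times \R) = \{\tau < 0\}$ is open, it suffices to show $\{\tau < 0\} \subseteq \Ppg(\Gamma^+_f)$. Fix a point $(x_0, t_0; \xi_0, \tau_0)$ with $\tau_0 < 0$ and apply Lemma~\ref{lem:criterSet} to any $\phi$ with $\phi(x_0, t_0) = 0$ and $d\phi(x_0, t_0) = (\xi_0, \tau_0)$. If $(x_0, t_0) \notin \Gamma^+_f$ both stalks vanish; otherwise, the implicit function theorem (using $\partial_t \phi(x_0, t_0) = \tau_0 < 0$) locally represents $\{\phi < 0\}$ as $\{t > g(x)\}$ for a smooth $g$ with $g(x_0) = t_0$. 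On a small product neighborhood $V = B \times (t_0 - \epsilon, t_0 + \epsilon)$ chosen so that $f(x), g(x) < t_0 + \epsilon/2$ for all $x \in B$, the homotopy $H((x, t), s) = (x, (1-s)t + s(t_0 + \epsilon/2))$ preserves both $\Gamma^+_f \cap V$ and $\Gamma^+_f \cap V \cap \{\phi < 0\}$---the defining inequalities $t \geq f(x)$ and $t > g(x)$ survive convex combinations with $t_0 + \epsilon/2$---and deformation retracts each onto the common section $B \times \{t_0 + \epsilon/2\}$. Both sets are thus contractible; the inclusion induces the identity on $H^0 = \cor$, and the isomorphism required by Lemma~\ref{lem:criterSet} follows.

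The main obstacle is the step in (2) where the duality $\RD'(\cor_U) \simeq \cor_{\closure{U}}$, stated in the text only for convex open $U$ in a vector space, has to be transferred to $\interior{\Gamma^+_f}$: the identifying homeomorphism (the shearing) is only topological when $f$ is merely continuous, so one must genuinely invoke the topological and local nature of the formula rather than any smooth reduction. Once this is in hand, the remaining ingredients---the triangular inequality, Proposition~\ref{prop:dual}, and the contractibility argument in (3)---are routine.
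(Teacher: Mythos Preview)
For (1) and (2) your argument is essentially identical to the paper's: both use the triangular inequality on the exact sequences~\eqref{eq:exact1} and~\eqref{eq:exact2}, together with the topological shearing $(x,t)\mapsto(x,t-f(x))$ to transfer the duality $\RD'(\cor_U)\simeq\cor_{\closure U}$ from a convex open set to $\interior{\Gamma^+_f}$.

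For (3) the approaches diverge. The paper reduces to the $C^1$ case by writing $\cor_{\Gamma^+_f}$ as a filtered colimit $\indlim[n]\cor_{\Gamma^+_{f_n}}$ over an increasing sequence of smooth functions $f_n\nearrow f$, and then invokes~\cite{KS}*{Exe.~V.7} to obtain $\Lambda^+_f\subseteq\closure{\bigcup_n\Lambda^+_{f_n}}\subseteq T^*_{\geq0}(M\times\R)$. Your argument instead verifies directly that every covector with $\tau_0<0$ lies in the propagation set, via an explicit vertical deformation retraction of $\Gamma^+_f\cap V$ and $\Gamma^+_f\cap V\cap\{\phi<0\}$ onto a common slice. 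This is correct and more elementary: it avoids both the smooth approximation and the colimit bound, and is in fact the same retraction technique the paper itself uses in the proof of Proposition~\ref{prop:SSbounds}. The paper's route has the merit of illustrating a useful reduction principle (approximate and pass to the limit); yours is self-contained and arguably cleaner for this particular statement.
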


This proposition implies that the knowledge of $\Lagr{f}$ is equivalent to that of $\Lambda_f^+$ outside of $T^*_{=0}(M \times \R)$, and in particular it is equivalent for Lipschitz functions.
Note that Item~2 was proved in essence by N.~Vichery (\cite{Vic}*{Lemma~4.12}).

\begin{proof}
(1)
follows from the triangular inequality applied to the exact sequence~\eqref{eq:exact1}.

\spa
(2)
Consider the topological automorphism of $M \times \R$ given by $\Phi(x, t) \coloneqq (x, t - f(x))$.
Then, $\Phi(\Gamma^\pm_f) = M \times (\pm \R_{\geq 0})$.
Therefore, since $\closure{\interiorSmall{\Gamma^+_f}} = \Gamma^+_f$, we have $\muSupp(\Gamma^+_f) = \muSupp(\interiorSmall{\Gamma^+_f})^a$ by Equation~\eqref{eq:SSanti} and the fact that it is a topological and local property.
On the other hand, the triangular inequality applied to the exact sequence~\eqref{eq:exact2} implies that $\unpLagr{f}^- = \unpmuSupp(\interiorSmall{\Gamma^+_f})$.

\spa
(3)
The result is clear if $f$ is $C^1$.
Now, let $f$ be continuous.
Since the result to prove is local, we can assume that $M$ is compact.
We define an increasing sequence $(f_n)_{n \in \N}$ of smooth real-valued functions on $M$ converging pointwise (indeed, uniformly) to $f$ as follows.
Let $f_0$ be the constant function equal everywhere to $(\min_M f) - 1$, and given $f_n < f$, let $\epsilon_n \coloneqq \min_M (f-f_n) > 0$ and by density of smooth functions in the space of continuous functions with the compact-open topology, let $f_{n+1}$ be a smooth function such that $f - \epsilon_n / 2 < f_{n+1} < f$.

Then, $\Gamma^+_f \subseteq \Gamma^+_{f_{n+1}} \subseteq \Gamma^+_{f_n}$ and $\bigcap_{n \in \N} \Gamma^+_{f_n} = \Gamma^+_f$.
It follows that the inclusions induce an isomorphism $\indlim[n] \cor_{\Gamma^+_{f_n}} \isoto \cor_{\Gamma^+_f}$.
Applying~\cite{KS}*{Exe.~V.7}, one gets $\Lambda^+_f \subseteq \closure{\bigcup_n \Lambda^+_{f_n}} \subseteq T^*_{\geq 0}(M \times \R)$.
\end{proof}

\begin{example}
It can happen that $(\Lambda^+_f)_x = T^*_xM \times \R_{\geq 0}$.
An example is given by $f \colon \R \to \R, x \mapsto x \sin(1/x)$, for which $(\Lambda^+_f)_0 = \R \times \R_{\geq 0}$.
In particular, the union in Item~1 need not be disjoint (although it is disjoint outside of $T^*_{=0}(M \times \R)$ in view of Items~2 and~3, and in particular for Lipschitz functions).
\end{example}

\section{Main results}
\label{sec:general}

This is the main section of the paper, where we prove the characterizations of Lipschitz continuity and strict differentiability in terms of the conormal.
We also prove upper bounds on the Whitney cone and on the conormal of a continuous map.

\subsection{Mean value theorem}

In the case of a continuous map between vector spaces, we can give a lower bound on the conormal in the form of a mean value theorem.
As in the classical treatment, we prove it first for maps of a real variable.
Recall that the conormal $\Lagr{f}$ of a continuous map $f$ was defined in Equation~\eqref{eq:conormal}.

\begin{lemma}[Rolle's lemma]
Let $a, b \in \R$ with $a < b$.
If $f \colon [a, b] \to \R$ is a continuous function with $f(a) = f(b)$, then there exists $c \in \intervO{a, b}$ such that $\{0\} \times \R \subseteq \Lagr[c]{f}$.
\end{lemma}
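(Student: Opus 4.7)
The plan is to follow the classical proof of Rolle's lemma and then invoke Fermat's lemma (Proposition~\ref{prop:fermat}) already established for continuous real-valued functions.

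First, by continuity of $f$ on the compact interval $[a,b]$, the function attains its supremum $M$ and infimum $m$. If $m = M$, then $f$ is constant; in this case any $c \in \intervO{a, b}$ is both a local minimum and a local maximum, hence a first-order extremum, and we can pick any such $c$. Otherwise, since $f(a) = f(b)$, at least one of $m, M$ is distinct from this common boundary value, so either the supremum or the infimum is attained at an interior point $c \in \intervO{a, b}$. That point is then a local extremum of $f$.

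Next, I would observe that a local extremum is automatically a first-order extremum in the sense of the paper: at a local minimum $c$ one has $f(y) \geq f(c)$ for $y$ near $c$, which immediately gives $\und f(c) \geq 0$; similarly at a local maximum. Hence $c$ is a first-order extremum, and Proposition~\ref{prop:fermat} applies to yield
\begin{equation}
\{0\} \times \R \subseteq \Lagr[c]{f},
\end{equation}
which is exactly the conclusion sought.

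There is no real obstacle: the nontrivial microlocal content has been absorbed into Proposition~\ref{prop:fermat}, and the remaining task is just the standard extremum-selection argument from one-variable calculus, together with the elementary observation that a local extremum is a first-order extremum.
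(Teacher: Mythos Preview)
Your proof is correct and follows exactly the paper's approach: the paper's proof is the one-liner ``Apply Fermat's lemma (Proposition~\ref{prop:fermat}) at a local extremum of $f$ in $\intervO{a, b}$,'' and you have simply spelled out the standard calculus argument for why such an interior local extremum exists.
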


\begin{proof}
Apply Fermat's lemma (Proposition~\ref{prop:fermat}) at a local extremum of $f$ in $\intervO{a, b}$.
\end{proof}

\begin{lemma}[mean value theorem for real-valued functions of a real variable]\label{lem:meanRR}
Let $a, b \in \R$ with $a < b$.
If $f \colon [a, b] \to \R$ is a continuous function, then there exists $c \in \intervO{a, b}$ such that $(f(b) - f(a), a - b) \in \Lagr[c]{f}$.
\end{lemma}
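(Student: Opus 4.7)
The plan is to mimic the classical derivation of the mean value theorem from Rolle's lemma, using the shearing lemma (Lemma~\ref{lem:coneShear}) as the microlocal substitute for the elementary statement that adding an affine function shifts the derivative in a predictable way.

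First, set $s \coloneqq \frac{f(b)-f(a)}{b-a}$ and define the auxiliary function $g \colon [a,b] \to \R$ by $g(x) \coloneqq f(x) - s(x-a)$. Since $g$ is continuous with $g(a) = g(b) = f(a)$, the preceding Rolle's lemma produces a point $c \in \intervO{a,b}$ such that $\{0\} \times \R \subseteq \Lagr[c]{g}$.

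Next, I would invoke the shearing lemma with the smooth perturbation $h(x) \coloneqq -s(x-a)$, so that $g = f + h$ and $h'(c) = -s$. Writing $\Lagr[c]{g} = (p_M - h'(c)^\intercal \circ p_N,\, p_N)(\Lagr[c]{f})$, the shearing map in these one-dimensional coordinates is simply $(\xi,\eta) \mapsto (\xi + s\eta,\eta)$. Consequently, from $(0,\eta) \in \Lagr[c]{g}$ for every $\eta \in \R$, I can solve back to obtain $(-s\eta,\eta) \in \Lagr[c]{f}$ for every $\eta \in \R$. Specializing to $\eta = a-b$ yields
\begin{equation*}
(-s(a-b),\, a-b) \;=\; \bigl(f(b)-f(a),\, a-b\bigr) \;\in\; \Lagr[c]{f},
\end{equation*}
which is the desired conclusion.

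No step should present a genuine obstacle: Rolle's lemma has just been proved, and the shearing lemma is tailor-made for exactly this kind of affine modification (its hypothesis of strict differentiability of the added map is trivially satisfied by the smooth $h$). The only mildly delicate point is bookkeeping the direction of the shearing map and the sign of $h'(c)^\intercal$; the computation above shows that the signs work out and produce precisely the covector $(f(b)-f(a),\, a-b)$ rather than its antipode. One minor domain subtlety is that $[a,b]$ is a manifold with boundary, but the covector is extracted at an interior point $c \in \intervO{a,b}$, where $\Lagr[c]{f}$ is defined on the open submanifold $\intervO{a,b}$ and the shearing lemma applies without modification.
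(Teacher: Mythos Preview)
Your proof is correct and follows exactly the approach of the paper: apply Rolle's lemma to $f$ minus the appropriate affine function, then use the shearing lemma (Lemma~\ref{lem:coneShear}) to translate the conclusion back to $\Lagr[c]{f}$. The paper's proof is terser but identical in substance, and your sign bookkeeping is accurate.
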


\begin{proof}
As in the classical case, we apply Rolle's lemma to the function $x \mapsto f(x) - \frac{f(b) - f(a)}{b - a} x$ and we use the shearing lemma (Lemma~\ref{lem:coneShear}).
\end{proof}

The previous Rolle lemma is not true for real-valued functions of several variables.
For instance, consider $p_2 \colon \R^2 \to \R$.
Then, $p_2(0, 0) = p_2(1, 0) = 0$, but for all $t \in [0, 1]$, one has $\Lagr[(t, 0)]{p_2} = \R (-1, 1)$, which intersects $\{0\} \times \R$ only at $\{(0, 0)\}$.
However, if we relax slightly the conclusion, there is a mean value theorem for continuous maps, which we prove first in the case of real-valued functions.
Recall that we defined $\unp{A} \coloneqq A \setminus \{0\}$.

\begin{lemma}[mean value theorem for real-valued functions]\label{lem:meanRealFunc}
Let $U$ be an open subset of a vector space.
Let $f \colon U \to \R$ be a continuous function.
Let $a, b \in U$ be such that $a \neq b$ and $[a, b] \subseteq U$.
Then, there exists $c \in \intervO{a, b}$ such that $\unpLagr[c]{f} \cap \big( b - a, f(b) - f(a) \big)^\perp \neq \varnothing$.
\end{lemma}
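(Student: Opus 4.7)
The plan is to reduce to the one-variable Lemma~\ref{lem:meanRR} via the affine parameterization $\gamma\colon\R\to\V$, $t\mapsto a+t(b-a)$, of the segment, and then lift the resulting one-dimensional conormal datum back to $\Lagr{f}$ using Proposition~\ref{prop:funcSSCont}(2). Set $U'\coloneqq\opb{\gamma}(U)$, an open neighborhood of $[0,1]$, and $g\coloneqq f\fcomp\gamma\colon U'\to\R$. By Lemma~\ref{lem:meanRR} applied to $g|_{[0,1]}$, there exists $c'\in\intervO{0, 1}$ such that $(f(b)-f(a),-1)\in\Lagr[c']{g}$; set $c\coloneqq\gamma(c')$.

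Consider the $C^\infty$ closed embedding $\Gamma\coloneqq\gamma\times\id_\R\colon U'\times\R\to U\times\R$. A direct computation from~\eqref{eq:lambdaf1} gives
\begin{equation*}
\Lagr{\Gamma}_{(t,s;\gamma(t),s)}=\{(\tau,\sigma;\xi,\rho)\mid \tau=-\innerp{\xi}{b-a},\ \sigma=-\rho\},
\end{equation*}
and $\opb{\Gamma}(\Gamma_f)=\Gamma_g$, whence $\opb{\Gamma}\cor_{\Gamma_f}=\cor_{\Gamma_g}$ by~\eqref{eq:opbSheaf}. Denote by ($\ast$) the non-characteristic condition for $\Gamma$ at $c$ with respect to $\cor_{\Gamma_f}$; using the explicit form above it unwinds to $\Lagr[c]{f}\cap\bigl((b-a)^\perp\times\{0\}\bigr)\subseteq\{0\}$.

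I would then split into two cases. If ($\ast$) fails, i.e., there exists $\xi\in T^*_c\V\setminus\{0\}$ with $\innerp{\xi}{b-a}=0$ and $(\xi,0)\in\Lagr[c]{f}$, then $(\xi,0)$ is nonzero and pairs to zero with $(b-a,f(b)-f(a))$, exhibiting the desired element of $\unpLagr[c]{f}\cap(b-a,f(b)-f(a))^\perp$. Otherwise ($\ast$) holds; since $\Lagr{f}$ is closed, it then holds on an open neighborhood of $c$, and applying (the local form of) Proposition~\ref{prop:funcSSCont}(2) to $\Gamma$ restricted there yields $\Lagr[c']{g}\subseteq\Lagr{\Gamma}\acomp\Lagr[c]{f}$. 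Unwinding the twisted composition with the explicit form of $\Lagr{\Gamma}$, the element $(f(b)-f(a),-1)\in\Lagr[c']{g}$ must arise from some $(\xi,-1)\in\Lagr[c]{f}$ with $\innerp{\xi}{b-a}=f(b)-f(a)$; then $(\xi,-1)\neq 0$ annihilates $(b-a,f(b)-f(a))$, concluding.

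The main technical obstacle is the localization of Proposition~\ref{prop:funcSSCont}(2): it is stated under a global non-characteristic hypothesis, whereas we only need the inclusion at the single point $c'$ assuming the pointwise condition there. This is routine -- openness of ($\ast$) at $c$ (by closedness of $\Lagr{f}$) combined with the locality of the microsupport allows one to replace $\Gamma$ by its restriction to a small neighborhood of $(c',g(c'))$ on which the condition holds globally.
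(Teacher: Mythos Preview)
Your proof is correct and follows essentially the same strategy as the paper: parametrize the segment by $\gamma$, invoke the one-variable mean value theorem (Lemma~\ref{lem:meanRR}), and lift the resulting conormal datum to $\Lagr{f}$ via a microsupport bound for the inclusion, with a case split handling the failure of the transversality/regularity condition. The paper phrases the lift as the chain rule (Proposition~\ref{prop:chain}) for the pair $(\gamma,f)$, which is exactly your Proposition~\ref{prop:funcSSCont}(2) applied to $\Gamma=\gamma\times\id_\R$ repackaged through kernel convolution, so the two routes are the same in substance.

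The one organizational difference is the order of operations: the paper first case-splits on whether the pair $(\gamma,f)$ is regular \emph{along the whole segment} and only then applies Lemma~\ref{lem:meanRR} in the regular case, whereas you apply Lemma~\ref{lem:meanRR} first to pin down $c'$ and then case-split on the non-characteristic condition \emph{at that single point}. Your order forces the small localization argument you flag at the end (restricting $\Gamma$ to a neighborhood where non-characteristicity holds); the paper's order avoids it, since global regularity lets Proposition~\ref{prop:chain} apply directly. Both are fine, and your remark that the localization is routine (closedness of $\Lagr{f}$ plus locality of the microsupport) is correct.
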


\begin{proof}
Set $u \coloneqq b - a$ and $v \coloneqq f(b) - f(a)$.
Define $\gamma \colon \intervO{0, 1} \to U, t \mapsto (1-t)a + tb$.
One has $\Lagr[t]{\gamma} = {\Gamma_\gamma}^\perp = (1, u)^\perp$ for any $t \in \intervO{0, 1}$.

If the pair $(\gamma, f)$ is not regular, then there exists a nonzero vector $\xi \in (0^*_\R \circ \Lagr{\gamma}) \cap (\Lagr{f} \circ 0^*_\R)$ with $\xi \in T^*_cU$ for some $c = \gamma(s) \in \intervO{a, b}$.
Therefore, $(0, \xi) \in \Lagr[s]{\gamma} = (1, u)^\perp$, so $\innerp{\xi}{u} = 0$, so $(\xi, 0) \in \unpLagr[c]{f} \cap (u, v)^\perp$, which completes the proof.

If the pair $(\gamma, f)$ is regular, then the chain rule for conormals (Proposition~\ref{prop:chain}) applies, and one has $\Lagr{f \circ \gamma} \subseteq \Lagr{\gamma} \acomp \Lagr{f}$.
We apply the mean value theorem for real-valued functions of a real variable (Lemma~\ref{lem:meanRR}) to $f \circ \gamma$ extended by continuity to $[0, 1]$.
It gives an $s \in \intervO{0, 1}$ such that $(v, -1) \in \Lagr[s]{f \circ \gamma}$.
Let $c \coloneqq \gamma(s)$.
By the chain rule, there exists $\xi \in T^*_c U$ such that $(v, \xi) \in \Lagr[s]{\gamma} = (1, u)^\perp$ and $(\xi, 1) \in \Lagr[c]{f}$.
One has $\innerp{(\xi, 1)}{(u, v)} = \innerp{\xi}{u} + v = 0$.
\end{proof}

We can now prove a mean value theorem for continuous maps between vector spaces.
Recall that the notation $\unpLagr[c][\eta_0]{f}$ was defined in Equation~\eqref{eq:directionalConormal}.

\begin{theorem}[mean value theorem]\label{thm:meanValThm}
Let $\V$ and $\W$ be vector spaces and $U \subseteq \V$ be open.
Let $f \colon U \to \W$ be a continuous map.
Let $a, b \in U$ be such that $a \neq b$ and $[a, b] \subseteq U$.
Let $\eta_0 \in \unp{\W}^*$.
Then, there exists $c \in \intervO{a, b}$ such that
\begin{equation}
\unpLagr[c][\eta_0]{f} \cap \big( b - a, f(b) - f(a) \big)^\perp \neq \varnothing.
\end{equation}
In particular, if $f$ is Lipschitz at $c$ for $(b-a)^\perp$, then there exists $\xi_0 \in \V^*$ such that $(\xi_0, \eta_0) \in \Lagr[c]{f} \cap \big( b - a, f(b) - f(a) \big)^\perp$.
\end{theorem}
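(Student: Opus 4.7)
The plan is to reduce the vector-valued case to the scalar case of Lemma~\ref{lem:meanRealFunc} by composing $f$ with the linear functional $\eta_0$. Set $g \coloneqq \eta_0 \fcomp f \colon U \to \R$, which is continuous, and note that $g(b) - g(a) = \innerp{\eta_0}{f(b) - f(a)}$. Applying Lemma~\ref{lem:meanRealFunc} to $g$ yields a point $c \in \intervO{a, b}$ and a nonzero covector $(\xi_c, \tau) \in \Lagr[c]{g}$ satisfying
\begin{equation}
\innerp{\xi_c}{b - a} + \tau \cdot \innerp{\eta_0}{f(b) - f(a)} = 0.
\end{equation}
The goal is then to transfer $(\xi_c, \tau)$ back to a covector in $\Lagr[c]{f}$ of the prescribed form.

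The transfer uses the chain rule (Proposition~\ref{prop:chain}). Since $\eta_0$ is linear and nonzero, hence $C^1$ and Lipschitz for $T^*\R$, the pair $(f, \eta_0)$ is regular, so $\Lagr{g} \subseteq \Lagr{f} \acomp \Lagr{\eta_0}$. By Proposition~\ref{prop:subman}, the conormal of the linear map $\eta_0$ is its conormal bundle, and an elementary computation gives
\begin{equation}
\Lagr{\eta_0} = \big\{ (w, \eta_0(w); -t\eta_0, t) \mid w \in \W,\ t \in \R \big\}.
\end{equation}
Unpacking the twisted composition $\acomp$ (which pairs covectors on the middle factor via the antipodal map), the condition $(\xi_c, \tau) \in \Lagr[c]{g}$ yields an $\eta \in \W^*$ with $(c, f(c); \xi_c, -\eta) \in \Lagr{f}$ and $(f(c), g(c); \eta, \tau) \in \Lagr{\eta_0}$; the latter forces $\eta = -\tau \eta_0$ and hence $(\xi_c, \tau \eta_0) \in \Lagr[c]{f}$.

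It remains to handle the sign of $\tau$ and produce a covector in $\unpLagr[c][\eta_0]{f}$. If $\tau > 0$, then $\tau \eta_0 \in \R_{\geq 0} \eta_0$ and $(\xi_c, \tau\eta_0)$ is the desired element; if $\tau < 0$, symmetry of $\Lagr[c]{f}$ (Proposition~\ref{prop:symmetric}) gives $(-\xi_c, -\tau\eta_0) \in \Lagr[c]{f}$ with $-\tau\eta_0 \in \R_{>0} \eta_0$; if $\tau = 0$ then $\xi_c \neq 0$ and $(\xi_c, 0) \in \Lagr[c]{f}$ lies in $\unpLagr[c][\eta_0]{f}$ since $0 \in \R_{\geq 0}\eta_0$. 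In every case the displayed orthogonality carries over, possibly after a global sign flip. For the final assertion, the Lipschitz hypothesis on $f$ at $c$ for $(b-a)^\perp$ means $\Lagr[c]{f} \cap ((b-a)^\perp \times \{0\}) \subseteq \{0\}$; this rules out the case $\tau = 0$ (which would force $\xi_c \in (b-a)^\perp$, contradicting $(\xi_c, 0) \neq 0$), so $\tau \neq 0$ and rescaling by $1/\tau$ (using symmetry if $\tau < 0$) produces $\xi_0 \in \V^*$ with $(\xi_0, \eta_0) \in \Lagr[c]{f} \cap (b-a, f(b)-f(a))^\perp$. The principal subtlety is the bookkeeping through $\acomp$ and the careful treatment of $\tau = 0$, which the scalar lemma does not a priori exclude.
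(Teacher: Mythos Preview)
Your proof is correct and follows essentially the same route as the paper: compose with $\eta_0$ to reduce to the real-valued case (Lemma~\ref{lem:meanRealFunc}), then use the chain rule for conormals (Proposition~\ref{prop:chain}) together with the explicit description $\Lagr{\eta_0} = \R(-\eta_0,1)$ to pull the resulting covector back into $\Lagr[c]{f}$. Your case analysis on the sign of $\tau$ is in fact slightly more explicit than the paper's, which silently absorbs the $\tau<0$ case into the symmetry of $\Lagr{f}$; both arguments are equivalent.
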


\begin{proof}
Set $u \coloneqq b - a$ and $v \coloneqq f(b) - f(a)$ and $v_0 \coloneqq \innerp{\eta_0}{v}$.
Since $\eta_0 \colon \W \to \R$ is Lipschitz for $T^* \R$, the chain rule gives $\Lagr{\eta_0 \circ f} \subseteq \Lagr{f} \acomp \Lagr{\eta_0}$.
One has $\Lagr[x]{\eta_0} = {\Gamma_{\eta_0}}^\perp = \R (-\eta_0, 1)$ for all $x \in \intervO{a, b}$.
Applying Lemma~\ref{lem:meanRealFunc} to this real-valued function, we obtain $c \in \intervO{a, b}$ and a nonzero vector $(\xi, \tau) \in \unpLagr[c]{\eta_0 \circ f} \cap (u, v_0)^\perp$.

By the chain rule, there exists $\eta \in \W^*$ such that $(\xi, \eta) \in \Lagr[c]{f}$ and $(-\eta, \tau) \in \Lagr[f(c)]{\eta_0} = \R (-\eta_0, 1)$.
This implies $\eta = \tau \eta_0$.
Therefore, $(\xi, \tau \eta_0) \in \Lagr[c][\eta_0]{f}$, and it is nonzero since $(\xi, \tau) \neq 0$.
Moreover, $\innerp{(\xi, \tau \eta_0)}{(u, v)} = \innerp{(\xi, \tau)}{(u, v_0)} = 0$.

Finally, if $f$ is Lipschitz at $c$ for $(b-a)^\perp$, then $\tau \neq 0$, and since the conormal of $f$ is symmetric, we can suppose by $\R$-homogeneity that $\eta = \eta_0$.
\end{proof}

\subsection{Lower bound on the conormal}

We immediately obtain from the mean value theorem an upper bound on the Whitney cone of a continuous map in terms of its conormal.
We will also consider it as a sort of lower bound on its conormal in terms of its Whitney cone.
Recall that we defined $A^\top \coloneqq \bigcup_{v \in \unp{A}} v^\perp$.

\begin{theorem}[lower bound on the conormal]\label{thm:lower}
Let $f \colon M \to N$ be a continuous map between manifolds and let $x \in M$.
Then,
\begin{equation}
\Whit[x]{f} \subseteq \bigcap_{\eta \in \unp{T}^*_{f(x)}N} \Lagr[x][\eta]{f}^\top
\end{equation}
with equality if $\dim_{f(x)} N = 1$, in which case it reads
\begin{equation}
\Whit[x]{f} = \Lagr[x]{f}^\top.
\end{equation}
In particular, if $w \in \Whit[x]{f}$ and $\eta \in T^*_{f(x)} N$ and $f$ is Lipschitz at~$x$ for $p_M(w)^\perp$, then there exists $\xi \in T^*_x M$ such that $(\xi, \eta) \in \Lagr[x]{f} \cap w^\perp$.
In particular, if $f$ is Lipschitz at~$x$ for $T^*_x M$, then $p_N(\Lagr[x]{f}) = T^*_{f(x)} N$.
\end{theorem}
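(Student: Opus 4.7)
The plan is to derive the main inclusion from the mean value theorem (Theorem~\ref{thm:meanValThm}), handle the equality case when $\dim_{f(x)}N = 1$ via Propositions~\ref{prop:SSbounds} and~\ref{prop:epigraph}, and then specialize to the two Lipschitz consequences.

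For the main inclusion, fix $w = (u, v) \in \Whit[x]{f}$ and $\eta \in \unp{T}^*_{f(x)}N$. Working in charts at $x$ and $f(x)$, the definition of the Whitney cone produces sequences $x_n \neq y_n$ converging to $x$ and $c_n > 0$ with $c_n(y_n - x_n, f(y_n) - f(x_n)) \to (u, v)$. For $n$ large, $[x_n, y_n]$ lies in the chart, so Theorem~\ref{thm:meanValThm} (with $\eta_0 = \eta$) yields $z_n \in \intervO{x_n, y_n}$ and a nonzero element $\zeta_n = (\xi_n, \tau_n \eta) \in \Lagr[z_n][\eta]{f}$, with $\tau_n \geq 0$ by definition of the directional conormal, satisfying $\innerp{\zeta_n}{(y_n - x_n, f(y_n) - f(x_n))} = 0$. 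Since $\Lagr{f}$ is a cone, I rescale so $\norm{\zeta_n} = 1$ (positive scaling preserves both membership in $\Lagr{f}$ and the sign of $\tau_n$), extract a convergent subsequence $\zeta_n \to \zeta = (\xi, \tau\eta)$ with $\tau \geq 0$ and $\norm{\zeta} = 1$, and combine closedness of $\Lagr{f}$ with $z_n \to x$ to conclude $\zeta \in \Lagr[x][\eta]{f} \setminus \{0\}$. Multiplying the orthogonality relation by $c_n$ and passing to the limit gives $\innerp{\zeta}{w} = 0$, so $w \in \Lagr[x][\eta]{f}^\top$.

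When $\dim_{f(x)}N = 1$, up to positive scaling there are only two choices of $\eta$, say $\eta_+$ and $\eta_- = -\eta_+$, and symmetry of the conormal (Proposition~\ref{prop:symmetric}) gives $\Lagr[x][\eta_+]{f}^a = \Lagr[x][\eta_-]{f}$. Combined with the identities $A^\top = (A^a)^\top$, $(A \cup B)^\top = A^\top \cup B^\top$, and $\Lagr[x]{f} = \Lagr[x][\eta_+]{f} \cup \Lagr[x][\eta_-]{f}$, this collapses the intersection on the right-hand side to $\Lagr[x]{f}^\top$. For the reverse containment, take $w \notin \Whit[x]{f}$; by the partition corollary following Proposition~\ref{prop:Dini}, $w$ lies in the open cone $N_x(\Gamma^+_f)$ or $N_x(\Gamma^-_f)$, say the former. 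The upper bound in Proposition~\ref{prop:SSbounds} gives $(\Lambda^+_f)_{(x, f(x))} \subseteq N_x(\Gamma^+_f)^\circ$, and openness of $N_x(\Gamma^+_f)$ forces the pairing of any nonzero $\zeta \in (\Lambda^+_f)_{(x, f(x))}$ with $w$ to be strictly positive (else $w - \epsilon \zeta$ would remain in the cone for small $\epsilon > 0$, contradicting $\innerp{\zeta}{w - \epsilon \zeta} \geq 0$). By Proposition~\ref{prop:epigraph}, every nonzero element of $\Lagr[x]{f}$ lies in $(\Lambda^+_f)_{(x, f(x))}$ or in its antipode, so $\innerp{\zeta}{w} \neq 0$, whence $w \notin \Lagr[x]{f}^\top$.

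For the first ``in particular'', I refine the construction of the first paragraph under the Lipschitz hypothesis on $f$ at $x$ for $p_M(w)^\perp$. If the limiting $\tau > 0$, rescaling $\zeta$ by $\tau^{-1}$ produces $(\tau^{-1}\xi, \eta) \in \Lagr[x]{f} \cap w^\perp$. If instead $\tau = 0$, then $\norm{\xi} = 1$ and the orthogonality limit forces $\innerp{\xi}{u} = 0$, placing $(\xi, 0) \in \unpLagr[x]{f}$ with $\xi \in p_M(w)^\perp$, which contradicts the Lipschitz hypothesis. The final claim $p_N(\Lagr[x]{f}) = T^*_{f(x)}N$ then follows by applying this with $w = 0 \in \Whit[x]{f}$ (so $p_M(w)^\perp = T^*_xM$) for every nonzero $\eta$, after noting that $0 \in p_N(\Lagr[x]{f})$ since $(x, f(x)) \in \pi_{M \times N}(\Lagr{f})$. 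The only delicate step in all of this is the normalization in the first paragraph: one must exploit the $\R_{\geq 0}$-conic structure of $\Lagr[x][\eta]{f}$ so that the limit lies in $\Lagr[x][\eta]{f}$ and not merely in $\Lagr[x]{f}$.
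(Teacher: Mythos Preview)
Your argument for the main inclusion and for the two ``in particular'' clauses is essentially the paper's: apply the mean value theorem (Theorem~\ref{thm:meanValThm}) along secants witnessing $w\in\Whit[x]{f}$, normalize, and pass to the limit; then rule out $\tau=0$ via the Lipschitz-for-$p_M(w)^\perp$ hypothesis. This part is correct and matches the paper.

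For the equality case when $\dim_{f(x)}N=1$, you take a genuinely different route. The paper first reduces to the case where $f$ is Lipschitz (via Proposition~\ref{prop:submanWhit}) and then invokes the \emph{upper} bound on the conormal (Theorem~\ref{thm:upper}, proved later, with a footnote disclaiming circularity) to force $v=v'$. You instead use the partition $\Whit[x]{f}\sqcup N_x(\Gamma^+_f)\sqcup N_x(\Gamma^-_f)=T_xM\times\R$, the bound $\Lambda^\pm_f\subseteq N(\Gamma^\pm_f)^\circ$ from Proposition~\ref{prop:SSbounds}, and the decomposition $\unpLagr{f}=\unpLagr{f}^+\cup\unpLagr{f}^-$ from Proposition~\ref{prop:epigraph}, together with the observation that the polar of an \emph{open} cone pairs strictly positively with its interior points. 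This is correct and has the advantage of avoiding the forward reference to Theorem~\ref{thm:upper}; the price is reliance on the epigraph machinery of Section~\ref{sec:real-valued}, which is specific to one-dimensional targets (precisely the case at hand). One cosmetic point: in ``$w-\epsilon\zeta$'' you are tacitly identifying $T_{(x,f(x))}(M\times\R)$ with its dual via a chart inner product; the argument is the standard one that a nonzero element of the polar of an open cone cannot vanish on an interior point.
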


\begin{proof}
(i)
Let $w \in \unpWhit[x]{f}$ and $\eta \in \unp{T}^*_{f(x)} N$ and fix charts at~$x$ and $f(x)$.
There exist sequences $y_n, z_n \to [n] x$ and $c_n > 0$ such that $c_n (z_n - y_n, f(z_n) - f(y_n)) \to[n] w$.
Since $w \neq 0$, we can suppose that $y_n \neq z_n$ for all $n \in \N$.
By the mean value theorem (Theorem~\ref{thm:meanValThm}), there exist sequences $x_n \in \intervO{y_n, z_n}$ and $\nu_n \in \unpLagr[x_n][\eta]{f} \cap (z_n - y_n, f(z_n) - f(y_n))^\perp$.
Since $x_n \to[n] x$, up to extracting a subsequence and normalizing $\nu_n$, we can suppose that $(\nu_n)$ converges.
Its limit, say $\nu$, is in $\unpLagr[x][\eta]{f} \cap w^\perp$.

\spa
(ii)
For the case of equality, suppose that $\dim_{f(x)}N = 1$, and let $w = (u, v) \in \Lagr[x]{f}^\top \setminus \{0\}$.

Suppose first that $f$ is Lipschitz.
There exists $(\xi, \eta) \in \unpLagr[x]{f}$ such that $\innerp{\xi}{u} + \eta v = 0$.
Since $f$ is Lipschitz, $u \neq 0$, so by the upper bound on the conormal (Theorem~\ref{thm:upper}\footnote{This does not create any circular argument.}), there exists $v' \in \R$ such that $(u, v') \in \unpWhit[x]{f}$ and $\innerp{\xi}{u} + \eta v' = 0$.
So $\eta (v - v') = 0$.
Since $f$ is Lipschitz, $\eta \neq 0$, so $v = v'$, so $w = (u, v') \in \Whit[x]{f}$.

In the general case, suppose that $w \notin \Whit[x]{f}$.
By Proposition~\ref{prop:submanWhit}(1), this implies that $\Gamma_f$ is the graph (in other coordinates) of a Lipschitz map.
The result then follows from the previous paragraph, since $\Whit[x]{f}$ and $\Lagr[x]{f}$ only depend on $f$ via its graph.

\spa
(iii)
For the last claim, let $w \in \Whit[x]{f}$ and $\eta \in T^*_{f(x)} N$.
The case $\eta = 0$ is trivial since $0 \in \Lagr[x]{f} \cap w^\perp$, so we suppose $\eta \neq 0$.
By~(i), there exist $\xi \in T^*_xM$ and $t \in \R_{\geq 0}$ such that $(\xi, t \eta) \in \unpLagr[x]{f} \cap w^\perp$.
Since $f$ is Lipschitz at~$x$ for $p_M(w)^\perp$, we have $t \neq 0$, so we can suppose $t = 1$, so $(\xi, \eta) \in \Lagr[x]{f} \cap w^\perp$.
\end{proof}

\begin{remark}
The condition in the theorem that $f$ be Lipschitz at~$x$ for $p_M(w)^\perp$ is necessary, as the function $\sqrt[3]{-} \colon \R \to \R$ shows.
\end{remark}

\subsection{Characterization of Lipschitz continuity}

In this subsection, we prove that a continuous map between manifolds $f \colon M \to N$ is Lipschitz if and only if it is ``Lipschitz for $T^*M$'' (Definition~\ref{def:muSubm}(3) or Equation~\eqref{eq:regular}).
The definitions and properties we use related to Lipschitz continuity are recalled in Appendix~\ref{app:strictDiff}.
We first need a technical lemma.

\begin{lemma}\label{lem:Banach}
Let $A$ be a topological space and $a_0 \in A$.
Let $n \in \N$ and $C, D \in \R_{> 0}$ with $C D < 1$.
Let $f \colon \R \times A \to \R^n$ be a continuous map with $f(0, a_0) = 0$ which is $C$-Lipschitz in its first variable in a neighborhood of $(0, a_0)$.
Let $\psi \colon A \times \R^n \to \R$ be a continuous map with $\psi(a_0, 0) = 0$ which is $D$-Lipschitz in its last $n$ variables in a neighborhood of $(a_0, 0)$.
Then, there exist $\eta > 0$, an open neighborhood~$U$ of~$a_0$, and a continuous map $\Psi \colon U \to \R$ such that for all $(t, a) \in \intervO{-\eta, \eta} \times U$, one has $t = \psi(a, f(t, a)) \;\Leftrightarrow\; t = \Psi(a)$.
\end{lemma}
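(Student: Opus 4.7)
The plan is to interpret this as a continuous-parameter Banach fixed-point problem. Define $F_a(t) \coloneqq \psi(a, f(t, a))$; a pair $(t, a)$ solves $t = \psi(a, f(t, a))$ if and only if $t$ is a fixed point of $F_a$. Since $CD < 1$, my goal is to arrange that $F_a$ be a $CD$-contraction of a suitable closed interval centred at $0$, uniformly in $a$ varying in a small neighborhood of $a_0$, and then invoke the Banach fixed-point theorem to define $\Psi(a)$ as the unique fixed point.

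For contractivity, the two partial Lipschitz hypotheses combine, for $t, t'$ small and $a$ near $a_0$, to give $\abs{F_a(t) - F_a(t')} \leq D \norm{f(t,a) - f(t',a)} \leq CD \abs{t - t'}$. For stability, using $f(0, a_0) = 0$ and $\psi(a_0, 0) = 0$, I split
\begin{equation*}
\abs{F_a(t)} \leq D \norm{f(t,a) - f(0,a)} + D \norm{f(0, a) - f(0, a_0)} + \abs{\psi(a, 0) - \psi(a_0, 0)} \leq CD \abs{t} + g(a),
\end{equation*}
where $g(a) \coloneqq D \norm{f(0, a) - f(0, a_0)} + \abs{\psi(a, 0) - \psi(a_0, 0)}$ is continuous in $a$ with $g(a_0) = 0$. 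First I pick a neighborhood $V$ of $a_0$ and $\eta > 0$ so small that on $\intervC{-\eta, \eta} \times V$ the above Lipschitz bounds apply and $f$ takes values in the region on which $\psi(a, \cdot)$ is $D$-Lipschitz; then I shrink $V$ to an open $U \ni a_0$ with $g|_U \leq (1 - CD)\eta$. For $a \in U$, the map $F_a$ then sends the complete metric space $\intervC{-\eta, \eta}$ into itself and is a $CD$-contraction there, so the Banach fixed-point theorem yields a unique fixed point $\Psi(a) \in \intervC{-\eta, \eta}$.

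Continuity of $\Psi \colon U \to \R$ follows from a uniqueness-plus-compactness argument rather than a Lipschitz estimate, which is unavailable because $\psi$ is only jointly continuous and not jointly Lipschitz in $a$: for $a_k \to a$ in $U$, any limit point $s$ of $(\Psi(a_k))$ in the compact interval $\intervC{-\eta, \eta}$ satisfies $s = F_a(s)$ by joint continuity of $F$, hence $s = \Psi(a)$ by uniqueness, and therefore $\Psi(a_k) \to \Psi(a)$. To upgrade from the closed to the open interval, observe that any fixed point $t \in \intervO{-\eta, \eta}$ of $F_a$ satisfies $\abs{t} \leq CD \abs{t} + g(a)$, hence $\abs{t} \leq g(a)/(1 - CD) \leq \eta$; so $t \in \intervC{-\eta, \eta}$ and uniqueness there forces $t = \Psi(a)$, giving the required equivalence on $\intervO{-\eta, \eta} \times U$. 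The chief delicate point is exactly the continuity of $\Psi$ when $A$ is merely a topological space, which is where the uniqueness argument substitutes for the quantitative estimate one would use in a metric setting.
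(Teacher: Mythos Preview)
Your approach is precisely the paper's: apply the Banach fixed-point theorem with continuous parameter to $(t,a) \mapsto \psi(a, f(t,a))$ on $\intervC{-\eta,\eta} \times U$; the paper states only this one sentence and leaves the details you have supplied. One small caveat: since $A$ is merely a topological space, your sequential argument for the continuity of $\Psi$ strictly speaking only yields sequential continuity; either run the same argument with nets, or observe that the standard estimate $\abs{\Psi(a') - \Psi(a)} \leq (1-CD)^{-1}\abs{F_{a'}(\Psi(a)) - F_a(\Psi(a))}$ (which requires only continuity of $a' \mapsto F_{a'}(t)$, not any Lipschitz control in $a$) works directly and is not in fact unavailable.
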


\begin{proof}
It suffices to apply the Banach fixed-point theorem with continuous parameter to the map $g \colon \R \times A \to \R, (t, a) \mapsto \psi(a, f(t, a))$ in a suitable neighborhood of $(0, a_0)$ of the form $\intervC{-\eta, \eta} \times U$ with $\eta >0$ and $U$ an open neighborhood of $a_0$.
\end{proof}

Recall that we defined $\Whit[x][0]{f} \coloneqq \Whit[x]{f} \cap (\{0\} \times T_{f(x)}N)$ and $\Lagr[x][0]{f} \coloneqq \Lagr[x]{f} \cap (T^*_x M \times \{0\})$.

\begin{theorem}\label{thm:Lip}
Let $f \colon M \to N$ be a continuous map between manifolds and let $x \in M$.
The following are equivalent:
\begin{enumerate}
\item
$f$ is Lipschitz on a neighborhood of~$x$,
\item
$\Whit[x][0]{f} = \{0\}$,
\item
$\Lagr[x][0]{f} = \{0\}$.
\end{enumerate}
\end{theorem}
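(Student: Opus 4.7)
Since $(1) \Leftrightarrow (2)$ is Proposition~\ref{prop:LipCone}(1), the plan is to establish $(3) \Rightarrow (2)$ using the lower bound on the conormal, and $(1) \Rightarrow (3)$, the hard direction, using Lemma~\ref{lem:Banach}.

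For $(3) \Rightarrow (2)$, I would argue by contradiction. Suppose $\Lagr[x][0]{f} = \{0\}$ and yet $(0, v) \in \Whit[x][0]{f}$ with $v \neq 0$. Pick any $\eta \in T^*_{f(x)}N$ with $\innerp{\eta}{v} \neq 0$. Applying the inclusion $\Whit[x]{f} \subseteq \Lagr[x][\eta]{f}^\top$ of Theorem~\ref{thm:lower} to $(0,v)$ produces some $(\xi, t\eta) \in \unpLagr[x][\eta]{f}$ (with $t \geq 0$) orthogonal to $(0,v)$, so $t\innerp{\eta}{v} = 0$; this forces $t = 0$, hence $\xi \neq 0$, so $(\xi, 0)$ is a nonzero element of $\Lagr[x][0]{f}$, a contradiction.

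For $(1) \Rightarrow (3)$, assume $f$ is $C$-Lipschitz in fixed charts on an open neighborhood $U$ of~$x$, and fix a nonzero $\xi_0 \in T^*_xM$. My goal is to show that the open set
\[
V = \bigl\{((x', y'); (\xi', \eta)) \in T^*(M \times N) \mid x' \in U,\ C\norm{\eta} < \norm{\xi'}\bigr\}
\]
is contained in $\Ppg(\cor_{\Gamma_f})$. Since $V$ is an open neighborhood of $(x, f(x); \xi_0, 0)$, this would exclude $(\xi_0, 0)$ from $\Lagr[x]{f}$. Points of $V$ with $y' \neq f(x')$ lie outside $\supp \cor_{\Gamma_f}$, so propagation is automatic; it remains to handle $(x', f(x'); \xi', \eta) \in V$. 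By Lemma~\ref{lem:criterFunc}, I must show that for every smooth $\phi$ near $(x', f(x'))$ with $\phi(x', f(x')) = 0$ and $d\phi(x', f(x')) = (\xi', \eta)$, the set $\Gamma_f \cap W \cap \{\phi < 0\}$ is contractible for sufficiently small neighborhoods $W$.

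The crux is to realize $\Gamma_f \cap \{\phi = 0\}$ as a continuous graph via Lemma~\ref{lem:Banach}. Writing $\phi$ as its $1$-jet plus a smooth remainder $r$ vanishing to first order at $(x', f(x'))$, and decomposing $x'' - x' = te + a$ with $e$ a unit vector dual to $\xi'$ and $a \in e^\perp$, the equation $\phi(x'', f(x'')) = 0$ becomes
\[
t = -\tfrac{1}{\norm{\xi'}}\bigl(\innerp{\eta}{f(x'+te+a) - f(x')} + r(x'+te+a,\, f(x'+te+a))\bigr).
\]
The right-hand side is Lipschitz in $t$ with constant at most $\tfrac{1}{\norm{\xi'}}(C\norm{\eta} + (1+C) L_W)$, where $L_W$ is the Lipschitz constant of $r$ on $W$ and tends to $0$ as $W$ shrinks; since $C\norm{\eta} < \norm{\xi'}$, the constant is strictly less than $1$ for $W$ small enough, and Lemma~\ref{lem:Banach} yields a continuous solution $t = T(a)$. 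Strict monotonicity of $t \mapsto \phi(x'+te+a, f(x'+te+a))$ (with positive lower slope $\norm{\xi'} - C\norm{\eta} - L_W$) then identifies $\Gamma_f \cap W \cap \{\phi < 0\}$, via $p_M$, with $\{t < T(a)\}$, homeomorphic to a half-space and hence contractible, so the morphism in Lemma~\ref{lem:criterFunc} is an isomorphism. The main subtlety will be that the admissible $W$ depends on $\phi$ through $L_W$, but this is harmless since Lemma~\ref{lem:criterFunc} only requires, for each $\phi$ separately, the existence of one sufficiently small $W$.
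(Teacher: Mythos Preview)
Your proposal is correct and follows the same overall strategy as the paper: $(3)\Rightarrow(2)$ via the lower bound on the conormal, and $(1)\Rightarrow(3)$ via a Banach fixed-point argument showing that $\Gamma_f\cap\{\phi<0\}$ is, in suitable coordinates, a half-space.

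The one noteworthy difference is in the implementation of $(1)\Rightarrow(3)$. The paper proceeds in two steps: first it uses the implicit function theorem to normalize $\phi$ to $x_1-\psi(x',y)$ and applies Lemma~\ref{lem:Banach} to prove propagation at the purely horizontal covector $(\xi_0,0)$; then, in a separate step~(ii), it reaches nearby covectors $(\xi_0+\xi,\eta)$ by a linear shear $\Phi$ that turns $\Gamma_f$ into the graph of another Lipschitz map and reduces to step~(i). You instead handle any $(\xi',\eta)$ with $C\norm{\eta}<\norm{\xi'}$ in a single pass, Taylor-expanding $\phi$ and verifying directly that the map $t\mapsto -\tfrac{1}{\norm{\xi'}}\bigl(\innerp{\eta}{f(x'+te+a)-f(x')}+r(\dots)\bigr)$ is a contraction. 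This is a genuine streamlining: it avoids both the implicit function theorem and the auxiliary linear change of coordinates, at the cost of a slightly more explicit Lipschitz estimate. Two small points to tighten: the lower slope of $t\mapsto\phi(x'+te+a,f(x'+te+a))$ is $\norm{\xi'}-C\norm{\eta}-(1+C)L_W$, not $\norm{\xi'}-C\norm{\eta}-L_W$; and Lemma~\ref{lem:criterFunc} asks for an isomorphism at the stalk level, so you need contractibility (and the correct image under $p_M$) for a \emph{cofinal} family of neighborhoods~$W$, not just one --- your argument actually gives this once $W$ is small enough, but the closing sentence understates what is required. The paper addresses this last point explicitly by exhibiting the basis $W_n=B(x_0,1/n)\times B(f(B(x_0,1/n)),1/n)$.
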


The implication (1)$\Rightarrow$(3) for real-valued maps was proved in essence in~\cite{Vic}*{Theorem~3.9(6)}.
We recall the proof here: if $f \colon M \to \R$ is, say, $C$-Lipschitz at $x_0 \in M$ in a given chart, then, setting $\gamma \coloneqq \{ (x, t) \in \R^{m+1} \mid t \geq C \norm{x} \}$, one has $\Gamma^+_f + \gamma \subseteq \Gamma^+_f$, so $N_{x_0}(\Gamma^+_f) \supseteq \interior{\gamma}$, so by Proposition~\ref{prop:SSbounds}, $\Lambda_{x_0}^+(f) \subseteq N_{x_0}(\Gamma^+_f)^\circ \subseteq \gamma^\circ$.
Therefore, $\Lambda_{x_0}^+(f) \circ \{0\} \subseteq \gamma^\circ \circ \{0\} = \{0\}$, and we conclude by Proposition~\ref{prop:epigraph}(1).

\begin{proof}

\spa
(2)$\Rightarrow$(1)
is (one implication of) Proposition~\ref{prop:LipCone}(1).

\spa
(3)$\Rightarrow$(2).
Let $(0, v) \in \Whit[x][0]{f}$.
Let $\eta \in \unp{T}_{f(x)}^* N$.
By the lower bound on the conormal (Theorem~\ref{thm:lower}), there exist $\xi \in T_x^* M$ and $t \in \R_{\geq 0}$ such that $(\xi, t \eta) \in \Lagr[x]{f}$ and $\innerp{(\xi, t \eta)}{(0, v)} = 0$.
Condition~(3) implies that $t \neq 0$.
Therefore, for all $\eta \in T_{f(x)}^* N$, one has $\innerp{\eta}{v} = 0$.
Therefore, $v = 0$.

\spa
(1)$\Rightarrow$(3).
Let $(x_0, \xi_0) \in \unp{T}^*M$.
We may suppose that $M$ is open in $\R^m$, that $N = \R^n$, and that $f$ is $C$-Lipschitz for some $C \in \R_{> 0}$.

\spa
(i)
We first prove that $(\xi_0, 0) \in \Ppg_{x_0}(f)$.
Let $\phi \colon M \times N \to \R$ be a function of class $C^1$ with $\phi(x_0, f(x_0)) = 0$ and $d\phi(x_0, f(x_0)) = (\xi_0, 0)$.
We will construct a basis of open neighborhoods $W$ of $(x_0, f(x_0))$ such that each topological embedding $p_M \circ i_W \colon \Gamma_f \cap \{ \phi < 0 \} \cap W \hookrightarrow p_M(W)$ induces an isomorphism in cohomology.
Then, Lemma~\ref{lem:criterFunc} will imply that $(\xi_0, 0) \in \Ppg_{x_0}(f)$.

Since $p_M(d\phi(x_0, f(x_0))) = \xi_0 \neq 0$, we may suppose that $\phi(x, y) = x_1 - \psi(x', y)$ in some neighborhood $W_0$ of $(x_0, f(x_0))$, where $x = (x_1, x') \in \R^m$.
Therefore, for any $W \subseteq W_0$, one has
\begin{equation*}
\Gamma_f \cap \{ \phi < 0 \} \cap W = \{ (x, f(x)) \in W \mid x_1 < \psi(x', f(x)) \}.
\end{equation*}
Since $\frac{\partial \psi}{\partial y}(x_0', f(x_0)) = - p_N(d\phi(x_0, f(x_0))) = 0$, there exists an open neighborhood of $(x_0', f(x_0))$ where $\psi$ is $1/(C+1)$-Lipschitz in its last $n$ variables.
Therefore, by Lemma~\ref{lem:Banach}, there exist $\eta > 0$, an open neighborhood $U'$ of $x_0'$, and a continuous map $\Psi \colon U' \to \R$ such that, setting $U_0 \coloneqq \intervO{(x_0)_1 - \eta, (x_0)_1 + \eta} \times U'$, we have for $x \in U_0$,
\begin{equation*}
x_1 < \psi(x', f(x))
\quad\Leftrightarrow\quad
x_1 < \Psi(x').
\end{equation*}
Then, for any open neighborhood $W \subseteq W_0 \cap (U_0 \times N)$ of $(x_0, f(x_0))$, one has $\Gamma_f \cap \{ \phi < 0 \} \cap W = \{ (x, f(x)) \in W \mid x_1 < \Psi(x') \}$.
Set $U \coloneqq p_M(W \cap \Gamma_f)$.
The projection $p_M \colon \{ (x, f(x)) \in W \mid x_1 < \Psi(x') \} \to \{ x \in U \mid x_1 < \Psi(x') \}$ is an isomorphism.
If $W$ is convex, then so is $U$, and the inclusion $i_U \colon \{ x \in U \mid x_1 < \Psi(x') \} \hookrightarrow U$ induces an isomorphism in cohomology.
Therefore, so does $p_M \circ i_W = i_U \circ p_M \colon \Gamma_f \cap \{ \phi < 0 \} \cap W \hookrightarrow U$.

Finally, we can find a basis of open convex neighborhoods $W$ of $(x_0, f(x_0))$ such that $p_M(W \cap \Gamma_f) = p_M(W)$.
For instance, we may set $U_n \coloneqq B (x_0, 1/n)$ and $W_n \coloneqq U_n \times B \big( f(U_n), 1/n \big)$ for $n \in \N_{> 0}$, where $B \big( f(U_n), 1/n \big)$ denotes the $1/n$-neighborhood of $f(U_n)$.

\spa
(ii)
We will prove that $B_\infty \big( (\xi_0, 0), \norm{\xi_0}/(C + 2) \big) \subseteq \Ppg_{x_0}(f)$, where the left hand side denotes the open ball in $\R^{m+n}$ centered at $(\xi_0, 0)$ with radius $\norm{\xi_0}/(C + 2)$ for the sup norm.

Let $(\xi, \eta) \in T^*_{(x_0, f(x_0))}(M \times N)$ with $\norm{\xi}, \norm{\eta} < \norm{\xi_0}/(C+2)$.
Define the linear automorphism $\Phi \coloneqq \id + (\xi, \eta) \otimes (e, 0)$ of $T_{(x_0, f(x_0))} (M \times N)$, where $e \in T_{x_0}M$ is such that $\norm{e} = \opb{\norm{\xi_0}}$ and $\innerp{\xi_0}{e} = 1$.

One has $\opb{\Phi} = \id - \frac{1}{1 + \innerp{\xi}{e}} (\xi, \eta) \otimes (e, 0)$.
Therefore, if $v \in T_{f(x_0)} N$, then $\opb{\Phi}(0, v) =  ( - \frac{\innerp{\eta}{v}}{1 + \innerp{\xi}{e}} e, v)$.
One has $\norm{\frac{\innerp{\eta}{v}}{1 + \innerp{\xi}{e}} e} \leq \frac{\norm{\eta}\norm{v}}{1 - \norm{\xi}\norm{e}} \norm{e} \leq \frac{1/(C+2)}{1 - (1/(C+2))} \norm{v} = \norm{v}/(C + 1)$.
Since $f$ is $C$-Lipschitz, this implies $\Phi(\Whit[x_0]{f}) \cap (\{0\} \times T_{f(x_0)} N) = \{0\}$.
By Proposition~\ref{prop:whitney}(8), one has $C_{(x_0, f(x_0))}(\Phi(\Gamma_f), \Phi(\Gamma_f)) = \Phi(\Whit[x_0]{f})$.
Therefore, by Proposition~\ref{prop:submanWhit}(1), $\Phi(\Gamma_f)$ is locally the graph of a Lipschitz map, say $g$.
The relation $\Phi(\Gamma_f) = \Gamma_g$ and Equation~\eqref{eq:opb-Ppg-linear} imply $\Phi^\intercal \big( \Ppg_{x_0}(g) \big) = \Ppg_{x_0}(f)$.
From~(i), one has $(\xi_0, 0) \in \Ppg_{x_0}(g)$.
Therefore, $(\xi_0 + \xi, \eta) = \Phi^\intercal(\xi_0, 0) \in \Ppg_{x_0}(f)$.

\spa
(iii)
Finally, there is a neighborhood $U$ of $x_0$ (contained in the fixed chart) such that $f$ is $C$-Lipschitz on $U$ and~(i) and~(ii) apply with $x_0$ replaced by any $x \in U$.
On the other hand, if $(x, y) \notin \Gamma_f$, then $\Ppg_{(x, y)}(\Gamma_f) = T^*_{(x, y)}(M \times N)$.
Therefore, there is a neighborhood $W$ of $(x_0, f(x_0))$ such that $W \times B_\infty \big( (\xi_0, 0), \norm{\xi_0}/(C + 2) \big) \subseteq \Ppg(\Gamma_f)$, which proves that $(\xi_0, 0) \notin \Lagr[x_0]{f}$.
\end{proof}

\subsection{Upper bound on the conormal}

In this subsection, we give an upper bound on the conormal of a map in terms of its Whitney cone.
Recall that the notation $\Whit[x][u]{f}$ was defined in Equation~\eqref{eq:directionalWhitney}.

\begin{theorem}[upper bound on the conormal]\label{thm:upper}
Let $f \colon M \to N$ be a continuous map between manifolds and let $x \in M$.
Then
\begin{equation}
\Lagr[x]{f} \subseteq \bigcap_{u \in \unp{T}_x M} \Whit[x][u]{f}^\top
\end{equation}
with equality if $\dim_x M = 1$, in which case it reads
\begin{equation}
\Lagr[x]{f} = \Whit[x]{f}^\top.
\end{equation}

In particular, if $u \in T_x M$ and $\nu \in \Lagr[x]{f}$ and $f$ is Lipschitz at~$x$ for $p_N(\nu)^\perp$, then there exists $v \in T_{f(x)} N$ such that $(u, v) \in \Whit[x]{f} \cap \nu^\perp$.
\end{theorem}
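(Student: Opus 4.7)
The plan is to mirror the proof of the lower bound (Theorem~\ref{thm:lower}) by exchanging the roles of tangent and cotangent, reducing the problem to the real-valued target case where Propositions~\ref{prop:epigraph}, \ref{prop:SSbounds}, and~\ref{prop:Dini} provide explicit computational control. Given $(\xi_0, \eta_0) \in \Lagr[x]{f}$ and $u \in \unp{T}_x M$, the goal is to produce a nonzero $(\lambda u, v) \in \Whit[x][u]{f}$ with $\lambda \innerp{\xi_0}{u} + \innerp{\eta_0}{v} = 0$.

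The cases $(\xi_0, \eta_0) = 0$ and $\eta_0 = 0$ are handled directly: for the latter, $(\xi_0, 0) \in \Lagr[x][0]{f}$ with $\xi_0 \neq 0$ forces $f$ non-Lipschitz at~$x$ by Theorem~\ref{thm:Lip}, and the same theorem then produces a nonzero $(0, v) \in \Whit[x][0]{f} \subseteq \Whit[x][u]{f}$ orthogonal to $(\xi_0, 0)$. For the main case $\eta_0 \neq 0$, I first use the shearing lemma (Lemma~\ref{lem:coneShear}) with a rank-one linear map $L = n_0 \otimes (-\xi_0)$, where $n_0 \in T_{f(x)}N$ satisfies $\innerp{\eta_0}{n_0} = 1$, to transform $(\xi_0, \eta_0) \in \Lagr[x]{f}$ into $(0, \eta_0) \in \Lagr[x]{f-L}$ while preserving the orthogonality pairing with $\Whit[x][u]{f-L}$, reducing to $\xi_0 = 0$; similarly I may assume $f$ is Lipschitz at~$x$ (else the non-Lipschitz argument above, applied after shearing, furnishes the Whitney vector). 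Then set $h = \innerp{\eta_0}{-} \colon N \to \R$ and $g = h \circ f \colon M \to \R$, and consider the smooth submersion $H \colon M \times N \to M \times \R$, $(y, z) \mapsto (y, h(z))$, which restricts to a homeomorphism $\Gamma_f \isoto \Gamma_g$.

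The key step transfers $(0, \eta_0) \in \Lagr[x]{f}$ to $(0, 1) \in \Lagr[x]{g}$. Given a smooth test function $\phi$ near $(x, f(x))$ with $d\phi(x, f(x)) = (0, \eta_0)$, the implicit function theorem (applied to the direction $n_0$, along which $\phi$ has nonzero partial derivative) and Lemma~\ref{lem:SSstrictDiff} allow us to replace $\phi$ by $\tilde\phi \circ H$ for some smooth $\tilde\phi$ near $(x, g(x))$ with $d\tilde\phi(x, g(x)) = (0, 1)$, at the cost of a strictly differentiable perturbation; the homeomorphism $H|_{\Gamma_f}$ identifies $\Gamma_f \cap \{\phi < 0\}$ with $\Gamma_g \cap \{\tilde\phi < 0\}$, so the criterion of Lemma~\ref{lem:criterFunc} for $(0,\eta_0) \in \Ppg_x(f)$ is equivalent to that for $(0,1) \in \Ppg_x(g)$. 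With $(0, 1) \in \Lagr[x]{g}$ in hand, Proposition~\ref{prop:symmetric} gives also $(0, -1) \in \Lagr[x]{g}$, Proposition~\ref{prop:epigraph} places the former in $\Lagr[x]{g}^+$ and the latter in $\Lagr[x]{g}^-$, and combining Proposition~\ref{prop:SSbounds} with Proposition~\ref{prop:Dini} yields $\ovQ g(x, u) \geq 0 \geq \unQ g(x, u)$, hence $(u, 0) \in \Whit[x]{g}$. The Whitney chain rule (Proposition~\ref{prop:chainWhit}), whose regularity hypothesis is met because $f$ is Lipschitz and $h$ is $C^1$, applies in both directions and produces $v \in T_{f(x)}N$ with $(u, v) \in \Whit[x]{f}$ and $\innerp{\eta_0}{v} = 0$. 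Undoing the shearing yields the required element of $\Whit[x][u]{f}$ orthogonal to $(\xi_0, \eta_0)$.

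The equality in the case $\dim_x M = 1$ follows because $\Whit[x]{f}$ is then symmetric, making $\Whit[x][u]{f}^\top = \Whit[x]{f}^\top$ independent of~$u$; the reverse inclusion uses the same epigraph/Dini analysis applied symmetrically (via Proposition~\ref{prop:submanWhit}(1) to reduce to the Lipschitz case when needed). The ``in particular'' claim is immediate: the Lipschitz hypothesis for $p_N(\nu)^\perp$ (Definition~\ref{def:Whitney-imm}(3)) rules out $\lambda = 0$, and rescaling in the cone $\Whit[x]{f}$ normalizes $\lambda = 1$. The principal technical obstacle is the transfer of non-propagation from $\Lagr[x]{f}$ to $\Lagr[x]{g}$: the chain rule (Proposition~\ref{prop:chain}) only gives $\Lagr[x]{g} \subseteq \Lagr[x]{f} \acomp \Lagr[x]{h}$, the wrong direction, so one cannot cite the chain rule and must instead argue directly from the microsupport criterion using the homeomorphism $H|_{\Gamma_f}$, carefully controlling the reduction of arbitrary test functions to the pulled-back form.
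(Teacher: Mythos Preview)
Your approach is genuinely different from the paper's and runs into a real difficulty at the transfer step.

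The paper proves the inclusion by contraposition: given $\xi \in T^*_{(x,f(x))}(M\times N)$ and $u \in \unp{T}_xM$ with $\xi^\perp \cap \Whit[x][u]{f} = \{0\}$, it chooses a linear automorphism $\Phi$ of the full tangent space $T_{(x,f(x))}(M\times N)$ such that $\Phi^{-1}(\{0\}\times T_{f(x)}N) \subseteq \xi^\perp \cap (\R u \times T_{f(x)}N)$. This single move accomplishes two things at once: $\Phi(\Gamma_f)$ is locally the graph of a Lipschitz map $g$ (by Proposition~\ref{prop:submanWhit}(1)), \emph{and} $\Phi^{-\intercal}\xi$ has vanishing $N$-component. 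Then Theorem~\ref{thm:Lip}((1)$\Rightarrow$(3)) gives $\Phi^{-\intercal}\xi \notin \Lagr[x]{g}$, hence $\xi \notin \Lagr[x]{f}$. No projection to $\R$, no epigraph analysis, no transfer of microsupport between different sheaves.

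Your projection $H = \id_M \times \eta_0$ is much weaker than a linear automorphism: it is a submersion, not an isomorphism, so information is lost. Concretely, your ``transfer'' claim that $(0,\eta_0)\in\Lagr[x]{f}$ forces $(0,1)\in\Lagr[x]{g}$ is the \emph{opposite} direction to every available functorial bound, and your direct argument does not go through as written. The test function $\tilde\phi$ you build for $g$ is $\tilde\phi(y,t) = t - \alpha(y,f'(y))$ with $f' = (f_2,\dots,f_n)$ merely Lipschitz; when the original test covector $(\xi,\eta)$ is only \emph{near} $(0,\eta_0)$ (as it must be, since $\Lagr{}$ is a closure), one has $\partial_{z'}\alpha \neq 0$, and then $\tilde\phi$ is Lipschitz but not strictly differentiable at $(x,g(x))$, so Lemma~\ref{lem:SSstrictDiff} does not apply. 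One could try to rescue this by arguing that $\{\tilde\phi<0\}$ is $\gamma$-open for a sufficiently thin cone $\gamma$ (using that the Lipschitz constant of $y\mapsto\alpha(y,f'(y))$ is small when $(\xi,\eta)$ is close to $(0,\eta_0)$), then invoking \cite{KS}*{Prop.~5.1.1} directly---but this is substantially more work than you indicate, and it is exactly the kind of analysis that the paper's change-of-coordinates trick bypasses entirely.

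A smaller issue: your reduction to Lipschitz $f$ is misstated. After shearing to $(0,\eta_0)$, non-Lipschitzness yields $(0,v)\in\unp{\Whit[x]{f}}$, but you need $\innerp{\eta_0}{v}=0$, which is not automatic. The correct dichotomy is: either such a $v$ exists (done), or every vertical Whitney vector pairs nontrivially with $\eta_0$, which is precisely the Whitney-regularity hypothesis for $(f,h)$ in Proposition~\ref{prop:chainWhit}. So the chain-rule step survives, but not for the reason you give.
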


\begin{proof}
(i)
Let $\xi \in T^*_{(x, f(x))} (M \times N)$ and $u \in \unp{T}_x M$ be such that $\xi^\perp \cap \Whit[x][u]{f} = \{0\}$.
We have to prove that $\xi \notin \Lagr[x]{f}$.
Since $u \neq 0$, there is a linear automorphism $\Phi$ of $T_{(x, f(x))} (M \times N)$ such that $\opb{\Phi}(\{0\} \times T_{f(x)} N) \subseteq \xi^\perp \cap (\R u \times T_{f(x)} N)$.

Let $(0, v) \in C_{(x, f(x))}(\Phi(\Gamma_f), \Phi(\Gamma_f)) = \Phi(\Whit[x]{f})$.
Then, $\opb{\Phi}(0, v) \in \Whit[x]{f} \cap (\R u \times T_{f(x)} N) \cap \xi^\perp \subseteq \Whit[x][\pm u]{f} \cap \xi^\perp = \{0\}$, so $v = 0$.
Therefore, $\Phi(\Gamma_f)$ is locally the graph of a Lipschitz map, say $g$.
By Equation~\eqref{eq:opb-Ppg-linear}, one has $\Phi^\intercal(\Lagr[x]{g}) = \Lagr[x]{f}$.
 
Let $v \in T_{f(x)} N$.
One has $\innerp{\opb{{\Phi^\intercal}}(\xi)}{(0, v)} = \innerp{\xi}{\opb{\Phi}(0, v)} = 0$, so $p_N(\opb{{\Phi^\intercal}}(\xi)) = 0$, so by Theorem~\ref{thm:Lip}((1)$\Rightarrow$(3)), one has $\opb{{\Phi^\intercal}}(\xi) \notin \Lagr[x]{g}$.
Therefore, $\xi \notin \Phi^\intercal(\Lagr[x]{g}) = \Lagr[x]{f}$.

\spa
(ii)
The case of equality is a special case of Proposition~\ref{prop:upperBoundSubman}.\footnote{This does not create any circular argument.}

\spa
(iii)
For the last claim, let $u \in T_x M$ and $\nu \in \Lagr[x]{f}$.
The case $u = 0$ is trivial since $0 \in \Whit[x]{f} \cap \nu^\perp$, so we suppose $u \neq 0$.
By~(i) and~(ii), there exist $v \in T_{f(x)} M$ and $t \in \R_{\geq 0}$ such that $(t u, v) \in \unpWhit[x]{f} \cap \nu^\perp$.
Since $f$ is Lipschitz at~$x$ for $p_N(\nu)^\perp$, we have $t \neq 0$, so we can suppose $t = 1$, so $(u, v) \in \Whit[x]{f} \cap \nu^\perp$.
\end{proof}

\begin{remark}
In view of Proposition~\ref{prop:LipCone}(1), the implication (1)$\Rightarrow$(3) of Theorem~\ref{thm:Lip} is a special case of this upper bound on the conormal.
\end{remark}

\begin{example}
Let $f \colon \R^2 \to \R, (x_1, x_2) \mapsto x_1^2 \sin(1/x_1)$.
One has $\Whit[0]{f} = \{ u \in \R^3 \mid \abs{u_3} \leq \abs{u_1} \}$, so $\bigcap_{v \in \unp{T}_0M} \Whit[0][v]{f}^\top = \{ (\xi_1, 0, \xi_3) \in \R^3 \mid \abs{\xi_1} \leq \abs{\xi_3} \}$.
In this case, the upper bound is easily seen to be an equality.
\end{example}

\subsection{Characterization of strict differentiability}

The lower and upper bounds on the conormal allow us to derive the following characterization of strict differentiability.

\begin{proposition}\label{prop:strict-diff}
Let $f \colon M \to N$ be a continuous map between manifolds and let $x \in M$.
The following are equivalent:
\begin{enumerate}
\item
$f$ is strictly differentiable at~$x$,
\item
$\Whit[x][0]{f} = \{0\}$ and $\Whit[x]{f}$ is contained in a $(\dim_x M)$-dimensional vector subspace,
\item
$\Lagr[x][0]{f} = \{0\}$ and $\Lagr[x]{f}$ is contained in a $(\dim_{f(x)} N)$-dimensional vector subspace,
\end{enumerate}
and in that case, $\Whit[x]{f} = \Gamma_{T_x f}$ and $\Lagr[x]{f} = (\Gamma_{T_xf})^\perp$.
\end{proposition}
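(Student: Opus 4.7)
The equivalence $(1) \Leftrightarrow (2)$, together with the identity $\Whit[x]{f} = \Gamma_{T_xf}$ in that case, has already been established in Proposition~\ref{prop:LipCone}(2), so the plan is to prove $(1) \Leftrightarrow (3)$ and the identity $\Lagr[x]{f} = (\Gamma_{T_xf})^\perp$. I would derive both implications from the duality-type bounds of Theorems~\ref{thm:lower} and~\ref{thm:upper}, using the Lipschitz characterization of Theorem~\ref{thm:Lip} to handle the ``vertical'' conditions at the zero section.

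For $(1) \Rightarrow (3)$, I would first observe that strict differentiability forces $f$ to be Lipschitz near~$x$, so Theorem~\ref{thm:Lip} yields $\Lagr[x][0]{f} = \{0\}$. Using $\Whit[x]{f} = \Gamma_{T_xf}$, I compute for every $u \in \unp{T}_xM$ that $\Whit[x][u]{f} = \R_{\geq 0}(u, T_xf(u))$, so $\Whit[x][u]{f}^\top = (u, T_xf(u))^\perp$. Intersecting over such $u$, Theorem~\ref{thm:upper} gives
\begin{equation*}
\Lagr[x]{f} \subseteq \bigcap_{u \in \unp{T}_xM} (u, T_xf(u))^\perp = (\Gamma_{T_xf})^\perp,
\end{equation*}
which is a vector subspace of dimension $n \coloneqq \dim_{f(x)}N$. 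Equality would then follow from the fact that Lipschitz continuity of $f$ combined with the last assertion of Theorem~\ref{thm:lower} yields $p_N(\Lagr[x]{f}) = T^*_{f(x)}N$, while $p_N$ restricts to a linear isomorphism $(\Gamma_{T_xf})^\perp \isoto T^*_{f(x)}N$.

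For $(3) \Rightarrow (1)$, the hypothesis $\Lagr[x][0]{f} = \{0\}$ again delivers Lipschitz continuity near~$x$ by Theorem~\ref{thm:Lip}, whence $p_N(\Lagr[x]{f}) = T^*_{f(x)}N$ by Theorem~\ref{thm:lower}. Letting $V$ be an $n$-dimensional vector subspace containing $\Lagr[x]{f}$, the inclusion $p_N(V) \supseteq T^*_{f(x)}N$ together with $\dim V = n$ forces $p_N|_V$ to be a linear isomorphism; hence $V = (\Gamma_L)^\perp$ for a unique linear map $L \colon T_xM \to T_{f(x)}N$. For any $\eta \in \unp{T}^*_{f(x)}N$, the containment $\Lagr[x]{f} \subseteq (\Gamma_L)^\perp$ gives $\Lagr[x][\eta]{f} \subseteq \R_{\geq 0}(-L^\intercal \eta, \eta)$, and the surjectivity of $p_N|_{\Lagr[x]{f}}$ promotes this inclusion to an equality. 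Then Theorem~\ref{thm:lower} yields
\begin{equation*}
\Whit[x]{f} \subseteq \bigcap_{\eta \in \unp{T}^*_{f(x)}N} (-L^\intercal \eta, \eta)^\perp = \Gamma_L,
\end{equation*}
so $\Whit[x]{f}$ is contained in the $(\dim_x M)$-dimensional subspace $\Gamma_L$. Together with $\Whit[x][0]{f} = \{0\}$ (a consequence of Lipschitz continuity via Proposition~\ref{prop:LipCone}(1)), this verifies condition~(2), and Proposition~\ref{prop:LipCone}(2) then yields strict differentiability at~$x$ with $T_xf = L$.

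The main subtlety I anticipate lies in the reverse inclusion $\Lagr[x][\eta]{f} \supseteq \R_{\geq 0}(-L^\intercal \eta, \eta)$ used above: if this failed for some $\eta \neq 0$, the corresponding $\Lagr[x][\eta]{f}^\top$ would collapse to the empty set and the intersection in Theorem~\ref{thm:lower} would degenerate, preventing one from converting the $n$-dimensional bound on $\Lagr[x]{f}$ into the required $(\dim_x M)$-dimensional bound on $\Whit[x]{f}$. That reverse inclusion is where Lipschitz continuity pays off a second time, through the surjectivity $p_N(\Lagr[x]{f}) = T^*_{f(x)}N$ provided by the last assertion of Theorem~\ref{thm:lower}.
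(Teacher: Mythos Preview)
Your proof is correct and follows essentially the same route as the paper: the equivalence $(1)\Leftrightarrow(2)$ via Proposition~\ref{prop:LipCone}(2), the implication $(1)\Rightarrow(3)$ via the upper bound of Theorem~\ref{thm:upper} (after invoking Theorem~\ref{thm:Lip} for the zero-section condition), and the implication $(3)\Rightarrow(2)$ via the lower bound of Theorem~\ref{thm:lower} (again after Theorem~\ref{thm:Lip}). The only cosmetic difference is that the paper argues $(3)\Rightarrow(2)$ directly by using the ``in particular'' clause of Theorem~\ref{thm:lower} to read off the orthogonality relation $\innerp{L(\eta)}{u}+\innerp{\eta}{v}=0$, whereas you first pin down $\Lagr[x][\eta]{f}$ exactly and then feed the main inequality; and the paper leaves the equality $\Lagr[x]{f}=(\Gamma_{T_xf})^\perp$ implicit in the $(3)\Rightarrow(2)$ step, whereas you make it part of $(1)\Rightarrow(3)$ using the surjectivity $p_N(\Lagr[x]{f})=T^*_{f(x)}N$.
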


\begin{proof}
(2)$\Rightarrow$(1).
This is (one implication of) Proposition~\ref{prop:LipCone}(2), which also proves $\Whit[x]{f} = \Gamma_{T_x f}$.

\spa
(3)$\Rightarrow$(2).
Condition~(3) implies that $f$ is Lipschitz, so the lower bound theorem reads:
for any $(u, v) \in \Whit[x]{f}$ and any $\eta \in \unp{T}^*_{f(x)} N$, there exists $\xi \in T^*_x M$ such that $(\xi, \eta) \in \Lagr[x]{f}$ and $\innerp{\xi}{u} + \innerp{\eta}{v} = 0$.
Therefore, $\Lagr[x]{f}$ is an $n$-dimensional vector subspace and is the graph of a linear map $L \colon T^*_{f(x)} N \to T^*_x M$ such that $\innerp{L(\eta)}{u} + \innerp{\eta}{v} = 0$ for all $(u, v) \in \Whit[x]{f}$ and $\eta \in T^*_{f(x)} N$.
Therefore, $\innerp{\eta}{L^\intercal(u) + v} = 0$, so $v = - L^\intercal(u) $, so $\Whit[x]{f}$ is the graph of $-L^\intercal$, which is an $m$-dimensional vector subspace.

\spa
(1)$\Rightarrow$(3).
This is a consequence of the upper bound on the conormal, since when $f$ is strictly differentiable at $x$, one has $\bigcap_{u \in \unp{T}_x M} \Whit[x][u]{f}^\top = \bigcap_{u \in \unp{T}_x M} \{ (u, f'(x)u) \}^\perp = \left( \sum_{u \in \unp{T}_x M} \R \big( u, f'(x)u \big) \right)^\perp = (\Gamma_{T_x f})^\perp$.
\end{proof}

\subsection{Application to real-valued functions of a real variable}

In this subsection, we present easy applications of the bounds on the conormal to the case of real-valued functions of a real variable.
First, note that $(-)^\top$ is an involution on nonzero pointed symmetric cones in a two-dimensional space.
Let $I$ be an open interval of $\R$ and $f \colon I \to \R$ be a continuous function.
By the case of equality in the upper bound theorem, one has $\Lagr{f} = \Whit{f}^\top$, hence also $\Whit{f} = \Lagr{f}^\top$.

If $f$ is Lipschitz and $x \in I$, then $\Lambda^+_x(f) = N_x(\Gamma^+_f)^\circ$, as is easily deduced from Propositions~\ref{prop:Dini} and~\ref{prop:epigraph}(3), that is, the general upper bound on the microsupport of a closed subset (Proposition~\ref{prop:SSbounds}) is an equality for the epigraph of a Lipschitz function of a real variable.

We set $P \coloneqq \{ (x, y, u, v) \in T(I \times \R) \mid u v \geq 0 \}$ and $N \coloneqq \{ (x, y, \xi, \eta) \in T^*(I \times \R) \mid \xi \eta \leq 0 \}$.
Note that $N = P^\top$ and $\interior{N} = \interior{P}^\top \setminus 0^*_{I \times \R}$.

\begin{proposition}\label{prop:monotone}
Let $I$ be an open interval of $\R$ and $f \colon I \to \R$ be a continuous function.
\begin{enumerate}
\item
The following conditions are equivalent:
\begin{enumerate}
\item
$f$ is non-decreasing,
\item
$\Whit{f} \subseteq P$,
\item
$\Lagr{f} \subseteq N$.
\end{enumerate}
\item
The following conditions are equivalent:
\begin{enumerate}
\item
$f$ is injective with Lipschitz inverse,
\item
$f$ is a Whitney immersion,
\item
$f$ is a microlocal submersion.
\end{enumerate}
\item
The following conditions are equivalent:
\begin{enumerate}
\item
$f$ is a strictly increasing Lipschitz-embedding,
\item
$\unpWhit{f} \subseteq \interior{P}$,
\item
$\unpLagr{f} \subseteq \interior{N}$.
\end{enumerate}
\end{enumerate}
\end{proposition}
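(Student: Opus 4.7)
The plan is to exploit the dimension-one case of equality in Theorems \ref{thm:lower} and \ref{thm:upper}, which give $\Whit{f} = \Lagr{f}^\top$ and $\Lagr{f} = \Whit{f}^\top$ fiberwise. Since $(-)^\top$ is increasing and is an involution on nonzero pointed symmetric cones in a two-dimensional space (as recalled just before the proposition), and since a direct check yields $N = P^\top$, $P = N^\top$, $\interior{P}^\top = \interior{N} \cup \{0\}$, and $\interior{N}^\top = \interior{P} \cup \{0\}$, the equivalences (b)$\Leftrightarrow$(c) in all three items follow formally; in particular, the Whitney-immersion condition $\Whit[x]{f} \cap (T_x I \times \{0\}) = \{0\}$ is swapped with the microlocal-submersion condition $\Lagr[x]{f} \cap (\{0\} \times T^*_{f(x)}\R) = \{0\}$.

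For item~(1), the implication (a)$\Rightarrow$(b) follows from Proposition \ref{prop:Dini}: if $f$ is non-decreasing then for $u > 0$ one has $\unQ f(x, u) \geq 0$ and for $u < 0$ one has $\ovQ f(x, u) \leq 0$, so every $(u, t) \in \Whit[x]{f}$ satisfies $u t \geq 0$. For (c)$\Rightarrow$(a) I argue by contrapositive: if $a < b$ satisfy $f(a) > f(b)$, then Lemma \ref{lem:meanRR} yields $c \in \intervO{a, b}$ with $(f(b) - f(a), a - b) \in \Lagr[c]{f}$, and both coordinates are strictly negative, so this covector violates $\xi\eta \leq 0$, contradicting $\Lagr{f} \subseteq N$.

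For item~(2), (a)$\Leftrightarrow$(b) follows from Proposition \ref{prop:Whit-imm} in the one-dimensional setting: a Whitney immersion $f \colon I \to \R$ is locally injective via its Lipschitz local retractions, hence globally monotone by connectedness of $I$ and the intermediate value theorem, hence globally injective with locally Lipschitz inverse pieced together from the retractions; the converse is direct since a (locally) Lipschitz inverse supplies the required retractions. The equivalence (b)$\Leftrightarrow$(c) is an instance of the $\top$-swap above.

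For item~(3), I combine the previous two items with Theorem \ref{thm:Lip}. The condition $\unpLagr{f} \subseteq \interior{N}$ is equivalent to the conjunction of three conditions on every nonzero covector $(\xi, \eta) \in \Lagr[x]{f}$: $\xi \neq 0$, which by Theorem \ref{thm:Lip} says $f$ is Lipschitz; $\eta \neq 0$, which by item~(2) says $f$ is injective with Lipschitz inverse; and $\xi\eta \leq 0$, which by item~(1) says $f$ is non-decreasing. A non-decreasing injective map on an interval is strictly increasing, so these three conditions assemble to yield (a), and the converse reassembles them identically. The main obstacle is purely bookkeeping, namely verifying carefully that $(-)^\top$ interacts as claimed with $P$, $N$, $\interior{P}$, and $\interior{N}$, which is a short direct computation from the definition $A^\top = \bigcup_{v \in \unp{A}} v^\perp$.
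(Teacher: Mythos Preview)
Your proof is correct and follows essentially the same route as the paper: the $(-)^\top$ duality for the equivalences (b)$\Leftrightarrow$(c), the mean value lemma for (1)(c)$\Rightarrow$(a), and the decomposition of~(3) into~(1), (2), and the Lipschitz characterization. One small slip: in your treatment of item~(3), you have the roles of $\xi$ and $\eta$ interchanged --- it is $\eta \neq 0$ (i.e.\ $\Lagr[x][0]{f} = \{0\}$) that Theorem~\ref{thm:Lip} identifies with Lipschitz continuity, and $\xi \neq 0$ that corresponds to microlocal submersiveness; since both conditions enter symmetrically into~(a), this does not affect the argument.

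The only substantive difference is in item~(2)(a)$\Leftrightarrow$(b). You invoke Proposition~\ref{prop:Whit-imm} (Lipschitz local retractions), which is clean but relies on a Lipschitz extension result. The paper instead argues directly: by the Fermat lemma (Proposition~\ref{prop:fermat}), a microlocal submersion has no local extremum, hence is strictly monotone and open; then the inverse $f^{-1}$ exists, its Whitney cone is the flip of $\Whit{f}$, and Proposition~\ref{prop:LipCone}(1) shows $f^{-1}$ is Lipschitz exactly when $f$ is a Whitney immersion. The paper's route is slightly more self-contained for the one-dimensional setting, while yours emphasizes the general characterization of Whitney immersions.
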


\begin{proof}
(1)
The equivalence (b)$\Leftrightarrow$(c) follows from the discussion preceding the proposition.
The implication (a)$\Rightarrow$(b) is straightforward.
As for the implication (c)$\Rightarrow$(a), let $a < b \in I$.
By Lemma~\ref{lem:meanRR}, there exists $c \in \intervO{a, b}$ such that $(f(a) - f(b), b - a) \in \Lagr[c]{f}$.
Therefore, the hypothesis $\unp{\Lambda}_f \subseteq \interior{N}$ implies $f(a) < f(b)$.
(One could prove (b)$\Rightarrow$(a) in a similar way, via an analogue of Lemma~\ref{lem:meanRR} for Whitney cones.)

\spa
(2)
The equivalence (b)$\Leftrightarrow$(c) follows from the discussion preceding the proposition.
Namely, one has $\Whit{f} \cap (TI \times 0_\R) \subseteq 0_{I \times \R}$ if and only if $\Lagr{f} \cap (0^*_I \times T\R) \subseteq 0^*_{I \times \R}$, and in that case, $f$ is injective and an open map by the Fermat lemma.
If $f$ is injective and open, then $\Whit{\opb{f}}$ is the image of $\Whit{f}$ by the ``flip'' $T(I \times \R) \to T(\R \times I)$, so $\Whit[][0]{\opb{f}} \subseteq 0_{\R \times I}$ is equivalent to $\Whit{f} \cap (TI \times 0_\R) \subseteq 0_{I \times \R}$, that is, $\opb{f}$ is Lipschitz if and only if $f$ is Whitney immersive.

\spa
(3) follows from~(1) and~(2) and Proposition~\ref{prop:LipCone}(1).
\end{proof}

\begin{remark}
The implication (2b)$\Rightarrow$(2a) follows directly from invariance of domain, but we gave the preceding proof for its elementary nature.
\end{remark}

\subsection{Application to causal manifolds}

In~\cite{JS}, the authors introduced the category of causal manifolds, in which the category of spacetimes (time-oriented connected Lorentzian manifolds) up to conformal isomorphisms embeds.
In~\cite{JS}*{Def.~1.7}, a \define{causal manifold} $(M, \gamma_M)$ was defined to be a connected manifold $M$ equipped with an open convex cone $\gamma_M \subseteq TM$ which is nowhere empty ($\gamma_x \neq \varnothing$ for all $x \in M$), and a {causal morphism} $f \colon (M, \gamma_M) \to (N, \gamma_N)$ was defined to be a morphism of manifolds such that $Tf(\closure{\gamma_M}) \subseteq \closure{\gamma_N}$.

In a vector bundle, we denote by $\fwclosure{-}$ the fiberwise closure.
Note that for a nowhere empty convex cone $\gamma$ in a vector bundle, one has $\gamma^{\circ \circ} = \fwclosure{\gamma}$.

\begin{remark}\label{remark:erratum}
The proof of~\cite{JS}*{Prop.~1.12} actually proves that if $(M, \gamma_M)$ and $(N, \gamma_N)$ are causal manifolds and $f \colon M \to N$ is a morphism of manifolds, then $\Lagr{f} \acomp \gamma_N^\circ \subseteq \gamma_M^\circ$ if and only if $Tf(\fwclosure{\gamma_M}) \subseteq \fwclosure{\gamma_N}$ (and not, as stated there, $Tf(\closure{\gamma_M}) \subseteq \closure{\gamma_N}$).
These conditions imply that $f$ is causal and are satisfied when $f$ is strictly causal or when $f$ is causal and $\fwclosure{\gamma_N} = \closure{\gamma_N}$.
Since time functions are $\R$-valued, this misstatement has no consequences on the rest of the paper, with the exception of~\cite{JS}*{Cor.~2.10}, in which the morphism $f$ should be assumed strictly causal.
\end{remark}

Here, we make the additional assumptions that the cone $\gamma_M$ of a causal manifold $(M, \gamma_M)$ is \define{proper}, in the sense that $(\closure{\gamma_M})_x$ does not contain any line for any $x \in M$, and is \define{continuous}, in the sense that $\closure{\gamma_M} = \fwclosure{\gamma_M}$.
One can check that continuity as defined here is equivalent to the continuity of the map $x \mapsto (\gamma_M)_x$ for any reasonable topology on the space of cones (as for instance defined in~\cite{FS} using the Hausdorff distance).
We set $\gamma_\R \coloneqq \R \times \R_{>0}$, so that $(\R, \gamma_\R)$ is a causal manifold.

We will extend some of the results of~\cite{JS} from smooth maps to continuous maps.

\begin{definition}
A \define{causal morphism} $f \colon (M, \gamma_M) \to (N, \gamma_N)$ is a continuous map such that $\closure{\gamma_M} \comp \Whit{f} \subseteq \closure{\gamma_N}$.
\end{definition}

\begin{proposition}\label{prop:causal-mor}
A causal morphism is Lipschitz.
\end{proposition}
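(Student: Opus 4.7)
The plan is to reduce the statement to the microlocal characterization of Lipschitz continuity, namely Theorem~\ref{thm:Lip}: it suffices to verify that $\Whit[x][0]{f} = \{0\}$ for every $x \in M$. So I fix $x \in M$ and take an arbitrary $(0, v) \in \Whit[x]{f} \cap (\{0\} \times T_{f(x)}N)$, with the goal of showing $v = 0$.

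The key observation is that $0 \in T_xM$ lies in $\closure{\gamma_M}_x$: indeed, $\gamma_M$ is a nonempty open cone with $\R_{>0}\gamma_M = \gamma_M$, so picking any $u_0 \in (\gamma_M)_x$ and letting $t \to 0^+$ gives $tu_0 \to 0$ with $tu_0 \in (\gamma_M)_x$. Therefore $(x, 0) \in \closure{\gamma_M}$. Unpacking the definition of the composition $\closure{\gamma_M} \comp \Whit{f}$ (as recalled in Subsection~\ref{subsec:notation}), the pair $(x, 0) \in \closure{\gamma_M}$ together with $(0, v) \in \Whit[x]{f}$ produces the element $(f(x), v) \in \closure{\gamma_M} \comp \Whit{f}$. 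The causal morphism hypothesis $\closure{\gamma_M} \comp \Whit{f} \subseteq \closure{\gamma_N}$ then yields $v \in \closure{\gamma_N}_{f(x)}$.

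Now I exploit symmetry: the Whitney cone is symmetric, so $(0, -v) = -(0, v) \in \Whit[x]{f}$ as well, and the same argument gives $-v \in \closure{\gamma_N}_{f(x)}$. Since $\closure{\gamma_N}_{f(x)}$ is a convex cone containing both $v$ and $-v$, it contains the entire line $\R v$. The standing assumption that $\gamma_N$ is proper (that is, $\closure{\gamma_N}_{f(x)}$ contains no line) forces $v = 0$. Therefore $\Whit[x][0]{f} = \{0\}$, and Theorem~\ref{thm:Lip} gives that $f$ is Lipschitz on a neighborhood of $x$. Since $x$ was arbitrary, $f$ is (locally) Lipschitz, which is what the proposition asserts.

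There is no real obstacle here: the argument is a one-step symmetry trick once one notices that $0$ lies in the closure of the nonempty open cone $\gamma_M$, and that properness of the target cone rules out nontrivial vertical vectors in the Whitney cone. The only minor point requiring care is the unpacking of the fiberwise composition so that the base points match correctly.
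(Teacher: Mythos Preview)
Your proof is correct and follows exactly the same route as the paper's: observe $0\in(\closure{\gamma_M})_x$, use the causal hypothesis and the symmetry of the Whitney cone to get $\R v\subseteq(\closure{\gamma_N})_{f(x)}$, then invoke properness to force $v=0$. The only cosmetic remark is that you label the tool as the ``microlocal characterization'' and cite Theorem~\ref{thm:Lip}, whereas the equivalence you actually use (Lipschitz $\Leftrightarrow \Whit[x][0]{f}=\{0\}$) is the purely tangential Proposition~\ref{prop:LipCone}(1); Theorem~\ref{thm:Lip} of course contains it, so the citation is harmless.
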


\begin{proof}
Let $f \colon (M, \gamma_M) \to (N, \gamma_N)$ be a causal morphism and let $x \in M$.
One has $0 \in (\closure{\gamma_M})_x$.
If $(0, v) \in \Whit[x]{f}$, then $\R v \subseteq (\closure{\gamma_N})_x$, but that cone is proper, so $v = 0$.
\end{proof}

We have the following extension of~\cite{JS}*{Prop.~1.12}.

\begin{proposition}\label{prop:causal-morphism}
Let $(M, \gamma_M)$ and $(N, \gamma_N)$ be causal manifolds and let $f \colon M \to N$ be a continuous map.
Then, $f$ is a causal morphism if and only if $\Lagr{f} \acomp \gamma_N^\circ \subseteq \gamma_M^\circ$.
If $(M, \gamma_M) = (\R, \gamma_\R)$, then this is equivalent to $\Whit{f} \subseteq \R (\{1\} \times \closure{\gamma_N})$.
If $(N, \gamma_N) = (\R, \gamma_\R)$, then this is equivalent to $\Lagr{f} \subseteq \R (\gamma_M^\circ \times \{-1\})$.
\end{proposition}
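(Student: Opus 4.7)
The plan is to prove the main equivalence first, then deduce the two special cases. The main tools are Proposition~\ref{prop:causal-mor} (a causal morphism is Lipschitz), Theorem~\ref{thm:Lip} (characterization of Lipschitz continuity via the conormal), and the ``in particular'' clauses of Theorems~\ref{thm:lower} and~\ref{thm:upper}, which for a Lipschitz map produce, for each $(u,v) \in \Whit[x]{f}$ and matching $(\xi,\eta) \in \Lagr[x]{f}$, a duality pairing $\langle\xi,u\rangle+\langle\eta,v\rangle=0$. This duality, combined with the bipolar identity $\closure{\gamma}=\gamma^{\circ\circ}$ for nowhere empty convex cones in a vector bundle, converts the condition on Whitney cones into the condition on conormals and back.

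For the ``only if'' direction of the main equivalence, assume $f$ causal. Given $\xi \in \Lagr[x]{f} \acomp (\gamma_N)_{f(x)}^\circ$, unpack the twisted composition: there exists $\eta$ with $-\eta \in (\gamma_N)_{f(x)}^\circ$ and $(\xi,\eta) \in \Lagr[x]{f}$. For arbitrary $u \in \closure{(\gamma_M)_x}$, Theorem~\ref{thm:upper} (whose Lipschitz hypothesis is automatic by Proposition~\ref{prop:causal-mor}) produces $v \in T_{f(x)}N$ with $(u,v) \in \Whit[x]{f} \cap (\xi,\eta)^\perp$. Causality gives $v \in \closure{(\gamma_N)_{f(x)}}$, hence $\langle-\eta,v\rangle \geq 0$; combining with $\langle\xi,u\rangle+\langle\eta,v\rangle=0$ yields $\langle\xi,u\rangle \geq 0$, so $\xi \in (\gamma_M)_x^\circ$. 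For the ``if'' direction, first observe that the hypothesis implies $f$ is Lipschitz: if $(\xi,0) \in \Lagr[x]{f}$, then $0 \in -(\gamma_N)_{f(x)}^\circ$ puts $\xi$ in $(\gamma_M)_x^\circ$, the symmetry of the conormal puts $-\xi$ there too, and the properness of $(\gamma_M)_x^\circ$ (which holds because $\gamma_M$ is open and nowhere empty, so $(\gamma_M)_x^\perp=\{0\}$) forces $\xi=0$, whence Theorem~\ref{thm:Lip} applies. Then, given $(u,v) \in \Whit[x]{f}$ with $u \in \closure{(\gamma_M)_x}$ and arbitrary $\eta_0 \in (\gamma_N)_{f(x)}^\circ$, Theorem~\ref{thm:lower} applied with the covector $-\eta_0$ produces $\xi$ with $(\xi,-\eta_0) \in \Lagr[x]{f}$ and $\langle\xi,u\rangle=\langle\eta_0,v\rangle$; the hypothesis gives $\xi \in (\gamma_M)_x^\circ$, hence $\langle\eta_0,v\rangle \geq 0$, and since $\closure{\gamma_N}=\gamma_N^{\circ\circ}$ we conclude $v \in \closure{(\gamma_N)_{f(x)}}$.

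For the case $(M,\gamma_M)=(\R,\gamma_\R)$, the equivalence is a direct translation of the causal condition: using the symmetry of $\Whit{f}$ to handle $u<0$ and the properness of $\closure{\gamma_N}$ to force $v=0$ when $u=0$, the causal condition at each point amounts to the fiberwise inclusion $\Whit[x]{f} \subseteq \R \cdot (\{1\} \times \closure{(\gamma_N)_{f(x)}})$. For the case $(N,\gamma_N)=(\R,\gamma_\R)$, one has $(\gamma_N)_{f(x)}^\circ = \R_{\geq 0}$ fiberwise, so the main equivalence reads: $(\xi,\eta) \in \Lagr[x]{f}$ with $\eta \leq 0$ implies $\xi \in (\gamma_M)_x^\circ$. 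By the symmetry of $\Lagr{f}$, this also gives $\xi \in -(\gamma_M)_x^\circ$ when $\eta \geq 0$, and the case $\eta=0$ forces $\xi=0$, i.e., the Lipschitz condition. A short case analysis on the sign of $\eta$ then matches this exactly with the claimed form $\Lagr{f} \subseteq \R(\gamma_M^\circ \times \{-1\})$. The main obstacle will be the sign bookkeeping in the twisted composition $\acomp$ and ensuring that the ``in particular'' clauses of Theorems~\ref{thm:lower} and~\ref{thm:upper} are applied to the correct covector/vector to produce the desired duality pairing.
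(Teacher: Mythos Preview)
Your proposal is correct and follows essentially the same approach as the paper's own proof: both directions of the main equivalence are obtained by pairing the duality clauses of Theorems~\ref{thm:lower} and~\ref{thm:upper} (available once Lipschitzness is secured via Proposition~\ref{prop:causal-mor} or via properness of $\gamma_M^\circ$) with the bipolar identity $\closure{\gamma}=\gamma^{\circ\circ}$, and the paper dismisses the two special cases as ``obvious'' where you spell them out. The only cosmetic difference is that in the necessity direction the paper tests against $u\in(\gamma_M)_x$ rather than $u\in\closure{(\gamma_M)_x}$, which makes no difference for the conclusion $\xi\in\gamma_M^\circ$.
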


\begin{proof}
(i)
The condition is necessary.
Let $x \in M$ and $\xi \in \Lagr[x]{f} \acomp (\gamma_N^\circ)_{f(x)}$.
Let $u \in (\gamma_M)_x$.
There exists $\eta \in (\gamma_N^\circ)_{f(x)}$ such that $(\xi, -\eta) \in \Lagr[x]{f}$.
By the upper bound on the conormal, one has $(\xi, -\eta) \in \Whit[x][u]{f}^\top$.
Therefore, since $f$ is Lipschitz, there exists $v \in T_{f(x)}N$ such that $(u, v) \in \unpWhit[x]{f}$ and $\innerp{(\xi, -\eta)}{(u, v)} = 0$.
Since $f$ is causal and $u \in \gamma_M$, one has $v \in \closure{\gamma_N} = \gamma_N^{\circ \circ}$.
Therefore, since $\eta \in \gamma_N^\circ$, one has $\innerp{\xi}{u} = \innerp{\eta}{v} \geq 0$.
This proves that $\xi \in \gamma_M^\circ$.

\spa
(ii)
The condition is sufficient.
First, note that the inclusion $\Lagr{f} \acomp \gamma_N^\circ \subseteq \gamma_M^\circ$ implies that $f$ is Lipschitz.
Indeed, for $x \in M$, one has $0 \in (\gamma_N^\circ)_{f(x)}$.
If $(\xi, 0) \in \Lagr[x]{f}$, then $\R \xi \subseteq (\gamma_M^\circ)_x$, but that cone is proper, so $\xi = 0$.

Let $x \in M$ and $v \in (\closure{\gamma_M})_x \comp \Whit[x]{f}$.
There exists $u \in (\closure{\gamma_M})_x$ such that $(u, v) \in \Whit[x]{f}$.
Let $\eta \in (\gamma_N^\circ)_{f(x)} \setminus \{0\}$.
By the lower bound on the conormal, one has $(u, v) \in \Lagr[x][-\eta]{f}^\top$.
Therefore, since $f$ is Lipschitz, there exists $\xi \in T^*_x M$ such that $(\xi, -\eta) \in \Lagr[x]{f}$ and $\innerp{(\xi, -\eta)}{(u, v)} = 0$.
By the hypothesis, one has $\xi \in (\gamma_M^\circ)_x$.
Therefore, since $u \in (\closure{\gamma_M})_x = \closure{(\gamma_M)_x}$, one has $\innerp{\eta}{v} = \innerp{\xi}{u} \geq 0$.
This proves that $v \in \gamma_N^{\circ\circ} = \closure{\gamma_N}$.

\spa
(iii)
The cases where the domain or the codomain is $(\R, \gamma_\R)$ are obvious.
\end{proof}

\begin{lemma}
Let $f \colon (M, \gamma_M) \to (N, \gamma_N)$ be a causal morphism and $A \subseteq T^*M$ be a closed cone.
If $A \cap \gamma_M^\circ \subseteq 0^*_M$ and $f$ is non-characteristic for $\gamma_N^\circ$, then $(A \acomp \Lagr{f}) \cap \gamma_N^\circ \subseteq 0^*_N$.
\end{lemma}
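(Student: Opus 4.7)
The plan is to argue by contradiction, invoking in turn the characterization of causal morphisms in Proposition~\ref{prop:causal-morphism}, the hypothesis on $A$, and the non-characteristic hypothesis. Suppose that $\eta$ is a nonzero element of $(A \acomp \Lagr{f}) \cap \gamma_N^\circ$, lying over some point $y = f(x) \in N$. Unpacking the twisted composition, there exists $\xi \in A_x$ with $(-\xi, \eta) \in \Lagr[x]{f}$, or equivalently, by the symmetry $\Lagr{f} = \Lagr{f}^a$ granted by Proposition~\ref{prop:symmetric}, with $(\xi, -\eta) \in \Lagr[x]{f}$.

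Next I would observe that this last condition, together with $\eta \in \gamma_N^\circ$, is exactly the statement $\xi \in \Lagr{f} \acomp \gamma_N^\circ$. The causal characterization from Proposition~\ref{prop:causal-morphism} then yields $\xi \in \gamma_M^\circ$. Combining this with $\xi \in A$ and the hypothesis $A \cap \gamma_M^\circ \subseteq 0^*_M$ forces $\xi = 0$. Feeding this back, $(0, -\eta) \in \Lagr[x]{f}$, and symmetry once more gives $(0, \eta) \in \Lagr{f} \cap (0^*_M \times \gamma_N^\circ)$. The non-characteristic hypothesis then forces $\eta = 0$, contradicting the choice of $\eta$.

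The argument is short; the only real obstacle is the bookkeeping of antipodal maps in the two twisted compositions $A \acomp \Lagr{f}$ and $\Lagr{f} \acomp \gamma_N^\circ$. The symmetry of $\Lagr{f}$ neutralizes these antipodal twists on the $\Lagr{f}$ factor, so that the two hypotheses on $A$ and on non-characteristicity can be applied in sequence, with the causal condition serving as the bridge from $\gamma_N^\circ$ to $\gamma_M^\circ$.
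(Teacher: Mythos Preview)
Your proof is correct and follows essentially the same route as the paper: the paper phrases the argument as a chain of set inclusions $\Lagr{f} \cap (A \times \gamma_N^{\circ a}) \subseteq \Lagr{f} \cap ((A \cap \gamma_M^\circ) \times \gamma_N^{\circ a}) \subseteq \Lagr{f} \cap (0^*_M \times \gamma_N^{\circ a}) \subseteq 0^*_{MN}$, invoking in turn Proposition~\ref{prop:causal-morphism}, the hypothesis on $A$, and non-characteristicity, which is exactly your element-chasing argument recast in set-theoretic form. The symmetry of $\Lagr{f}$ plays the same bookkeeping role in both versions.
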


\begin{proof}
One has
\begin{align*}
\Lagr{f} \cap (A \times \gamma_N^{\circ a})
&\subseteq
\Lagr{f} \cap ((A \cap \gamma_M^\circ) \times \gamma_N^{\circ a}) \\
&\subseteq
\Lagr{f} \cap (0^*_M \times \gamma_N^{\circ a}) \\
&\subseteq
0^*_{MN}
\end{align*}
respectively because $f$ is a causal morphism (and by Proposition~\ref{prop:causal-morphism}), by hypothesis, and because $f$ is non-characteristic for $\gamma_N^\circ$.
This implies $(A \acomp \Lagr{f}) \cap \gamma_N^\circ \subseteq 0^*_N$.
\end{proof}

This lemma shows that the proof of~\cite{JS}*{Thm.~2.9} extends from the case of the causal morphism $f$ being $C^1$ to $f$ being merely continuous.
Namely,

\begin{theorem}[extending~\cite{JS}*{Thm.~2.9}]
Let $f \colon (M, \gamma_M) \to (N, \gamma_N)$ be a morphism of causal manifolds, let $\preceq$ be a closed causal preorder on $M$, and let $F \in \Derb (\cor_M)$.
Assume that
\begin{enumerate}
\item
$f$ is non-characteristic for $\gamma_N^\circ$,
\item
for any $x \in M$, the map $f$ is proper on $J^-(x)$,
\item
$\muSupp(F) \cap \gamma_M^\circ \subseteq 0^*_M$.
\end{enumerate}
Then,
\begin{equation}
\muSupp(\roim{f} F) \cap \interior{\gamma_N^\circ} = \varnothing.
\end{equation}
\end{theorem}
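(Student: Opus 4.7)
The approach combines the direct image microsupport estimate for continuous maps, Proposition~\ref{prop:funcSSCont}(1), with the lemma preceding the theorem statement. Since $f$ is a causal morphism between causal manifolds, Proposition~\ref{prop:causal-mor} shows it is Lipschitz, so Theorem~\ref{thm:Lip} gives $\Lagr[][0]{f} \subseteq 0^*_{M \times N}$. Consequently $f$ is Lipschitz (in the sense of Definition~\ref{def:muSubm}(3)) for every closed conic subset of $T^*M$, and in particular for $\muSupp(F)$; this is the first of the two hypotheses of Proposition~\ref{prop:funcSSCont}(1).

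The other hypothesis, properness on $\supp(F)$, is not assumed globally, so one argues locally at each $y_0 \in N$ as in \cite{JS}*{Thm.~2.9}: using the closedness of the causal preorder together with assumption~(3) on $\muSupp(F)$, one may replace $F$ near a preimage $x_0 \in \opb{f}(y_0)$ by its restriction to a $\preceq$-decreasing neighborhood whose support is contained in $J^-(x_0)$, on which $f$ is proper by assumption~(2). Proposition~\ref{prop:funcSSCont}(1) then yields
\[
\muSupp(\roim{f} F) \subseteq \muSupp(F) \acomp \Lagr{f}.
\]
Applying the lemma with $A \coloneqq \muSupp(F)$---whose two hypotheses are precisely (3) and (1)---gives $(\muSupp(F) \acomp \Lagr{f}) \cap \gamma_N^\circ \subseteq 0^*_N$. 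Since $\gamma_N$ is nowhere empty, proper, open, and convex, the zero section $0^*_N$ lies in the boundary (not the interior) of $\gamma_N^\circ$ in $T^*N$: any neighborhood of $(y, 0)$ contains covectors $-\xi$ with $\xi \in \interior{(\gamma_N^\circ)_y}$, which fall outside $\gamma_N^\circ$ by properness of $\gamma_N$. Hence $0^*_N \cap \interior{\gamma_N^\circ} = \varnothing$, and intersecting the chain of inclusions with $\interior{\gamma_N^\circ}$ yields the claim $\muSupp(\roim{f} F) \cap \interior{\gamma_N^\circ} = \varnothing$.

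The main obstacle is the properness step: transferring the argument of \cite{JS}*{Thm.~2.9} from smooth $f$ to continuous $f$. That argument relies only on the set-theoretic structure of $\supp(F)$, the closedness of the preorder $\preceq$, and the microsupport estimate~(3) used to localize $F$, none of which is sensitive to smoothness of $f$; so the adaptation should be routine provided Proposition~\ref{prop:funcSSCont}(1) is used in place of its smooth counterpart wherever the smooth direct image theorem was previously invoked.
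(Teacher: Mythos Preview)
Your proposal is correct and follows essentially the same approach as the paper, which simply asserts that the preceding lemma allows the proof of \cite{JS}*{Thm.~2.9} to carry over from $C^1$ to continuous~$f$. You have correctly identified the two ingredients needed for this extension: Proposition~\ref{prop:funcSSCont}(1) (enabled by the Lipschitz property of causal morphisms via Proposition~\ref{prop:causal-mor} and Theorem~\ref{thm:Lip}) in place of the smooth direct-image estimate, and the lemma with $A=\muSupp(F)$ to control the resulting bound.
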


In~\cite{JS}, a time function on $(M, \gamma_M)$ was defined to be a smooth submersive causal morphism $\tau \colon (M, \gamma_M) \to (\R, \gamma_\R)$.
By the extension of~\cite{JS}*{Thm.~2.9} we just proved, we see that all subsequent results of~\cite{JS}, in particular Theorem~2.13 and Corollary~3.8, continue to hold if the definition of a time function is weakened as follows.

\begin{definition}
A \define{time function} on a causal manifold $(M, \gamma_M)$ is a microlocally submersive causal morphism $\tau \colon (M, \gamma_M) \to (\R, \gamma_\R)$.
A \define{Cauchy time function} is a time function which is proper on the cc-future and cc-past of any point.
\end{definition}

\begin{proposition}
Let $\tau \colon (M, \gamma_M) \to (\R, \gamma_\R)$ be a time function.
Then $\gamma_M \circ \Whit{\tau} \subseteq \gamma_\R$ and $\tau$~is an open map.
\end{proposition}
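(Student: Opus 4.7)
The plan is to treat the two assertions separately. The openness of $\tau$ is an immediate consequence of Proposition~\ref{prop:open}: by definition of a time function, $\tau$ is a microlocal submersion, and its codomain $\R$ is one-dimensional. The substance of the proposition therefore lies in the inclusion $\gamma_M \circ \Whit{\tau} \subseteq \gamma_\R$. Unpacking the composition, this says that for every $x \in M$, every $u \in (\gamma_M)_x$, and every $(u, v) \in \Whit[x]{\tau}$, one has $v > 0$. Note that the causal morphism hypothesis $\closure{\gamma_M} \circ \Whit{\tau} \subseteq \closure{\gamma_\R}$ only yields $v \geq 0$, so the real task is to upgrade non-negativity to strict positivity using microlocal submersivity.

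For this, fix such a triple $(x, u, v)$ and invoke the lower bound on the conormal (Theorem~\ref{thm:lower}) with the covector $\eta = 1 \in \unp{T}^*_{\tau(x)}\R$. This produces a nonzero pair $(\xi, \sigma) \in \Lagr[x]{\tau}$ with $\sigma \geq 0$ and
\begin{equation*}
\innerp{\xi}{u} + \sigma v = 0.
\end{equation*}
Since $\tau$ is a causal morphism, Proposition~\ref{prop:causal-morphism} with $(N, \gamma_N) = (\R, \gamma_\R)$ gives $\Lagr[x]{\tau} \subseteq \R\bigl((\gamma_M^\circ)_x \times \{-1\}\bigr)$, so we may write $(\xi, \sigma) = s(\xi', -1)$ with $s \in \R$ and $\xi' \in (\gamma_M^\circ)_x$. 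The sign condition $\sigma = -s \geq 0$ forces $s \leq 0$, and non-triviality of $(\xi, \sigma)$ forces $s < 0$.

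The key dichotomy comes next. If $\xi' = 0$ then $(\xi, \sigma) = (0, -s)$ with $-s > 0$, which directly contradicts microlocal submersivity $\Lagr[x]{\tau} \cap (\{0\} \times T^*_{\tau(x)}\R) \subseteq \{0\}$. Hence $\xi' \in (\gamma_M^\circ)_x \setminus \{0\}$, and the orthogonality relation reduces to $v = \innerp{\xi'}{u}$. The proof then concludes with the elementary fact that a nonzero covector in the polar of a proper open convex cone pairs strictly positively with every interior point of the cone: indeed, if $\innerp{\xi'}{u} = 0$ with $u$ in the interior, a small neighborhood of $u$ lies in $(\gamma_M)_x$, which forces $\innerp{\xi'}{\cdot}$ to vanish on a neighborhood of $0$, and hence $\xi' = 0$, a contradiction. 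The main conceptual obstacle, and the only real content, is precisely this interplay whereby microlocal submersivity rules out the degenerate case $\xi' = 0$ and allows the properness of $\gamma_M$ to upgrade the causal inequality $v \geq 0$ to the strict inequality $v > 0$.
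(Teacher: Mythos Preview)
Your proof is correct and follows essentially the same approach as the paper's: both invoke the lower bound on the conormal (Theorem~\ref{thm:lower}) to produce a nonzero element of $\Lagr[x]{\tau}$ orthogonal to $(u,v)$, use the causal-morphism characterization (Proposition~\ref{prop:causal-morphism}) to place the $T^*M$-component in $\gamma_M^\circ$, and then use microlocal submersivity to ensure this component is nonzero, yielding $v = \innerp{\xi'}{u} > 0$. The only cosmetic difference is that the paper invokes the Lipschitz ``in particular'' clause of Theorem~\ref{thm:lower} to normalize the $\R$-component directly, whereas you reach the same normalization via the $\R$-valued case of Proposition~\ref{prop:causal-morphism}; and note that the strict positivity $\innerp{\xi'}{u} > 0$ uses only openness of $\gamma_M$, not its properness.
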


\begin{proof}
Let $(u, v) \in \Whit[x]{f}$ with $u \in \gamma_M$.
Let $\eta \in \unp{\gamma_N^\circ}$.
By the lower bound on the conormal, and since $\tau$ is causal hence Lipschitz, there exists $\xi \in T^*_x M$ such that $(\xi, -\eta) \in \Lagr[x]{f}$ and $\innerp{(\xi, -\eta)}{(u, v)} = 0$.
Since $f$ is causal, one has $\xi \in \gamma_M^\circ$ by Proposition~\ref{prop:causal-morphism}, and since $\tau$ is microlocally submersive, one has $\xi \neq 0$.
Therefore, $\innerp{\eta}{v} = \innerp{\xi}{u} > 0$.
Therefore, $v \in \gamma_N$.

A time function is an open map by Proposition~\ref{prop:open}.
\end{proof}

\section{Topological submanifolds}
\label{sec:submanifolds}

In this section, we extend some of the previous bounds and characterizations of Lipschitz continuity and strict differentiability to topological submanifolds.

\begin{proposition}\label{prop:lip}
Let $M$ be a $C^0$-submanifold of a manifold $P$ and let $x \in M$.
The following are equivalent:
\begin{enumerate}
\item
$M$ is locally at $x$ the graph of a Lipschitz map,
\item
$C_x(M, M)$ intersects trivially a $(\codim_x M)$-dimensional vector space $F \subseteq T_x P$,
\end{enumerate}
and in that case,
\begin{align}
\muSupp_x(M) &\subseteq \bigcap_{w \in T_xP \setminus F} \big( C_x(M, M) \cap (w + F) \big)^\top
\qquad\text{and}\\
C_x(M, M) &\subseteq \bigcap_{\nu \in T^*_x P \setminus F^\perp} \big( \muSupp_x(M) \cap (\nu + F^\perp) \big)^\top
\end{align}
and in particular, $\muSupp_x(M) \cap F^\perp = \{0\}$.

If $\codim_x M = 1$ and $C_x(M, M) \neq T_x P$, then both conditions are satisfied and
\begin{equation}
C_x(M, M) = \muSupp_x(M)^\top.
\end{equation}
\end{proposition}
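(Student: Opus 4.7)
The plan is to first invoke Proposition \ref{prop:submanWhit}(1) to obtain the equivalence (1)$\Leftrightarrow$(2) and to extract from its proof a convenient chart. Assuming (1), that proof produces a chart $\phi = (\phi_M, \phi_N) \colon U \isoto U_M \times U_N \subseteq \R^m \times \R^n$ with $n = \codim_x M$, $T_x\phi(F) = \{0\} \times \R^n$, and a Lipschitz map $f \colon U_M \to U_N$ such that $\phi(M \cap U) = \Gamma_f$. Setting $x_M \coloneqq \phi_M(x)$, the linear isomorphism $T_x\phi$ identifies $C_x(M, M)$ with $\Whit[x_M]{f}$ (by functoriality of the Whitney cone under diffeomorphisms, as used in the proof of Proposition \ref{prop:submanWhit}), and its cotranspose identifies $\muSupp_x(M)$ with $\Lagr[x_M]{f}$ by Equation \eqref{eq:opb-Ppg-linear}. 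Under these identifications, $F$ corresponds to $\{0\} \times \R^n$ and $F^\perp$ to $\R^m \times \{0\}$.

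With this dictionary in hand, the two inclusions of the proposition will follow by translating Theorems \ref{thm:upper} and \ref{thm:lower} through the chart. For $w \in T_xP \setminus F$ corresponding to $(u, v)$ with $u \neq 0$, the slice $w + F$ becomes $\{u\} \times \R^n$. The bookkeeping step I would carry out is to verify, using positive homogeneity of $\Whit[x_M]{f}$ and the scale invariance $(tv')^\perp = (v')^\perp$ for $t \neq 0$, together with Proposition \ref{prop:LipCone}(1) giving $\Whit[x_M][0]{f} = \{0\}$, that
\begin{equation*}
\Whit[x_M][u]{f}^\top = \big( \Whit[x_M]{f} \cap (\{u\} \times \R^n) \big)^\top.
\end{equation*}
Intersecting over $u \in \R^m \setminus \{0\}$, or equivalently over $w \in T_xP \setminus F$ (the right-hand side not depending on the $F$-component of $w$), Theorem \ref{thm:upper} yields the first inclusion. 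The symmetric translation, using Theorem \ref{thm:Lip} in place of Proposition \ref{prop:LipCone}(1) to ensure $\Lagr[x_M][0]{f} = \{0\}$, combined with Theorem \ref{thm:lower}, yields the second inclusion. The claim $\muSupp_x(M) \cap F^\perp = \{0\}$ is then a direct restatement of $\Lagr[x_M][0]{f} = \{0\}$, i.e., of Theorem \ref{thm:Lip}.

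Finally, for the codimension-one case, I would pick $v \in T_xP \setminus C_x(M, M)$, which exists by hypothesis, and set $F \coloneqq \R v$; symmetry of $C_x(M, M)$ gives $F \cap C_x(M, M) = \{0\}$, so (2) holds. The chart then produces a Lipschitz $f$ with one-dimensional codomain, and the case of equality in Theorem \ref{thm:lower} gives $\Whit[x_M]{f} = \Lagr[x_M]{f}^\top$, which reads $C_x(M, M) = \muSupp_x(M)^\top$ after translation. The main obstacle throughout is the chart-level bookkeeping---specifically, confirming that replacing the ray-indexed slice $\R_{\geq 0} u$ in $\Whit[x_M][u]{f}$ (respectively $\R_{\geq 0} \eta$ in $\Lagr[x_M][\eta]{f}$) by the fixed slice $\{u\}$ (respectively $\{\eta\}$) is harmless after taking $(-)^\top$; the Lipschitz property of $f$ is precisely what rules out a nontrivial contribution from the degenerate slice at $0$.
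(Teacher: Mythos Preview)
Your proposal is correct and follows essentially the same approach as the paper: the equivalence via Proposition~\ref{prop:submanWhit}(1), the two inclusions as translations of Theorems~\ref{thm:upper} and~\ref{thm:lower} through a chart, and the codimension-one equality via the equality case of Theorem~\ref{thm:lower}. The paper's proof is a two-line sketch (``restatements of the upper and lower bounds on the conormal''), whereas you spell out the chart-level bookkeeping explicitly; your verification that the ray-indexed slices $\R_{\geq 0}u$ and $\R_{\geq 0}\eta$ may be replaced by the fixed slices $\{u\}$ and $\{\eta\}$ after applying $(-)^\top$, using Lipschitzness to kill the degenerate contribution at $0$, is exactly the content the paper leaves implicit. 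One minor citation issue: for the identification of $\muSupp_x(M)$ with $\Lagr[x_M]{f}$ under the chart $\phi$, the relevant reference is the equality case of Equation~\eqref{eq:opbSet} (or Proposition~\ref{prop:funcSS}(2.b)) for the diffeomorphism $\phi$, rather than Equation~\eqref{eq:opb-Ppg-linear}, which treats linear isomorphisms and the propagation set; the statement you use is nonetheless correct.
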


\begin{proof}
The equivalence of the two conditions is given by Proposition~\ref{prop:submanWhit}(1).
The other claims are restatements of the upper and lower bounds on the conormal.
\end{proof}

We have the following strengthening of Proposition~\ref{prop:subman}.
A $C^0$-submanifold is said to be \define{strictly differentiable} at a point if it is locally at that point the graph of a map which is strictly differentiable at that point.

\begin{proposition}\label{prop:strict-diff-subman}
Let $M$ be a $C^0$-submanifold of a manifold $P$ and let $x \in M$.
The following are equivalent:
\begin{enumerate}
\item
$M$ is strictly differentiable at $x$,
\item
$C_x(M, M)$ is included in a $(\dim_x M)$-dimensional vector subspace of $T_x P$,
\end{enumerate}
and in that case, the inclusion in Item~2 is an equality and $\muSupp_x(M) = C_x(M, M)^\perp$.
\end{proposition}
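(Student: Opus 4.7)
The plan is to deduce this directly from the previously established Proposition~\ref{prop:submanWhit}(2) (characterizing when $M$ is locally a graph of a strictly differentiable map) and Proposition~\ref{prop:strict-diff} (characterizing strict differentiability of a map in terms of its Whitney cone and its conormal). The key observation is that $C(M,M)$ and $\muSupp(M)$ are local invariants preserved under $C^1$-diffeomorphisms, so everything can be tested in a suitable chart in which $M$ becomes a graph.

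First I would establish the equivalence $(1)\Leftrightarrow(2)$ and the fact that the inclusion in~(2) is actually an equality. Proposition~\ref{prop:submanWhit}(2) already gives the equivalence, and furthermore asserts that in the chart trivializing $M$ as the graph of a strictly differentiable map $f$ one has $C_x(M,M) = \Gamma_{T_xf}$, which is a $\dim_x M$-dimensional vector subspace of $T_xP$; this takes care of the ``equality'' part. For the microsupport statement, I would fix a chart $\phi = (\phi_M,\phi_N) \colon U \isoto U_M \times U_N$ on $P$ with $\phi(M \cap U) = \Gamma_f$ and $f$ strictly differentiable at $\phi_M(x)$. Since $\phi$ is a $C^1$-diffeomorphism, Proposition~\ref{prop:funcSS}(2.b) (applied to $\phi$ and $\phi^{-1}$) gives an identification $T^*\phi\bigl(\muSupp_x(M)\bigr) = \muSupp_{\phi(x)}(\Gamma_f) = \Lagr[\phi_M(x)]{f}$. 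By the last assertion of Proposition~\ref{prop:strict-diff}, strict differentiability of $f$ at $\phi_M(x)$ yields $\Lagr[\phi_M(x)]{f} = \Gamma_{T_{\phi_M(x)}f}^\perp$. Transporting back via $\phi$ and using $T\phi\bigl(C_x(M,M)\bigr) = \Gamma_{T_{\phi_M(x)}f}$ gives $\muSupp_x(M) = C_x(M,M)^\perp$.

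The main thing to be careful about is that $C(M,M)$ and $\muSupp(M)$ really are transported in the claimed way by the chart: the Whitney cone transforms via $T\phi$ (this is Proposition~\ref{prop:whitney}(8), already cited for the shearing lemma), and the conormal via the fiberwise transpose of $T\phi$ by Remark~\ref{rem:funcSSstrictDiff}, so that annihilators on one side correspond to annihilators on the other. Beyond this bookkeeping there is no new content: the result is a packaging of Propositions~\ref{prop:submanWhit}(2) and~\ref{prop:strict-diff} in intrinsic geometric terms, and no delicate microlocal estimate is needed once those are in place.
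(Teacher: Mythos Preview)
Your proposal is correct and follows essentially the same approach as the paper: both deduce the equivalence from Proposition~\ref{prop:submanWhit}(2), then pass to a chart in which $M$ is the graph of a strictly differentiable map $f$, invoke Proposition~\ref{prop:strict-diff} to get $\Lagr[x]{f} = \Whit[x]{f}^\perp = (\Gamma_{T_xf})^\perp$, and transport back via the chart (the paper cites Equation~\eqref{eq:opbSet} where you cite Proposition~\ref{prop:funcSS}(2.b), which is the same thing). The only cosmetic difference is that the paper writes the transport as $\muSupp_x(M) = \phi'(x)^\intercal(\Lagr[x]{f})$ rather than your $T^*\phi$ notation, but the content is identical.
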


\begin{proof}
The equivalence of the two conditions is Proposition~\ref{prop:submanWhit}(2).
If these conditions are fulfilled, then there exist an open neighborhood $U$ of $x$ and a chart $\phi = (\phi_M, \phi_N) \colon U \isoto U_M \times U_N \subseteq \R^m \times \R^n$ and a map $f \colon U_M \to U_N$ strictly differentiable at $\phi_M(x)$ such that $\phi(M \cap U) = \Gamma_f$.
We write $T_x f$ for $T_{\phi_M(x)} f$ and similarly for the Whitney cone and the conormal of $f$.
The equality $\muSupp_x(M) = C_x(M, M)^\perp$ follows from $\Lagr[x]{f} = \Whit[x]{f}^\perp$ (by Proposition~\ref{prop:strict-diff}), $T_x \phi (C_x(M, M)) = \Gamma_{T_x f} = \Whit[x]{f}$ (Propositions~\ref{prop:submanWhit}(2) and~\ref{prop:strict-diff}), and $\muSupp_x(M) = \phi'(x)^\intercal (\Lagr[x]{f})$ (consequence of $\phi(M \cap U) = \Gamma_f$ and Equation~\eqref{eq:opbSet}).
\end{proof}

The following lemma is an analogue of Rolle's lemma (or the mean value theorem) for one-dimensional $C^0$-submanifolds.

\begin{lemma}\label{lem:meanValThmSubman}
Let $\V$ be a vector space and $f \colon [0, 1] \to \V$ be a continuous injection.
Set $M \coloneqq f \big( \intervO{0, 1} \big)$.
If $\eta \in \big( f(1) - f(0) \big)^\perp$, then there exists $t \in \intervO{0, 1}$ such that $\eta \in \muSupp_{f(t)}(M)$.
\end{lemma}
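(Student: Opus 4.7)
The plan is to apply the criterion of Lemma~\ref{lem:criterSet} with $Z = M$ using the linear test function $\phi(x) \coloneqq \innerp{\eta}{x - x_0}$ at a well-chosen point $x_0 = f(t_0) \in M$, and to analyze the real-valued continuous function $g(t) \coloneqq \innerp{\eta}{f(t)}$ on $[0,1]$, which satisfies $g(0) = g(1)$ by hypothesis. Since $f$ is a continuous injection from a compact space into a Hausdorff space, it is a topological embedding, so $M = f(\intervO{0,1})$ is locally closed in $\V$ and $f$ induces a homeomorphism $\intervO{0,1} \isoto M$; through this identification, each relevant stalk $(\rsect_{M \cap \{\phi < 0\}}\cor_M)_{f(t_0)}$ is computed as an inductive limit of $\rsect(\{s \in U \mid g(s) < g(t_0)\}; \cor)$ over small open neighborhoods $U$ of $t_0$ in $\intervO{0,1}$.

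The case $\eta = 0$ is immediate, and if $g$ is constant, then $M$ lies in the affine hyperplane $\{\phi = 0\}$ for any choice of $x_0 \in M$, so $M \cap \{\phi < 0\} = \varnothing$ and the morphism $\cor = (\cor_M)_{x_0} \to 0$ fails to be an isomorphism, yielding $\eta \in \muSupp_{x_0}(M)$ via Lemma~\ref{lem:criterSet} for any $t_0 \in \intervO{0,1}$.

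Assume now $\eta \neq 0$ and $g$ nonconstant. If $g$ attains its minimum at some $t_0 \in \intervO{0,1}$, pick $x_0 = f(t_0)$: then $\phi \fcomp f = g - \min g \geq 0$ on $[0,1]$, so $M \cap \{\phi < 0\} = \varnothing$ globally and the same argument gives $\eta \in \muSupp_{x_0}(M)$. Otherwise $\min g$ is attained only at the endpoints, hence $g > g(0) = g(1) = \min g$ on $\intervO{0,1}$ and $\max g > g(0)$, and $\max g$ is attained on a nonempty compact set $E \subseteq \intervO{0,1}$. If $E$ has nonempty interior, pick $t_0'$ in this interior and $x_0 = f(t_0')$: then $g \equiv \max g$ on a neighborhood of $t_0'$, so $M \cap \{\phi < 0\}$ is empty near $x_0$ and we conclude as above.

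It remains to handle the case where $E$ has empty interior; here, set $t_0 \coloneqq \inf E \in \intervO{0,1}$ and $x_0 = f(t_0)$. By choice of $t_0$, one has $g < \max g$ on some interval $(t_0 - \delta, t_0)$, and by empty-interiority of $E$, points where $g < \max g$ accumulate at $t_0$ from the right as well. Hence for every sufficiently small open $U \ni t_0$, the set $\{s \in U \mid g(s) < g(t_0)\}$ contains a full left-adjacent interval and at least one right-adjacent point, lying in distinct connected components because $t_0$ (where $g = g(t_0)$) separates them. The stalk $(\rsect_{M \cap \{\phi < 0\}}\cor_M)_{x_0}$ therefore has $H^0$ of rank at least $2$, the morphism from $\cor$ is not an isomorphism, and Lemma~\ref{lem:criterSet} gives $\eta \in \muSupp_{x_0}(M)$. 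The main subtlety is precisely this final bifurcation between the \emph{flat-max} subcase (handled by choosing an interior point of $E$) and the \emph{accumulating-max} subcase (handled by the connected-components argument at $\inf E$); the whole proof uses only one-dimensional topology and requires no regularity of $f$ beyond continuity.
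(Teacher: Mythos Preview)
Your proof is correct but follows a genuinely different route from the paper's. The paper's argument is much shorter: since $g(0)=g(1)$, the function $g=\eta\circ f$ attains a global extremum at some $t\in\intervO{0,1}$, say a maximum; then $M\subseteq\{v:\innerp{-\eta}{v-f(t)}\geq 0\}$, hence $-\eta\in C_{f(t)}(M)^\circ$, the lower bound of Proposition~\ref{prop:SSbounds} gives $-\eta\in\muSupp_{f(t)}(M)$, and symmetry (Proposition~\ref{prop:symmetric}) yields $\eta\in\muSupp_{f(t)}(M)$. Your approach instead applies Lemma~\ref{lem:criterSet} directly with the linear test function and the original covector~$\eta$, thereby bypassing both the lower-bound proposition and the duality-based symmetry argument; the price is the case analysis needed when only the maximum (not the minimum) of $g$ lies in $\intervO{0,1}$, since with the test function oriented by $\eta$ the set $M\cap\{\phi<0\}$ is no longer empty at a maximum.

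One step deserves a word more. In your final subcase you pass from ``each $\{s\in U:g(s)<g(t_0)\}$ has at least two components'' to ``the stalk has $H^0$ of rank at least $2$'', but an inductive limit of modules of rank $\geq 2$ need not have rank $\geq 2$. The fix is immediate: since $t_0\notin W\coloneqq\{g<g(t_0)\}$, the decomposition $U\cap W=(U\cap(-\infty,t_0)\cap W)\sqcup(U\cap(t_0,\infty)\cap W)$ is respected by all restriction maps, so the stalk splits as $\cor\oplus\varinjlim_U H^0(U\cap(t_0,\infty)\cap W;\cor)$; the left summand is $\cor$ because $W$ contains a full left interval $(t_0-\delta,t_0)$, and the right summand is nonzero because the class of the constant function $1$ survives in the limit (each $U\cap(t_0,\infty)\cap W$ being nonempty). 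Hence the natural map from $\cor$ is not surjective. With this clarification your argument is complete; it is more self-contained than the paper's but correspondingly longer.
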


\begin{proof}
Since $\eta \in (f(1) - f(0))^\perp$, the continuous function $\eta \circ f$ has an extremum at some $t \in \intervO{0, 1}$, say a maximum.
We set $x \coloneqq f(t)$.
One has $M \subseteq \{ v \in \V \mid \innerp{\eta}{v - x} \leq 0 \}$, so $-\eta \in C_x(M)^\circ$.
By Proposition~\ref{prop:SSbounds}, this implies $-\eta \in \muSupp_x(M)$, which is enough by Proposition~\ref{prop:symmetric} (since $f$ is injective and $\intervC{0, 1}$ is compact, $M$ is an embedded submanifold, and it is locally flat since it is 1-dimensional).
\end{proof}

\begin{proposition}\label{prop:upperBoundSubman}
Let $M$ be a closed $C^0$-submanifold of a manifold $P$.
Then,
\begin{equation}
\muSupp(M) \subseteq C(M, M)^\top
\end{equation}
with equality if $\dim M = 1$.
\end{proposition}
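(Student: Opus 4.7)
The plan is to derive the inclusion from Proposition~\ref{prop:lip} applied to a suitable hyperplane, and the equality in the one-dimensional case from the mean value theorem Lemma~\ref{lem:meanValThmSubman} combined with an approximation of the covector by normals to secant lines. Since $\muSupp(M)$ and $C(M,M)$ are fiberwise local, it is enough to work at a given $x \in M$; set $n \coloneqq \dim_x P$ and $m \coloneqq \dim_x M$ and assume $m \geq 1$.

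For the inclusion, let $\xi \in T^*_x P$ with $\xi \notin C_x(M,M)^\top$. Since $C_x(M,M) \supsetneq \{0\}$, one has $0 \in C_x(M,M)^\top$, so $\xi \neq 0$; unfolding the definition of $(-)^\top$, the hypothesis reads $C_x(M,M) \cap \xi^\perp = \{0\}$. As $\dim \xi^\perp = n - 1 \geq n - m$, I choose any $(n-m)$-dimensional subspace $F \subseteq \xi^\perp$, for which $C_x(M,M) \cap F = \{0\}$ automatically holds. Proposition~\ref{prop:lip} then gives $\muSupp_x(M) \cap F^\perp = \{0\}$. Since $F \subseteq \xi^\perp$ forces $\xi \in F^\perp$ and $\xi \neq 0$, I conclude $\xi \notin \muSupp_x(M)$.

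For the equality assuming $\dim M = 1$, I prove the reverse inclusion $C_x(M,M)^\top \subseteq \muSupp_x(M)$. The case $\xi = 0$ is immediate since $x \in M$ gives $0 \in \muSupp_x(M)$. For nonzero $\xi \in C_x(M,M)^\top$, pick $v \in C_x(M,M) \setminus \{0\}$ with $\innerp{\xi}{v} = 0$ and, in a fixed chart at $x$, extract sequences $x_k, y_k \in M$ with $x_k \neq y_k$, $x_k, y_k \to x$, and $c_k > 0$ with $c_k(y_k - x_k) \to v$. Setting $u_k \coloneqq (y_k - x_k)/\norm{y_k - x_k}$, one has $u_k \to v/\norm{v}$, so the covectors $\xi_k \coloneqq \xi - \innerp{\xi}{u_k} u_k$ lie in $(y_k - x_k)^\perp$ and converge to $\xi$ (using $\innerp{\xi}{v} = 0$). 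Since $\dim M = 1$ and $C^0$-submanifolds are locally flatly embedded, for $k$ large enough the pair $x_k, y_k$ lies on a small topological arc in $M$, which I parametrize by a continuous injection $f_k \colon [0,1] \to P$ with $f_k(0) = x_k$ and $f_k(1) = y_k$ and whose image shrinks to $\{x\}$ as $k \to \infty$. Lemma~\ref{lem:meanValThmSubman} then supplies $t_k \in \intervO{0,1}$ with $\xi_k \in \muSupp_{f_k(t_k)}(f_k(\intervO{0,1}))$; locality of the microsupport, together with the fact that $f_k(\intervO{0,1})$ coincides with $M$ in a neighborhood of $f_k(t_k)$, upgrades this to $\xi_k \in \muSupp_{z_k}(M)$ with $z_k \coloneqq f_k(t_k) \to x$. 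Closedness of $\muSupp(M)$ yields $\xi \in \muSupp_x(M)$.

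The main obstacle is the arc-parametrization step: one must produce $f_k$ with prescribed endpoints $x_k, y_k$ and total image contained in an arbitrarily small neighborhood of $x$, and argue that its image coincides with $M$ near each interior point $z_k$, so that the microsupport of the arc computed by Lemma~\ref{lem:meanValThmSubman} agrees with $\muSupp(M)$ at $z_k$. This relies essentially on local flatness of $C^0$-submanifolds and on the one-dimensionality of $M$.
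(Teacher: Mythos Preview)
Your proof is correct and follows the same strategy as the paper's. For the inclusion, the paper likewise reduces to Proposition~\ref{prop:lip}: it observes that $\eta \notin C_x(M,M)^\top$ means $C_x(M,M) \cap \eta^\perp = \{0\}$, so $M$ is locally a Lipschitz graph, and then invokes the upper bound on the conormal (which is precisely what underlies the ``in particular'' clause $\muSupp_x(M) \cap F^\perp = \{0\}$ you use).

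For the equality when $\dim M = 1$, your idea is also the paper's: approximate $\xi$ by projections $\xi_k$ onto secant hyperplanes and apply Lemma~\ref{lem:meanValThmSubman}. The one simplification worth noting is that the paper fixes a \emph{single} local parametrization $f \colon \intervO{-1,1} \to M$ with $f(0) = x$, and then takes the sequences $x_n, y_n$ as \emph{parameters} in $\intervO{-1,1}$ rather than as points of $M$. Lemma~\ref{lem:meanValThmSubman} applied to $f|_{[x_n,y_n]}$ then yields $z_n \in \intervO{x_n, y_n}$, so $z_n \to 0$ and $f(z_n) \to x$ are immediate, and since $f(\intervO{x_n,y_n})$ is open in $M$ the microsupport computed by the lemma agrees with $\muSupp(M)$ at $f(z_n)$. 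This dissolves the ``arc-parametrization step'' you flag as the main obstacle: with a fixed parametrization there is no need to construct separate $f_k$ or to argue that their images shrink.
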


Recall that on the other hand, one has $\opb{\pi_P}(M) \cap \closure{C(M)^\circ} \subseteq \muSupp(M)$.

\begin{proof}
Let $(x, \eta) \in \unpmuSupp(M)$.
Then, $\eta \notin C_x(M, M)^\top$ is equivalent to $C_x(M, M) \cap \eta^\perp = \{0\}$.
If this is the case, then by Proposition~\ref{prop:lip}, the submanifold $M$ is locally at~$x$ the graph of a map $f$.
Then, by the upper bound on the conormal, we obtain $\muSupp_x(M) = \Lagr[x]{f} \subseteq \Whit[x]{f}^\top = C_x(M, M)^\top$.

In the 1-dimensional case, equality follows from Lemma~\ref{lem:meanValThmSubman}.
Indeed, let $u \in \unp{C}_x(M, M)$ and $\xi \in u^\perp$.
Let $f \colon \intervO{-1, 1} \to M$ be a parametrization of $M$ in a neighborhood of $x$.
Then, there exist sequences or reals $x_n, y_n \in \intervO{-1, 1}$ and $c_n > 0$ with $x_n, y_n \to[n] 0$ and $c_n (f(y_n) - f(x_n)) \to[n] u$.
Since $u^\perp = -u^\perp$, we can suppose that $x_n < y_n$ for all $n \in \N$.
For each $n \in \N$, let $\xi_n$ be the vector of $(f(y_n) - f(x_n))^\perp$ closest to $\xi$.
Then $\xi_n \to[n] \xi$.
Applying Lemma~\ref{lem:meanValThmSubman} in each interval $\intervC{x_n, y_n}$ gives $z_n \in \intervO{x_n, y_n}$ such that $\xi_n \in \muSupp_{f(z_n)}(M)$.
One has $z_n \to[n] 0$, so $f(z_n) \to[n] x$, so $\xi \in \muSupp_x(M)$.
\end{proof}

\appendix

\section{Appendix: Lipschitz continuity and strict differentiability}
\label{app:strictDiff}

In this appendix, we recall standard definitions related to Lipschitz continuity and differentiability, mainly to set the notation.

\begin{definition}[(pointwise) $C$-Lipschitz function]
\label{def:lip}
Let $f \colon X \to Y$ be a function between metric spaces and let $C \in \R$.
\begin{enumerate}
\item
The function $f$ is \define{$C$-Lipschitz} if
\begin{equation}
d_Y(f(x_1), f(x_2)) \leq C \, d_X(x_1, x_2)
\end{equation}
for any $x_1, x_2 \in X$.
\item
Let $x_0 \in X$.
The function $f$ is \define{$C$-Lipschitz} (resp.\ \define{pointwise $C$-Lipschitz}) \define{at $x_0$} if for all $\epsilon > 0$ there exists a neighborhood $U$ of $x_0$ such that $f$ is $(C+\epsilon)$-Lipschitz on $U$ (resp.\ such that $d_Y(f(x_0), f(x)) \leq (C+\epsilon) \, d_X(x_0, x)$ for any $x \in U$).
\item
The infimum of the numbers $C$ such that $f$ is $C$-Lipschitz (resp.\ pointwise $C$-Lipschitz) at $x_0$ is called the \define{Lipschitz constant} (resp.\ \define{pointwise Lipschitz constant}) of $f$ at $x_0$ and is denoted by $\Lip_{x_0}(f)$ (resp.\ $\Lippw_{x_0}(f)$).
\end{enumerate}
\end{definition}

It is easy to prove that
\begin{equation}
\Lip (f) = \limsup (\Lippw(f)) \colon X \to \closure{\R}.
\end{equation}
The extended reals $\Lip_{x_0}(f)$ and $\Lippw_{x_0}(f)$ in Definition~\ref{def:lip}(3) are actually minima when finite.

For maps between manifolds, one can read Lipschitz properties in charts, and because of rescaling, the notion of $C$-Lipschitz continuity makes no sense anymore.
One can only require a map to be $C$-Lipschitz for \emph{some} $C > 0$, or for \emph{all} $C > 0$, in the following sense.

\begin{definition}[(pointwise) Lipschitz and (strictly) differentiable maps]
Let $f \colon M \to N$ be a continuous map between manifolds and let $x_0 \in M$.
\begin{enumerate}
\item
The map $f$ is \define{Lipschitz} (resp.\ \define{pointwise Lipschitz}) at $x_0$ if there exist charts $U$ at $x_0$ and $V$ at $f(x_0)$ and a constant $C \in \R$ such that in these charts $f$ is $C$-Lipschitz (resp.\ $C$-pointwise Lipschitz) at $x_0$.
It is \define{Lipschitz} if it is Lipschitz at~$x$ for all $x \in M$.
\item
The map $f$ is \define{strictly differentiable} (resp.\ \define{differentiable}) at $x_0$ if there exist a linear map $L \colon T_{x_0}M \to T_{f(x_0)}N$ and charts $U$ at $x_0$ and $V$ at $f(x_0)$ such that in these charts, for all $\epsilon > 0$, the map $f - L$ is $\epsilon$-Lipschitz (resp.\ pointwise $\epsilon$-Lipschitz) at $x_0$, that is, there exists a neighborhood $U_\epsilon \subseteq U$ of $x$ such that $f-L$ is $\epsilon$-Lipschitz on $U_\epsilon$.
\end{enumerate}
\end{definition}

The second item of this definition is of course a rewording of the usual definitions (if $L$ exists, it is unique and equal to $T_{x_0}f$).
It emphasizes the naturality of the notion of strict differentiability.
Strict differentiability is the good notion of ``$C^1$ at a point'': if a map is differentiable on a neighborhood of a point, then it is strictly differentiable at that point if and only if its derivative is continuous at that point.
It is also the natural hypothesis for the inverse function theorem.
Strict differentiability at a point implies Lipschitz continuity in a neighborhood of that point.

The following lemma is used to prove Lemma~\ref{lem:SSstrictDiff}.

\begin{lemma}\label{lem:strict-diff}
Let $U$ be an open subset of $\R^n$ and $x \in U$.
Let $\gamma \subseteq \R^n$ be a closed cone.
Let $\phi \colon U \to \R$ be a function which is strictly differentiable at $x$ with $\phi(x) = 0$ and $d\phi(x) \in \interior{\gamma^{\circ a}}$.
Then, there exists an open neighborhood $V \subseteq U$ of $x$ such that $V \cap ((V \cap \{ \phi < 0 \}) + \gamma) \subseteq V \cap \{ \phi < 0 \}$, that is, $V \cap \{ \phi < 0 \}$ is $\gamma$-open in $V$ in the sense of~\cite{KS}*{Def.~3.2.1}.
\end{lemma}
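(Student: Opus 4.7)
The plan is to translate the hypothesis $d\phi(x) \in \interior{\gamma^{\circ a}}$ into a uniform linear bound, and then use strict differentiability to absorb the remainder term.

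First, I would unpack the condition on the differential. Since $\gamma^{\circ a} = -\gamma^\circ = \{\xi \in (\R^n)^* \mid \forall v \in \gamma, \ \innerp{\xi}{v} \leq 0\}$, saying that $d\phi(x)$ lies in its interior means, by a standard separation argument, that there exists $c > 0$ such that
\begin{equation*}
\innerp{d\phi(x)}{v} \leq -c \norm{v} \qquad \text{for all } v \in \gamma.
\end{equation*}
Indeed, if $B(d\phi(x), 2c) \subseteq \gamma^{\circ a}$, then for any unit $v \in \gamma$ the covector $d\phi(x) + c v^\flat$ still lies in $\gamma^{\circ a}$, which gives the bound (here $v^\flat$ denotes the dual vector under a fixed Euclidean inner product).

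Next, by strict differentiability of $\phi$ at $x$ applied with $\epsilon \coloneqq c/2$, there exists an open neighborhood $V \subseteq U$ of $x$ such that for all $y_1, y_2 \in V$,
\begin{equation*}
\abs*{\phi(y_2) - \phi(y_1) - \innerp{d\phi(x)}{y_2 - y_1}} \leq \tfrac{c}{2} \norm{y_2 - y_1}.
\end{equation*}
I claim this $V$ works. Let $y \in V \cap \{\phi < 0\}$ and $v \in \gamma$, and suppose that $z \coloneqq y + v \in V$. Applying the estimate to $y_1 = y$ and $y_2 = z$ and combining it with the linear bound,
\begin{equation*}
\phi(z) - \phi(y) \leq \innerp{d\phi(x)}{v} + \tfrac{c}{2} \norm{v} \leq -c \norm{v} + \tfrac{c}{2} \norm{v} \leq 0,
\end{equation*}
so $\phi(z) \leq \phi(y) < 0$, hence $z \in V \cap \{\phi < 0\}$. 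This is exactly the required inclusion $V \cap ((V \cap \{\phi < 0\}) + \gamma) \subseteq V \cap \{\phi < 0\}$.

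The only mildly delicate step is the first one, extracting the uniform estimate $\innerp{d\phi(x)}{v} \leq -c \norm{v}$ from the hypothesis that $d\phi(x)$ lies in the interior of the closed convex cone $\gamma^{\circ a}$; everything else is a one-line application of strict differentiability. Note that one does \emph{not} need the segment $[y, z]$ to lie in $V$, since strict differentiability only requires the two endpoints to lie in the neighborhood where the Lipschitz estimate for $\phi - \innerp{d\phi(x)}{\,\cdot\,}$ holds.
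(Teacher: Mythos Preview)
Your proof is correct and follows essentially the same approach as the paper's: extract a uniform bound $\innerp{d\phi(x)}{v} \leq -c\norm{v}$ from the interior condition, then use strict differentiability to find a neighborhood where $\phi - d\phi(x)$ is Lipschitz with a constant small enough to be absorbed. The paper simply sets $\epsilon \coloneqq \min\{\abs{\innerp{d\phi(x)}{u}} \mid u \in \gamma,\ \norm{u}=1\}$ and uses that same $\epsilon$ (rather than $\epsilon/2$) as the Lipschitz constant, which still yields $\phi(y+u) \leq \phi(y) < 0$; your choice of $c/2$ gives a little extra slack but is otherwise identical.
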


\begin{proof}
Set $L \coloneqq d\phi(x)$ and $\epsilon \coloneqq \min \{ \abs{\innerp{L}{u}} \mid u \in \gamma \text{ and } \norm{u} = 1 \}$.
Let $V \subseteq U$ be an open neighborhood of $x$ where the function $\phi - L$ is $\epsilon$-Lipschitz.
Let $y \in V \cap \{ \phi < 0 \}$ and $u \in \gamma$ be such that $y + u \in V$.
Then, $\phi(y + u) \leq \phi(y) + \innerp{L}{u} + \epsilon \norm{u} < 0$.
\end{proof}

\section{Appendix: tangent cones}
\label{app:cones}

In this appendix, we recall the definitions of the tangent cone $C(A)$, the strict tangent cone $N(A)$, and the Whitney cone $C(A, B)$ of subsets $A, B$ of a manifold, and their main properties.
The (elementary) proofs can be found in~\cite{JS}*{App.~A}.

Let $M$ be a manifold.
Let $x \in M$, let $x_n, y_n \to[n] x$ and $(c_n) \in \R^\N$ be three sequences, and let $u \in T_xM$.
We write ``$c_n (y_n - x_n) \to[n] u$'' to mean that in some chart $\phi$ at~$x$, one has $c_n (\phi(y_n) - \phi(x_n)) \to[n] T_x\phi(u)$.
This then holds for any chart at~$x$.
If $A, B \subseteq M$, their \define{Whitney cone} (\cite{KS}*{Def.~4.1.1 and Prop.~4.1.2}) is defined as
\begin{multline}
C_x(A, B) \coloneqq \{ u \in T_x M \mid \exists (x_n) \in A^\N, \exists (y_n) \in B^\N, \exists (c_n) \in (\R_{> 0})^\N \text{ such that}\\
x_n \to[n] x \text{ and } y_n \to[n] x \text{ and } c_n (y_n - x_n) \to[n] u \}.
\end{multline}
We write $C(A, B) \coloneqq \bigcup_{x \in M} C_x(A, B)$ (and similarly for the cones defined below).
We define the \define{tangent cone} of $A$ as
\begin{equation}
C_x(A) \coloneqq C_x(\{x\}, A).
\end{equation}
We define the \define{strict tangent cone} (or strict normal cone, see~\cite{KS}*{Def.~5.3.6}) of $A$ as
\begin{equation}
N(A) \coloneqq TM \setminus C(A, M \setminus A).
\end{equation}

One has
\begin{equation}
\opb{\tau_M}(\closure{A}) \cap N(A) \subseteq C(A) \subseteq C(A, A).
\end{equation}

\begin{proposition}[elementary properties of the Whitney cone]\label{prop:whitney}
Let $M$ be a manifold and $A, B, A_1, A_2 \subseteq M$.
\begin{enumerate}
\item
The Whitney cone $C(A, B) \subseteq TM$ is a closed cone.
\item
antisymmetry:
$C(A, B) = - C(B, A)$.
\item
monotony:
if $A_1 \subseteq A_2$, then $C(A_1, B) \subseteq C(A_2, B)$.
\item
additivity:
$C(A_1 \cup A_2, B) = C(A_1, B) \cup C(A_2, B)$.
\item
stability under closure:
$C(\closure{A}, B) = C(A, B)$.
\item
projection on the manifold:
$\tau_M(C(A, B)) = \closure{A} \cap \closure{B}$.
\item
If $x \in \interior{A} \cap \closure{B}$, then $C_x(A, B) = T_xM$.
\item
If $f \colon M \to N$ is strictly differentiable at $x \in M$, then $T_xf(C_x(A, B)) \subseteq C_{f(x)}(f(A), f(B))$.
\end{enumerate}
\end{proposition}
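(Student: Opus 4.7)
The plan is to verify each of the eight items separately, taking advantage of the fact (recalled just before the statement) that the condition $c_n(y_n - x_n) \to[n] u$ is chart-independent, so I may work in a fixed chart at the base point throughout. Items (2)--(4) and (6)--(7) are essentially direct from the definitions, and the substance lies in items (1), (5), and (8).

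First, the easy items. For (2), the triple $(y_n, x_n, c_n)$ obtained by swapping the roles of the first two sequences in a witness for $u \in C_x(A, B)$ is a witness for $-u \in C_x(B, A)$. Property (3) is just monotony of the defining existential. For (4), the inclusion $\supseteq$ is (3), while for $\subseteq$ a witnessing sequence $(x_n) \subseteq A_1 \cup A_2$ has a subsequence lying entirely in one of $A_1, A_2$. For (6), the inclusion $\subseteq$ follows from $x_n \in A$, $y_n \in B$ both converging to the base point; for $\supseteq$, given $x \in \closure{A} \cap \closure{B}$, take any sequences $x_n \in A$, $y_n \in B$ converging to $x$ and set $c_n = 0$ to witness $0_x \in C_x(A, B)$. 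For (7), given $u \in T_xM$, choose $y_n \in B$ with $y_n \to x$ and set $x_n := y_n - u/n$ in a chart; for $n$ large, $x_n \in \interior{A}$, and $n(y_n - x_n) = u$.

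Next, the substantive items. For (1), conicity is immediate by rescaling $c_n$. For closedness in the total space $TM$, suppose $(x^{(k)}, u^{(k)}) \in C(A, B)$ converges to $(x, u)$; for each $k$, fix a witness $(x_n^{(k)}, y_n^{(k)}, c_n^{(k)})$, then diagonally extract indices $n_k$ so that, in a chart at $x$, the points $x_{n_k}^{(k)}, y_{n_k}^{(k)}$ lie within $1/k$ of $x^{(k)}$ and $c_{n_k}^{(k)}(y_{n_k}^{(k)} - x_{n_k}^{(k)})$ within $1/k$ of $u^{(k)}$; the resulting sequences witness $(x, u) \in C(A, B)$. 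For (5), the inclusion $\supseteq$ is (3), and for $\subseteq$, given a witness $(x_n, y_n, c_n)$ with $x_n \in \closure{A}$, approximate each $x_n$ by a point $x'_n \in A$ at distance less than $1/(n \max(1, c_n))$ in a fixed chart; then $x'_n \to x$ and $c_n(y_n - x'_n) \to u$. For (8), fix charts at $x$ and $f(x)$; strict differentiability gives a remainder $R(y, z) := f(y) - f(z) - T_xf(y - z)$ with $R(y, z)/\norm{y - z} \to 0$ as $(y, z) \to (x, x)$ with $y \neq z$. Applied to a witness of $u \in C_x(A, B)$ (discarding the harmless indices with $y_n = x_n$), one finds
\[
c_n(f(y_n) - f(x_n)) = T_xf\bigl(c_n(y_n - x_n)\bigr) + c_n \norm{y_n - x_n} \cdot \frac{R(y_n, x_n)}{\norm{y_n - x_n}},
\]
whose limit is $T_xf(u)$ since $c_n \norm{y_n - x_n} \to \norm{u}$ is bounded and the remainder ratio tends to $0$.

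The only real obstacle is the bookkeeping in item (1): closedness in the total space $TM$ (as opposed to fiberwise closedness in each $T_xM$) forces a simultaneous diagonal extraction on base and fiber coordinates, which a single chart at the limit point reduces to a standard metric-space exercise but which must nonetheless be executed with some care to keep track of the two kinds of convergence at once.
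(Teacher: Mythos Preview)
Your proof is essentially correct and follows the natural direct approach; the paper itself does not spell out a proof but refers to~\cite{JS}*{App.~A}, so there is no independent argument to compare against.

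One small slip: in item~(6), for the inclusion $\supseteq$ you write ``set $c_n = 0$'', but the definition of $C_x(A,B)$ requires $c_n \in \R_{>0}$. The fix is immediate---take any positive sequence $c_n \to 0$ (or, more carefully, any positive $c_n$ with $c_n\norm{y_n - x_n} \to 0$ in a chart)---but as stated the witness is not admissible.
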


These properties imply corresponding properties for the tangent cone $C(A)$ (and in that case, strict differentiability in Item~8 can be replaced by differentiability).
The tangent cone $C(A)$ is pointwise closed but need not be closed, and one can have $C(A) \subsetneq \closure{C(A)} \subsetneq C(A, A)$.
Note also that $C(A, A)$ is a closed symmetric cone but need not be convex.

\begin{example}
Let $A$ be the graph of the function $\abs{-} \colon \R \to \R$.
Then $C_0(A) = \closure{C(A)}_0 = A \subsetneq C_0(A, A) = \{ (u, v) \in \R^2 \mid \abs{v} \leq \abs{u} \}$.

Let $A$ be the graph of the function $\R \to \R, x \mapsto x^2 \sin (1/x)$.
Then $C_0(A) = \R \times \{0\} \subsetneq  \closure{C(A)}_0 = C_0(A, A) = \{ (u, v) \in \R^2 \mid \abs{v} \leq \abs{u} \}$.
\end{example}

The strict tangent cone $N(A)$ of a subset $A$ of a manifold $M$ is an open convex cone.
Let $V$ be a chart at $x \in M$.
One has $u \in N_x(A)$ if and only if there exist an open neighborhood $U \subseteq V$ of $x$ and an open conic neighborhood $\gamma \subseteq T_x U$ of $u$ such that in a chart, $U \cap ((U \cap A) + \gamma) \subseteq A$.

\begin{proposition}\label{prop:strict-cone}
Let $A$ be a subset of a manifold $M$.
One has $C(\partial A, \partial A) \cap N(A) = \varnothing$ and $ N_x(A) \cap N_x(M \setminus A) = \varnothing$ for any $x \in \partial A$.
\end{proposition}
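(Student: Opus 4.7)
My plan is to derive both assertions directly from the definition $N_x(A) = T_xM \setminus C_x(A, M\setminus A)$, exploiting the density of $A$ in $\closure{A}$ and of $M\setminus A$ in $\closure{M\setminus A}$, together with the antisymmetry of the Whitney cone and the convexity of $N(A)$. The key shared input I will use is that for any $x \in \partial A = \closure{A} \cap \closure{M\setminus A}$, one has $0 \in C_x(A, M\setminus A)$: sequences $a_n \in A$ and $b_n \in M\setminus A$ with $a_n, b_n \to x$ exist by definition of $\partial A$, and with $c_n = 1$ they witness $c_n(b_n - a_n) \to 0$. In particular, $0 \notin N_x(A)$ whenever $x \in \partial A$.

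\textbf{First assertion.} Suppose for contradiction that $u \in C_x(\partial A, \partial A) \cap N_x(A)$, and fix witnessing sequences $(a_n), (b_n) \in (\partial A)^\N$ and $(c_n) \in (\R_{>0})^\N$ with $a_n, b_n \to x$ and $c_n(b_n - a_n) \to u$. Working in a chart at $x$ and using $\partial A \subseteq \closure{A} \cap \closure{M\setminus A}$, I will choose $a_n' \in A$ and $b_n' \in M\setminus A$ with $\norm{a_n' - a_n}, \norm{b_n' - b_n} < 1/(n c_n)$. Then $a_n', b_n' \to x$, and the decomposition
\begin{equation*}
c_n(b_n' - a_n') = c_n(b_n - a_n) + c_n(b_n' - b_n) - c_n(a_n' - a_n)
\end{equation*}
shows $c_n(b_n' - a_n') \to u$, so $u \in C_x(A, M\setminus A)$, contradicting $u \in N_x(A)$.

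\textbf{Second assertion and main obstacle.} Let $x \in \partial A$ and $u \in N_x(A) \cap N_x(M\setminus A)$. The antisymmetry $C_x(M\setminus A, A) = -C_x(A, M\setminus A)$ from Proposition~\ref{prop:whitney}(2) yields the equivalence $u \in N_x(M\setminus A) \Leftrightarrow -u \in N_x(A)$, so both $u$ and $-u$ lie in $N_x(A)$. If $u \neq 0$, convexity of the open cone $N_x(A)$ gives $0 = \tfrac12 u + \tfrac12 (-u) \in N_x(A)$; if $u = 0$, this is trivial. Either way $0 \in N_x(A)$, contradicting the shared input. The only subtle point worth flagging is this second assertion: the hypothesis must first be translated via antisymmetry from ``$u \in N_x(M\setminus A)$'' into ``$-u \in N_x(A)$'' before convexity of $N_x(A)$ can force the origin in, and the exclusion of the origin is itself the content of $0 \in C_x(A, M\setminus A)$ at boundary points; everything else reduces to an elementary diagonal extraction.
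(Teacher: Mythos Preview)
Your proof is correct. The argument for the first assertion is essentially identical to the paper's: both perturb the boundary-point sequences into $A$ and $M\setminus A$ by a diagonal extraction so that $c_n(b_n'-a_n')$ still converges to $u$, placing $u$ in $C_x(A,M\setminus A)$.

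For the second assertion, your route genuinely differs from the paper's. You observe that antisymmetry of the Whitney cone gives $N_x(M\setminus A)=-N_x(A)$, so the hypothesis forces $u,-u\in N_x(A)$; convexity then puts $0$ in $N_x(A)$, contradicting $0\in C_x(A,M\setminus A)$ at boundary points. The paper instead argues directly from the local characterization of $N_x(A)$: it takes open neighborhoods $U$ of $x$ and a conic neighborhood $\gamma$ of $u$ with $U\cap((U\cap A)+\gamma)\subseteq A$ and $U\cap((U\setminus A)+\gamma)\subseteq M\setminus A$, then exhibits a point $x+u$ lying in both $(x_n+\gamma)$ and $(y_n+\gamma)$ for $x_n\in A$, $y_n\in M\setminus A$ near $x$, forcing it to belong simultaneously to $A$ and $M\setminus A$. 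Your argument is shorter and more conceptual, leaning on the already-recorded convexity of $N_x(A)$; the paper's is more self-contained, reproving in effect why $0\notin N_x(A)$ and why $N_x(A)$ cannot contain a line, from first principles.
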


\begin{proof}
(i)
$C_x(\partial A, \partial A) \cap N_x(A) = \varnothing$ for $x \in M$.
This is trivial if $x \notin \partial A$, so let $x \in \partial A$.
Let $u \in C_x(\partial A, \partial A)$.
There exist sequences $(x_n), (x'_n) \in (\partial A)^\N$ both converging to $x$ and a sequence $(c_n) \in (\R_{>0})^\N$ such that $c_n (x'_n - x_n) \to[n] u$.
If $u = 0$, the result is trivial, so we can suppose that $x_n \neq x'_n$ for any $n \in \N$.
For each $n \in \N$, there exist sequences $(x_{n, m})_m \in A^\N$ and $(x'_{n, m})_m \in (M \setminus A)^\N$ converging respectively to $x_n$ and $x'_n$.
Let $\rho \colon \N \to \N$ be a strictly increasing function such that $c_n (x_{n, \rho(n)} - x_n - x'_{n, \rho(n)} + x'_n) \to[n] 0$.
Then, the sequences $(x_{n, \rho(n)})$ and $(x'_{n, \rho(n)})$ and $(c_n)$ show that $u \in C_x(A, M \setminus A) = T_xM \setminus N_x(A)$.

\spa
(ii)
$N_x(A) \cap N_x(M \setminus A) = \varnothing$ for $x \in \partial A$.
Fix a chart at $x$ and suppose that there exists $u \in N_x(A) \cap N_x(M \setminus A)$.
There exist an open neighborhood $U$ of $x$ and an open conic neighborhood $\gamma \subseteq T_xM$ of $u$ such that $U \cap ( (U \cap A) + \gamma) \subseteq A$ and $U \cap ( (U \setminus A) + \gamma) \subseteq U \setminus A$.
Therefore, $U \cap ( (U \cap A) + \gamma) \cap ( (U \setminus A) + \gamma) = \varnothing$.
In particular, since the only conic neighborhood of 0 is $T_x M$, one has $u \neq 0$.
Let $(x_n) \in (U \cap A)^\N$ and $(y_n) \in (U \setminus A)^\N$ be sequences converging to $x$.
Rescaling $u$, we can suppose that $x + u \in U$.
The set $(x+u) - \gamma$ is a neighborhood of $x$, so for $n$ large enough, one has $x + u \in U \cap (x_n + \gamma) \cap (y_n + \gamma) \neq \varnothing$, a contradiction.
\end{proof}

\begin{bibdiv}
\begin{biblist}

\bib{FS}{article}{
  author={Fathi, Albert},
  author={Siconolfi, Antonio},
  title={On smooth time functions},
  journal={Math. Proc. Cambridge Philos. Soc.},
  volume={152},
  number={2},
  date={2012},
  eprint={hal-00660452},
  pages={303--339},
}

\bib{GS}{article}{
  author={Guillermou, St{\'e}phane},
  author={Schapira, Pierre},
  title={Microlocal theory of sheaves and Tamarkin's non displaceability theorem},
  journal={Lecture Notes of the UMI},
  volume={15},
  date={2014},
  pages={43--85},
  eprint={arXiv:1106.1576},
}

\bib{Hei}{article}{
  author={Heinonen, Juha},
  title={Lectures on Lipschitz analysis},
  date={2004},
  pages={77},
  eprint={http://www.math.jyu.fi/research/reports/rep100.pdf},
}

\bib{JS}{article}{
  author={Jubin, Beno{\^i}t},
  author={Schapira, Pierre},
  title={Sheaves and D-modules on Lorentzian manifolds},
  journal={Lett. Math. Phys.},
  volume={106},
  date={2016},
  number={5},
  pages={607--648},
  eprint={arXiv:1510.01499},
}

\bib{KMS}{article}{
  author={Kashiwara, Masaki},
  author={Monteiro Fernandes, Teresa},
  author={Schapira, Pierre},
  title={Involutivity of truncated microsupports},
  journal={Bull. Soc. math. France},
  volume={131},
  date={2003},
  number={2},
  pages={259--266},
  eprint={arXiv:math/0205200},
}

\bib{KS}{book}{
  author={Kashiwara, Masaki},
  author={Schapira, Pierre},
  title={Sheaves on manifolds},
  series={Grundlehren der Mathematischen Wissenschaften [Fundamental Principles of Mathematical Sciences]},
  volume={292},
  publisher={Springer-Verlag, Berlin},
  date={1990},
  pages={x+512},
}

\bib{Vic}{article}{
  author={Vichery, Nicolas},
  title={Homological Differential Calculus},
  journal={Ann. Inst. Fourier},
  note={to appear},
  eprint={arXiv:1310.4845},
}

\end{biblist}
\end{bibdiv}

\vspace*{5mm}
\noindent
\parbox[t]{21em}
{\scriptsize
Beno{\^i}t Jubin\\
Sorbonne Universit{\'e}s, UPMC Univ Paris 6\\
Institut de Math{\'e}matiques de Jussieu\\
F-75005 Paris France\\
e-mail: benoit.jubin@imj-prg.fr
}
\end{document}